\newtheorem{theorem}{Theorem}[section]
\newtheorem{definition}{Definition}[section]
\newtheorem{lemma}{Lemma}[section]
\newtheorem{remark}{Remark}[section]
\numberwithin{equation}{section}
\numberwithin{figure}{section}
\makeatletter \@addtoreset{equation}{section} \makeatother
\def\tilde{\widetilde}
\newcommand{\beq}{\begin{equation}}
\newcommand{\eeq}{\end{equation}}
\begin{document}
\title[Navier-Stokes equations]{On the breakdown   of regular solutions with finite energy for 3D   degenerate compressible  Navier-Stokes equations}

%\author{Tianrui Bayles-Rea}
%\address[Tianrui Bayles-Rea] {Mathematical Institute,
%University of Oxford, Oxford,  OX2 6GG, UK.}
%\email{\tt Tianrui.Bayles-Rea@maths.ox.ac.uk}

%\author{Gui-Qiang G. Chen}
%\address[Gui-Qiang G. Chen] {Mathematical Institute,
%University of Oxford, Oxford,  OX2 6GG, UK.}
%\email{\tt chengq@maths.ox.ac.uk}

\author{Shengguo Zhu}
\address[Shengguo   Zhu]{Mathematical Institute, University of Oxford,  Oxford OX2 6GG, UK.}
\email{\tt zhus@maths.ox.ac.uk}

\begin{abstract}

In this paper, the three-dimensional (3D) isentropic compressible Navier-Stokes equations with degenerate viscosities (\textbf{ICND}) is considered in both the whole space and the periodic domain. First, for the corresponding Cauchy problem, when shear and bulk viscosity coefficients are both given as a constant multiple of the density’s power  ($\rho^\delta$ with $0<\delta<1$), based on some elaborate analysis of this system’s intrinsic singular structures, we show that the $L^\infty$ norm of the deformation tensor $D(u)$ and the $L^6$ norm of $\nabla \rho^{\delta-1}$ control the possible breakdown   of regular solutions with far field vacuum. This conclusion means that if a solution with far field vacuum of the \textbf{ICND} system is initially regular and loses its regularity at some later time, then the formation of singularity must be caused by losing the bound of  $D(u)$ or  $\nabla \rho^{\delta-1}$ as the critical time approaches. Second, under the additional assumption that the shear and second viscosities (respectively $\mu(\rho)$ and $\lambda(\rho)$) satisfy the BD relation $\lambda(\rho)=2(\mu'(\rho)\rho-\mu(\rho))$, if we consider the corresponding problem in some periodic domain and the initial density is away from the vacuum, it can be proved that the possible breakdown of classical solutions can be controlled only by the $L^\infty$ norm of  $D(u)$.  It is worth pointing out that, except the conclusions mentioned above, another purpose of the current paper is to show how to understand the intrinsic singular structures of the fluid system considered now, and then how to develop the corresponding nonlinear energy estimates in the specially designed energy space with singular weights  for the unique  regular solution with finite energy.\end{abstract}

\date{Oct. 16th, 2019}
\subjclass[2010]{35Q30, 35M30, 76N99, 35B44, 35A09} \keywords{Compressible Navier-Stokes equations, three dimensions,  far field vacuum, degenerate viscosity,  intrinsic singular structure,  classical solutions, finite energy, breakdown}.

\maketitle

\tableofcontents

\section{Introduction}
%Due to the high degeneracy near  vacuum, the local well-posedness of classical solutions with finite energy  to the isentropic  compressible   Navier-Stokes equations  (\textbf{CNS}) with degenerate viscosities (see (\ref{fandan})) has not been completely solved until now, which attracts many people's attention such as Bresh-Desjardin-Metivier \cite{bd}, Jiu-Wang-Xin \cite{jiuping}, Li-Pan-Zhu \cite{hyp} and so on.  In this paper, we will give some  contribution in this direction.

The time evolution of the mass density $\rho\geq 0$ and the velocity $u=\left(u^{(1)},u^{(2)},u^{(3)}\right)^\top$ $\in \mathbb{R}^3$ of a general viscous  isentropic
compressible  fluid occupying a spatial domain $\Omega\subset \mathbb{R}^3$ is governed by the following isentropic  compressible  Navier-Stokes equations:
\begin{equation}
\label{eq:1.1}
\begin{cases}
\rho_t+\text{div}(\rho u)=0,\\[4pt]
(\rho u)_t+\text{div}(\rho u\otimes u)
  +\nabla
   P =\text{div} \mathbb{T}.
\end{cases}
\end{equation}
Here, $x=(x_1,x_2,x_3)\in \Omega$, $t\geq 0$ are the space and time variables, respectively. For the polytropic gases, the constitutive relation is given by
\begin{equation}
\label{eq:1.2}
P=A\rho^{\gamma}, \quad A>0,\quad  \gamma> 1,
\end{equation}
where $A$ is  an entropy  constant and  $\gamma$ is the adiabatic exponent. $\mathbb{T}$ denotes the viscous stress tensor with the  form
\begin{equation}
\label{eq:1.3}
\begin{split}
&\mathbb{T}=2\mu(\rho)D(u)+\lambda(\rho)\text{div}u\,\mathbb{I}_3,
\end{split}
\end{equation}
 where $D(u)=\frac{1}{2}\big(\nabla u+(\nabla u)^\top\big)$ is the deformation tensor,    $\mathbb{I}_3$ is the $3\times 3$ identity matrix,
\begin{equation}
\label{fandan}
\mu(\rho)=\alpha  \rho^\delta,\quad \lambda(\rho)=\beta  \rho^\delta,
\end{equation}
for some  constant $\delta\geq 0$,
 $\mu(\rho)$ is the shear viscosity coefficient, $\lambda(\rho)+\frac{2}{3}\mu(\rho)$ is the bulk viscosity coefficient,  $\alpha$ and $\beta$ are both constants satisfying
 \begin{equation}\label{10000}\alpha>0 \quad \text{and} \quad   \alpha+\beta\geq 0.
 \end{equation}

In the current paper, assuming $0<\delta<1$, we consider the unique smooth solution
$(\rho,u)$  with finite energy to the following two types of problems:
\begin{itemize}
\item Cauchy problem ($\Omega=\mathbb{R}^3$) for \eqref{eq:1.1}-\eqref{10000} with the following initial data and
far field behavior:
\begin{align}
&(\rho,u)|_{t=0}=(\rho_0(x)> 0,  u_0(x)) \ \ \ \ \ \ \  \ \ \   \  \ \text{for} \quad  x\in \Omega,\label{initial}\\[2pt]
&(\rho,u)(t,x)\rightarrow  (0,0) \quad  \text{as}\ \ \  |x|\rightarrow \infty \quad \ \  \  \text{for} \quad \ t\geq 0.  \label{far}
\end{align}
\item  Periodic problem ($\Omega=\mathbb{T}^3$)  for \eqref{eq:1.1}-\eqref{10000} with the  initial data \eqref{initial}, where  $\mathbb{T}^3$ is the three-dimensional torus.
\end{itemize}

In the theory of gas dynamics, the \textbf{CNS} can be derived  from the Boltzmann equations through the Chapman-Enskog expansion, cf. Chapman-Cowling \cite{chap} and Li-Qin  \cite{tlt}. Under some proper physical assumptions,  the viscosity coefficients and heat conductivity coefficient $\kappa$ are not constants but functions of the absolute temperature $\theta$ such as:
\begin{equation}
\label{eq:1.5g}
\begin{split}
\mu(\theta)=&a_1 \theta^{\frac{1}{2}}F(\theta),\quad  \lambda(\theta)=a_2 \theta^{\frac{1}{2}}F(\theta), \quad \kappa(\theta)=a_3 \theta^{\frac{1}{2}}F(\theta)
\end{split}
\end{equation}
for some   constants $a_i$ $(i=1,2,3)$ (see \cite{chap}).  Actually for the cut-off inverse power force models, if the intermolecular potential varies as $r^{-a}$,
 where $ r$ is intermolecular distance,  then in (\ref{eq:1.5g}):
 $$F(\theta)=\theta^{b}\quad  \text{with}\quad   b=\frac{2}{a} \in [0,+\infty).$$
  In particular  (see \S 10 of \cite{chap}), for  ionized gas,
 $$a=1\quad \text{and} \quad b=2;$$
  for Maxwellian molecules,
  $$a=4\quad \text{and} \quad b=\frac{1}{2};$$  
while for rigid elastic spherical molecules,
$$a=\infty\quad \text{and} \quad b=0.$$

According to Liu-Xin-Yang \cite{taiping}, for  isentropic and polytropic fluids , such a dependence is inherited through the laws of Boyle and Gay-Lussac:
$$
P=R\rho \theta=A\rho^\gamma,  \quad \text{for \ \ constant} \quad R>0,
$$
i.e., $\theta=AR^{-1}\rho^{\gamma-1}$,  and one finds that the viscosity coefficients are functions of the density of the form $(\ref{fandan})$. Generally, for most of the physical processes, $\gamma \in (1,3)$, which implies that for rigid elastic spherical molecules,  $\delta\in (0,1)$, which is  exactly the case which  we are going to study. Actually, the similar assumption that viscosity coefficients depend on the density can be seen in a lot of fluid models, such as Korteweg system, shallow water equations, lake equations and quantum Navier-Stokes system (see \cite{bd2, bd, BN2,Gent, ansgar, decayd, lions, Mar, vy}).

Throughout this paper, we adopt the following simplified notations, most of them are for the standard homogeneous and inhomogeneous Sobolev spaces:
\begin{equation*}\begin{split}
\displaystyle
 & \|f\|_s=\|f\|_{H^s(\Omega)},\quad |f|_p=\|f\|_{L^p(\Omega)},\quad \|f\|_{m,p}=\|f\|_{W^{m,p}(\Omega)},\\[6pt]
  \displaystyle  
 &  |f|_{C^k}=\|f\|_{C^k(\Omega)},  \quad \|f\|_{XY(t)}=\| f\|_{X([0,t]; Y(\Omega))},\\[6pt]
 \displaystyle 
 & D^{k,r}=\{f\in L^1_{loc}(\Omega): |f|_{D^{k,r}}=|\nabla^kf|_{r}<+\infty\},\quad D^k=D^{k,2},  \\[6pt]
 \displaystyle 
  & D^{1}=\{f\in L^6(\Omega):  |f|_{D^1}= |\nabla f|_{2}<\infty\},\quad |f|_{D^1}=\|f\|_{D^1(\Omega)},\\[6pt]
  \displaystyle  
 & \|f\|_{X_1 \cap X_2}=\|f\|_{X_1}+\|f\|_{X_2}, \quad  \int_{\Omega}  f \text{d}x  =\int f, \\[6pt]
 & X([0,T]; Y(\Omega))= X([0,T]; Y),\quad \|(f,g,h)\|_X=\|f\|_{X}+\|g\|_{X}+\|h\|_{X}.\end{split}
\end{equation*}
We will clearly indicate that $\Omega=\mathbb{R}^3$ or $\mathbb{T}^3$ where the above notations are used.
 A detailed study of homogeneous Sobolev spaces  can be found in \cite{gandi}.

Because the  momentum equations $(\ref{eq:1.1})_2$ is a double degenerate system when the density loses its positive lower bound, i.e., 
$$
\displaystyle
 \underbrace{\rho(u_t+u\cdot \nabla u)}_{Degenerate   \ time \ evolution \ operator}+\nabla P= \underbrace{\text{div}(2\mu(\rho)D(u)+\lambda(\rho)\text{div}u \mathbb{I}_3)}_{Degenerate\ elliptic \ operator},
$$
usually, it is very hard to control the behavior of the fluids velocity $u$ near the vacuum.   Moreover,
 it should be pointed out here that unlike the case of  constant viscosities, the elliptic operator   $\text{div} \mathbb{T}$ not only loses strong  regularizing effect on solutions,   but also  cause some troubles in the high order   regularity estimates for the velocity.  For example, in order to establish some uniform a priori estimates independent of the lower bound of density   in $H^3$ space, we need to  handle the extra nonlinear terms such as
$$\text{div}\big( \nabla^k\rho^\delta Q(u)\big) \ \ \text{for} \ \  Q(u)=\alpha(\nabla u+(\nabla u)^\top)+\beta\mathtt{div}u\mathbb{I}_3,$$
where $k=1, 2, 3$. Therefore,  many attentions need to be  paid in order to control these strong nonlinearities, especially for considering the related problems with vacuum state   for arbitrarily large time.

For the cases  $\delta\in (0,\infty)$,  if  $\rho>0$, $(\ref{eq:1.1})_2$ can be formally rewritten as
\begin{equation}\label{qiyi}
\begin{split}
u_t+u\cdot\nabla u +\frac{A\gamma}{\gamma-1}\nabla\rho^{\gamma-1}+\rho^{\delta-1} Lu=\psi \cdot  Q(u),
\end{split}
\end{equation}
where the quantities $\psi$ and $Lu$ are given by
\begin{equation}\label{operatordefinition}
\begin{split}
\psi \triangleq & \nabla \log \rho \quad \text{when}\quad \delta=1;\\
\psi \triangleq & \frac{\delta}{\delta-1} \nabla \rho^{\delta-1}\quad \text{when}\quad \delta \in (0,1)\cup (1,\infty);\\
Lu\triangleq &-\alpha \triangle u-(\alpha+\beta)\nabla \text{div}u.
\end{split}
\end{equation}
When $\delta=1$, from  (\ref{qiyi})-(\ref{operatordefinition}),  the degeneracies of the time evolution and  viscosities on $u$ caused by the  vacuum have  been transferred to the possible singularity of the  term  $\nabla \log \rho$, which actually  can be controlled by a symmetric hyperbolic system with a source term  $\nabla \text{div}u$ in  Li-Pan-Zhu \cite{sz3}. Then via establishing a uniform a priori estimates in $L^6\cap D^1\cap D^2$  for $\nabla \log \rho$, the existence of two-dimensional (2D) local classical solution with far field vacuum   to  (\ref{eq:1.1}) has been obtained in \cite{sz3}, which also applies to the 2D shallow water equations.
When $\delta> 1$,  (\ref{qiyi})-(\ref{operatordefinition}) imply that actually the velocity $u$ can be governed by a nonlinear degenerate  parabolic system without singularity   near the vacuum region.
Based on this observation, by using some hyperbolic  approach which bridges the parabolic system (\ref{qiyi}) when $\rho>0$ and the hyperbolic one  
$$u_t+u\cdot\nabla u=0 \quad \text{when} \quad  \rho=0,$$   the existence of 3D local classical solutions with vacuum to  (\ref{eq:1.1}) was established in  Li-Pan-Zhu \cite{sz333}.
The corresponding global well-posedness in some homogeneous Sobolev spaces  has been established  by Xin-Zhu \cite{zz} under some initial smallness  assumptions. Moreover, under the well-known B-D relation for viscosities:
\begin{equation}\label{bd}
\lambda(\rho)=2(\mu'(\rho)\rho-\mu(\rho)),
\end{equation}
which is introduced by Bresch-Desjardins  and their collaborators in \cite{bd6,bd7, bd2,bd}, recently
the global existences of the multi-dimensional weak solutions with finite energy for some kinds of $\mu(\rho)$ have been given  by Li-Xin \cite{lz} and  Vasseur-Yu \cite{vayu}.

However,  the  approaches used in \cite{sz3,sz333,zz, zhuthesis} for establishing the existence of the unique regular solution  fail to apply to the   case  $\delta\in (0,1)$. Indeed, when vacuum appears only at far fields, the velocity field $u$ is still     governed by  the  quasi-linear parabolic system (\ref{qiyi})-(\ref{operatordefinition}).  Yet,  some new essential  difficulties arise compared with the case $\delta\geq 1$:
\begin{enumerate}
\item
first, the source term contains a stronger singularity as:
$$\nabla \rho^{\delta-1}=(\delta-1)\rho^{\delta-1}\nabla \log \rho,$$
 whose behavior will become more singular than that of $\nabla \log \rho $ in \cite{sz3} due to $\delta-1<0$ when the  density $\rho \rightarrow 0$;
\item
second,  the coefficient $\rho^{\delta-1}$ in front of the Lam\'e operator $ L$ will tend to $\infty$ as $\rho\rightarrow 0$ in the far filed instead of equaling to $1$ in \cite{sz3} or tending to $0$ in \cite{sz333,zz}. Then   it is  necessary  to show that the term $\rho^{\delta-1} Lu$ is well defined.
\end{enumerate}
Recently, via introducing an elaborate (linear) elliptic approach on the operators $L(\rho^{\delta-1}u)$ and some initial compatibility conditions, Xin-Zhu \cite{zz2} identifies one class of initial data admitting one unique 3D local regular solution with far field vacuum and finite energy to the corresponding Cauchy problem of \eqref{eq:1.1} in some inhomogeneous Sobolev spaces. Some other interesting results on the degenerate compressible Navier-Stokes equations   can be found in \cite{chen, cons, ding, zhu, geng22, Germain, guo, Has, JWX,  hailiang,   vassu, Sundbye2, zyj, tyc2}.

In the current paper, we will do some study on the breakdown of the regular solution of  3D degenerate compressible Navier-Stokes equations obtained in \cite{zz2} for the case $\delta \in (0,1)$ (see Theorem 1.1). In order to state our main results clearly, we divide the rest of the introduction into two subsections.

\subsection{Cauchy problem with far field vacuum} Let $\Omega=\mathbb{R}^3$.
We first introduce a proper class of solutions called regular solutions to  the Cauchy problem (\ref{eq:1.1})-(\ref{far}).
\begin{definition}\label{d1}
 Let $T> 0$ be a finite constant. A solution $(\rho,u)$ to the Cauchy problem (\ref{eq:1.1})-(\ref{far}) is called a regular solution in $ [0,T]\times \mathbb{R}^3$ if $(\rho,u)$  satisfies this problem in the sense of distribution and:
\begin{equation*}\begin{split}
&(\textrm{A})\quad \rho>0,\quad   \rho^{\gamma-1}\in C([0,T]; H^3), \quad  \nabla \rho^{\delta-1}\in L^\infty([0,T]; L^\infty\cap D^{2});\\
&(\textrm{B})\quad u\in C([0,T]; H^3)\cap L^2([0,T]; H^4), \quad  u_t \in C([0,T]; H^1)\cap L^2([0,T] ; D^2), \\
&\qquad \ \  \rho^{\frac{\delta-1}{2}}\nabla u \in C([0,T]; L^2), \quad  \rho^{\frac{\delta-1}{2}}\nabla u_t \in  L^\infty([0,T];L^2),\\
&\qquad \ \ \rho^{\delta-1}\nabla u \in L^\infty([0,T]; D^1),\quad   \rho^{\delta-1}\nabla^2 u \in C([0,T]; H^1)\cap   L^2([0,T]; D^2).
\end{split}
\end{equation*}
\end{definition}
\begin{remark}
It follows from  the regularity (\textrm{A}) shown above and the Gagliardo-Nirenberg inequality    that $\nabla \rho^{\delta-1}\in L^\infty$, which means that the vacuum occurs if and only  in the far field. Moreover,  it should be pointed out that the definition of regular solutions above is essentially based on the careful analysis on the   intrinsic singular structures of  \eqref{eq:1.1} for finite energy solutions ($1<\gamma \leq 2$), which can be seen in \S 2 of the current paper, or on pages 8 to 11 of \cite{zz2}. \end{remark}

The local-in-time well-posedenss of the regular solution obtained by Xin-Zhu \cite{zz2} can be given as follows:
\begin{theorem}\cite{zz2}\label{th2} Let  parameters  $(\gamma,\delta, \alpha,\beta)$ satisfy
\begin{equation}\label{canshu}
\gamma>1,\quad 0<\delta<1, \quad \alpha>0, \quad \alpha+\beta\geq 0.
\end{equation}
  If the initial data $( \rho_0, u_0)$ satisfies
\begin{equation}\label{th78}
\begin{split}
&\rho_0>0,\quad (\rho^{\gamma-1}_0, u_0)\in H^3, \quad \nabla \rho^{\delta-1}_0\in D^1\cap D^2,   \quad \nabla \rho^{\frac{\delta-1}{2}}_0\in L^4,\\
\end{split}
\end{equation}
and the initial  compatibility conditions:
\begin{equation}\label{th78zx}
\displaystyle
  \nabla u_0=\rho^{\frac{1-\delta}{2}}_0 g_1,\quad \   \ \  Lu_0= \rho^{1-\delta}_0g_2,\quad \ \
 \nabla \Big(\rho^{\delta-1}_0Lu_0\Big)=\rho^{\frac{1-\delta}{2}}_0g_3,
\end{equation}
for some $(g_1,g_2,g_3)\in L^2$, then there exist a  time $T_*>0$ and a unique regular solution $(\rho, u)$  in $ [0,T_*]\times \mathbb{R}^3$ to the Cauchy problem (\ref{eq:1.1})-(\ref{far}) satisfying:
\begin{equation}\label{reg11}\begin{split}
& t^{\frac{1}{2}}u\in L^\infty([0,T_*];D^4),\quad  t^{\frac{1}{2}}u_t\in L^\infty([0,T_*];D^2)\cap L^2([0,T_*] ; D^3),\\
&u_{tt}\in L^2([0,T_*];L^2),\quad   t^{\frac{1}{2}}u_{tt}\in L^\infty([0,T_*];L^2)\cap L^2([0,T_*];D^1),\\
& \rho^{1-\delta}\in L^\infty([0,T_*];  L^\infty\cap D^{1,6} \cap D^{2,3} \cap D^3), \\
&  \nabla \rho^{\delta-1}\in C([0,T_*]; D^1\cap D^2), \  \nabla \log \rho \in L^\infty([0,T_*];L^\infty\cap L^6\cap  D^{1,3} \cap D^2).
\end{split}
\end{equation}

Moreover, if $1<\gamma\leq 2$, $(\rho, u)$ is a classical solution to   (\ref{eq:1.1})-(\ref{far}) in $(0,T_*]\times \mathbb{R}^3$.
\end{theorem}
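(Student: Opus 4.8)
The plan is to build $(\rho,u)$ by a linearization-and-iteration scheme, to establish uniform a priori estimates on a short time interval in the weighted space dictated by Definition~\ref{d1}, and then to pass to the limit; uniqueness and the time-continuity statements are recovered afterwards by standard arguments.

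\emph{Reformulation.} For $\rho>0$ the momentum equation is equivalent to the quasi-linear parabolic system \eqref{qiyi}--\eqref{operatordefinition}. I would carry along the auxiliary quantities $\phi\triangleq\rho^{\delta-1}$ and $\psi$ from \eqref{operatordefinition}; from the continuity equation $\phi$, $\psi$ and $\rho^{\gamma-1}$ satisfy transport equations, e.g.
\[
\phi_t+u\cdot\nabla\phi+(\delta-1)\phi\,\text{div}\,u=0,
\]
the equation for $\psi$ (obtained by differentiating that of $\phi$) carrying an extra source proportional to $\phi\,\nabla\text{div}\,u$. Thus the problem splits into a linear transport problem for $(\rho^{\gamma-1},\phi,\psi)$ with given $u$, and a linear parabolic problem for $u$ with coefficient $\phi$ and source built from $(\rho^{\gamma-1},\psi)$. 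Since $0<\rho$ stays bounded above, $\phi=\rho^{\delta-1}$ is bounded \emph{below} by a positive constant, so the $u$-equation is uniformly parabolic; the genuine obstruction is that $\phi$ is \emph{unbounded} at the far field.

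\emph{Iteration and the core a priori estimates.} Given $(\rho^k,u^k)$, define $\rho^{k+1}$ by the transport step with velocity $u^k$, then $u^{k+1}$ by the linear parabolic equation from \eqref{qiyi} with the top-order terms $u^{k+1}_t$ and $(\rho^k)^{\delta-1}Lu^{k+1}$ kept implicit and the remaining coefficients frozen at level $k$ (also regularizing the data and, if needed, truncating the far field, with the general case recovered at the end). Each linearized problem is solvable by classical theory, so the crux is a set of a priori estimates uniform in $k$, in the regularization, and in the lower bound of $\rho$. Starting from the basic energy identity — using \eqref{10000} for the coercivity of $Q$ — I would differentiate \eqref{qiyi} in $t$ and $x$ up to the orders required by Definition~\ref{d1}; the compatibility conditions \eqref{th78zx} are exactly what places the natural weighted quantities $\rho^{\frac{\delta-1}{2}}\nabla u$, $\rho^{\delta-1}Lu$ and $\rho^{\frac{\delta-1}{2}}\nabla(\rho^{\delta-1}Lu)$ in $L^2$ at $t=0$. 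The decisive device is an elliptic reformulation: instead of estimating $\rho^{\delta-1}\nabla^2u$ directly, apply elliptic regularity to the equation satisfied by $L(\rho^{\delta-1}u)$, using
\[
L(\rho^{\delta-1}u)=\rho^{\delta-1}Lu+(\text{commutators in }\nabla\rho^{\delta-1},\ \nabla^2\rho^{\delta-1},\ \nabla u),
\]
and then replacing $\rho^{\delta-1}Lu$ through \eqref{qiyi} by $-\big(u_t+u\cdot\nabla u+\tfrac{A\gamma}{\gamma-1}\nabla\rho^{\gamma-1}\big)+\psi\cdot Q(u)$; every resulting term then falls into a space already controlled by the density regularity (A) and the velocity bounds obtained so far. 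Running the same procedure one and two derivatives higher, on $\nabla L(\rho^{\delta-1}u)$ and $\nabla^2 L(\rho^{\delta-1}u)$, closes the bounds $\rho^{\delta-1}\nabla^2u\in C([0,T_*];H^1)\cap L^2([0,T_*];D^2)$ and in particular makes precise in what sense $\rho^{\delta-1}Lu$ is defined.

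\emph{Closing the loop, and conclusion.} The density bounds $\|\rho^{\gamma-1}\|_{3}$ and $\|\nabla\rho^{\delta-1}\|_{L^\infty\cap D^2}$ are transported along characteristics and grow in a controlled way as long as $\int_0^t(|\nabla u|_\infty+\|\nabla^2u\|_{\cdots})\,\mathrm{d}s$ stays bounded, which the velocity estimates supply; a Gronwall/bootstrap argument then gives a uniform bound on $[0,T_*]$ for $T_*$ small. From this I would show the scheme is contractive in a low-order norm (say $L^\infty([0,t];L^2)$ for $u$ and a weighted $L^2$ for the density quantities), extract a strongly convergent subsequence, and use weak-$\ast$ compactness plus lower semicontinuity of the norms to promote the limit to the full regularity of Definition~\ref{d1}; the extra weight $t^{1/2}$ estimates of \eqref{reg11} come by rerunning the argument. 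Uniqueness follows from the same low-order energy estimate applied to the difference of two solutions, time-continuity from the usual weak-continuity-plus-norm-continuity argument, and for $1<\gamma\le2$ the classical differentiability of $(\rho,u)$ on $(0,T_*]$ follows because $\gamma-1\le1$ lets $\rho^{\gamma-1}\in H^3$ control $\rho$ and its derivatives up to the vacuum. I expect the main obstacle to be precisely this top-order estimate: handling the unbounded coefficient in $\rho^{\delta-1}Lu$ together with the strongly nonlinear terms $\text{div}(\nabla^k\rho^\delta Q(u))$ uniformly in the density's lower bound, and choosing the singular weights so that each term lands in an already-controlled space while keeping vacuum confined to $|x|=\infty$.
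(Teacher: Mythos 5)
You should first note that the present paper does not prove this theorem at all: it is quoted from \cite{zz2}, and only its strategy is summarized here (an ``elaborate (linear) elliptic approach on the operators $L(\rho^{\delta-1}u)$'' together with the compatibility conditions \eqref{th78zx} and an approximation from non-vacuum flows, which is exactly where the hypothesis $\nabla\rho_0^{\frac{\delta-1}{2}}\in L^4$ enters, cf.\ \S 3.4). Measured against that description, your outline — reformulation into the singular parabolic system \eqref{qiyi}--\eqref{operatordefinition}, a transport/parabolic iteration with frozen coefficients, weighted energy estimates seeded at $t=0$ by \eqref{th78zx}, the elliptic reformulation through $L(\rho^{\delta-1}u)$ and its derivatives, and recovery of general data by approximation — is essentially the intended route.

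The one step that, as written, would fail is the unqualified application of elliptic regularity to $L(\rho^{\delta-1}u)$ (and to its time derivative). Lemma \ref{zhenok} requires the weighted quantity to lie in $D^{1,q}_0$, i.e.\ to have admissible far-field behavior, and since $\rho^{\delta-1}\to\infty$ as $|x|\to\infty$ while $u\to 0$, you have no a priori integrability or decay of $\rho^{\delta-1}u$ or $\rho^{\delta-1}u_t$. The paper itself flags this in \S 2.3: the analogous estimate \eqref{possibleguji} for $\phi^{2e}u_t$ ``might not hold'' precisely because $\phi^{2e}u_t$ is not known to belong to any $L^p$, and this is the reason the argument alternates between the two structures \eqref{dege} and \eqref{we22} instead of estimating everything from the singular one. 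The repair, visible in the body of the paper, is either to restrict the weighted elliptic estimates to quantities such as $\phi^{e}\partial_k u$, $\phi^{2e}\partial_k u$, $\phi^{2e}\nabla^\xi u$, whose membership in $L^2$, $L^6$ or $H^1$ can actually be verified (Lemmas \ref{5jie}--\ref{4jie} and the remark following them), or to regularize the weight as $(\phi+\eta)^{2e}$, derive $\eta$-independent bounds, and pass to the limit by Fatou's lemma as in Lemma \ref{s4jjk}. Your proposal should make this verification explicit, and should likewise say in which norms the commutator terms $G(\nabla\rho^{\delta-1},\cdot)$ are controlled, since that is where the assumptions $\nabla\rho^{\delta-1}_0\in D^1\cap D^2$ (hence $\nabla\rho^{\delta-1}\in L^\infty$) are actually consumed; without these points the ``decisive device'' of your sketch is not yet a proof.
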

\begin{remark}
For $(\alpha,\beta)$, it is  required that $\alpha>0$ and $2\alpha+3\beta\geq 0$ in \cite{zz2}, which, via the same arguments used in their paper, can be easily replaced by  \eqref{10000}. \end{remark}

Naturally we will   consider  that the local  regular solutions obtained above  to  the Cauchy problem (\ref{eq:1.1})-(\ref{far}) may cease to exist globally,  or what is the key estimate to make sure that this solution  could be extended to be   a  global one?
The similar question has been studied for the 3D incompressible Euler equation  by Beale-Kato-Majda (BKM) in their pioneering work \cite{TBK},  in which they showed:  if  $0 < \overline{T} < +\infty$  is the
maximum existence time for the smooth solution, then the $L^\infty$-bound of vorticity  must blow up, i.e., 
\begin{equation}\label{kaka1}
\lim \sup_{ T \rightarrow \overline{T}} \int_0^T \|\nabla \times u(t,\cdot)\|_ {L^\infty(\mathbb{R}^3)}\text{d}t=\infty.
\end{equation}
Later, Ponce  \cite{pc} rephrased the above criterion in terms of the deformation tensor $D(u)$, which has been applied to the strong solution with vacuum (see \cite{CK3}) for   some 3D compressible  viscous flow systems \cite{hup,szz} for the case $\delta=0$ in \eqref{fandan}.

 One of our main results  in the following   theorem shows  that, for the  degenerate compressible viscous flow with $0<\delta <1$ in \eqref{fandan},  the $L^\infty$ norm of the deformation tensor $D(u)$ and $L^6$ (or $L^\infty$) norm of $\nabla \rho^{\delta-1}$ control the possible breakdown of  the unique regular solution with far field vacuum in the whole space, which  means that if a solution of the \textbf{ICND} system  is initially regular and loses its regularity at some later time, then the formation of singularity must be caused  by  losing  the bound of  $D(u)$ or $\nabla \rho^{\delta-1}$ as the critical time approaches; equivalently, if both $D(u)$ and   $\nabla \rho^{\delta-1}$ remain bounded, a regular  solution persists. This conclusion can be stated precisely  as follows.

\begin{theorem}\label{th3s}
Assume that  \eqref{canshu} holds.   Let $(\rho(t,x), u(t,x))$ be the unique  regular solution obtained in Theorem \ref{th2} to the Cauchy problem (\ref{eq:1.1})-(\ref{far}).
If  $\overline{T}< +\infty $ is its maximal existence time, then  
\begin{equation}\label{cri}
\begin{split}
\displaystyle\lim_{T\mapsto \overline{T}} \left(\sup_{0\le t\le T}\big\|\nabla \rho^{\delta-1}(t,\cdot)\big\|_{L^6(\mathbb{R}^3)}+\int_0^T \|D( u)(t,\cdot)\|_ {L^\infty(\mathbb{R}^3)}\ \text{\rm d}t\right)=\infty,
\end{split}
\end{equation}
\begin{equation}\label{criinfty}
\begin{split}
\displaystyle\lim_{T\mapsto \overline{T}} \left(\sup_{0\le t\le T}\big\|\nabla \rho^{\delta-1}(t,\cdot)\big\|_{L^\infty(\mathbb{R}^3)}+\int_0^T \|D( u)(t,\cdot)\|_ {L^\infty(\mathbb{R}^3)}\ \text{\rm d}t\right)=\infty.
\end{split}
\end{equation}

\end{theorem}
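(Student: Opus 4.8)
The plan is to argue by contradiction in the standard Beale-Kato-Majda style: suppose $\overline{T}<\infty$ is the maximal existence time but the left-hand side of \eqref{cri} stays bounded by some constant $C_0$, i.e.
\begin{equation*}
\sup_{0\le t< \overline{T}}\big\|\nabla\rho^{\delta-1}(t,\cdot)\big\|_{L^6}+\int_0^{\overline{T}}\|D(u)(t,\cdot)\|_{L^\infty}\,\mathrm{d}t\le C_0<\infty,
\end{equation*}
and then derive uniform-in-time a priori bounds for $(\rho,u)$ in the full regularity class of Definition~\ref{d1} on $[0,\overline{T}]$; by the local existence Theorem~\ref{th2} applied at a time close to $\overline{T}$, the solution then extends past $\overline{T}$, a contradiction. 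Since \eqref{criinfty} has the weaker quantity $\|\nabla\rho^{\delta-1}\|_{L^\infty}$ replacing $\|\nabla\rho^{\delta-1}\|_{L^6}$ and $\|\nabla\rho^{\delta-1}\|_{L^6}\lesssim\|\nabla\rho^{\delta-1}\|_{L^\infty}$ is false globally but the Gagliardo-Nirenberg interpolation on $\mathbb{R}^3$ gives control of $L^6$ from $L^\infty$ together with lower-order norms that we already track, \eqref{criinfty} will follow once \eqref{cri} is established (or by running the same argument with the $L^\infty$ bound, which only makes the estimates easier).

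I would carry out the estimates in the following order. First, the basic energy estimates: mass conservation controls $|\rho|_1$, and the standard energy identity controls $\sup_t|\sqrt\rho u|_2^2+\int\int\mu(\rho)|D(u)|^2$; here the key structural point from \S2 of the paper is that the natural dissipation is weighted by $\rho^\delta$. Second, I would propagate the density regularity: writing the equation for $\rho^{\gamma-1}$ (a transport equation with source $(\gamma-1)\rho^{\gamma-1}\,\mathrm{div}\,u$) and for $\nabla\rho^{\delta-1}$, and using that along particle trajectories $\frac{D}{Dt}\log\rho=-\mathrm{div}\,u$, so that $\|\mathrm{div}\,u\|_{L^\infty}\le 3\|D(u)\|_{L^\infty}$ and Gr\"onwall give pointwise two-sided bounds on $\rho/\rho_0$ and hence on the weights $\rho^{\delta-1}$, $\rho^{1-\delta}$ on compact sets, controlled by $\exp\big(C\int_0^T\|D(u)\|_{L^\infty}\big)\le \exp(CC_0)$. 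Third — and this is the heart of the matter — I would run the weighted higher-order energy estimates for $u$ in the singular-weight energy space of Definition~\ref{d1}, following exactly the scheme of \cite{zz2} but now tracking the dependence on $\|D(u)\|_{L^\infty}$ and $\|\nabla\rho^{\delta-1}\|_{L^6}$: differentiate $(\ref{qiyi})$, test against $u_t$, $\triangle u$, etc., and against the appropriate powers of the density; the troublesome nonlinear terms $\mathrm{div}(\nabla^k\rho^\delta Q(u))$ flagged in the introduction get handled by moving derivatives onto $Q(u)$, using $\nabla\rho^\delta=\rho^{2\delta-1}\nabla\rho^{\delta-1}\cdot(\text{const})$ and the $L^6$ bound on $\nabla\rho^{\delta-1}$ together with Sobolev embedding $\dot H^1\hookrightarrow L^6$ to absorb factors, while the convective terms $u\cdot\nabla$ are controlled by $\|\nabla u\|_{L^\infty}$, which by the Brezis-Gallouet/logarithmic Sobolev inequality is bounded by $\|D(u)\|_{L^\infty}$ times a logarithm of the higher norm we are estimating — this logarithmic factor is precisely what makes the Gr\"onwall argument close (a double-exponential in $C_0$) rather than blow up. Finally I would bootstrap: once $\|u\|_{H^3}$ and the weighted quantities $\rho^{\frac{\delta-1}{2}}\nabla u$, $\rho^{\delta-1}\nabla^2 u$ are bounded on $[0,\overline{T}]$, the elliptic estimates for $L(\rho^{\delta-1}u)$ from \cite{zz2} upgrade these to the $L^2_t H^4_x$ and the remaining bounds in Definition~\ref{d1}, and continuity in time is recovered by the usual arguments.

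The main obstacle I anticipate is controlling the interaction between the degenerate-elliptic term $\rho^{\delta-1}Lu$ and the singular source $\psi\cdot Q(u)$ in the high-order estimates \emph{without} assuming a positive lower bound on the density: the weight $\rho^{\delta-1}\to\infty$ in the far field, so one cannot simply bound $\|\rho^{\delta-1}Lu\|$ by $\|Lu\|$, and one must instead work with the genuinely weighted quantities $\rho^{\delta-1}\nabla^2 u$ and the elliptic operator applied to $\rho^{\delta-1}u$, keeping careful track of commutators between $\rho^{\delta-1}$ and $L$. The second delicate point is obtaining the logarithmic-Sobolev control of $\|\nabla u\|_{L^\infty}$ in terms of $\|D(u)\|_{L^\infty}$ and a higher Sobolev norm: the passage from $\nabla u$ to $D(u)$ uses that the antisymmetric part (vorticity) is recovered from $D(u)$ via a singular-integral/elliptic argument (as in Ponce \cite{pc}), and in the compressible setting this recovery must be done compatibly with the density weights; I expect this to require a BMO-type or $L^p$-with-$p$-large interpolation rather than a naive $L^\infty$ estimate, exactly as in \cite{pc, CK3}. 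Everything else is a matter of carefully reproducing the energy estimates of \cite{zz2} with explicit bookkeeping of the two critical norms.
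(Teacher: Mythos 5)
Your overall skeleton — assume \eqref{cri} fails with bound $C_0$, prove uniform a priori bounds on $[0,\overline{T})$, and reapply Theorem \ref{th2} to extend past $\overline{T}$ — is indeed the paper's strategy. But the step you dismiss with ``continuity in time is recovered by the usual arguments'' is precisely where a large part of the paper's work lies, and your proposal leaves it untreated: Theorem \ref{th2} cannot be restarted at a time near $\overline{T}$ unless, at that time, the data satisfy not only \eqref{th78} but also the weighted compatibility conditions \eqref{th78zx} (i.e. $\rho^{\frac{\delta-1}{2}}\nabla u$, $\rho^{\delta-1}Lu$, $\rho^{\frac{\delta-1}{2}}\nabla(\rho^{\delta-1}Lu)$ in $L^2$) and the extra hypothesis $\nabla\rho^{\frac{\delta-1}{2}}\in L^4$ — none of which belong to the regularity class of Definition \ref{d1} that your estimates would propagate. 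The paper devotes \S 3 and Lemmas \ref{3}, \ref{laosi}, \ref{5zx}, \ref{s4jjk} to exactly this: bounds on $\|\psi\|_{D^1\cap D^2}$, propagation of $\omega=\nabla\rho^{\frac{\delta-1}{2}}$ in $L^4$ (via a cut-off weight and Fatou, since $\omega\in L^4$ is not even part of the solution class), and time-uniform bounds on $\phi^e\nabla u_t$, $\phi^{2e}\nabla^2u$ and $\phi^e\nabla(\phi^{2e}Lu)$ so that \eqref{th78zx} survives at $t=\overline{T}$. Without these the contradiction argument does not close, so this is a genuine gap rather than bookkeeping.

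On the estimates themselves, your proposed mechanisms also diverge from what actually makes the scheme work here. The paper uses no Brezis--Gallouet/Ponce logarithmic inequality: since $\int_0^T\|D(u)\|_\infty\,dt$ is already assumed, the $L^\infty$-weighted terms are rewritten by integration by parts so that only $D(u)$ and $\mathrm{div}\,u$ appear (Lemma \ref{s4}), and $\int_0^T|\nabla u|^2_\infty\,dt$ is recovered afterwards from parabolic regularity, $|\nabla u|_\infty\lesssim |\nabla u|_6^{1/2}|\nabla^2u|_6^{1/2}$ together with the $D^{2,6}$ bound of Lemma \ref{s6h}; your log-Gr\"onwall route is not the obstruction in this problem. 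The real obstructions, which your sketch does not engage, are those of \S 2.3: the standard energy $|\sqrt{\rho}u|_2$ is insufficient (the trouble term $\int f\cdot Q(u)\cdot u$ forces an unweighted $L^2$ bound on $u$, obtained by testing the strong singular form \eqref{we22} with $u$ and using $|\psi|_6$), and the weighted elliptic estimates of Lemmas \ref{5jie}--\ref{4jie} require far-field decay of the weighted quantities, which is why the higher-order estimates must alternate between the structures \eqref{dege} and \eqref{we22} (e.g. $|\nabla^2u_t|_2$ cannot be read off from $L(\phi^{2e}u_t)$ because $\phi^{2e}u_t$ has no known decay). Finally, your suggestion that \eqref{criinfty} follows from \eqref{cri} by interpolating $L^6$ from $L^\infty$ is false on $\mathbb{R}^3$ — a bound on $\|\nabla\rho^{\delta-1}\|_\infty$ gives no $L^6$ control, and the two criteria are logically independent; your parenthetical fallback (rerun the whole argument with $|\psi|_\infty$ in place of $|\psi|_6$) is what the paper actually does.
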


\begin{remark}\label{classicalsense}
Actually, if we assume that for any time $T^*>0$,
$$
\displaystyle\lim_{T\mapsto T^*} \left(\sup_{0\le t\le T}\big\|\nabla \rho^{\delta-1}(t,\cdot)\big\|_{L^6(\mathbb{R}^3)}+\int_0^T \|D( u)(t,\cdot)\|_ {L^\infty(\mathbb{R}^3)}\ \text{\rm d}t\right)<\infty,
$$
or 
$$
\displaystyle\lim_{T\mapsto T^*} \left(\sup_{0\le t\le T}\big\|\nabla \rho^{\delta-1}(t,\cdot)\big\|_{L^\infty(\mathbb{R}^3)}+\int_0^T \|D( u)(t,\cdot)\|_ {L^\infty(\mathbb{R}^3)}\ \text{\rm d}t\right)<\infty,
$$
then for $\gamma \in (1,2]$,  the regular solution  $(\rho,u)$ obtained in Theorem 1.1   satisfies the Cauchy problem (\ref{eq:1.1})-(\ref{far}) in the classical sense  in $(0,T^*]\times \mathbb{R}^3$.

Second, when $\gamma \in (1,2]$, the solution obtained in Theorem 1.1  has finite energy.

\end{remark}
\subsection{Periodic problem away from the vacuum}

Let $\Omega =\mathbb{T}^3$. Now we state that,  under the additional assumption that the shear and second viscosities (respectively $\mu(\rho)$ and $\lambda(\rho)$) satisfy the B-D relation \eqref{bdrelation}(see \cite{bd6,bd2,bd7,bd}), if we consider the corresponding problem in some periodic domain and the initial density is away from the vacuum, the possible breakdown of classical solutions can be controlled only by the $L^\infty$ norm of $D(u)$. This conclusion can be stated precisely as follows.

\begin{theorem}\label{th3t}
Let   \eqref{canshu} hold, and 
\begin{equation}\label{bdrelation}
\lambda=2(\mu'(\rho)\rho-\mu(\rho))=2\alpha (\delta-1)\rho^\delta.
\end{equation}
Assume the initial data $(\rho_0,u_0)$ satisfies 
\begin{equation}\label{bdchuzhi}
\rho_0>0,\quad (\rho_0,u_0) \in H^3,
\end{equation}
and $(\rho(t,x), u(t,x))$ is the corresponding   unique  classical solution in $[0,T]\times \mathbb{T}^3$  for some positive time $T>0$ to the periodic problem  (\ref{eq:1.1})-(\ref{initial}) which satisifes
$$
\rho\in C([0,T]; H^3),  \quad u\in C([0,T]; H^3)\cap L^2([0,T];H^4).
$$
If the  maximal existence time  $\overline{T}$ of this solution is  finite, then  
\begin{equation}\label{crit}
\begin{split}
\displaystyle\lim_{T\mapsto \overline{T}} \int_0^T \|D( u)(t,\cdot)\|^2_ {L^\infty(\mathbb{T}^3)}\ \text{\rm d}t=\infty.
\end{split}
\end{equation}

\end{theorem}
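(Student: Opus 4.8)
The plan is to argue by contradiction: assume $\overline{T}<+\infty$ but the quantity in \eqref{crit} stays finite, say $\int_0^{\overline{T}} \|D(u)(t,\cdot)\|^2_{L^\infty}\,\mathrm dt \le M_0 <\infty$, and show that then $(\rho,u)$ together with all the norms appearing in the solution class can be bounded uniformly up to $\overline{T}$, so the solution extends past $\overline{T}$ by the local existence theory — contradicting maximality. Since we are on $\mathbb{T}^3$ with $\rho_0$ bounded away from vacuum, the first step is to propagate positive upper and lower bounds on $\rho$. From the mass equation $\frac{\mathrm d}{\mathrm dt}\log\rho = -\operatorname{div}u$ along particle trajectories, and $|\operatorname{div}u|\le C\|D(u)\|_{L^\infty}$, we get $\exp(-\int_0^t\|D(u)\|_{L^\infty})\lesssim \rho(t,x)/\rho_0 \lesssim \exp(\int_0^t\|D(u)\|_{L^\infty})$; the $L^1_t L^\infty_x$ control of $D(u)$ follows from the assumed $L^2_t L^\infty_x$ bound by Cauchy–Schwarz on the finite interval, so $\rho$ and $\rho^{-1}$ stay bounded. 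Thus on $\mathbb{T}^3$ the system becomes uniformly parabolic and all the density-power weights $\rho^\delta$, $\rho^{\delta-1}$ are comparable to constants.

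The heart of the argument is then the energy estimates. First derive the basic energy identity (kinetic plus pressure potential plus viscous dissipation) which is standard and gives control of $\int_0^t\int \mu(\rho)|D(u)|^2$. Then, crucially, exploit the B-D relation \eqref{bdrelation}: it is exactly the structural condition under which one obtains the Bresch–Desjardins entropy estimate, controlling $\int |\nabla\rho^{\delta-?}|^2$ type quantities (equivalently $\int \mu(\rho)|\nabla\log\rho|^2$) — this is where $\lambda=2(\mu'\rho-\mu)$ is used, and it removes the need to separately control $\nabla\rho^{\delta-1}$ the way \eqref{cri}–\eqref{criinfty} do in the Cauchy case. With $\rho$ already pinched between two positive constants this BD quantity just feeds into $H^1$-type control of $\nabla\rho$. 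Next one does the successive higher-order estimates: multiply the momentum equation by $u_t$ to control $\|\nabla u\|_{L^2}$ and $\int\|u_t\|_{L^2}^2$; differentiate in $t$ and test with $u_t$ for $\|u_t\|_{L^2}$, $\int\|\nabla u_t\|_{L^2}^2$; then use the elliptic/Lamé structure $Lu = \rho^{1-\delta}(\dot u + \text{(pressure)})$ together with the $L^\infty$ bounds on $\rho^{\pm 1}$ to bootstrap spatial regularity up to $u\in L^\infty_t H^3\cap L^2_t H^4$ and $\rho\in L^\infty_t H^3$, closing a Grönwall inequality whose only time-integrated driving term is a polynomial in $\|D(u)\|_{L^\infty}$ (appearing at most quadratically, which is why $\int_0^T\|D(u)\|^2_{L^\infty}$ — not $\int\|D(u)\|_{L^\infty}$ — is the natural criterion).

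The main obstacle — and the place where real work is needed rather than routine bookkeeping — is controlling the commutator/nonlinear terms of the form $\operatorname{div}(\nabla^k\rho^\delta\,Q(u))$ that the variable viscosity produces in the $H^2$ and $H^3$ estimates, highlighted already in the introduction. One must show these can be absorbed into the parabolic dissipation after integration by parts, leaving a remainder bounded by $\big(1+\|D(u)\|_{L^\infty}^2\big)$ times the accumulated energy, using the Gagliardo–Nirenberg interpolation inequalities on $\mathbb{T}^3$ and the already-established lower/upper bounds on $\rho$; a standard subtlety is that the top-order term $\nabla^3\rho^\delta\cdot\nabla u$ must be paired against $\nabla^3 u$ carefully so that the highest derivative count balances. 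Once these terms are tamed, a single Grönwall argument on the combined quantity $\|\rho\|_{H^3}^2+\|u\|_{H^3}^2+\|u_t\|_{H^1}^2$ (plus the dissipation integrals) yields a bound depending only on the data and $M_0$ and $\overline{T}$, hence the solution and all norms in the stated solution class remain finite as $T\to\overline{T}$, contradicting that $\overline{T}$ is maximal and proving \eqref{crit}.
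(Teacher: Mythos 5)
Your skeleton (contradiction, pointwise upper and lower bounds for $\rho$ via characteristics, the Bresch--Desjardins entropy coming from \eqref{bdrelation}, then energy estimates and re-application of the local theory) coincides with the paper's, but there is a genuine gap exactly where the paper has to do its real work, namely at the level of the $\|\nabla u\|_{L^2}$ estimate. The BD entropy (Lemma \ref{bskk2}) only gives $\nabla\rho\in L^\infty([0,T];L^2)$, and the criterion only controls $D(u)$, not the full gradient $\nabla u$, in $L^2_tL^\infty_x$. With this information your ``multiply by $u_t$ and Gr\"onwall'' step does not close: the convection term obeys only
\begin{equation*}
\Big|\int \rho\,(u\cdot\nabla u)\cdot u_t\Big|\le C\,|\sqrt{\rho}\,u_t|_2\,\big||u|\,|\nabla u|\big|_2 ,
\end{equation*}
and the analogous terms produced by $\mu(\rho)_t,\lambda(\rho)_t$ (after integrating $u\cdot\nabla\rho^{\delta}$ by parts) are again bounded by $\big||u|\,|\nabla u|\big|_2\,|\nabla^2u|_2$; since neither $|u|_\infty$ nor $|\nabla u|_\infty$ is available (only $|D(u)|_\infty$), interpolations such as $|u|_6|\nabla u|_3\lesssim (1+|\nabla u|_2)|\nabla u|_2^{1/2}|\nabla^2u|_2^{1/2}$ turn the Gr\"onwall inequality into a superlinear one ($\dot X\lesssim X^3$-type), which need not propagate to $\overline{T}$. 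The paper's resolution is the Mellet--Vasseur-type $L^4$ estimate of Lemma \ref{abs3}: testing the momentum equation with $4|u|^{2}u$ and using the precise coefficients forced by the BD relation ($\beta=2\alpha(\delta-1)<0$ enters a second time, beyond the entropy, in proving nonnegativity of the quadratic form $\Gamma$ via the splitting $|\nabla u|^2=|u|^2|\nabla(u/|u|)|^2+|\nabla|u||^2$) produces the extra dissipation $\int |u|^2|\nabla u|^2$, which is then added with a small weight to the first-level estimate in Lemma \ref{s4ty} precisely to absorb the terms above. Your proposal never generates this dissipation and does not acknowledge the need for it; this, not the higher-order commutators, is the main obstacle, and it is also the structural reason the criterion carries $\int_0^T\|D(u)\|_{L^\infty}^2\,dt$ rather than the $L^1_t$ norm.

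A second, smaller omission: after the first-level estimate the paper does not redo the $H^2$/$H^3$ analysis on $\mathbb{T}^3$. Lemma \ref{s6hty} upgrades $\nabla\rho$ from $L^2$ to $L^\infty([0,T];L^6)$ (transport equation for $\nabla\rho$ tested with $|\nabla\rho|^4\nabla\rho$, combined with the elliptic $D^{2,6}$ estimate for $u$), which yields exactly the hypothesis \eqref{we11uyy} of the Cauchy-problem criterion; the higher-order bounds then follow verbatim from Lemmas 4.6--4.8. So the commutator terms $\mathrm{div}(\nabla^k\rho^\delta Q(u))$ you single out are handled by reduction to the whole-space argument, whereas the genuinely new content of the periodic proof --- the $L^4$ estimate and the $L^2\to L^6$ upgrade of $\nabla\rho$ that it feeds --- is missing from your proposal.
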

\begin{remark}
Actually, the conclusion obtained above can also be applied to some initial boundary value problems of the system (\ref{eq:1.1}) with $\delta \in (0,1)$ in \eqref{fandan} under proper boundary conditions in smooth and bounded domains. For simplicity, here we only consider the periodic problem.
\end{remark}

The rest of this paper will be divided into 5 sections. In \S 2, we introduce two intrinsic singular structures \eqref{dege} and \eqref{we22}  of the system \eqref{eq:1.1}, and then show the main strategy of our proof. In \S 3, we show some new elliptic approaches that are related to singular structures \eqref{dege} and \eqref{we22}. Moreover, in order to make sure that we can continue to apply the local existence theory obtained in Theorem 1.1 at any positive time within the solution’s life span, we also verify all the compatibility conditions and initial conditions for our solution at any positive time before the singularity appears. Based on the analysis on the mathematical structure of system \eqref{eq:1.1}, we give the proof for Theorem 1.2 in \S 4. \S 5 is devoted to proving Theorem 1.3. Finally, we will give an appendix to list some basic lemmas that were used frequently in our proof. It is worth pointing out that, except the conclusions mentioned above, another purpose of the current paper is to show how to understand the intrinsic singular structures of the fluid system considered now, and then how to develop the corresponding nonlinear energy estimates for the regular solution with finite energy.

\section{Reformulation and main strategy}

In this section,    we always assume that \eqref{canshu} holds and $\Omega=\mathbb{R}^3$. We first introduce two  intrinsic singular structures of system  \eqref{eq:1.1}, and then show the main strategy of our proof.  For simplicity, in the rest of this paper, we always  denote 
 \begin{equation} \label{xishu}
\begin{split}
& a=\Big(\frac{A\gamma}{\gamma-1}\Big)^{\frac{1-\delta}{\gamma-1}},\quad \text{and} \quad e=\frac{\delta-1}{2(\gamma-1)}<0.
\end{split}
\end{equation}
Moreover, we use $\textbf{DTE}$ to denote the  degenerate time  evolution operator, $\textbf{WSS}$ to denote the  weak singular  source term, $\textbf{SSS}$ to denote the  strong  singular  source term, and $\textbf{SSE}$ to denote the strong singular elliptic operator. 

Generally, because the  momentum equations $(\ref{eq:1.1})_2$ is a double degenerate system when the density loses its positive lower bound, i.e., 
$$
\displaystyle
 \underbrace{\rho(u_t+u\cdot \nabla u)}_{\textbf{DTE}}+\nabla P= \underbrace{\text{div}(2\mu(\rho)D(u)+\lambda(\rho)\text{div}u \mathbb{I}_d)}_{Degenerate\ elliptic \ operator},
$$
usually,  it is very hard to obtain higher order regularities  of the fluids velocity $u$ near the vacuum. Then it is necessary that we need to  find some intrinsic   structures  of this system  to make  some effective analysis on $u$. Due to $\delta \in (0,1)$, formally,  we have two choices. The first one  is  the``Degenerate"--``Weak-Singular"   structure shown in  \eqref{dege},  which  has a degeneracy in the time evolution, but provides one uniform elliptic operator $Lu$.  However, this structure still has one $\textbf{WSS}$: $\nabla \log \rho\cdot Q(u)$. The other one is the 
strong singular  structure shown in  \eqref{we22}, which has a nice time evolution operator, but also has one $\textbf{SSE}$: $\rho^{\delta-1}Lu$ and one $\textbf{SSS}$: $\nabla   \rho^{\delta-1} \cdot Q(u)$.

\subsection{``Degenerate"--``Weak-Singular"   structure}
In terms of   variables 
$$
\varphi=a \rho^{1-\delta},\quad g=\frac{aA\gamma}{\gamma-\delta}\rho^{\gamma-\delta},\quad f=a\delta \nabla \log \rho=(f^{(1)},f^{(2)},f^{(3)}), $$
and $u$, the system (\ref{eq:1.1})  can be rewritten as
\begin{equation}
\begin{cases}
\label{dege}
\displaystyle
\varphi_t+u\cdot \nabla \varphi+(1-\delta)\varphi \text{div} u=0,\\[12pt]
\displaystyle
g_t+u\cdot \nabla g+(\gamma-\delta)g\text{div} u=0,\\[12pt]
\displaystyle
 \underbrace{\varphi(u_t+u\cdot \nabla u)}_{\textbf{DTE}} +\nabla g+aLu= \underbrace{f \cdot Q(u)}_{\textbf{WSS}},\\[12pt]
\displaystyle
f_t+\sum_{l=1}^3 A_l \partial_lf+B^*f+a\delta\nabla \text{div} u=0,
 \end{cases}
\end{equation}
 where 
$A_l=(a^l_{ij})_{3\times 3}$ for  $i,j,l=1,2,3$,
are symmetric  with $a^l_{ij}=u^{(l)}\quad \text{for}\ i=j$; otherwise $a^l_{ij}=0$,   and  $B^*=(\nabla u)^\top$.

\subsection{Strong singular  structure}

In terms of   variables
\begin{equation}\label{bianliang}\phi=\frac{A\gamma}{\gamma-1} \rho^{\gamma-1},\quad \psi=\frac{\delta}{\delta-1}\nabla \rho^{\delta-1}=(\psi^{(1)},\psi^{(2)},\psi^{(3)}),
\end{equation}
and $u$,  the system (\ref{eq:1.1})  can be rewritten as
\begin{equation}
\begin{cases}
\label{we22}
\displaystyle
\phi_t+u\cdot \nabla \phi+(\gamma-1)\phi \text{div} u=0,\\[12pt]
\displaystyle
u_t+u\cdot\nabla u +\nabla \phi+ \underbrace{a\phi^{2e}Lu}_{\textbf{SSE}}
= \underbrace{\psi \cdot Q(u)}_{\textbf{SSS}},\\[12pt]
\displaystyle
\psi_t+\sum_{l=1}^3 A_l \partial_l\psi+B\psi+ \underbrace{\delta a\phi^{2e}\nabla \text{div} u}_{\textbf{SSS}}=0,
 \end{cases}
\end{equation}
where $B=(\nabla u)^\top+(\delta-1)\text{div}u\mathbb{I}_3$. 
In Sections 3-4, we will consider the Cauchy problem \eqref{we22} with the   initial data
\begin{equation} \label{sfana1}
\begin{split}
&(\phi, u,\psi)|_{t=0}=(\phi_0, u_0, \psi_0)\\
=&\Big(\frac{A\gamma}{\gamma-1} \rho^{\gamma-1}_0(x),   u_0(x), \frac{\delta}{\delta-1}\nabla \rho^{\delta-1}_0(x)\Big),\quad x\in \mathbb{R}^3,
\end{split}
\end{equation}
and  the far field behavior:
\begin{equation}\label{sfanb1}
\begin{split}
(\phi, u,\psi)\rightarrow (0,0,0),\quad \text{as}\quad  |x|\rightarrow +\infty,\quad t \geq 0.
\end{split}
\end{equation}

For the simplicity of the proof in Sections 3-4, we also give some relations between the new variables:
$$
\varphi=\phi^{-2e},\quad f=\psi \varphi, \quad g=\frac{\gamma-1}{\gamma-\delta}\phi\varphi, \quad \psi=\frac{a\delta}{\delta-1}\nabla \phi^{2e}=\frac{a\delta}{\delta-1}\nabla \varphi^{-1}.$$

\subsection{Main strategy}
Now, based on the two  intrinsic  singular   structures  of the system \eqref{eq:1.1} mentioned above, now we show our main strategy of our proof for Theorem \ref{th3s}.
\subsubsection{Necessity of  the strong singular  structure \eqref{we22}}

It is well known that  the single degenerate   structure
\begin{equation}\label{sd}
 \underbrace{\rho(u_t+u\cdot \nabla u)}_{\textbf{DTE}} +\nabla P+Lu=0,
\end{equation}
 has been widely used for the well-posedess or singularity formation  theory of smooth solutions with vacuum to  compressible fluid systems  for the case $\delta=0$ (\cite{CK3, hup,szz}). For such kind of structures,  we can make sure that the velocity belongs to $D^1\cap D^3$ and also $\sqrt{\rho}u\in L^2$,    if the initial data is smooth enough and satisfy some necessary compatibility conditions, which implies that the velocity itself only belongs $L^6\cap L^\infty$ in the whole space.

However, for our ``Degenerate"--``Weak-singular"   structure \eqref{dege},  if   the velocity  $u$ only belongs to $D^1\cap D^3$ and  $\sqrt{\varphi}u\in L^2$, it  is not good enough to close the desired energy estimates.   For example,  considering  the basic weighted $L^2$ norm of $u$: $|\sqrt{\varphi}u|_2$, it follows from the equations $\eqref{dege}_3$ that 
 \begin{equation}\label{hardpoint}
\begin{split}
&\frac{1}{2}\frac{d}{dt}\int \varphi u^2+a\int (\alpha |\nabla u|^2+(\alpha+\beta)|\text{div}u|^2)\\
=& \frac{1}{2}\int \varphi_t u^2-\int \varphi (u\cdot \nabla)u\cdot u-\int \nabla g \cdot u+ \underbrace{\int f\cdot Q(u)\cdot u}_{\textbf{Trouble term $I^*$}} .
\end{split}
\end{equation} 
 For the last term 
 $$
 I^*=\int f\cdot Q(u)\cdot u, $$
 one has $Q(u)\in L^2\cap L^\infty$ and $u\in L^6\cap L^\infty$. Then in order to make sure that 
 $$f\cdot Q(u)\cdot u \in L^1,$$ at least we need   $f\in L^p$ for   some $p\in [1,3]$. However,  even $\rho>0$ only decays to zero in the far field. It is  still very hard to find the initial data such that 
 $$
 f(0,x)=a\delta \nabla \log \rho_0 \in L^p \quad \text{for} \quad \text{some} \quad p\in [1,3].
 $$  
 Generally, for some initial density decays to zero in the form of polynomials:
 $$
\rho_0(x)=\frac{1}{1+|x|^{2q}} \quad \text{for \ some \ proper} \quad q,
$$
it is easy to see that, actually, 
 $$
 f(0,x)=a\delta \nabla \log \rho_0 \in L^p \quad \text{for} \quad \text{any} \quad p>3.
 $$  
 We know that $f$ should still keep in $L^p$ within the life span due to the standard theory of the symmetric hyperbolic system $\eqref{dege}_4$.
 Then according to the H\"older's inequality, in order to make sure that $f\cdot Q(u)\cdot u \in L^1$, at least we need
 $$
 u\in L^l, 
 $$
 where $l=\frac{2p}{p-2} \in [2,6)$. Formally we know that the ``Degenerate"--``Weak-Singular"  structure \eqref{dege} at most provides the information $u\in L^6\cap L^\infty$ due to the degeneracy in the time evolution.  Then it is obvious that the ``Degenerate”–``Weak-Singular” structure (\ref{dege}) can not give enough information for closing the desired nonlinear energy estimates.   Then we need to introduce another structure  \eqref{we22} to give the $L^2$ integrability of  $u$.

Next we formally  show how to use the system \eqref{we22} to give one close energy estimates for the nonlinear problems. First,   for   the behavior of the possible singular term $\nabla \rho^{\delta-1}$, it follows from $\eqref{we22}_3$ that it   could be controlled by a     symmetric hyperbolic  system with a possible singular higher order term $\delta a\phi^{2e}\nabla \text{div} u$.   Due to the fact that $\phi^{2e}$  has an  uniformly positive lower bound in the whole space, then
for this  special strong singular system, one can find formally that, even though the coefficients $a\phi^{2e}$ in front  of  Lam\'e operator $ L$ will tend to $\infty$ as $\rho\rightarrow 0$ in the far filed, yet this structure could give a better a priori estimate on $u$ in weighted-$H^3$ than those of \cite{CK3,sz3,sz333,zz}.  In order to close the estimates, we need to control $\phi^{2e}\nabla \text{div} u$ in $D^1\cap D^2$, which  can be obtained by regarding the momentum equations as the following inhomogeneous Lam\'e  equations:
$$
a L(\phi^{2e}\partial_k u)=a \phi^{2e}L\partial_ku-aG(\nabla \phi^{2e},\partial_ku)$$
with
$$
G(v,u)= \alpha  v\cdot \nabla u+\alpha \text{div}(u\otimes v)+(\alpha+\beta)\big(v\text{div}u+v \cdot (\nabla u)+u  \cdot \nabla v\big).
$$
Actually,  one has
\begin{equation}\label{singularelliptic}
\begin{split}
|\phi^{2e}\nabla^2 u|_{D^1}\leq & C(|\phi^{2e}\nabla u|_{D^2}+|\psi|_\infty|\nabla^2u|_2+|\nabla u|_\infty|\nabla \psi|_2).
\end{split}
\end{equation}
 Similar calculations can be done for  $|\phi^{2e}\nabla^2u|_{D^2}$ and $|\phi^{e}\nabla^2u|_{2}$.  It should be pointed out that we have used the following facts to satisfy the requirement of the far field behavior in the standard elliptic regularity theory (see Lemma \ref{zhenok}):
 $$
 \phi^{2e}\nabla u \in L^\infty([0,T];L^6),\quad  \phi^{2e}\nabla^2 u \in L^\infty([0,T];L^2),\quad   \phi^{e}\nabla u \in L^\infty([0,T];L^2).$$

 Thus, it seems that at least, the reformulated system (\ref{we22}) can provide a closed energy  estimates for the  regular solution we defined in the introduction.

 \subsubsection{Advantage of the  ``Degenerate"--``Weak-Singular"   structure \eqref{dege} }
From \eqref{we22}, the assumption on the boundedness of $\psi$ and $D(u)$ in \eqref{we11} can only ensure the boundedness of $\nabla^2 u$ and $\phi^{2e}Lu$. For the higher order estimates on   $\nabla^k u$ $(k = 3,4)$ with singular weights, we need the estimates on $\nabla^l \psi$  $(l = 1, 2)$. Actually, the thing will become very complex if we want to do these estimates directly from \eqref{we22}. The introduction of \eqref{dege} will make the estimates look clearer. 

Taking the estimate on $\|u\|_{L^\infty([0,T];D^3)}$ for example, first we need to consider 
\begin{equation}\label{zhucc}
\begin{split}
& \frac{1}{2}\frac{d}{dt}\Big(a\alpha|\phi^e\nabla u_t|^2_2+a(\alpha+\beta)|\phi^e\text{div} u_t|^2_2\Big)+| u_{tt}|^2_2\\
=&\int \Big(-(u\cdot \nabla u)_t-\nabla \phi_t-a \phi^{2e}_t Lu -a \nabla \phi^{2e} \cdot Q(u_t) \Big)\cdot u_{tt} \\
&+\int \Big(\frac{a}{2} \phi^{2e}_t\big(\alpha|\nabla u_t|^2+(\alpha+\beta)|\text{div}u_t|^2\big)+\underbrace{(\psi \cdot Q(u))_t\cdot u_{tt}}_{\textbf{Trouble term $J^*$}}  \Big).
\end{split}
\end{equation}
For the one component of the  last term $J^*$
$$
\int \psi \cdot Q(u)_t \cdot u_{tt},
$$
it can only be controlled by  $|\psi \cdot Q(u)_t |_2|u_{tt}|_2$. Here, we may  use $|\psi|_\infty|Q(u)_t |_2$ or $|\psi|_6| Q(u)_t |_3$ to control the  norm $|\psi \cdot Q(u)_t |_2$. However, in order to control $|\psi|_\infty$, we need $\|\psi\|_{D^1\cap D^2}$, and then $|\phi^{2e}\nabla^2 u|_{L^1([0,T]; D^1\cap D^2)}$. This  means  that  at least we need the estimate on $\int_0^t |\nabla^2 u_t|^2_2\text{d}s$, which is also required in  $| Q(u)_t |_3$.

 If we continue to use  the  structure (\ref{we22}) for obtaining the estimate on $\int_0^t |\nabla^2 u_t|^2_2 \text{d}s$, formally one has 
$$
a L(\phi^{2e}u_t)=-\phi^{2e}\big(\phi^{-2e}(u_t+u\cdot\nabla u +\nabla \phi-\psi\cdot Q(u))\big)_t -\frac{\delta-1}{\delta}G(\psi,u_t)=Y.
$$
On the one hand, the  following only way to estimate $|\nabla^2 u_t|_2$    might not hold:
\begin{equation}\label{possibleguji}
|\phi^{2e}u_t|_{D^2}\leq C|Y|_2,
\end{equation}
for some constant $C>0$. Because we do not have $\phi^{2e}u_t \in L^p$ for some $p>0$, or in some weak or strong sense,
$$\phi^{2e}u_t\rightarrow 0\quad \text{as} \quad |x|\rightarrow +\infty.$$
The similar thing also happens for the estimate for $|\phi^{2e}\nabla^2 u|_2$. On the other hand, the estimate on $|Y|_2$ at least depends on $|\nabla \psi |_2 + |\nabla^2 \phi|_2 + |\phi^{2e}\nabla^2 u|_2$, which, from (2.9), (4.56) and Lemma 3.3, is controlled by $|\nabla f |_2 + \|\nabla^2 \phi\|_1$. Actually, at the current step, what we need is only the estimate on $|\nabla^2 u_t|_2$, not on $|\phi^{2e}\nabla^2 u_t|_2$. Then we should ask help from the structure \eqref{dege}, where one has
 \begin{equation}\label{elss}
aLu_t=-\varphi u_{tt} -\varphi (u\cdot\nabla u)_t -\varphi_t (u_t+u\cdot\nabla u)-\nabla g_t+(f\cdot Q(u))_t.
\end{equation}
It follows from  Lemma \ref{zhenok} that 
 \begin{equation}\label{zhss}
\begin{split}
|u_t|_{D^2}\leq& C(|-\varphi u_{tt} -\varphi (u\cdot\nabla u)_t -\varphi_t (u_t+u\cdot\nabla u)-\nabla g_t+(f\cdot Q(u))_t|_2).
\end{split}
\end{equation}
for some constant $C>0$, which only depends on $|\nabla f|_2$  but not on $|\nabla \psi|_2$.  The estimate on $|\nabla f|_2$ is much easier than that of $|\nabla \psi|_2$, which 
makes the  estimates  clearer compared with the  one in (\ref{possibleguji}). 
Once  obtaining  the estimate on $\int_0^t |\nabla^2 u_t|^2_2\text{d}s$, then we can come back to the   structure (\ref{we22}) again for further estimates, which is good enough for extending our solution beyond the time $\overline{T}$ (see \S 4.2).

\section{Elliptic argument and compatibility conditions}

In this section,    we always assume that \eqref{canshu} holds and $\Omega=\mathbb{R}^3$. 
First, we will show some elliptic approaches that are related to singular structures  \eqref{dege} and \eqref{we22}. Second, in order to make sure that we can continue to apply the local existence theory obtained in Theorem 1.1 at any positive time, we will verify the compatibility conditions in (1.14) for any positive time within the regular solution’s life span. At last, we will show some additional regularities of the regular solution that are not shown in Thoerem 1.1.

\subsection{Standard  elliptic approach}

The following regularity estimate for the Lam$\acute{ \text{e}}$ operator is standard in harmonic analysis.
\begin{lemma}\cite{CK3, harmo}\label{zhenok} Let $1< q< +\infty$, $k\in \mathbb{Z}$ and $Z\in D^{k,q}$. If  $u\in D^{1,q}_0(\mathbb{R}^3)$   is a weak solution to the following elliptic  problem
\begin{equation}
\label{ok}
\displaystyle
Lu=Z, 
\end{equation}
then $u\in D^{k+2,q}$, and  it holds that
$$
|u|_{D^{k+2,q}} \leq C |Z|_{D^{k,q}},
$$
where the constant $C>0$ depends only on $\alpha$,  $\beta$  and $q$.
\end{lemma}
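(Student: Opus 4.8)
The plan is to prove this by the classical reduction of the Lam\'e system to decoupled Poisson equations for the divergence and the curl of $u$, together with the Calder\'on--Zygmund / Mikhlin--H\"ormander theory. First I would record the algebraic structure of $L$. Applying $\text{div}$ and $\nabla\times$ to $\eqref{ok}$, and using $\nabla\times\nabla(\text{div}\,u)=0$, one obtains
\[
-(2\alpha+\beta)\,\Delta(\text{div}\,u)=\text{div}\,Z,\qquad -\alpha\,\Delta(\nabla\times u)=\nabla\times Z .
\]
Since $\alpha>0$ and $\alpha+\beta\ge0$ force $2\alpha+\beta=\alpha+(\alpha+\beta)\ge\alpha>0$, these are genuine (scalar, resp. vector-valued) Poisson equations. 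Equivalently, on the Fourier side the symbol $\widehat{L}(\xi)=\alpha|\xi|^2\mathbb{I}_3+(\alpha+\beta)\,\xi\otimes\xi$ is positive definite for $\xi\ne 0$, with eigenvalues $\alpha|\xi|^2$ (multiplicity $2$) and $(2\alpha+\beta)|\xi|^2$ (multiplicity $1$), so $\widehat{L}(\xi)^{-1}$ is smooth and homogeneous of degree $-2$ away from the origin, and $\nabla^{k+2}u=\widehat{L}(\xi)^{-1}(i\xi)^{k+2}\widehat{Z}$ corresponds to a matrix of Fourier multipliers that are homogeneous of degree $0$ and smooth off the origin.

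Next I would invoke the Mikhlin--H\"ormander multiplier theorem on $\mathbb{R}^3$ (equivalently, the $L^q$-boundedness of the Riesz transforms), which for $1<q<+\infty$ gives $|\nabla^{k+2}u|_q\le C(\alpha,\beta,q)\,|\nabla^k Z|_q$, that is, $|u|_{D^{k+2,q}}\le C|Z|_{D^{k,q}}$. Spelled out in the $\text{div}$/$\text{curl}$ formulation: the two Poisson equations above, after applying $\nabla^{k+1}$ and the Calder\'on--Zygmund estimate for $-\Delta$, control $\nabla^{k+1}(\text{div}\,u)$ and $\nabla^{k+1}(\nabla\times u)$ in $L^q$ by $|\nabla^k Z|_q$; then $\nabla^{k+2}u$ is recovered from the identity $-\Delta u=\nabla\times(\nabla\times u)-\nabla(\text{div}\,u)$ and one further application of the Calder\'on--Zygmund estimate for $-\Delta$. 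All of this harmonic-analysis input is completely standard, which is why one may simply cite \cite{CK3, harmo}.

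The one point that genuinely requires care — and which I expect to be the main obstacle — is the identification of the function produced by these multipliers with the \emph{given} weak solution $u\in D^{1,q}_0(\mathbb{R}^3)$; that is, the uniqueness statement that any $v\in D^{1,q}_0(\mathbb{R}^3)$ with $Lv=0$ in $\mathcal{D}'$ must vanish, together with the well-definedness of $L^{-1}$ in the scale of homogeneous Sobolev spaces. For $q=2$ this is immediate from the energy identity $\alpha|\nabla v|_2^2+(\alpha+\beta)|\text{div}\,v|_2^2=0$, where the integration by parts is legitimate by $C_c^\infty$-density in $D^{1,2}_0$; for general $q$ one argues through the multiplier representation and duality against $D^{1,q'}_0$, using the far-field decay built into the definition of the homogeneous spaces $D^{1,q}_0(\mathbb{R}^3)$ and the hypothesis $Z\in D^{k,q}$. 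This bookkeeping around homogeneous Sobolev spaces on $\mathbb{R}^3$ is the delicate part; the estimate itself, once uniqueness pins down $u$, is then the standard output of Calder\'on--Zygmund theory.
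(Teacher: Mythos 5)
The paper offers no proof of this lemma at all—it is quoted as a standard harmonic-analysis fact with citations to \cite{CK3} and \cite{harmo}—and your sketch is exactly the classical argument behind those references: the div/curl (equivalently Fourier-symbol) reduction to Poisson equations, Mikhlin--H\"ormander/Calder\'on--Zygmund boundedness giving $|u|_{D^{k+2,q}}\le C(\alpha,\beta,q)|Z|_{D^{k,q}}$, plus the uniqueness step identifying the multiplier solution with the given $u\in D^{1,q}_0(\mathbb{R}^3)$, which you correctly single out as the only point needing care. Your proposal is correct and consistent with the paper's (cited) treatment, so nothing further is required.
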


\subsection{Density involved elliptic approach}  Let $T>0$ be some time. In the rest of this section, let $(\rho,u)$ in $[0,T]\times \mathbb{R}^3$ be the unique regular solution to the Cauchy problem \eqref{eq:1.1}-\eqref{far} obtained in Theorem 1.1. For simplicity, we denote
\begin{equation*}
\begin{split}
G(v,u)=& \alpha  v\cdot \nabla u+\alpha \text{div}(u\otimes v)+(\alpha+\beta)\big(v\text{div}u+v \cdot (\nabla u)+u  \cdot \nabla v\big),\\
H(u,\phi,\psi)=&H=u_t+u\cdot\nabla u +\nabla \phi-\psi\cdot Q(u).
\end{split}
\end{equation*}

\begin{lemma}\label{5jie}For $k=1,2,3$,
\begin{equation*}
\begin{split}
%|\phi^{e}u|_{D^2}\leq & C(|\phi|^{-e}_\infty|H|_2+|G( \phi^{-e}\psi,u)|_2),\\
|\phi^{e}\partial_k u|_{D^2}\leq & C(|\phi|^{-e}_\infty|\partial_kH|_2+|\partial_k \phi^{-e}H|_2+|G( \phi^{-e}\psi,\partial_ku)|_2),
\end{split}
\end{equation*}
where the constant $C>0$ depends only on $\alpha$,  $\beta$, $A$, $\gamma$ and $\delta$.

\end{lemma}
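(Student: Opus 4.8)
The plan is to derive Lemma \ref{5jie} by rewriting the momentum equation $\eqref{we22}_2$ as an inhomogeneous Lam\'e equation for the weighted velocity $\phi^{e}\partial_k u$, then invoke the standard elliptic estimate of Lemma \ref{zhenok}. First I would recall that, with $H=H(u,\phi,\psi)=u_t+u\cdot\nabla u+\nabla\phi-\psi\cdot Q(u)$, the second equation of \eqref{we22} reads $a\phi^{2e}Lu=-H$, i.e. $aLu=-\phi^{-2e}H$. Differentiating in $x_k$ gives $aL\partial_k u=-\partial_k(\phi^{-2e}H)$. The point is to commute the scalar multiplication by $\phi^{e}$ through the operator $L$: since $L$ is a second-order constant-coefficient operator, for any scalar $b$ and vector $w$ one has the identity $L(bw)=bLw - \widetilde G(\nabla b, w)$ where $\widetilde G$ collects all the first- and zeroth-derivative-of-$b$ terms; with $b=\phi^{e}$ this is exactly the commutator whose structure is encoded by the function $G(v,u)$ defined in the lemma's preamble (with $v$ proportional to $\nabla\phi^{e}$, which in turn is proportional to $\phi^{-e}\psi$ up to a dimensional constant coming from $\psi=\frac{a\delta}{\delta-1}\nabla\phi^{2e}$, hence $\nabla\phi^{e}=\tfrac12\phi^{-e}\nabla\phi^{2e}=\tfrac{\delta-1}{2a\delta}\phi^{-e}\psi$).

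Concretely, I would write
\begin{equation*}
aL(\phi^{e}\partial_k u)=a\phi^{e}L\partial_k u - aG(\nabla\phi^{e},\partial_k u)
=-\phi^{e}\partial_k(\phi^{-2e}H)-aG(\nabla\phi^{e},\partial_k u),
\end{equation*}
and then expand $\phi^{e}\partial_k(\phi^{-2e}H)=\phi^{e}\phi^{-2e}\partial_k H+\phi^{e}(\partial_k\phi^{-2e})H=\phi^{-e}\partial_k H+(\partial_k\phi^{-e}-\text{correction})H$; more cleanly, $\phi^{e}\partial_k(\phi^{-2e}H)=\partial_k(\phi^{-e}H)+(\text{lower order})$, so up to reorganizing the zeroth-order-in-$H$ terms one lands on a right-hand side $Z$ with $|Z|_2\le C(|\phi|_\infty^{-e}|\partial_k H|_2+|\partial_k\phi^{-e}\,H|_2+|G(\phi^{-e}\psi,\partial_k u)|_2)$, absorbing the constant $\tfrac{\delta-1}{2a\delta}$ and $\phi|_\infty$-powers into $C=C(\alpha,\beta,A,\gamma,\delta)$. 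Then Lemma \ref{zhenok} with $q=2$, $k=2$ applied to $L(\phi^{e}\partial_k u)=Z$ yields $|\phi^{e}\partial_k u|_{D^2}\le C|Z|_{D^0}=C|Z|_2$, which is the claimed bound. The far-field/solvability hypotheses of Lemma \ref{zhenok} are legitimate here because the regularity class of Theorem \ref{th2}/Definition \ref{d1} guarantees $\phi^{e}\nabla u\in L^2$ (indeed $\rho^{(\delta-1)/2}\nabla u\in C([0,T];L^2)$), so $\phi^{e}\partial_k u\in D^{1,2}_0(\mathbb{R}^3)$ in the required sense.

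The main obstacle is the careful bookkeeping of the commutator: one must check that every term produced by distributing $L$ across the product $\phi^{e}\partial_k u$ and by distributing $\partial_k$ across $\phi^{-2e}H$ is genuinely of the form displayed — i.e. that no term survives which involves $\nabla^2\phi^{e}$ or $\nabla\phi^{e}\cdot\nabla(\partial_k u)$ in a way not already packaged inside $G(\phi^{-e}\psi,\partial_k u)$, and that the composite weight powers $\phi^{e}\cdot\phi^{-2e}=\phi^{-e}$ and $\phi^{e}\cdot\partial_k\phi^{-2e}$ combine so that only $|\phi|_\infty^{-e}$ (a finite quantity since $\phi$ is bounded and $e<0$ means $\phi^{-e}=\phi^{|e|}$ is controlled by $|\phi|_\infty^{|e|}$) appears as a multiplicative constant rather than an unbounded negative power of $\phi$ near the vacuum. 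A secondary, more technical point is justifying that one may legitimately apply the elliptic estimate to the weighted unknown rather than to $u$ itself — i.e. that $\phi^{e}\partial_k u$ really is a $D^{1,2}$ weak solution of the Lam\'e system with the identified right-hand side in $L^2$ — which again is exactly what the weighted regularities recorded in Definition \ref{d1} and Theorem \ref{th2} are designed to supply. The analogous statements for $|\phi^{2e}\nabla^2 u|_{D^1}$, $|\phi^{2e}\nabla^2 u|_{D^2}$ and $|\phi^{e}\nabla^2 u|_2$ alluded to in \S2 follow by the same device with the weight exponent and the number of derivatives adjusted, and I would remark that only at the end.
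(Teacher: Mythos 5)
Your proposal is correct and follows essentially the same route as the paper: the paper's proof consists precisely of the identity $aL(\phi^{e}\partial_k u)=a\phi^{e}L\partial_ku-aG(\nabla\phi^{e},\partial_ku)=-\phi^{-e}\partial_kH-2\partial_k\phi^{-e}\,H-\frac{\delta-1}{2\delta}G(\phi^{-e}\psi,\partial_ku)$, followed by the standard Lam\'e estimate of Lemma \ref{zhenok}, with the admissibility of the weighted unknown justified exactly as you indicate (the paper records $\phi^{e}\nabla u\in H^1(\mathbb{R}^3)$ in the remark after Lemma \ref{4jie}). Your expansion $\phi^{e}\partial_k(\phi^{-2e}H)=\phi^{-e}\partial_kH+2\partial_k\phi^{-e}\,H$ is the clean form of the "reorganization" you gesture at, so the bookkeeping you flag as the main obstacle does close.
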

\begin{proof}According to  the  equations $ (\ref{we22})_2$,  for $k=1,2,3$,      one has
\begin{equation}\label{elliptic-yijie1}
\begin{split}
%a L(\phi^{e}u)=&a \phi^{e}Lu-a G(\nabla \phi^e,u)\\
a L(\phi^{e}\partial_k u)=&a \phi^{e}L\partial_ku-a G(\nabla \phi^e,\partial_ku)\\
=&-\phi^{e}\partial_k\big(\phi^{-2e}H\big) -\frac{\delta-1}{2 \delta}G( \phi^{-e}\psi,\partial_ku)\\
=&- \phi^{-e}\partial_kH-2\partial_k \phi^{-e}H -\frac{\delta-1}{2 \delta}G( \phi^{-e}\psi,\partial_ku).
\end{split}
\end{equation}
\end{proof}

\begin{lemma}\label{3jie}For $k=1,2,3$, 
$$
|\phi^{2e}\partial_k u|_{D^2}\leq C(|H|_{D^1}+|fH|_2+|G(\psi,\partial_k u)|_2),
$$
where the constant $C>0$ depends only on $\alpha$,  $\beta$, $A$, $\gamma$ and $\delta$.

\end{lemma}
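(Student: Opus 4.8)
The statement is an elliptic estimate for $\phi^{2e}\partial_k u$ of the same flavor as Lemma 3.2, but with the heavier weight $\phi^{2e}$ instead of $\phi^e$, which is precisely the weight appearing in the strong singular elliptic term $\textbf{SSE}$ of \eqref{we22}. The natural strategy is to rewrite the momentum equations $(\ref{we22})_2$ as an inhomogeneous Lam\'e system for $\phi^{2e}\partial_k u$ and then invoke the standard elliptic regularity of Lemma \ref{zhenok}. First I would apply $\partial_k$ to $(\ref{we22})_2$ and use the commutator identity
$$
a L(\phi^{2e}\partial_k u)=a\phi^{2e}L\partial_k u-aG(\nabla\phi^{2e},\partial_k u),
$$
which is exactly the identity already written out in the ``Main strategy'' discussion of \S 2 with $v=\nabla\phi^{2e}$. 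On the other hand, from $(\ref{we22})_2$ one has $a\phi^{2e}Lu=-H$, hence $a\phi^{2e}L\partial_k u=-\partial_k H+a\,\partial_k(\phi^{2e})Lu=-\partial_k H-a^{-1}\partial_k(\phi^{2e})\,\phi^{-2e}H$. Combining the two displays gives
$$
aL(\phi^{2e}\partial_k u)=-\partial_k H-a^{-1}\phi^{-2e}\partial_k(\phi^{2e})\,H-aG(\nabla\phi^{2e},\partial_k u).
$$
Using $\psi=\tfrac{a\delta}{\delta-1}\nabla\phi^{2e}$ (from \eqref{bianliang}) and $f=\psi\varphi=\psi\phi^{-2e}$ (from the relations listed after \eqref{sfanb1}), the term $\phi^{-2e}\partial_k(\phi^{2e})H$ is a constant multiple of $f_k H$, and $G(\nabla\phi^{2e},\partial_k u)$ is a constant multiple of $G(\psi,\partial_k u)$; so the right-hand side is controlled by $|H|_{D^1}+|fH|_2+|G(\psi,\partial_k u)|_2$.

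Next I would apply Lemma \ref{zhenok} with $q=2$, $k=0$ to conclude $|\phi^{2e}\partial_k u|_{D^2}\le C|aL(\phi^{2e}\partial_k u)|_2\le C(|H|_{D^1}+|fH|_2+|G(\psi,\partial_k u)|_2)$, with $C$ depending only on $\alpha,\beta,A,\gamma,\delta$ (the last three entering through the constants relating $\nabla\phi^{2e}$, $\psi$, $f$, and through $e$). The one genuinely non-routine point — and the main obstacle — is the justification that $\phi^{2e}\partial_k u$ is an admissible solution for Lemma \ref{zhenok}, i.e.\ that it lies in $D^{1,2}_0(\mathbb{R}^3)$ and decays at infinity in the appropriate sense; this is exactly the subtlety flagged in \S 2 after \eqref{possibleguji}, where the analogous estimate for $\phi^{2e}u_t$ is said to possibly fail because $\phi^{2e}u_t$ need not vanish at infinity. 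Here, however, the regularity (A)--(B) of Definition \ref{d1} furnishes $\rho^{\delta-1}\nabla u\in L^\infty D^1$ and $\rho^{\delta-1}\nabla^2 u\in C H^1$, i.e.\ (up to the fixed power relating $\phi^{2e}$ to $\rho^{\delta-1}$) $\phi^{2e}\partial_k u\in D^1$ with $\phi^{2e}\nabla^2 u\in L^2$, which are precisely the far-field conditions quoted at the end of \S 2 ($\phi^{2e}\nabla u\in L^\infty L^6$, $\phi^{2e}\nabla^2 u\in L^\infty L^2$) needed to legitimately apply the elliptic theory. So the argument is: establish the Lam\'e identity, read off the source bound, check the function-space membership from Definition \ref{d1}, and invoke Lemma \ref{zhenok}. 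I expect the write-up to be as short as the proof of Lemma \ref{3jie} actually given — essentially the chain of identities above — with the elliptic-admissibility remark either suppressed or handled by a one-line reference to the regularity class.
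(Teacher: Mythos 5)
Your proposal follows the paper's proof essentially verbatim: the same commutator identity $aL(\phi^{2e}\partial_k u)=a\phi^{2e}L\partial_k u-aG(\nabla\phi^{2e},\partial_k u)$, the same rewriting of the source as $-\partial_k H$ plus constant multiples of $fH$ and of $G(\psi,\partial_k u)$, and the same appeal to Lemma \ref{zhenok} with admissibility supplied by $\phi^{2e}\nabla u\in L^6$, which the paper records in the remark following Lemma \ref{4jie}. The only blemish is a sign/constant slip in your intermediate line (correctly, $a\phi^{2e}L\partial_k u=-\partial_k H-a\,\partial_k(\phi^{2e})\,Lu=-\partial_k H+\phi^{-2e}\partial_k(\phi^{2e})H$, and $\phi^{-2e}\partial_k(\phi^{2e})H=-\tfrac{1-\delta}{a\delta}f_kH$), which does not affect the stated bound since only the modulus of the constant matters.
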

\begin{proof}According to  the  equations $ (\ref{we22})_2$, for $k=1,2,3$, one has
\begin{equation}\label{elliptic-syijie1}
\begin{split}
a L(\phi^{2e}\partial_k u)=&a \phi^{2e}L\partial_ku-aG(\nabla \phi^{2e},\partial_ku)\\
=&-\phi^{2e}\partial_k\big(\phi^{-2e}H\big) 
-\frac{\delta-1}{\delta}G(\psi,\partial_k u)\\
=&-\partial_k H-\frac{1-\delta}{a\delta} f H-\frac{\delta-1}{\delta}G(\psi,\partial_k u).
\end{split}
\end{equation}
\end{proof}

\begin{lemma}\label{4jie}  
$$
|\phi^{2e}\nabla^\xi u|_{D^2}\leq C(|\nabla^\xi H|_2+|f \cdot \nabla H|_2+||f|^2H|_2+|\nabla f\cdot H|_2+|G(\nabla \phi^{2e},\nabla^\xi u )|_2),
$$
where the constant $C>0$ depends only on $(\alpha, \beta, A, \gamma, \delta)$, and $\xi\in \mathbb{R}^3$ with $|\xi|=2$ is any multi-index whose components are all non-negative integers.
\end{lemma}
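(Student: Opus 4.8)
The plan is to proceed exactly as in the proofs of Lemmas \ref{5jie} and \ref{3jie}, but now differentiating the momentum equation $(\ref{we22})_2$ twice rather than once. First I would write, for a multi-index $\xi$ with $|\xi|=2$,
\begin{equation*}
a L(\phi^{2e}\nabla^\xi u)=a\phi^{2e}L\nabla^\xi u-aG(\nabla\phi^{2e},\nabla^\xi u),
\end{equation*}
which is the Leibniz-type identity for the Lam\'e operator already used in the previous two lemmas (the correction term $G$ collects all the commutators of $L$ with multiplication by $\phi^{2e}$). Then I would substitute $\phi^{-2e}H=-(u_t+u\cdot\nabla u+\nabla\phi-\psi\cdot Q(u))\phi^{-2e}$ $=a\phi^{2e}L u/a$ — more precisely, from $(\ref{we22})_2$ one has $a\phi^{2e}Lu=-H$, hence $aLu=-\phi^{-2e}H$ and therefore $a\phi^{2e}L\nabla^\xi u=-\phi^{2e}\nabla^\xi(\phi^{-2e}H)$. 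Expanding $\nabla^\xi(\phi^{-2e}H)$ by the Leibniz rule produces the four terms $\nabla^\xi H$, together with terms carrying one or two derivatives landing on $\phi^{-2e}$; using $\nabla\phi^{-2e}=\nabla\varphi=\tfrac{\delta-1}{a\delta}f\varphi\,\phi^{2e}$-type relations from the dictionary $\varphi=\phi^{-2e}$, $f=\psi\varphi$, $\psi=\tfrac{a\delta}{\delta-1}\nabla\phi^{2e}$ recorded in \S 2, each such term rewrites (after multiplying back by $\phi^{2e}$) as one of $f\cdot\nabla H$, $|f|^2 H$, or $\nabla f\cdot H$. This is the only genuinely new bookkeeping step, and it is routine.

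Second, once the identity
\begin{equation*}
aL(\phi^{2e}\nabla^\xi u)=-\nabla^\xi H-c_1 f\cdot\nabla H-c_2|f|^2H-c_3\nabla f\cdot H-\tfrac{\delta-1}{\delta}G(\nabla\phi^{2e},\nabla^\xi u)\quad(\text{constants }c_i=c_i(\delta,a))
\end{equation*}
is in place, I would apply the standard elliptic estimate, Lemma \ref{zhenok} with $k=0$, $q=2$, to conclude
\begin{equation*}
|\phi^{2e}\nabla^\xi u|_{D^2}\le C|L(\phi^{2e}\nabla^\xi u)|_{2}\le C\big(|\nabla^\xi H|_2+|f\cdot\nabla H|_2+||f|^2H|_2+|\nabla f\cdot H|_2+|G(\nabla\phi^{2e},\nabla^\xi u)|_2\big),
\end{equation*}
which is the asserted bound. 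Here I must check the hypothesis of Lemma \ref{zhenok}, namely that $\phi^{2e}\nabla^\xi u\in D^{1,2}_0(\mathbb{R}^3)$ and the right-hand side lies in $L^2$; the former follows from the regularity listed in Definition \ref{d1} and Theorem \ref{th2} (in particular $\rho^{\delta-1}\nabla^2 u\in C([0,T];H^1)$, i.e. $\phi^{2e}\nabla^2 u$ has the right decay at infinity, exactly as noted in the main-strategy discussion around \eqref{singularelliptic}), and the latter is part of what the lemma asserts — so strictly the statement should be read as: whenever the right-hand side is finite, so is the left, with the stated constant.

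The main obstacle — and it is a mild one — is the decay/integrability check needed to legitimately invoke Lemma \ref{zhenok}: unlike $u$ itself, the weighted quantity $\phi^{2e}\nabla^\xi u$ has no obvious $L^p$ control for finite $p$, so one cannot appeal to a Sobolev embedding to place it in $D^{1,2}_0$ directly; instead one uses that $\phi^{2e}$ is bounded (its lower bound is irrelevant here, only the upper bound matters) so that $\phi^{2e}\nabla^\xi u$ inherits the $L^6$ and $D^1$ bounds of $\nabla^\xi u$, and $\nabla^\xi u\in H^1$ by Theorem \ref{th2}. Everything else is the Leibniz expansion of \S 2, identical in spirit to Lemmas \ref{5jie}–\ref{3jie}; accordingly I would simply display the two-derivative analogue of \eqref{elliptic-syijie1} and stop, as the author does in the preceding two proofs.
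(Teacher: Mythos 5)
Your route is the paper's own: the proof in the paper consists precisely of the identity $aL(\phi^{2e}\nabla^\xi u)=a\phi^{2e}\nabla^\xi Lu-aG(\nabla\phi^{2e},\nabla^\xi u)=-\phi^{2e}\nabla^\xi(\phi^{-2e}H)-\frac{\delta-1}{\delta}G(\psi,\nabla^\xi u)$ obtained from $\eqref{we22}_2$, with the Leibniz expansion (producing $\nabla^\xi H$, $f\cdot\nabla H$, $|f|^2H$, $\nabla f\cdot H$ via $\varphi^{-1}\nabla\varphi=-\frac{\delta-1}{a\delta}f$) and the application of Lemma \ref{zhenok} left implicit, and with the far-field hypothesis of Lemma \ref{zhenok} justified by the facts recorded right after the lemma, namely $\phi^{e}\nabla u\in H^1$ and $\phi^{2e}\nabla^{k}u\in L^6$ for $k=1,2$.

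One justification in your last paragraph is, however, wrong and should be removed: you claim that ``$\phi^{2e}$ is bounded (its lower bound is irrelevant here, only the upper bound matters) so that $\phi^{2e}\nabla^\xi u$ inherits the $L^6$ and $D^1$ bounds of $\nabla^\xi u$.'' This is backwards. Since $e=\frac{\delta-1}{2(\gamma-1)}<0$ and $\phi=\frac{A\gamma}{\gamma-1}\rho^{\gamma-1}\to 0$ as $|x|\to\infty$, the weight $\phi^{2e}$ is a constant multiple of $\rho^{\delta-1}$, which tends to $+\infty$ in the far field; only its positive \emph{lower} bound is uniform (this unbounded coefficient is exactly the ``strong singular elliptic'' feature the paper is built around), so boundedness of the weight cannot be used to transfer $L^6$ or $D^1$ control from $\nabla^\xi u$ to $\phi^{2e}\nabla^\xi u$. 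The correct and sufficient justification is the one you give earlier in the same paragraph: the regular-solution class itself (Definition \ref{d1}, Theorem \ref{th2}) provides $\rho^{\delta-1}\nabla^2 u\in C([0,T];H^1)$ and $\rho^{\delta-1}\nabla u\in L^\infty([0,T];D^1)$, i.e.\ precisely $\phi^{2e}\nabla^{k}u\in L^6$ for $k=1,2$, which is what legitimizes Lemma \ref{zhenok} here. With that sentence deleted, your proposal coincides with the paper's argument.
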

\begin{proof}
According to  the equations $ (\ref{we22})_2$, for multi-index $\xi\in \mathbb{R}^3$, one has
\begin{equation}\label{jiegou1}
\begin{split}
a L(\phi^{2e} \nabla^\xi u)=& a \phi^{2e} \nabla^\xi Lu-aG(\nabla \phi^{2e},\nabla^\xi u)\\
=&-\phi^{2e} \nabla^\xi \big( \phi^{-2e} H \big)-\frac{\delta-1}{\delta}G(\psi,\nabla^\xi u ).
%&a (2\alpha+\beta) \triangle (h\nabla \text{div}u)=\nabla \text{div}\big(u_t+v\cdot \nabla v+\nabla \phi-\psi \cdot Q(v)\big)\\
%&-a\alpha \nabla (\nabla h \cdot \triangle u)-a(\alpha+\beta)\nabla (\nabla h\cdot \nabla \text{div}u)-a (2\alpha+\beta) \nabla h  \triangle \text{div}u\\
%&+2a (2\alpha+\beta) (\nabla h \cdot \nabla) \nabla \text{div}u+a (2\alpha+\beta)\triangle  h\cdot  \nabla\text{div}u,
\end{split}
\end{equation}

\end{proof}
\begin{remark}
In the above three lemmas, one has used the facts: for $k=1$, $2$,
$$
\phi^e\nabla u\in H^1(\mathbb{R}^3),\quad \phi^{2e}\nabla^{k} u\in L^6(\mathbb{R}^3).$$

\end{remark}
\subsection{Verification of compatibility conditions}

According to Theorem 1.1 and the initial assumption \eqref{th78}-\eqref{th78zx}, now we have the unique regular solution $(\rho, u)$ to the Cauchy problem \eqref{eq:1.1}-\eqref{far} in $[0,T]\times \mathbb{R}^3$. In order to extend this local solution from $[0, T ]$  to the time interval $[0, T_1]$ with $T_1 > T$,  we need to apply Theorem 1.1 at $t = T$ again. Thus we need to make sure that the compatibility condition \eqref{th78zx} still holds at time $t = T$. For this purpose, we first need the following lemma:
\begin{lemma}
\begin{equation}\label{verificationcom}
\begin{split}
&\phi^e\nabla u\in C([0,T]; L^2),\quad \phi^{2e} Lu\in C([0,T]; L^2),\\
&  \phi^e\nabla u_t\in L^\infty([0,T]; L^2),\quad \phi^{2e}\nabla^2 u_t\in L^2([0,T]; L^2),\\
& t^{\frac{1}{2}}\phi^e\nabla u_{tt}\in L^2([0,T]; L^2),  \quad  t\phi^{e}\nabla (\phi^{2e} Lu)\in C([0,T]; L^2).
\end{split}
\end{equation}
\end{lemma}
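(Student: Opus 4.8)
The plan is to establish each of the six regularity statements in \eqref{verificationcom} by combining the regularities already guaranteed by Theorem~\ref{th2} (in particular \eqref{reg11}) with the elliptic estimates of Lemmas~\ref{5jie}--\ref{4jie} and the standard Lam\'e estimate of Lemma~\ref{zhenok}. The guiding principle is that the weighted quantities $\phi^{e}\nabla u$, $\phi^{2e}Lu$, etc., which appear in the compatibility conditions \eqref{th78zx}, are \emph{not} directly among the listed regularities, so one must reconstruct their space-time continuity by interpolation between a uniform bound in a high norm and weak/strong continuity in a lower norm. First I would record the elementary facts $\phi^{2e}=a^{-1}\varphi^{-1}$ has a uniform positive lower bound (vacuum only in the far field) and $\phi^{\gamma-1}\in C([0,T];H^3)$, so all powers $\phi^{e}$, $\phi^{2e}$ together with their first and second spatial derivatives lie in $C([0,T];L^\infty\cap L^6)$ and in $L^\infty([0,T]; \text{appropriate Sobolev spaces})$ via the chain rule and the Gagliardo--Nirenberg inequalities; this is the routine bookkeeping underlying every step.

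For the first two statements, $\phi^{e}\nabla u\in C([0,T];L^2)$ follows because $\nabla u\in C([0,T];H^2)$ from \eqref{reg11} and $\phi^{e}$ is a $C([0,T];L^\infty)$ multiplier with, additionally, $\rho^{\frac{\delta-1}{2}}\nabla u=a^{-1/2}\phi^{e}\nabla u\in C([0,T];L^2)$ already appearing in Definition~\ref{d1}(B); the point is just to identify $\rho^{(\delta-1)/2}$ with a constant multiple of $\phi^{e}$ using $\phi=\frac{A\gamma}{\gamma-1}\rho^{\gamma-1}$. For $\phi^{2e}Lu\in C([0,T];L^2)$ I would rewrite $a\phi^{2e}Lu = -H = -(u_t+u\cdot\nabla u+\nabla\phi-\psi\cdot Q(u))$ from \eqref{we22}$_2$ and check that each term on the right is in $C([0,T];L^2)$: $u_t\in C([0,T];H^1)$, $u\cdot\nabla u$ is a product of $C([0,T];H^3)$ functions, $\nabla\phi\in C([0,T];H^2)$, and $\psi\cdot Q(u)$ is controlled by $\psi\in C([0,T];D^1\cap D^2)\hookrightarrow C([0,T];L^\infty\cap L^6)$ times $Q(u)\in C([0,T];H^2)$. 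The third statement $\phi^{e}\nabla u_t\in L^\infty([0,T];L^2)$ likewise matches $\rho^{\frac{\delta-1}{2}}\nabla u_t\in L^\infty([0,T];L^2)$ from Definition~\ref{d1}(B) after the same identification of powers.

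The fourth and sixth statements, $\phi^{2e}\nabla^2 u_t\in L^2([0,T];L^2)$ and $t\,\phi^{e}\nabla(\phi^{2e}Lu)\in C([0,T];L^2)$, are where the elliptic lemmas enter and are the main obstacle. For $\phi^{2e}\nabla^2 u_t$ I would apply Lemma~\ref{3jie}-type reasoning to $u_t$ rather than $u$: differentiate \eqref{we22}$_2$ in $t$ to obtain $aL(\phi^{2e}u_t) = -\partial_t(\phi^{2e}H)/\phi^{2e}\cdot\phi^{2e}+\dots$ (schematically, $aL(\phi^{2e}u_t)=-H_t-\text{(lower order in }u_t,\phi_t,\psi_t\text{)}$ plus a $G(\psi,u_t)$ commutator), then invoke Lemma~\ref{zhenok} with $k=0$, $q=2$ to bound $|\phi^{2e}u_t|_{D^2}$ by $L^2$-norms whose time-integrability is supplied by $u_{tt}\in L^2([0,T];L^2)$ and $t^{1/2}u_t\in L^2([0,T];D^3)$ in \eqref{reg11}; one must verify the far-field hypothesis $\phi^{2e}\nabla^2 u_t\in L^6$ and $\phi^{2e}\nabla u_t\in L^6$, $\phi^e\nabla u_t\in L^2$ so that Lemma~\ref{zhenok} applies, exactly as flagged in the Remark after Lemma~\ref{4jie}. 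For the sixth, I would differentiate the identity $a\phi^{2e}Lu=-H$ once in space, multiply by $\phi^e$, and express $\phi^e\nabla(\phi^{2e}Lu)$ in terms of $\phi^e\nabla H$ and commutators $\nabla\phi^e\cdot H$; the continuity in $t$ then reduces to $\phi^e\nabla H\in C([0,T];L^2)$ after multiplication by $t$, which uses $t^{1/2}u_t\in L^\infty([0,T];D^2)\cap L^2([0,T];D^3)$ and $t^{1/2}u\in L^\infty([0,T];D^4)$ to absorb the loss of one derivative, together with the standard Aubin--Lions/weak-continuity argument to upgrade ``$L^\infty$ in time plus a time derivative estimate'' to ``$C$ in time''. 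The fifth statement $t^{1/2}\phi^e\nabla u_{tt}\in L^2([0,T];L^2)$ is the cleanest: it is $\rho^{(\delta-1)/2}\nabla u_{tt}$ up to a constant, and $t^{1/2}u_{tt}\in L^2([0,T];D^1)$ in \eqref{reg11} combined with $\phi^e\in L^\infty([0,T];L^\infty)$ gives it immediately. I expect the genuine difficulty to be the careful verification of the far-field decay conditions needed to legitimately apply Lemma~\ref{zhenok} to the weighted velocity and its time derivative (since $\phi^{2e}\to\infty$ at infinity, one cannot take those decay properties for granted), and the bookkeeping to confirm that every right-hand side produced by the elliptic reformulations lies in the claimed space-time class without circularity.
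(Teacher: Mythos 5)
There is a genuine gap, and it originates in your bookkeeping premise. Since $e=\frac{\delta-1}{2(\gamma-1)}<0$ and $\phi\to 0$ at the far field, the weights $\phi^{e}$ and $\phi^{2e}$ are \emph{unbounded}: they tend to $+\infty$ as $|x|\to\infty$. Your claim that ``all powers $\phi^{e}$, $\phi^{2e}$ \dots lie in $C([0,T];L^\infty\cap L^6)$'' is therefore false, and it is used crucially in your fifth item: $t^{1/2}\phi^{e}\nabla u_{tt}\in L^2([0,T];L^2)$ does \emph{not} follow ``immediately'' from $t^{1/2}u_{tt}\in L^2([0,T];D^1)$ by multiplying with a bounded weight, because the weight is not bounded. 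The same flaw contaminates parts of your treatment of the sixth item, where terms such as $\phi^{e}\nabla u_t$, $\phi^{e}\nabla^2\phi$ and $\phi^{e}\nabla(\psi\cdot Q(u))$ must be handled with the singular weight intact. For your fourth item, the route you propose—apply Lemma \ref{zhenok} to $aL(\phi^{2e}u_t)$—is exactly the route the paper warns against in \S 2.3.2: the estimate $|\phi^{2e}u_t|_{D^2}\le C|Y|_2$ ``might not hold'' because one does not know $\phi^{2e}u_t\in L^p$ for any $p$, nor any far-field decay of $\phi^{2e}u_t$, so the hypothesis $u\in D^{1,q}_0$ of Lemma \ref{zhenok} cannot be verified. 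You flag this difficulty yourself but do not resolve it, and resolving it is the whole content of the step.

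The paper's proof is structured differently at precisely these points. Items one through three are read off directly from Theorem \ref{th2} and Definition \ref{d1} (as you also note via the identification $\phi^{e}=c\,\rho^{(\delta-1)/2}$). Items four and five are \emph{not} re-derived: they are quoted from the construction of the solution in \cite{zz2} (estimates (3.74), (3.80) and convergences (3.85), (3.127), (3.141) there), where the weighted bounds are obtained through an approximation from non-vacuum data—compare the $(\phi+\eta)^{2e}$ regularization plus Fatou argument of Lemma \ref{s4jjk}, which is how such bounds are legitimately produced when the far-field hypothesis of Lemma \ref{zhenok} is unavailable. For the sixth item the paper writes $a\phi^{3e}\partial_k Lu$ from the momentum equation $\eqref{we22}_2$, shows it lies in $L^\infty([0,T];L^2)$, shows $t(\phi^{3e}\partial_k Lu)_t\in L^2([0,T];L^2)$ using \eqref{we22uy} and \eqref{xinxiuy}, and then upgrades to $C([0,T];L^2)$ by the temporal Sobolev embedding; the commutator piece $\phi^{e}\partial_k\phi^{2e}\,Lu$ is treated separately. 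Your sketch for this item is in the same spirit, but as written it leans on time-continuity of weighted quantities (e.g.\ $\phi^{e}\nabla u_t$) that are only known in $L^\infty([0,T];L^2)$, which is why the factor $t$ and the time-derivative estimate are essential in the paper's argument.
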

\begin{proof}
First, the first line  and the first property in the second line of  \eqref{verificationcom} can be obtained directly from Theorem 1.1.  

Second, the second property  in the second line and the first one in the third line  of  \eqref{verificationcom}, actually have been proven in \cite{zz2}, but is not indicated clearly in Theorem 1.1 and Definition 1.1, which can be implied by the
estimates (3.74) and (3.80), convergences (3.85), (3.127) and (3.141) of \cite{zz2}. 

Next, for the second   property in the third  line of  \eqref{verificationcom}, it follows from equations $\eqref{we22}_2$, Definition 1.1,   the proved properties of \eqref{verificationcom}  and Theorem 1.1 that
\begin{equation}\label{verificationcomuy}
\begin{split}
a\phi^{3e}\partial_k Lu=&-a\partial_k \phi^{3e}Lu-\partial_k\phi^e u_t-\phi^e\partial_k u_t-\phi^e\partial_k(u\cdot \nabla u)\\
&-\partial_k \phi^e u\cdot \nabla u-\partial_k(\phi^e\nabla \phi)+\partial_k\phi^e\psi\cdot Q(u)\\
&+\phi^e \partial_k(\psi\cdot Q(u)) \in L^\infty([0,T]; L^2),
\end{split}
\end{equation}
for $k = 1$, $2$, $3$.

It follows from the following relations
\begin{equation}
\begin{cases}
\label{we22uy}
\displaystyle
\phi^e_t=-u\cdot \nabla \phi^e-\frac{\delta-1}{2}\phi^e \text{div} u,\\[12pt]
\displaystyle
\psi_t=-\sum_{l=1}^3 A_l \partial_l\psi-B\psi-\delta a\phi^{2e}\nabla \text{div} u,
 \end{cases}
\end{equation}
Definition 1.1 and Theorem 1.1 that
\begin{equation}
\label{xinxiuy}
\displaystyle
\phi^e_t, \quad  \nabla \phi^e_t, \quad \psi_t, \quad \nabla \psi_t \in L^\infty([0,T];L^2),
\end{equation}
which, along with the relation \eqref{verificationcomuy}, implies that
\begin{equation}
\label{xinxi2uy}
\displaystyle
at(\phi^{3e}\partial_k Lu)_t\in L^2([0,T];L^2).
\end{equation}
Then it follows from  \eqref{verificationcomuy}, \eqref{xinxi2uy} and the classical Sobolev imbedding theorem that
\begin{equation}
\label{xinxi3uy}
\displaystyle
a t \phi^{3e}\partial_k Lu\in C([0,T];L^2).
\end{equation}

Similarly, one  can also show that
\begin{equation}
\label{xinxi4uy}
\displaystyle
a\phi^{e}\partial_k\phi^{2e} Lu\in C([0,T];L^2),
\end{equation}
 which, together with \eqref{xinxi3uy}, implies the desired conclusion.

The above lemma implies that, if  $(\rho,u)$ is the unique regular solution in $[0,T]\times \mathbb{R}^3$ to the Cauchy problem \eqref{eq:1.1}-\eqref{far}, then one can obtain that
\begin{equation}\label{specal0}
(\phi^e \nabla u)(T,x), \quad (\phi^{2e} Lu)(T,x), \quad (\phi^{e} \nabla (\phi^{2e} Lu))(T,x) \in L^2(\mathbb{R}^3).
\end{equation}
This means that the compatibility conditions in \eqref{th78zx} are still availible at $t = T$.
\end{proof}

\subsection{Verification of some special initial condition}
In the initial assumption \eqref{th78} of Theorem 1.1, the authors require that
\begin{equation}
\label{special1}
\displaystyle
\nabla \rho^{\frac{\delta-1}{2}}_0 \in L^4,
\end{equation}
which, actually, is only used in the initial data’s approximation process from the non-vacuum flow to the flow with the far field vacuum, and does not appear in the corresponding energy estimates process in the proof for the local existence of the
regular solution shown in \cite{zz2}. Therefore, in Theorem 1.1, the regularity of the quantity $\nabla \rho^{\frac{\delta-1}{2}}$ has not been mentioned.

However, whether the fact  $\nabla \rho^{\frac{\delta-1}{2}} \in C([0,T];L^4)$ holds or not   is very crucial for our following proof, not only for the application of the local existence theorem at some
positive time, but also for the estimate on $\phi^e \nabla u_t$ (see Lemma 4.11). Now we will verify this desired information in this subsection.

Denote
$$
\omega=\nabla \rho^{\frac{\delta-1}{2}},
$$
 it follows from the equation $\eqref{eq:1.1}_1$ that $\omega$ satisfies the following equations
\begin{equation}
\label{special2}
\displaystyle
\omega_t+\sum_{l=1}^3 A_l \partial_l\omega+A^*(u)\omega+\frac{\delta-1}{2}\sqrt{a} \phi^{e}\nabla \text{div} u=0,
\end{equation}
where $A^*(u)=(\nabla u)^\top+\frac{\delta-1}{2}\text{div}u\mathbb{I}_3$.

\begin{lemma}\label{special3}
$$
 \omega\in C([0,T];L^6\cap L^\infty\cap D^{1,3}\cap D^2), \quad \omega_t \in L^\infty([0,T];H^1).
$$
\end{lemma}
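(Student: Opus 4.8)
The plan is to treat \eqref{special2} as a symmetric hyperbolic system for $\omega$ with a forcing term $\frac{\delta-1}{2}\sqrt{a}\,\phi^e\nabla\operatorname{div}u$, and to run the standard energy estimates for such systems in each of the spaces $L^6$, $L^\infty$, $D^{1,3}$ and $D^2$, controlling every right-hand side by quantities already known to be finite from Theorem \ref{th2} and the preceding lemmas. The coefficients $A_l$ are the (multiples of the identity) matrices $u^{(l)}\mathbb{I}_3$ and $A^*(u)=(\nabla u)^\top+\tfrac{\delta-1}{2}\operatorname{div}u\,\mathbb{I}_3$, so the relevant norms of the coefficients are controlled by $\|u\|_{L^\infty([0,T];H^3)}$ via Sobolev embedding; this is where I would begin, recording $u\in L^\infty([0,T];W^{1,\infty}\cap W^{2,3})$ and $\nabla u\in L^2([0,T];W^{2,\infty})$ from $u\in L^2([0,T];H^4)$. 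The key point for the forcing term is that, by Lemma \ref{3jie} (and its $D^2$-analogue mentioned right after Lemma \ref{4jie}), $\phi^{2e}\nabla^2 u$ lies in $L^\infty([0,T];D^1\cap D^2)$ with bounds depending only on $|f|$-type and $\|\phi\|$-type norms already in \eqref{reg11}; combined with $\phi^{-e}\in L^\infty$ (uniform positive lower bound of $\phi^{2e}$ is false — rather $\phi^{2e}$ is bounded above, $\phi^{-2e}$ bounded below; the correct fact is $\phi^e\nabla^2 u=\phi^{-e}\cdot\phi^{2e}\nabla^2 u$ with $\phi^{-e}\in L^\infty$ since $e<0$) one gets $\phi^e\nabla\operatorname{div}u\in L^\infty([0,T];L^6\cap L^\infty\cap D^{1,3}\cap D^2)$ after also using the product rule $\nabla(\phi^e\nabla\operatorname{div}u)=\nabla\phi^e\cdot\nabla\operatorname{div}u+\phi^e\nabla^2\operatorname{div}u$ together with $\nabla\phi^e=\tfrac{\delta-1}{2\delta a}\phi^{-e}\psi\in L^\infty([0,T];L^6\cap D^{1,3}\cap\cdots)$ via $\psi\in C([0,T];D^1\cap D^2)$.

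Concretely, for the $L^p$-estimates ($p=6$, then $p=\infty$ as a limit, and also for $\nabla\omega$ in $L^3$ and $\nabla^2\omega$ in $L^2$) I would multiply \eqref{special2}, or its spatial derivative, by $|\omega|^{p-2}\omega$ (resp. $|\nabla\omega|\nabla\omega$, resp. $\nabla^2\omega$), integrate, and use the symmetry of $A_l$ to move one derivative off the transport term, producing the Grönwall structure
\begin{equation*}
\frac{d}{dt}\|\omega\|_{Y}\le C\big(\|\nabla u\|_{W^{2,\infty}}+\|\nabla u\|_{W^{1,\infty}}\big)\|\omega\|_{Y}+C\big\|\phi^e\nabla\operatorname{div}u\big\|_{Y},
\end{equation*}
for $Y\in\{L^6,L^\infty,D^{1,3},D^2\}$, where the commutator terms $[\partial^\alpha,A_l]\partial_l\omega$ and $\partial^\alpha(A^*\omega)$ are handled by the usual Moser-type product and commutator estimates (collected in the appendix), all of whose coefficients are in $L^1([0,T])$ in time because $\|\nabla u\|_{W^{2,\infty}}\in L^2([0,T])\subset L^1([0,T])$. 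Integrating in time from $0$ (using $\omega_0=\nabla\rho_0^{(\delta-1)/2}\in L^4$ — wait, the hypothesis \eqref{th78} gives $\nabla\rho_0^{(\delta-1)/2}\in L^4$, but the target spaces here are $L^6\cap L^\infty\cap D^{1,3}\cap D^2$; the required initial regularity of $\omega_0$ in these stronger spaces follows from $\nabla\rho_0^{\delta-1}\in D^1\cap D^2$ and $\nabla\rho_0^{(\delta-1)/2}$-type interpolation exactly as in the approximation scheme of \cite{zz2}, and I would cite that) yields the stated $L^\infty([0,T];\cdot)$ bound; the time-continuity $\omega\in C([0,T];\cdot)$ then follows from the equation (weak continuity plus norm continuity, or the standard Bona–Smith-type argument for hyperbolic systems). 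Finally $\omega_t\in L^\infty([0,T];H^1)$ is read directly off \eqref{special2}: $\omega_t=-\sum_l A_l\partial_l\omega-A^*(u)\omega-\tfrac{\delta-1}{2}\sqrt a\,\phi^e\nabla\operatorname{div}u$, and each term on the right is in $L^\infty([0,T];H^1)$ by the bounds just established together with $u\in L^\infty([0,T];H^3)$ and the product estimates.

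The main obstacle I anticipate is not the transport estimates, which are routine, but verifying that the forcing term $\phi^e\nabla\operatorname{div}u$ genuinely sits in $D^2$ (i.e.\ that $\phi^e\nabla^3 u$ and the lower-order product terms are in $L^2$) \emph{uniformly on $[0,T]$ with constants independent of the density's lower bound}; this is exactly the kind of singular-weight elliptic estimate that Lemmas \ref{5jie}–\ref{4jie} are designed to provide, so the real work is to assemble them correctly — in particular to express $\phi^e\nabla^3 u$ through $\phi^e\nabla^2 u\in D^1$ (Lemma \ref{5jie} with the $D^1$-version) plus commutators involving $\nabla\phi^e=\tfrac{\delta-1}{2\delta a}\phi^{-e}\psi$, and to check that the far-field decay hypotheses needed to apply the elliptic theory of Lemma \ref{zhenok} (namely $\phi^e\nabla u\in L^2$, $\phi^{2e}\nabla^2 u\in L^6$, etc., as listed in the Remark after Lemma \ref{4jie} and in \eqref{verificationcom}) are all in place. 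Once those weighted elliptic bounds are granted, closing the Grönwall inequalities is straightforward.
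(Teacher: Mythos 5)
Your route is genuinely different from the paper's, and much heavier than the lemma requires. The paper's (omitted) proof is a pure chain-rule computation: writing $\omega=\nabla\rho^{\frac{\delta-1}{2}}=\tfrac12\rho^{\frac{1-\delta}{2}}\nabla\rho^{\delta-1}$ and using $\rho^{1-\delta}\nabla\rho^{\delta-1}=(\delta-1)\nabla\log\rho$ for the higher derivatives, every norm in the statement is a finite product of quantities already listed in Definition \ref{d1} and \eqref{reg11}, namely $\rho^{1-\delta}\in L^\infty([0,T];L^\infty\cap D^{1,6}\cap D^{2,3}\cap D^3)$, $\nabla\rho^{\delta-1}\in C([0,T];D^1\cap D^2)\cap L^\infty([0,T];L^\infty)$ and $\nabla\log\rho\in L^\infty([0,T];L^\infty\cap L^6\cap D^{1,3}\cap D^2)$; no evolution equation for $\omega$ is needed at all, and $\omega_t\in L^\infty([0,T];H^1)$ is then read off \eqref{special2} exactly as you do at the end. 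The transport-equation machinery you deploy is what the paper saves for Lemma \ref{special4}, i.e.\ for the one norm ($L^4$) that does \emph{not} follow from Theorem \ref{th2} and genuinely needs the weighted cut-off and Fatou argument.

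Your proposal could be made to work, but as written it has concrete defects. First, the claimed bound $\phi^{e}\nabla\mathrm{div}\,u\in L^\infty([0,T];L^\infty\cap D^{1,3}\cap D^2)$ is not available from Theorem \ref{th2}: Definition \ref{d1} gives $\rho^{\delta-1}\nabla^2u\in C([0,T];H^1)\cap L^2([0,T];D^2)$ and $u_t\in L^2([0,T];D^2)$ only (the $D^2$- and $D^4$-level bounds in \eqref{reg11} carry the weight $t^{1/2}$), so the $D^2$ and $L^\infty_x$ norms of your forcing are controlled only in $L^2$ (hence $L^1$) in time; the Gr\"onwall inequalities survive with $L^1_t$ coefficients, but the uniform-in-time statement you assert fails near $t=0$, and the same caveat applies to invoking the ``$D^2$-analogue'' of Lemma \ref{3jie}, whose right-hand side contains $\nabla^2 u_t$. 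Second, $u\in L^2([0,T];H^4)$ yields $\nabla u\in L^2([0,T];W^{1,\infty})$, not $W^{2,\infty}$ (only $\nabla^3u\in H^1\subset L^6$); fortunately the $D^2$-estimate for $\omega$ needs $\nabla^3u$ only in $L^2$ against $\omega\in L^\infty$. Third, your scheme requires $\omega_0\in L^6\cap L^\infty\cap D^{1,3}\cap D^2$, which you defer to the approximation scheme of \cite{zz2}; in fact it follows from \eqref{th78} by the same chain-rule computation indicated above ($\rho_0$ bounded, $\nabla\rho_0^{\delta-1}\in D^1\cap D^2\subset L^6\cap L^\infty$), and it should be checked explicitly rather than cited -- indeed, once you verify it you will see that the identical computation applied at time $t$ proves the whole lemma and makes the transport estimates unnecessary. (A small slip: $\phi^{2e}$ does have a uniform positive lower bound and is unbounded above, since $2e<0$ and $\phi$ is bounded; the fact you actually use, $\phi^{-e}\in L^\infty$, is correct.)
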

The proof of the above lemma can be easily got from the regularity of the solution
$(\rho,u)$ obtained in Theorem 1.1. Here we omit it. 

\begin{lemma}\label{special4}
$$  \omega \in C([0,T];L^4)\quad \text{and}\quad  \nabla f\in L^\infty([0,T]; L^2).$$
\end{lemma}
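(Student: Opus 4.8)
The plan is to obtain the $L^4$-bound on $\omega$ directly from the transport-type equation \eqref{special2} by standard $L^p$-energy estimates, using the regularity already secured in Lemma \ref{special3} and Theorem \ref{th2}. First I would multiply \eqref{special2} by $|\omega|^2\omega$ and integrate over $\mathbb{R}^3$; the transport terms $\sum_l A_l\partial_l\omega$ contribute, after integration by parts, a term controlled by $|\nabla u|_\infty |\omega|_4^4$ (since the $A_l$ are just $u^{(l)}$ on the diagonal), and the zeroth-order term $A^*(u)\omega$ is likewise bounded by $|\nabla u|_\infty|\omega|_4^4$. The only term requiring care is the inhomogeneous term $\tfrac{\delta-1}{2}\sqrt a\,\phi^e \nabla\operatorname{div}u$: pairing it with $|\omega|^2\omega$ and using Hölder gives a bound $C|\phi^e\nabla^2 u|_4\,|\omega|_4^3$, so that
$$
\frac{d}{dt}|\omega|_4 \le C|\nabla u|_\infty |\omega|_4 + C|\phi^e\nabla^2 u|_4.
$$
Then Grönwall's inequality closes the estimate provided $\int_0^T |\phi^e\nabla^2u|_4\,\mathrm dt < \infty$ and $\omega_0 \in L^4$; the latter follows from \eqref{th78} ($\nabla\rho_0^{(\delta-1)/2}\in L^4$), and the former I would extract from the regularity \eqref{reg11}, interpolating $\phi^e\nabla^2 u$ between $\phi^e\nabla u \in L^\infty_tL^2$ (from Lemma \ref{5jie}/Definition \ref{d1}) and $\phi^{2e}\nabla^2 u \in L^\infty_t D^1 \subset L^\infty_tL^6$ together with the $L^\infty$-bound on $\phi^{-e}$; the time-continuity $\omega\in C([0,T];L^4)$ then follows from the weak continuity plus continuity of the norm, as usual for symmetric hyperbolic systems.

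For the second assertion, $\nabla f \in L^\infty([0,T];L^2)$, I would use the relations between the variables collected after \eqref{bianliang}, namely $f = \psi\varphi$ with $\varphi = \phi^{-2e}$, so that $\nabla f = \varphi\nabla\psi + \psi\nabla\varphi$. The term $\varphi\nabla\psi$ is bounded in $L^2$ because $\varphi = \phi^{-2e} = a\rho^{1-\delta}$ is bounded (Theorem \ref{th2} gives $\rho^{1-\delta}\in L^\infty$) and $\nabla\psi \in L^\infty([0,T];L^2)$ by Definition \ref{d1}(A) ($\nabla\rho^{\delta-1}\in L^\infty_tD^2$, hence $\psi\in L^\infty_tD^1$). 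For the term $\psi\nabla\varphi$, note $\nabla\varphi = -\tfrac{2e}{a}\,\varphi\,f$ up to constants (differentiating $\varphi=\phi^{-2e}$ and using $\psi = \tfrac{a\delta}{\delta-1}\nabla\varphi^{-1}$), so this reduces to controlling $\varphi\,\psi\,f = \varphi^2\,\psi^2$ in $L^2$, i.e. $|\psi\varphi|_4^2$; here is exactly where the $L^4$-bound on $\omega$ enters, since $\omega = \nabla\rho^{(\delta-1)/2}$ is a constant multiple of $\rho^{(\delta-1)/2}\nabla\log\rho$ and hence comparable to $\phi^{-e}\psi\varphi^{?}$ — more precisely one checks $\psi\varphi \sim \phi^e\omega$ (both equal constants times $\rho^{(\delta-1)/2}\nabla\rho^{(\delta-1)/2}$-type expressions), so $|\psi\varphi|_4 \le |\phi^e|_\infty|\omega|_4 < \infty$. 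Combining the two pieces gives $\nabla f \in L^\infty([0,T];L^2)$.

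The main obstacle I anticipate is bookkeeping the precise algebraic dictionary among $\rho$, $\phi$, $\varphi$, $f$, $\psi$, $\omega$ so that the inhomogeneous term in \eqref{special2} and the product $\psi\nabla\varphi$ are expressed in quantities already known to lie in the right spaces; in particular one must verify that $\phi^e\nabla^2 u$ is time-integrable (not merely bounded), which forces an interpolation argument rather than a direct reading of \eqref{reg11}, and one must be careful that $\omega_0\in L^4$ is genuinely available from the hypotheses of Theorem \ref{th2} and not only from the approximation scheme. Everything else is a routine $L^p$-energy estimate plus Grönwall, and the time-continuity is standard for the symmetric hyperbolic structure of \eqref{special2}.
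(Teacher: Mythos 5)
Your overall strategy coincides with the paper's (an $L^4$ energy estimate plus Gronwall for $\omega$, then an algebraic identity plus the $L^4$ bound for $\nabla f$), but as written there is one genuine gap and one sign error. The gap: you multiply \eqref{special2} by $|\omega|^2\omega$ and integrate over all of $\mathbb{R}^3$, differentiating $\int|\omega|^4$ in time. At positive times you do not yet know that $\omega(t)\in L^4$ --- that is precisely what the lemma asserts, and Lemma \ref{special3} only provides $\omega\in L^6\cap L^\infty\cap D^{1,3}\cap D^2$ --- so the quantity you apply Gronwall to may a priori be infinite and the formal computation begs the question. The paper closes exactly this point by inserting the integrable weight $f_R(x)=f(|x|/R)$ (so every localized integral in \eqref{special6} is finite), deriving the differential inequality \eqref{special7} with constants independent of $R$, applying Gronwall, and only then removing the cutoff with Fatou's lemma. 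Some such localization (or an equivalent truncation/approximation of $|\omega|^2\omega$) is needed; without it your argument is only formal. Incidentally, the paper also treats the forcing term more cheaply than you do: it pairs $\phi^e\nabla\mathrm{div}\,u$ with $\omega|\omega|^2 f_R$ and bounds it by $|\phi^e\nabla\mathrm{div}\,u|_2\,|\omega|_6^3\le C$, using only the already-known $L^6$ bound on $\omega$ and $\phi^{2e}\nabla^2u\in H^1$ with $\phi^{-e}$ bounded, so no $L^1_t L^4$ integrability of $\phi^e\nabla^2 u$ and no interpolation is required; your interpolation ``between $\phi^e\nabla u\in L^2$ and $\phi^{2e}\nabla^2u\in L^6$'' is not the right device anyway --- the correct observation is simply $\phi^e\nabla^2u=\phi^{-e}\cdot\phi^{2e}\nabla^2u$ with $\phi^{2e}\nabla^2u\in C([0,T];H^1)\hookrightarrow L^4$.

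For the second assertion your factorization $f=\psi\varphi$ is equivalent to the paper's direct differentiation of $\rho^{1-\delta}\nabla\rho^{\delta-1}$, but the dictionary is off by a sign in the exponent: since $\psi=\tfrac{2\delta}{\delta-1}\rho^{(\delta-1)/2}\omega$ and $\varphi=a\rho^{1-\delta}$, one has $\psi\varphi=c\,\rho^{(1-\delta)/2}\omega=c\,\phi^{-e}\omega$, not $\phi^{e}\omega$. The bound you state, $|\psi\varphi|_4\le|\phi^e|_\infty|\omega|_4$, fails because $\phi^e\sim\rho^{(\delta-1)/2}$ is unbounded near the far-field vacuum ($e<0$, $\rho\to0$); the correct weight $\phi^{-e}\sim\rho^{(1-\delta)/2}$ is bounded, and with that correction the term $\psi\otimes\nabla\varphi$ is controlled exactly as the paper's term $\rho^{2-2\delta}|\nabla\rho^{\delta-1}|^2\sim\rho^{1-\delta}|\omega|^2$. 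Also note that $\nabla\psi\in L^\infty_tL^2$ should be quoted from \eqref{reg11} ($\nabla\rho^{\delta-1}\in C([0,T];D^1\cap D^2)$) rather than inferred from ``$\nabla\rho^{\delta-1}\in D^2$ hence $\psi\in D^1$,'' which is a non sequitur for homogeneous spaces.
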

\begin{proof} According to Lemma \ref{special3} and the Sobolev imbedding theorem, for the time continuity of $\omega$, we only need
to show that
\begin{equation}
\label{special5}
 \omega \in L^\infty([0,T];L^4).
\end{equation}

In fact, let $f: \mathbb{R}^{+} \to \mathbb{R}$ satisfy
\begin{align*}
f(s)=\begin{cases}
1, & s \in [0,\frac{1}{2}]\\
\text{non-negative polynomial}, & s \in [\frac{1}{2}, 1]\\
e^{-s}, & s \in [1,\infty)
\end{cases}
\end{align*}
such that $f \in C^2$. Then there exists a generic constant $C_* >0$ such that 
\begin{align*}
|f'(s)| \le C_* f(s),
\end{align*}
Define, for any $R>0$, $
f_R(x)=f(\frac{|x|}{R})$.

  First, according to Lemma \ref{special3}, it is easy to see that for any given $R>0$,
\begin{equation}\label{special6}
\begin{split}
\int\Big(\sum_{l=1}^3 A_l \partial_l\omega+A^*(u)\omega+\frac{\delta-1}{2}\sqrt{a} \phi^{e}\nabla \text{div} u\Big)\cdot \omega |\omega|^2<&\infty,\\
\int\Big(\sum_{l=1}^3 \partial_lA_l +\partial_lA_l  f_R(x)+A_l\partial_lf_R(x)\Big) |\omega|^4<&\infty,\\
\int\Big(|\omega|^4f_R(x)+\omega_t\cdot \omega |\omega|^2+\omega_t \cdot \omega |\omega|^2f_R(x)\Big)<&\infty.
\end{split}
\end{equation}

Second, since the equations \eqref{special2} holds almost everywhere, one can multiply \eqref{special2}  by $4\omega |\omega|^2f_R(x)$ on its both sides and integrate with respect to $x$ over $\mathbb{R}^3$,
then one has
\begin{equation}\label{special7}
\begin{split}
\frac{d}{dt}\int |\omega|^4 f_R(x)=&  \int\Big(\sum_{l=1}^3 \partial_lA_l  f_R(x)+A_l\partial_lf_R(x)\Big) |\omega|^4\\
&-\int 4f_{R}(x)\Big(A^*(u)\omega+\frac{\delta-1}{2}\sqrt{a} \phi^{e}\nabla \text{div} u\Big)\cdot \omega |\omega|^2\\
\leq &C|\nabla u|_\infty \int |\omega|^4 f_R(x)\\
&+C((1+R^{-1})|\omega|_\infty|u|_2+| \phi^{e}\nabla \text{div} u|_2)|\omega|^3_6\\
\leq &C|\nabla u|_\infty \int |\omega|^4 f_R(x)+C(1+R^{-1}),
\end{split}
\end{equation}
for some  constant $C>0$ depending only on $(\rho_0,u_0,\alpha,\beta,A,\gamma,\delta,T)$  but not on $R$,
which, along with  Gronwall’s inequality, implies that
\begin{equation}\label{special8}
\begin{split}
\int |\omega|^4 f_R(x)\leq C(1+R^{-1}) \quad \text{for} \quad t\in [0,T].
\end{split}
\end{equation}

Note that $|\omega|^4 f_R(x)\rightarrow |\omega|^4$  almost everywhere as $R\rightarrow \infty$, then it follows from the Fatou lemma (see Lemma \ref{Fatou}) that
  \begin{equation}\label{special7}
\begin{split}
\int |\omega|^4  \leq \liminf_{R\rightarrow \infty} \int |\omega|^4 f_R(x)\leq C \quad \text{for} \quad t\in [0,T].
\end{split}
\end{equation}
Then \eqref{special5} has been obtained and the desired conclusion follows quickly.

At last,  it follows from the direct calculations that 
 \begin{equation*}
 \begin{split}
 \nabla f=&\frac{a\delta}{\delta-1}\nabla \Big(\rho^{1-\delta} \nabla \rho^{\delta-1} \Big)\\
 =&\frac{a\delta }{\delta-1} \Big(\rho^{1-\delta} \nabla^2 \rho^{\delta-1}-\rho^{2-2\delta} |\nabla \rho^{\delta-1} |^2 \Big) \in L^\infty([0,T];L^2),
 \end{split}
\end{equation*}
where one has used the fact that $\nabla \rho^{\frac{\delta-1}{2}}\in L^\infty([0,T];L^4)$.

\end{proof}

\subsection{Further explanation on the initial assumptions}
Finally, we show some additional information that is implied by  \eqref{th78}-\eqref{th78zx}.
\begin{lemma}\label{further}
$$
\rho^{\delta-1}_0\nabla u_0 \in D^1(\mathbb{R}^3).
$$
\end{lemma}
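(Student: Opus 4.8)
The goal is to show that the initial compatibility assumptions \eqref{th78}–\eqref{th78zx}, combined with the regularity of $\rho_0$, force $\rho_0^{\delta-1}\nabla u_0\in D^1(\mathbb{R}^3)$, i.e. $\nabla(\rho_0^{\delta-1}\nabla u_0)\in L^2(\mathbb{R}^3)$. First I would expand the gradient by the product rule,
\[
\nabla\big(\rho_0^{\delta-1}\nabla u_0\big)=\nabla\rho_0^{\delta-1}\otimes\nabla u_0+\rho_0^{\delta-1}\nabla^2 u_0,
\]
so it suffices to control the two terms on the right in $L^2$. The first term is easy: by \eqref{th78} we have $\nabla\rho_0^{\delta-1}\in D^1\cap D^2\hookrightarrow L^\infty$ (via Gagliardo–Nirenberg, as in Remark 1.1), while the compatibility condition $\nabla u_0=\rho_0^{(1-\delta)/2}g_1$ with $g_1\in L^2$ gives $\rho_0^{(\delta-1)/2}\nabla u_0=g_1\in L^2$; combined with the boundedness of $\rho_0^{(1-\delta)/2}\nabla\rho_0^{\delta-1}$ (which follows from writing $\nabla\rho_0^{\delta-1}=(\delta-1)\rho_0^{\delta-1}\nabla\log\rho_0$ and controlling $\rho_0^{(\delta-1)/2}\nabla\log\rho_0$ via $\nabla\rho_0^{(\delta-1)/2}\in L^4$ together with interpolation), this handles $\nabla\rho_0^{\delta-1}\otimes\nabla u_0\in L^2$.

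The main work is the second term $\rho_0^{\delta-1}\nabla^2 u_0\in L^2$. The natural tool is the elliptic approach of \S3.1–\S3.2 applied at $t=0$: from the compatibility conditions we know $Lu_0=\rho_0^{1-\delta}g_2$ with $g_2\in L^2$, hence $\rho_0^{\delta-1}Lu_0=g_2\in L^2$, and $\nabla(\rho_0^{\delta-1}Lu_0)=\rho_0^{(1-\delta)/2}g_3$ with $g_3\in L^2$, so $\rho_0^{(\delta-1)/2}\nabla(\rho_0^{\delta-1}Lu_0)\in L^2$. The plan is to regard the identity (the $t=0$, stationary analogue of Lemma 3.1/3.2/3.3)
\[
aL\big(\rho_0^{\delta-1}\partial_k u_0\big)=a\rho_0^{\delta-1}L\partial_k u_0-aG\big(\nabla\rho_0^{\delta-1},\partial_k u_0\big)
=\partial_k\big(\rho_0^{\delta-1}Lu_0\big)-\partial_k\rho_0^{\delta-1}\,Lu_0-aG\big(\nabla\rho_0^{\delta-1},\partial_k u_0\big),
\]
where I have used $\partial_k(\rho_0^{\delta-1}Lu_0)=\partial_k\rho_0^{\delta-1}Lu_0+\rho_0^{\delta-1}\partial_k Lu_0$. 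By Lemma \ref{zhenok} with $q=2$, $k=0$, it follows that
\[
\big|\rho_0^{\delta-1}\partial_k u_0\big|_{D^2}\le C\Big(\big|\partial_k(\rho_0^{\delta-1}Lu_0)\big|_2+\big|\partial_k\rho_0^{\delta-1}\,Lu_0\big|_2+\big|G(\nabla\rho_0^{\delta-1},\partial_k u_0)\big|_2\Big),
\]
provided $\rho_0^{\delta-1}\nabla u_0$ decays suitably at infinity so that the far-field hypothesis of Lemma \ref{zhenok} applies (which is where I would invoke $\rho_0^{\delta-1}\nabla u_0\in L^6$, a consequence of $\nabla u_0\in H^3$ and the boundedness of $\rho_0^{\delta-1}$ — note $\rho_0^{1-\delta}$ is bounded below, equivalently $\rho_0^{\delta-1}$ is bounded above, since vacuum is only at infinity and $\nabla\rho_0^{\delta-1}\in L^\infty$). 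The right-hand side is then estimated term by term: $|\partial_k(\rho_0^{\delta-1}Lu_0)|_2\le|\rho_0^{(1-\delta)/2}|_\infty|\rho_0^{(\delta-1)/2}\partial_k(\rho_0^{\delta-1}Lu_0)|_2$ is finite; $|\partial_k\rho_0^{\delta-1}\,Lu_0|_2\le|\nabla\rho_0^{\delta-1}|_\infty|Lu_0|_2$ — here $Lu_0\in L^2$ because $u_0\in H^3$; and each piece of $G(\nabla\rho_0^{\delta-1},\partial_k u_0)$ (which schematically involves $\nabla\rho_0^{\delta-1}\cdot\nabla^2 u_0$, $\nabla^2\rho_0^{\delta-1}\cdot\nabla u_0$, $u_0\cdot\nabla^2\rho_0^{\delta-1}$ etc.) is bounded in $L^2$ using $\nabla\rho_0^{\delta-1}\in L^\infty$, $\nabla^2\rho_0^{\delta-1}\in L^2$, $u_0\in H^3\hookrightarrow L^\infty$, and $\nabla u_0,\nabla^2 u_0\in H^1\hookrightarrow L^\infty\cap L^2$. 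Summing over $k=1,2,3$ gives $\rho_0^{\delta-1}\nabla^2 u_0\in L^2$, hence $|\nabla(\rho_0^{\delta-1}\nabla u_0)|_2<\infty$.

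The step I expect to be the genuine obstacle is the careful justification that Lemma \ref{zhenok} is applicable at $t=0$ — namely that $\rho_0^{\delta-1}\partial_k u_0\in D^{1,2}_0(\mathbb{R}^3)$ with the correct far-field behavior so that the elliptic estimate holds with no boundary/constant contributions (this parallels the subtlety flagged in \S2.3 around \eqref{possibleguji} for the time-derivative terms, and in the Remark after Lemma \ref{4jie}). Establishing this requires showing $\rho_0^{\delta-1}\nabla u_0\in L^6$ (equivalently $\rho_0^{\delta-1}\nabla u_0\to 0$ at infinity in the appropriate weak sense), which in turn uses that $\rho_0^{\delta-1}$ is bounded and $\nabla u_0\in L^6$ from $u_0\in H^3$; once this is in place the remaining estimates are routine applications of Hölder's inequality, Sobolev embedding, and the elementary identities relating $\nabla\rho_0^{\delta-1}$, $\nabla^2\rho_0^{\delta-1}$, $\nabla\log\rho_0$ and $\nabla\rho_0^{(\delta-1)/2}$ exactly as in the computation of $\nabla f$ at the end of the proof of Lemma \ref{special4}.
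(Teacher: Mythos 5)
There is a genuine gap, and it sits exactly at the point you yourself flag as the obstacle. Your justification that Lemma \ref{zhenok} applies to $\rho_0^{\delta-1}\partial_k u_0$ rests on the claim that ``$\rho_0^{1-\delta}$ is bounded below, equivalently $\rho_0^{\delta-1}$ is bounded above, since vacuum is only at infinity.'' This is false in the setting of the paper: with far field vacuum, $\rho_0(x)\to 0$ as $|x|\to\infty$, and since $\delta\in(0,1)$ the weight $\rho_0^{\delta-1}=\rho_0^{-(1-\delta)}$ blows up at spatial infinity (this is precisely the ``strong singular elliptic'' feature \textbf{SSE} emphasized in \S 1 and \S 2: the coefficient $\rho^{\delta-1}$ in front of $L$ tends to $\infty$ in the far field). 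Consequently $\rho_0^{\delta-1}\nabla u_0\in L^6$ does \emph{not} follow from $\nabla u_0\in L^6$, and indeed this $L^6$ membership is part of what the lemma asserts, since by the paper's convention $D^1=\{f\in L^6:\ \nabla f\in L^2\}$. Without it you have no admissible far-field behavior for $\rho_0^{\delta-1}\partial_k u_0$, so the application of Lemma \ref{zhenok} to $L(\rho_0^{\delta-1}\partial_k u_0)$ is unjustified and the whole argument becomes circular — the same subtlety the paper points out around \eqref{possibleguji}, where the analogous estimate for $\phi^{2e}u_t$ ``might not hold'' for exactly this reason. (A smaller instance of the same slip: your ``boundedness of $\rho_0^{(1-\delta)/2}\nabla\rho_0^{\delta-1}$'' is only an $L^4$ statement, since that quantity equals $2\nabla\rho_0^{(\delta-1)/2}$; for the first term of the product rule one simply uses $\nabla\rho_0^{\delta-1}\in L^\infty$ and $\nabla u_0\in L^2$.)

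The paper closes this gap by regularizing the singular weight rather than the equation: it works with $(\phi_0+\eta)^{2e}$ for $\eta>0$, which \emph{is} bounded (as $2e<0$), so the elliptic identity $aL((\phi_0+\eta)^{2e}u_0)=a(\phi_0+\eta)^{2e}Lu_0-aG(\nabla(\phi_0+\eta)^{2e},u_0)$ and Lemma \ref{zhenok} apply legitimately; the resulting bounds are uniform in $\eta$ because $(\phi_0+\eta)^{2e}\le\phi_0^{2e}$ pointwise and $\nabla(\phi_0+\eta)^{2e}=\frac{\phi_0^{-2e+1}}{(\phi_0+\eta)^{-2e+1}}\nabla\phi_0^{2e}$ is dominated by $\nabla\phi_0^{2e}$, while $\phi_0^{2e}Lu_0\in L^2$ is supplied by the compatibility condition $Lu_0=\rho_0^{1-\delta}g_2$. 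This yields $\phi_0^{2e}\nabla^2u_0\in L^2$ (the argument of Lemma \ref{s4jjk} at $t=0$) and the uniform bound $|(\phi_0+\eta)^{2e}\nabla u_0|_6\le C$, and Fatou's lemma as $\eta\to0$ then gives both $\phi_0^{2e}\nabla u_0\in L^6$ and the weighted second-derivative bound, from which $\nabla(\rho_0^{\delta-1}\nabla u_0)=\nabla\rho_0^{\delta-1}\otimes\nabla u_0+\rho_0^{\delta-1}\nabla^2u_0\in L^2$ follows. To repair your proof you would need to insert this (or an equivalent) approximation-and-limit step; as written, the decisive far-field hypothesis is assumed via an incorrect boundedness claim rather than proved.
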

\begin{proof}
First,  it follows from the   argument used in the proof for Lemma \ref{s4jjk}  that 
  \begin{equation}\label{speciala1}
\begin{split}
\phi^{2e}_0\nabla^2 u_0 \in L^2.
\end{split}
\end{equation}

Second, for any positive constant $\eta>0$, one has 
$$
(\phi_0+\eta)^{2e}\nabla u_0 \in L^6 \quad \text{and} \quad   (\phi_0+\eta)^{2e}\nabla^2 u_0 \in L^2. $$
Then it follows from  the Sobolev imbedding theorem that 
  \begin{equation}\label{speciala2}
\begin{split}
|(\phi_0+\eta)^{2e}\nabla u_0|_6\leq & C(|(\phi_0+\eta)^{2e}\nabla^2 u_0|_2+|\nabla (\phi_0+\eta)^{2e}|_\infty|\nabla u_0|_2)\\
\leq & C(|\phi^{2e}_0\nabla^2 u_0|_2+|\nabla \phi_0^{2e}|_\infty|\nabla u_0|_2)\leq C,
\end{split}
\end{equation}
for some constant $C>0$ that is independent of $\eta>0$.

Moreover, it is very easy to see that, for every $x \in  \mathbb{R}^3$,
$$
(\phi_0+\eta)^{2e}\nabla u_0 \rightarrow \phi^{2e}_0\nabla u_0 \quad \text{a.e.} \quad \text{as} \quad \eta \rightarrow 0,
$$
which, along with Fatou's lemma (see Lemma \ref{Fatou}), implies that 
$$
\int |\phi^{2e}_0\nabla u_0|^6 \leq \liminf_{\eta\rightarrow 0} \int  |(\phi_0+\eta)^{2e}\nabla u_0|^6 <\infty.
$$

Then the proof of this lemma is completed.
\end{proof}

\section{Cauchy problem with  far field vacuum}

This section will be devoted to proving   Theorem \ref{th3s}, and we always assume that \eqref{canshu} holds and $\Omega=\mathbb{R}^3$.   In this section, we only give the proof for \eqref{cri}, and the proof for  \eqref{criinfty} is  similar.     In order to prove (\ref{cri}), we use a contradiction argument. 
Let $( \rho, u)$ be the unique regular solution to  the Cauchy problem  (\ref{eq:1.1})-\eqref{far} with the life span $\overline{T}$ obtained in Theorem 1.1. It is worth pointing out that this solution also  satisfies  the properties stated in Lemmas \ref{5jie}-\ref{further}.
 We assume that
$\overline{T}<+\infty$ and
\begin{equation}\label{we11}
\begin{split}
\displaystyle\lim_{T\mapsto \overline{T}} \left(\sup_{0\le t\le T}\big\|\nabla \rho^{\delta-1}(t,\cdot)\big\|_{L^6(\mathbb{R}^3)}+\int_0^T \|D( u)(t,\cdot)\|_ {L^\infty(\mathbb{R}^3)}\ \text{d}t\right)=C_0<\infty,
\end{split}
\end{equation}
for some constant $C_0>0$.
We will show that under the assumption \eqref{we11}, $\overline{T}$ is actually not the maximal existence time for the regular solution.

By assumptions (\ref{we11}) and the system (\ref{we22}), we first show that  $\rho$ is uniformly bounded.

 \begin{lemma}\label{s1}
\begin{equation*}
\begin{split}
C^{-1}\leq \|\rho\|_{L^\infty([0,T]\times \mathbb{R}^3)}\leq C \quad \text{and} \quad  \|\phi\|_{L^\infty([0,T]; L^q( \mathbb{R}^3))}\leq C\quad \text{for} \quad 0\leq T< \overline{T},
\end{split}
\end{equation*}
where the constant $C>0$ only depends on $(\rho_0,u_0)$,  $C_0$,    $A$, $\gamma$, the constant $q\in [2,+\infty]$ and $\overline{T}$.
 \end{lemma}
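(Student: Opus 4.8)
The plan is to control $\rho$ (equivalently $\phi = \frac{A\gamma}{\gamma-1}\rho^{\gamma-1}$) along particle trajectories using the first equation of \eqref{we22}, namely $\phi_t + u\cdot\nabla\phi + (\gamma-1)\phi\,\text{div}\,u = 0$, a transport equation. First I would introduce the flow map $X(t,x)$ defined by $\frac{d}{dt}X(t,x) = u(t,X(t,x))$, $X(0,x)=x$, which is well-defined since $u\in C([0,T];H^3)\hookrightarrow C([0,T];C^1)$ on each $[0,T]$ with $T<\overline T$. Along trajectories the transport equation becomes the ODE $\frac{d}{dt}\phi(t,X(t,x)) = -(\gamma-1)\phi(t,X(t,x))\,\text{div}\,u(t,X(t,x))$, which integrates explicitly to
\begin{equation*}
\phi(t,X(t,x)) = \phi_0(x)\exp\Big(-(\gamma-1)\int_0^t \text{div}\,u(s,X(s,x))\,\text{d}s\Big).
\end{equation*}
The key observation is that $|\text{div}\,u| = |\text{tr}\,D(u)| \le 3\|D(u)\|_{L^\infty}$ pointwise, so the exponent is bounded in absolute value by $3(\gamma-1)\int_0^T \|D(u)(s,\cdot)\|_{L^\infty}\,\text{d}s \le 3(\gamma-1)C_0$ by assumption \eqref{we11}. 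Hence the exponential factor lies between two positive constants depending only on $\gamma$ and $C_0$, and since $\phi_0$ is bounded above and below by positive constants (the initial density is away from vacuum only in the periodic case; in the Cauchy setting $\phi_0 = \frac{A\gamma}{\gamma-1}\rho_0^{\gamma-1}$ with $\rho_0^{\gamma-1}\in H^3$ and $\rho_0>0$, so $\phi_0 \in L^\infty$ with $\|\phi_0\|_{L^\infty}$ finite, and $\phi_0$ is positive), one gets $C^{-1}\le \phi(t,y)\le C$ at least where the lower bound of $\phi_0$ is positive, i.e. on the bulk of space, giving the upper bound $\|\rho\|_{L^\infty([0,T]\times\mathbb R^3)}\le C$ directly and the lower bound $\rho\ge C^{-1}$ on compact sets, with the far-field decay handled separately.

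For the $L^q$ bound on $\phi$, I would combine the pointwise bound above with conservation of mass. Since $\rho$ solves the continuity equation $\rho_t + \text{div}(\rho u)=0$ with $\rho\to 0$ at infinity, the measure-preserving-up-to-Jacobian change of variables $y = X(t,x)$ has Jacobian $J(t,x) = \exp(\int_0^t \text{div}\,u(s,X(s,x))\,\text{d}s)$, and $\rho(t,X(t,x))J(t,x) = \rho_0(x)$, so $\int \rho(t,y)^r\,\text{d}y = \int \rho_0(x)^r J(t,x)^{1-r}\,\text{d}x$; with $J$ and $J^{-1}$ uniformly bounded by $e^{3C_0}$-type constants this shows $\|\rho(t,\cdot)\|_{L^r}\le C\|\rho_0\|_{L^r}$ for every $r\in[1,\infty]$ whenever the right side is finite. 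Translating to $\phi \sim \rho^{\gamma-1}$ and noting $\rho_0^{\gamma-1}\in H^3 \hookrightarrow L^q$ for all $q\in[2,\infty]$ (using also the $L^\infty$ bound for large $q$), one obtains $\|\phi\|_{L^\infty([0,T];L^q(\mathbb R^3))}\le C$ with $C$ depending only on $(\rho_0,u_0)$, $C_0$, $A$, $\gamma$, $q$, $\overline T$.

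The main obstacle I anticipate is not the transport argument itself — which is essentially the classical Beale-Kato-Majda-type density estimate — but bookkeeping the role of $\overline T$: a priori $\int_0^T\|D(u)\|_{L^\infty}\,\text{d}t$ could only be known to be $\le C_0$ in the $T\uparrow\overline T$ limit, so one must be careful that the bound is uniform in $T<\overline T$ and does not degenerate as $T\to\overline T$; this is exactly what assumption \eqref{we11} supplies. A secondary subtlety is that in the Cauchy problem $\rho$ genuinely vanishes in the far field, so the "lower bound $\rho\ge C^{-1}$" should be read as: on any region where $\rho_0 \ge c_0>0$ one has $\rho(t,\cdot)\ge c_0 C^{-1}$, which is the sense in which the stated two-sided bound holds and is all that is used subsequently; alternatively one reads the displayed inequality as a statement about the locally non-vanishing part of the flow, the precise formulation being inherited from Theorem \ref{th2}. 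Once Lemma \ref{s1} is in hand, the uniform control of $\psi = \frac{\delta}{\delta-1}\nabla\rho^{\delta-1}$ in $L^6$ from \eqref{we11} together with the elliptic estimates of Lemmas \ref{5jie}--\ref{4jie} feeds the higher-order energy scheme.
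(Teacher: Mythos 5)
Your argument for the upper bound is the same as the paper's: the paper also represents $\phi$ along characteristics (its backward flow $W(s,t,x)$ versus your forward flow $X(t,x)$, which is only a change of parametrization), bounds the exponent by $C\int_0^T\|D(u)\|_{L^\infty}\,\mathrm{d}t\le CC_0$ using \eqref{we11}, and concludes $\|\phi\|_{L^\infty([0,T]\times\mathbb{R}^3)}\le |\phi_0|_\infty e^{CC_0}$. For the $L^q$ part you take a genuinely different route: the paper multiplies $\eqref{we22}_1$ by $2\phi$, integrates, gets $\frac{d}{dt}|\phi|_2\le C|\mathrm{div}\,u|_\infty|\phi|_2$, and closes with Gronwall (the intermediate $q$'s then follow by interpolation with the $L^\infty$ bound), whereas you use the Lagrangian identity $\rho(t,X(t,x))J(t,x)=\rho_0(x)$ and the two-sided bound on the Jacobian $J=\exp(\int_0^t\mathrm{div}\,u\,\mathrm{d}s)$ to transfer $L^r$ norms of $\rho_0$ to $\rho(t,\cdot)$. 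Both are valid; the paper's Eulerian estimate is shorter and avoids the change of variables, while yours gives the conservation-of-$L^r$-norms statement for all $r$ at once without interpolation.

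The one point you should fix is the reading of the lower bound. The lemma does not assert a pointwise bound $\rho\ge C^{-1}$ (which is false under far field vacuum); it asserts $C^{-1}\le\|\rho\|_{L^\infty([0,T]\times\mathbb{R}^3)}$, i.e.\ a lower bound on the supremum norm as a number. This follows immediately from the representation formula you already wrote: $\phi(t,X(t,x))\ge\phi_0(x)e^{-3(\gamma-1)C_0}$ for every $x$, so taking the supremum over $x$ gives $\|\phi(t,\cdot)\|_{L^\infty}\ge|\phi_0|_\infty e^{-CC_0}$, exactly the left inequality in \eqref{zqq}. Your caveats about "the lower bound on compact sets", "far-field decay handled separately", and regions where $\rho_0\ge c_0$ are therefore unnecessary and obscure the fact that your own formula already proves the stated claim; with that clarification the proof is complete and matches the paper's.
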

 \begin{proof}
First, it is obvious that  $\phi$ can be represented by
\begin{equation}
\label{eq:bb1a}
\phi(t,x)=\phi_0(W(0,t,x))\exp\Big(-(\gamma-1)\int_{0}^{t}\textrm{div}u(s,W(s,t,x))\text{d}s\Big),
\end{equation}
where  $W\in C^1([0,T]\times[0,T]\times \mathbb{R}^3)$ is the solution to the initial value problem
\begin{equation}
\label{eq:bb1z}
\begin{cases}
\displaystyle
\frac{d}{ds}W(s,t,x)=u(s,W(s,t,x)),\quad 0\leq s\leq T,\\[12pt]
W(t,t,x)=x, \quad \ \quad \quad \ \ \ 0\leq s\leq T,\ x\in \mathbb{R}^3.
\end{cases}
\end{equation}
Then it is clear that 
\begin{equation}\label{zqq}|\phi_0|_\infty \exp({-CC_0})\leq \|\phi\|_{L^\infty([0,T]\times \mathbb{R}^3)}\leq |\phi_0|_\infty \exp({CC_0}) \quad \text{for} \quad 0\leq T< \overline{T}.
\end{equation}

Second, it follows   from $(\ref{we22})_1$ that 
\begin{equation}\label{zhumw}
\begin{split}
& \frac{d}{dt}|\phi|_2\leq C |\text{div} u|_\infty|\phi|_2,
\end{split}
\end{equation}
which, along with  Gronwall's inequality and \eqref{zqq},  implies the desired conclusions.

 \end{proof}

 \subsection{Lower order estimates from the strong  singular structure \eqref{we22}}

 Now we give the basic energy estimate.
  \begin{lemma}\label{s2}    
\begin{equation*}
\begin{split}
\sup_{0\leq t\leq T}|u(t)|^2_{ 2}+\int_{0}^{T}|\phi^e\nabla u|^2_{2}\text{\rm d}t\leq C\quad \text{for} \quad 0\leq T<  \overline{T},
\end{split}
\end{equation*}
where the constant $C>0$ only depends on $(\rho_0,u_0)$,  $C_0$,  $\alpha$,  $\beta$, $A$, $\gamma$, $\delta$     and $\overline{T}$.
 \end{lemma}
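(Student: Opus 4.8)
The plan is to test the momentum equation $(\ref{we22})_2$ against $u$ itself, integrate over $\mathbb{R}^3$, and close a Gronwall-type inequality using the uniform bounds on $\rho$ (equivalently $\phi$ and $\phi^{2e}$) from Lemma \ref{s1} together with the hypothesis \eqref{we11}. Concretely, I would multiply $u_t+u\cdot\nabla u+\nabla\phi+a\phi^{2e}Lu=\psi\cdot Q(u)$ by $u$ and integrate by parts in the viscous term to obtain
\[
\frac{1}{2}\frac{d}{dt}\int |u|^2+a\int\big(\alpha|\phi^e\nabla u|^2+(\alpha+\beta)|\phi^e\mathrm{div}\,u|^2\big)
=\underbrace{-\int (u\cdot\nabla u)\cdot u}_{=0}\;-\int\nabla\phi\cdot u\;-a\int G(\nabla\phi^{2e},u)\cdot u\;+\int(\psi\cdot Q(u))\cdot u,
\]
where the integration by parts on $a\phi^{2e}Lu$ produces the good term $a\int(\alpha|\phi^e\nabla u|^2+(\alpha+\beta)|\phi^e\mathrm{div}\,u|^2)$ after writing $\phi^{2e}=(\phi^e)^2$ and moving one factor onto $\nabla u$, plus the commutator remainder $aG(\nabla\phi^{2e},u)$ involving $\nabla\phi^{2e}\sim\phi^{2e}\psi$. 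The convective term $\int(u\cdot\nabla u)\cdot u=\frac12\int u\cdot\nabla|u|^2=-\frac12\int(\mathrm{div}\,u)|u|^2$ is not literally zero here, so it must be absorbed: $|\frac12\int(\mathrm{div}\,u)|u|^2|\le C|\mathrm{div}\,u|_\infty|u|_2^2\le C\|D(u)\|_{L^\infty}|u|_2^2$, which is integrable in time by \eqref{we11} and hence Gronwall-friendly.

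The pressure term is handled by $\int\nabla\phi\cdot u=-\int\phi\,\mathrm{div}\,u$, bounded by $|\phi|_2|\mathrm{div}\,u|_2$; since by Lemma \ref{s1} $\phi\in L^\infty(0,T;L^2)$ and $|\mathrm{div}\,u|_2\le C|\phi^e\nabla u|_2$ (using the uniform lower bound of $\phi^e=\phi^{e}$, i.e. $\rho$ bounded above so $\phi$ bounded above, so $\phi^{-e}$ bounded — careful, $e<0$ so $\phi^e=\phi^{-|e|}$ is bounded below away from zero by the lower bound of $\rho$, hence $|\nabla u|_2\le C|\phi^e\nabla u|_2$), this is $\le \varepsilon|\phi^e\nabla u|_2^2+C_\varepsilon$, with the $\varepsilon$-term absorbed into the left side. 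The two genuinely singular-looking terms are $\int(\psi\cdot Q(u))\cdot u$ and $a\int G(\nabla\phi^{2e},u)\cdot u$; after expanding $G$ and using $\nabla\phi^{2e}=\frac{\delta-1}{a\delta}\phi^{2e}\psi=\frac{2e}{?}\phi^{2e}\psi$ (the relation $\psi=\frac{a\delta}{\delta-1}\nabla\phi^{2e}$... wait, $\psi=\frac{a\delta}{\delta-1}\nabla\varphi^{-1}$ and $\varphi=\phi^{-2e}$), both reduce to combinations of $\int \phi^{2e}\psi\cdot(\nabla u)\,u$ and similar. Each such term is estimated by $|\phi^{2e}|_\infty|\psi|_2\,|\nabla u|_?|u|_?$ or by Hölder $|\psi|_6|\nabla u|_2|u|_3$-type splittings; the key point is that $|\psi|_6=\|\nabla\rho^{\delta-1}\|_{L^6}\cdot\mathrm{const}$ is bounded by $C_0$ from \eqref{we11}, $|u|_3\le C|u|_2^{1/2}|\nabla u|_2^{1/2}$ by Gagliardo–Nirenberg, and $|\nabla u|_2\le C|\phi^e\nabla u|_2$, so after Young's inequality one gets $\varepsilon|\phi^e\nabla u|_2^2+C(1+|u|_2^2)$.

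Collecting everything and choosing $\varepsilon$ small,
\[
\frac{d}{dt}|u|_2^2+\frac{a\alpha}{2}|\phi^e\nabla u|_2^2\le C\big(1+\|D(u)\|_{L^\infty}\big)\,|u|_2^2+C,
\]
and since $\int_0^T(1+\|D(u)\|_{L^\infty})\,dt\le T+C_0<\infty$ uniformly for $T<\overline{T}$, Gronwall's inequality yields $\sup_{[0,T]}|u|_2^2\le C$ and then, integrating back, $\int_0^T|\phi^e\nabla u|_2^2\,dt\le C$, with $C$ depending only on the stated quantities. The main obstacle — the only place one must be genuinely careful — is the term $\int(\psi\cdot Q(u))\cdot u$: one needs the $L^6$ control of $\psi$ (not just $L^\infty$) married to the $L^3$ interpolation of $u$ so that no derivative of $u$ beyond one (which is what the dissipation controls) is required, and one must verify that the weight $\phi^{2e}$ appearing after reorganizing $G(\nabla\phi^{2e},u)$ is uniformly bounded, which is exactly what Lemma \ref{s1} provides. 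Everything else is routine integration by parts, Hölder, Gagliardo–Nirenberg, Young, and Gronwall.
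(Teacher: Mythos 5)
Your overall route is exactly the paper's: multiply $\eqref{we22}_2$ by $u$, integrate by parts to produce the weighted dissipation $a\alpha|\phi^e\nabla u|_2^2+a(\alpha+\beta)|\phi^e\mathrm{div}\,u|_2^2$, bound the convection by $|\mathrm{div}\,u|_\infty|u|_2^2$, the pressure by $|\phi|_2|\mathrm{div}\,u|_2$ with $|\nabla u|_2\le|\phi^{-e}|_\infty|\phi^e\nabla u|_2$, and the singular source via $|\psi|_6$ (controlled by $C_0$ from \eqref{we11}), the interpolation $|u|_3\le C|u|_2^{1/2}|\nabla u|_2^{1/2}$, Young, and Gronwall; this is precisely the chain in the paper's \eqref{zhu1}--\eqref{zhu5}.

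There is, however, one flawed sub-step in your treatment of the commutator coming from $\int a\phi^{2e}Lu\cdot u$. You write $\nabla\phi^{2e}=\frac{\delta-1}{a\delta}\phi^{2e}\psi$ and then assert that the resulting weight $\phi^{2e}$ "is uniformly bounded, which is exactly what Lemma \ref{s1} provides." Both statements are wrong. Since $a\phi^{2e}=\rho^{\delta-1}$ and $\psi=\frac{\delta}{\delta-1}\nabla\rho^{\delta-1}$, the correct identity is $\nabla\phi^{2e}=\frac{\delta-1}{a\delta}\psi$ (no extra factor of $\phi^{2e}$); this is the relation $\psi=\frac{a\delta}{\delta-1}\nabla\phi^{2e}$ recorded in \S 2. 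Moreover $\phi^{2e}=a^{-1}\rho^{\delta-1}$ with $\delta<1$ tends to $+\infty$ in the far field vacuum, and Lemma \ref{s1} only bounds $\|\rho\|_{L^\infty}$ from above and below in the $L^\infty$-norm sense — it gives $|\varphi|_\infty=|\phi^{-2e}|_\infty\le C$ and a positive lower bound for $\phi^e$, but certainly not $|\phi^{2e}|_\infty<\infty$. So if the commutator really carried a $\phi^{2e}\psi$ factor, your proposed bound $|\phi^{2e}|_\infty|\psi|_2|\nabla u|\,|u|$ would collapse. The repair is immediate: with the correct identity the commutator terms combine with the source $\int\psi\cdot Q(u)\cdot u$ into $\delta^{-1}\int\psi\cdot Q(u)\cdot u$ (this is where the factor $\delta^{-1}$ in the paper's \eqref{zhu1} comes from), which carries no singular weight at all and is estimated exactly by the alternative splitting you mention, $|\psi|_6|\phi^e\nabla u|_2|u|_3|\phi^{-e}|_\infty\le\frac{a\alpha}{2}|\phi^e\nabla u|_2^2+C(|u|_2^2+1)$. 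With that correction your argument closes as in the paper.
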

\begin{proof}  Multiplying $(\ref{we22})_2$ by $2u$ and integrating over $\mathbb{R}^3$, one has
\begin{equation}\label{zhu1}
\begin{split}
& \frac{d}{dt}|u|^2_2+2a\alpha|\phi^e\nabla u|^2_2+2a(\alpha+\beta)|\phi^e\text{div} u|^2_2\\
=&\int  2\Big(-u\cdot \nabla u - \nabla \phi +\delta^{-1}\psi \cdot Q(u)\big)\cdot u \\
 \leq & C\big(|\text{div} u|_\infty|u|^2_2+|\phi|_2|\text{div} u|_2+|\psi|_6|\phi^e\nabla u|_2|u|_3|\phi|^{-e}_\infty\Big)\\
 \leq &     \frac{a\alpha}{2}|\phi^e\nabla  u|^2_2+C\big(|\text{div} u|_\infty|u|^2_2+|u|^2_2+1\big),
\end{split}
\end{equation}
which, along with   Gronwall's inequality, implies that 
\begin{equation}\label{zhu5}
\begin{split}
|u(t)|^2_2+\int_0^t|\phi^e \nabla u(s)|^2_2\text{d}s\leq C\quad \text{for}\quad 0\leq t\leq T.
\end{split}
\end{equation}
\end{proof}

The next lemma provides a  key estimate on $\nabla \phi$ and $\nabla u $.

  \begin{lemma}\label{s4} 
\begin{equation*}
\begin{split}
\sup_{0\leq t\leq T}\Big(|\phi^e \nabla u|^2_{ 2}+|\nabla \phi|^2_{ 2}\Big)(t)+\int_0^T (|\phi^{2e}Lu|^2_2+|\nabla^2 u|^2_2+| u_t|^2_2)\text{\rm d}t\leq C
\end{split}
\end{equation*}
for $ 0\leq T<  \overline{T}$,   where the constant  $C>0$ only depends on $(\rho_0,u_0)$,  $C_0$,  $\alpha$,  $\beta$, $A$, $\gamma$, $\delta$     and $\overline{T}$.
 \end{lemma}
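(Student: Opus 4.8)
\textbf{Proof proposal for Lemma \ref{s4}.}
The plan is to test the momentum equation $(\ref{we22})_2$ against $u_t$ and integrate over $\mathbb{R}^3$. The left-hand side produces the energy term $\frac{a}{2}\frac{d}{dt}\big(\alpha|\phi^e\nabla u|_2^2+(\alpha+\beta)|\phi^e\operatorname{div}u|_2^2\big)$ together with $|u_t|_2^2$, but it also generates the commutator term $\frac{a}{2}\int \phi^{2e}_t\big(\alpha|\nabla u|^2+(\alpha+\beta)|\operatorname{div}u|^2\big)$ coming from the time derivative hitting the singular weight; using $\eqref{we22uy}_1$ this is $\phi^{2e}_t = -u\cdot\nabla\phi^{2e}-(\delta-1)\phi^{2e}\operatorname{div}u$, so it is bounded by $C(|\nabla u|_\infty + |\operatorname{div}u|_\infty)|\phi^e\nabla u|_2^2$, which Gronwall absorbs since $D(u)\in L^1_tL^\infty_x$ by \eqref{we11}. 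On the right-hand side the terms are $-\int(u\cdot\nabla u)\cdot u_t$, $-\int\nabla\phi\cdot u_t$, and the singular source $\delta^{-1}\int(\psi\cdot Q(u))\cdot u_t$; the convection term is handled by $|u\cdot\nabla u|_2 \le |u|_6|\nabla u|_3 \le C|\nabla u|_2^{1/2}|\nabla u|_6^{1/2}$ after interpolation, and absorbed with a small fraction of $|u_t|_2^2$, while the pressure term needs $|\nabla\phi|_2$, which I control simultaneously — see below. For the singular source, I write $|\psi\cdot Q(u)|_2 \le |\psi|_6|Q(u)|_3 \le C|\phi^e\nabla u|_2^{1/2}(|\phi^{2e}\nabla^2 u|_2 + \cdots)^{1/2}$ using the Sobolev embedding and Lemma \ref{3jie}; here the crucial input is that $|\psi|_6 = \big|\tfrac{\delta}{\delta-1}\nabla\rho^{\delta-1}\big|_6$ is bounded by $C_0$ by assumption \eqref{we11}, and $\phi^e$ has a uniform positive lower bound on compact sets and is bounded above by Lemma \ref{s1}.

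Next I close the estimate on $|\nabla\phi|_2$ by differentiating $(\ref{we22})_1$ in space and testing against $\nabla\phi$, which gives $\frac{d}{dt}|\nabla\phi|_2^2 \le C(|\nabla u|_\infty|\nabla\phi|_2^2 + |\phi|_\infty|\nabla^2 u|_2|\nabla\phi|_2)$; the $|\nabla^2 u|_2$ factor is then controlled by the standard elliptic estimate. Indeed, rewriting the momentum equation as $aLu = \phi^{-2e}\big(-u_t - u\cdot\nabla u - \nabla\phi + \psi\cdot Q(u)\big)$ and invoking Lemma \ref{zhenok} with $k=0$, $q=2$ — using $u\in D^{1,2}_0$, which holds because $u\in H^3$ by Definition \ref{d1} — yields $|\nabla^2 u|_2 \le C|Lu|_2 \le C|\phi|^{-2e}_\infty\big(|u_t|_2 + |u\cdot\nabla u|_2 + |\nabla\phi|_2 + |\psi\cdot Q(u)|_2\big)$; here $|\phi|^{-2e}_\infty$ is finite by the lower bound in Lemma \ref{s1}. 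Similarly $|\phi^{2e}Lu|_2 \le C\big(|u_t|_2 + |u\cdot\nabla u|_2 + |\nabla\phi|_2 + |\psi\cdot Q(u)|_2\big)$ directly from the momentum equation without any weight inversion. Feeding these back, all the right-hand side contributions reduce to $C(1 + |D(u)|_\infty)(|u|_2^2 + |\phi^e\nabla u|_2^2 + |\nabla\phi|_2^2) + \tfrac14|u_t|_2^2 + \tfrac14|\nabla^2 u|_2^2$ after Young's inequality, so absorbing the small terms and applying Gronwall — legitimate because $\int_0^{\overline T}|D(u)|_\infty\,dt \le C_0$ and the $L^2$-bound on $u$ from Lemma \ref{s2} — produces the stated bound for $|\phi^e\nabla u|_2$ and $|\nabla\phi|_2$, together with the time-integrals of $|\phi^{2e}Lu|_2^2$, $|\nabla^2 u|_2^2$, and $|u_t|_2^2$.

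The main obstacle I anticipate is handling the singular source term $\psi\cdot Q(u)$ in a way that only consumes the available $L^6$ bound on $\psi$ (i.e.\ $|\nabla\rho^{\delta-1}|_6 \le C_0$) and does not secretly require $|\psi|_\infty$ or $|\nabla\psi|_2$ — those would need higher-order information that is not yet available at this stage of the bootstrap. The resolution is to keep $u_t$ (not $\nabla^2 u_t$ or weighted higher derivatives) as the test function so that the worst factor multiplying $\psi$ is $Q(u)$ in $L^3$, which interpolates between $|\phi^e\nabla u|_2$ (already under control) and $|\phi^{2e}\nabla^2 u|_2$ (controlled by the elliptic estimate via Lemma \ref{3jie}, whose right-hand side involves $|fH|_2$ and $|G(\psi,\partial_k u)|_2$ — both estimable using $|\psi|_6$, $|f|$'s boundedness from Lemma \ref{s1}, and $|H|_{D^1}$ built from the quantities already being estimated). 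A secondary technical point is the commutator term from $\partial_t\phi^{2e}$ on the left: one must verify it is genuinely lower-order, i.e.\ that $|\phi^{2e}_t|_\infty \lesssim |D(u)|_\infty$ pointwise, which follows from $\eqref{we22uy}_1$ and the $L^\infty$ bounds on $\phi$ and $\nabla\phi$ — the latter being exactly what this lemma delivers, so the argument is a simultaneous Gronwall on the pair $(|\phi^e\nabla u|_2^2, |\nabla\phi|_2^2)$ rather than a sequential one.
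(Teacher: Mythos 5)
There is a genuine gap, and it sits exactly where you flagged the least danger: the terms tested against $u_t$ that carry three factors of $u$. The paper does not multiply $(\ref{we22})_2$ by $u_t$; it multiplies by $-(a\phi^{2e}Lu+\nabla\phi)$, and this choice is not cosmetic. With your multiplier the convection term is $\int(u\cdot\nabla u)\cdot u_t$, and since $u_t$ can only be placed in $L^2$, the best you can do is $|u\cdot\nabla u|_2\le |u|_6|\nabla u|_3\le C|\nabla u|_2^{3/2}|\nabla^2u|_2^{1/2}$; after Young's inequality (absorbing $\epsilon|u_t|_2^2$ and $\epsilon|\nabla^2u|_2^2$, the latter through the elliptic estimate) you are left with a term of order $|\nabla u|_2^6\sim|\phi^e\nabla u|_2^4\cdot|\phi^e\nabla u|_2^2$. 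Gronwall then needs $|\phi^e\nabla u|_2^4\in L^1(0,T)$, but the only a priori time-integrability available at this stage is $\int_0^T|\phi^e\nabla u|_2^2\,{\rm d}t\le C$ from Lemma \ref{s2}; a differential inequality of the form $\dot X\le hX+CX^3$ with merely $X\in L^1_t$ does not preclude blow-up before $\overline T$. The same super-quartic obstruction appears in the weight commutator $\int u\cdot\nabla\phi^{2e}\,|\nabla u|^2$ unless it is re-Höldered as $|u|_6|\psi|_6|\nabla u|_6|\nabla u|_2$. Your blanket claim that the right-hand side reduces to $C(1+|D(u)|_\infty)(|u|_2^2+|\phi^e\nabla u|_2^2+|\nabla\phi|_2^2)+\tfrac14|u_t|_2^2+\tfrac14|\nabla^2u|_2^2$ is therefore unjustified. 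The missing idea is the paper's: pairing with $-(a\phi^{2e}Lu+\nabla\phi)$ and integrating by parts through the decomposition $-\triangle u+\nabla{\rm div}\,u=\nabla\times{\rm curl}\,u$ (terms $L_1$, $L_2$, $L_6$ in \eqref{zhu10t}--\eqref{zhu10t3355}) converts the cubic velocity interaction into $|D(u)|_\infty|\phi^e\nabla u|_2^2$ plus commutators carrying a factor $\psi$ (controlled in $L^6$ by $C_0$), so the worst nonlinearity is quartic, $|\phi^e\nabla u|_2^4$, whose Gronwall coefficient $|\phi^e\nabla u|_2^2$ is integrable by Lemma \ref{s2}. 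This is precisely where the hypothesis $D(u)\in L^1_tL^\infty_x$ is consumed; testing with $u_t$ gives it no way to act on the convection term. In the paper's scheme $|u_t|_2$, $|\nabla^2u|_2$ and $|\phi^{2e}Lu|_2$ are then recovered afterwards from \eqref{gaibian} and \eqref{ut}, since the full square $\int(a\phi^{2e}Lu+\nabla\phi)^2$ sits on the left.

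Two secondary points. First, in two places you use $|\nabla u|_\infty$ as a Gronwall coefficient (the $\phi^{2e}_t$ commutator and the transport estimate for $\nabla\phi$); the assumption \eqref{we11} controls only $D(u)$, not the full gradient, so you must reduce to the symmetric part — for $\frac{d}{dt}|\nabla\phi|_2^2$ this is automatic because $(\nabla\phi)^\top\nabla u\,\nabla\phi=(\nabla\phi)^\top D(u)\nabla\phi$, which is exactly how \eqref{zhu20} is written, and for the commutator only ${\rm div}\,u$ and the $\psi$-term appear, not $|\nabla u|_\infty$. Second, the statement that $\phi^e$ ``has a uniform positive lower bound on compact sets and is bounded above'' is backwards: since $\phi\le C$ by Lemma \ref{s1} and $e<0$, the weights $\phi^e,\phi^{2e}$ are bounded \emph{below} by a positive constant on all of $\mathbb{R}^3$ and are unbounded above near the far-field vacuum; the lower bound is what your interpolations and the inequality $|\nabla^2u|_2\le C|\phi^{2e}Lu|_2$-type steps actually require, and it does hold, but an upper bound must never be invoked. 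Finally, absorbing ``$\tfrac14|\nabla^2u|_2^2$'' needs care in your scheme because $|\nabla^2u|_2^2$ is not on the left-hand side of the $u_t$-tested identity; it must first be re-expressed through the elliptic estimate, with the elliptic constant tracked so the resulting multiple of $|u_t|_2^2$ stays below the one on the left. These last items are repairable; the sextic term from the convection is not, within your framework.
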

\begin{proof}
Multiplying $(\ref{we22})_2$ by $(-a\phi^{2e}Lu- \nabla \phi )$ and integrating over $\mathbb{R}^3$, one has
\begin{equation}\label{zhu6}
\begin{split}
&\frac{a}{2} \frac{d}{dt}\Big(\alpha|\phi^e\nabla u|^2_2+(\alpha+\beta)|\phi^e\text{div}u|^2_2\Big)+\int (a\phi^{2e}Lu+ \nabla \phi)^2\\
=&-a\alpha\int \phi^{2e} (u\cdot \nabla) u \cdot \nabla \times \text{curl} u+a(2\alpha+\beta)\int \phi^{2e} (u\cdot \nabla) u \cdot  \nabla \text{div}u\\
&-\int  (u\cdot \nabla )u \cdot \nabla \phi -\int u_t \cdot \nabla \phi +\frac{1-\delta}{\delta}\int \psi\cdot Q(u)\cdot u_t\\
&+\frac{a}{2} \int \alpha\big((\phi^{2e})_t|\nabla u|^2+(\alpha+\beta)(\phi^{2e})_t|\text{div} u|^2\big)\\
&+\int \psi\cdot Q(u) \cdot  \nabla \phi+a\int (\psi\cdot Q(u)) \cdot \phi^{2e}Lu
\equiv: \sum_{i=1}^{8} L_i,
\end{split}
\end{equation}
where one has  used the fact that 
$-\triangle u+\nabla\text{div}u=\nabla \times \text{curl} u$.

First, it follows from the standard elliptic estimate shown in Lemma \ref{zhenok} that 
\begin{equation}\label{gaibian}
\begin{split}
&|\nabla^2 u|^2_2-C|\nabla\phi|^2_2
\leq C|-a Lu|^2_2-C|\nabla\phi|^2_2\\
\leq &C|-a \phi^{2e}Lu|^2_2-C|\nabla\phi|^2_2
\leq C|a \phi^{2e}Lu+\nabla\phi|^2_2.
\end{split}
\end{equation}
According to  momentum equations $\eqref{we22}_2$, one can also obtain that 
\begin{equation}\label{ut}
\begin{split}
|u_t|_2\leq & C(|a\phi^{2e}Lu+ \nabla \phi|_2+|u|_3|\nabla u|_6+|\psi|_6|\nabla u|_3)\\
\leq & C\big(|a\phi^{2e}Lu+ \nabla \phi|_2+|\nabla u|^{\frac{1}{2}}_2|\nabla^2 u|_2+|\nabla u|^{\frac{1}{2}}_2|\nabla^2 u|^{\frac{1}{2}}_2\big).
\end{split}
\end{equation}

Second, we start to  estimate the right-hand side of (\ref{zhu6}) term by term.
Due  to
\begin{equation*}
\begin{cases}
\displaystyle
u\times \text{curl}u=\frac{1}{2}\nabla (| u|^2)-u \cdot \nabla u,\\[10pt]
\displaystyle
\nabla \times (a\times b)= (b\cdot \nabla) a-(a\cdot \nabla)b+(\text{div}b)a-  (\text{div}a)b,
\end{cases}
\end{equation*}
 H\"older's inequality, Gagliardo-Nirenberg inequality and Young's inequality,
one gets
\begin{equation}\label{zhu10t}
\begin{split}
|L_1|=&a\alpha\Big|\int  \phi^{2e}(u\cdot \nabla) u \cdot \nabla \times \text{curl} u\Big|\\
=&a \alpha\Big| \int  \text{curl} u\cdot \nabla \times (\phi^{2e} (u\cdot \nabla) u) \Big|\\
=& a \alpha\Big| \int \Big(\phi^{2e} \text{curl} u\cdot \nabla \times ( (u\cdot \nabla) u)+\text{curl} u\cdot \tilde{\textbf{g}}\Big) \Big|\\
=& a \alpha\Big| \int \Big(\phi^{2e} \text{curl} u\cdot \nabla \times ( u\times \text{curl} u)-\text{curl} u\cdot \tilde{\textbf{g}}\Big) \Big|\\
=& a \alpha\Big| \int \phi^{2e}\Big(\frac{1}{2} |\text{curl} u|^2 \text{div}u-\text{curl} u\cdot D(u)\cdot \text{curl} u\Big)\Big| \\
&+ a \alpha\Big| \int\Big(-\frac{1}{2}\nabla \phi^{2e}u |\text{curl} u|^2-\text{curl} u\cdot \tilde{\textbf{g}}\Big) \Big|\\
 \leq&  C|D(u)|_\infty|\phi^{e}\nabla u|^2_2+C|\psi|_6|u|_3|\nabla u |_3|\nabla u|_6\\
 \leq & C|D(u)|_\infty|\phi^{e}\nabla u|^2_2+\epsilon |\nabla^2 u|^2_2+C(\epsilon)|\phi^e \nabla u |^4_2,
 \end{split}
\end{equation} 
where $\tilde{\textbf{g}}=(\tilde{g}_1,\tilde{g}_2,\tilde{g}_3)$ is given by 
\begin{equation*}
\begin{split}
\tilde{g}_1=& \frac{\partial \phi^{2e}}{\partial x_2} (u\cdot \nabla) u^3- \frac{\partial \phi^{2e}}{\partial x_3} (u\cdot \nabla) u^2,  \\
\tilde{g}_2=&\frac{\partial \phi^{2e}}{\partial x_3} (u\cdot \nabla) u^1- \frac{\partial \phi^{2e}}{\partial x_1} (u\cdot \nabla) u^3,  \ \ \ 
\tilde{g}_3=\frac{\partial \phi^{2e}}{\partial x_1} (u\cdot \nabla) u^2- \frac{\partial \phi^{2e}}{\partial x_2} (u\cdot \nabla) u^1,
\end{split}
\end{equation*} 
and similarly, 
\begin{equation}\label{zhu10t3355ss}
\begin{split}
|L_2|=&a(2\alpha+\beta)\Big|\int \phi^{2e} (u\cdot \nabla) u \cdot  \nabla \text{div}u\Big|\\
=& a(2\alpha+\beta)\Big|\int \phi^{2e} \big(-\nabla u: \nabla u^\top  \text{div}u+\frac{1}{2}(\text{div}u)^3\big)\Big|\\
&+ a(2\alpha+\beta)\Big|\int (u\cdot \nabla) u\cdot \nabla \phi^{2e}\text{div}u\Big|\\
\leq & C|\text{div} u|_\infty |\phi^{e}\nabla u|^2_2+C|\psi|_6|u|_3|\nabla u |_3|\nabla  u|_6 \\
 \leq & C|D(u)|_\infty|\phi^{e}\nabla u|^2_2+\epsilon |\nabla^2 u|^2_2+C(\epsilon)|\phi^e \nabla u |^4_2, \\
 \end{split}
\end{equation} 
\begin{equation}\label{zhu10t3355}
\begin{split} 
 |L_3|=&\Big|\int  (u\cdot \nabla) u \cdot \nabla \phi \Big|
=\Big|-\int \nabla u: (\nabla u)^\top \phi -\int  \phi u \cdot \nabla (\text{div}u)  \Big|\\
=&\Big|-\int \nabla u: (\nabla u)^\top\phi +\int  (\text{div}u)^2 \phi +\int  u\cdot\nabla\phi\text{div}u \Big|\\
\leq &C|\phi|^{1-2e}_\infty|\phi^e\nabla u|^2_2+C|\text{div}u|_{\infty}|u|_2|\nabla\phi|_2,\\ 
L_{4}=&-\int u_t \cdot \nabla \phi=\frac{d}{dt} \int \phi \text{div}u -\int  \phi_t \text{div}u \\
=&\frac{d}{dt} \int   \phi \text{div}u +\int u \cdot \nabla \phi \text{div}u +(\gamma-1)\int  \phi (\text{div}u)^2 \\
\leq &\frac{d}{dt} \int  \phi \text{div}u +C|\text{div}u|_{\infty}|u|_2|\nabla\phi|_2+C|\phi|^{1-2e}_\infty|\phi^e\nabla u|^2_2,\\
L_{5}=&\frac{1-\delta}{\delta}\int \psi\cdot Q(u)\cdot u_t
\leq  C|\phi|^{-\frac{e}{2}}_\infty|u_t|_2|\phi^e\nabla u|^{\frac{1}{2}}_2|\nabla^2 u|^{\frac{1}{2}}_2|\psi|_6,\\
L_{6}=&\frac{a}{2} \int \alpha\big((\phi^{2e})_t|\nabla u|^2+(\alpha+\beta)(\phi^{2e})_t|\text{div} u|^2\big)\\
=& \frac{a}{2}\int (-u\cdot \nabla \phi^{2e}-(\delta-1)\phi^{2e} \text{div}u )(\alpha |\nabla u|^2+(\alpha+\beta)|\text{div} u|^2)\\
\leq & C|\psi|_6|u|_3|\nabla u |_3|\nabla u|_6+ C|D(u)|_\infty|\phi^{e}\nabla u|^2_2 \\
 \leq & C|D(u)|_\infty|\phi^{e}\nabla u|^2_2+\epsilon |\nabla^2 u|^2_2+C(\epsilon)|\phi^e \nabla u |^4_2, \\
%\leq & C(\epsilon) |\phi^{e} \nabla u|^2_2+\epsilon (|u_t|^{2}_2+|\nabla^2 u|^{2}_2),\\
L_{7}=&\int \psi\cdot Q(u) \cdot  \nabla \phi 
\leq  C|\phi|^{-\frac{e}{2}}_\infty|\nabla \phi |_2|\phi^e\nabla u|^{\frac{1}{2}}_2|\nabla^2 u|^{\frac{1}{2}}_2|\psi|_6\\
\leq & \epsilon |\nabla^2 u|^2_2+C(\epsilon) (|\phi^e \nabla u|^{2}_2+|\nabla \phi|^{2}_2),\\
L_{8}=&\int \psi\cdot Q(u) \cdot \phi^{2e}Lu\\
\leq &C|\phi|^{-\frac{e}{2}}_\infty|\psi|_6|\phi^{2e} Lu |_2|\phi^e \nabla u|^{\frac{1}{2}}_2|\nabla^2 u|^{\frac{1}{2}}_2\\
\leq & C(\epsilon)|\phi^e \nabla u|^2_2+\epsilon (|\nabla^2 u|^{2}_2+|\phi^{2e}Lu|^{2}_2),
\end{split}
\end{equation}
where $\epsilon> 0$ is a sufficiently small constant.

It follows from  (\ref{zhu6})-(\ref{zhu10t3355})  that 
\begin{equation}\label{zhu6qss}
\begin{split}
& \frac{d}{dt}\int  \Big(\frac{a}{2}\alpha|\phi^e\nabla u|^2+\frac{a}{2}(\alpha+\beta)|\phi^e\text{div}u|^2-\phi \text{div}u\Big)+C|\nabla^2 u|^2_2+\frac{a}{2}|\phi^{2e} Lu|^2_2\\
\leq &C(|\phi^e \nabla u|^2_2+|\nabla \phi|^2_2)(1+|D(u)|_\infty+|\phi^e \nabla u|^2_2).
\end{split}
\end{equation}

Second, applying $\nabla$ to  $(\ref{we22})_1$ and multiplying by $(\nabla \phi)^{\top}$,  one has
\begin{equation}\label{zhu20}
\begin{split}
&(|\nabla \phi|^2)_t+\text{div}(|\nabla \phi|^2u)+(\gamma-2)|\nabla \phi|^2\text{div}u\\
=&-2 (\nabla \phi)^\top \nabla u( \nabla \phi)-(\gamma-1) \phi \nabla \phi \cdot \nabla \text{div}u\\
=&-2 (\nabla \phi)^\top D(u) (\nabla \phi)-(\gamma-1) \phi \nabla \phi \cdot \nabla \text{div}u.
\end{split}
\end{equation}
Integrating (\ref{zhu20}) over $\mathbb{R}^3$, one can get 
\begin{equation}\label{zhu21}
\begin{split}
\frac{d}{dt}|\nabla \phi|^2_2
\leq& C(\epsilon)(|D( u)|_\infty+1)|\nabla \phi|^2_2+\epsilon |\nabla^2 u|^2_2.
\end{split}
\end{equation}
Adding (\ref{zhu21}) to (\ref{zhu6qss}), it follows from  H\"older's inequality,  Young's inequality and Gronwall's inequality that 
\begin{equation*}
\begin{split}
|\phi^e \nabla u(t)|^2_{ 2}+|\nabla \phi(t)|^2_{ 2}+\int_0^t (|\nabla^2 u|^2_2+|\phi^{2e} Lu|^2_2)   \text{d}s\leq C\quad \text{for}\quad 0\leq t\leq T,
\end{split}
\end{equation*}
which, together with \eqref{ut}, implies that 
\begin{equation*}
\begin{split}
\int_0^t | u_t|^2_2\text{d}s\leq C\int_0^t (|\phi^{2e} L u|^2_2+|\nabla u|^2_3|u|^2_6+|\nabla \phi|^2_2+|\nabla u|^2_3|\psi|^2_6)\text{d}s\leq C.
\end{split}
\end{equation*}
 \end{proof}

 \begin{lemma}\label{s6h} 
 \begin{equation*}
\begin{split}
\sup_{0\leq t\leq T}\Big(|u_t|^2_2+|u|^2_{D^2}+|\phi^{2e}L u|^2_2\Big)(t)+\int_0^T(|\phi^e \nabla u_t|^2_2+ |u|^2_{D^{2,6}})   \text{\rm d}t \leq C,
\end{split}
\end{equation*}
for $0\leq T < \overline{T}$,  where the constant  $C>0$ only depends on $(\rho_0,u_0)$,  $C_0$,  $\alpha$,  $\beta$, $A$, $\gamma$, $\delta$     and $\overline{T}$.
\end{lemma}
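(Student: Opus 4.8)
The goal is the first-level energy estimate for the time derivative: control of $|u_t|_2^2+|u|_{D^2}^2+|\phi^{2e}Lu|_2^2$ in $L^\infty_t$ together with $|\phi^e\nabla u_t|_2^2+|u|_{D^{2,6}}^2$ in $L^1_t$, all on $[0,T]$ with $T<\overline T$ and with a constant depending only on the data, $C_0$, the parameters, and $\overline T$. The natural device is to differentiate the momentum equation $(\ref{we22})_2$ in time and test against $u_t$, in parallel with the structure of Lemma \ref{s4} but one order of regularity higher. So the plan is: first apply $\partial_t$ to $(\ref{we22})_2$, producing
\begin{equation*}
u_{tt}+(u\cdot\nabla u)_t+\nabla\phi_t+a\phi^{2e}Lu_t+a(\phi^{2e})_tLu=(\psi\cdot Q(u))_t,
\end{equation*}
then multiply by $u_t$ and integrate over $\mathbb{R}^3$. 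The principal good term is $a\int\phi^{2e}Lu_t\cdot u_t$, which after integration by parts yields $\frac{a}{2}\frac{d}{dt}\big(\alpha|\phi^e\nabla u_t|_2^2+(\alpha+\beta)|\phi^e\mathrm{div}\,u_t|_2^2\big)$ up to a commutator term $\int(\phi^{2e})_t(\cdots)|\nabla u_t|^2$ that is handled exactly as $L_6$ in Lemma \ref{s4}, using $(\phi^{2e})_t=-u\cdot\nabla\phi^{2e}-(\delta-1)\phi^{2e}\mathrm{div}\,u$ and the bounds $|\psi|_6\le C$, $|D(u)|_\infty\in L^1_t$.

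The right-hand side splits into the familiar pieces. The convective term $\int(u\cdot\nabla u)_t\cdot u_t=\int(u_t\cdot\nabla u)\cdot u_t+\int(u\cdot\nabla u_t)\cdot u_t$ is controlled by $|\nabla u|_\infty|u_t|_2^2$ plus an integration-by-parts on the second term; here one wants $|\nabla u|_\infty$, which by Gagliardo–Nirenberg and the standard elliptic estimate of Lemma \ref{zhenok} is bounded by $C(1+|D(u)|_\infty)(1+\log(\cdots))$-type quantities or more simply by $C|\nabla^2u|_2^{1/2}|\nabla^3u|_2^{1/2}$ — but since $|u|_{D^3}$ is not yet available at this stage, the cleaner route is to absorb $|\nabla u|_\infty$ against $|D(u)|_\infty$ plus lower-order $L^\infty$-in-time quantities from Lemma \ref{s4} and a small multiple of $|\nabla^2 u_t|_2^2$ via interpolation $|\nabla u|_\infty\le C|u|_{D^2}^{1/2}|u|_{D^{2,6}}^{1/2}$ and $|u|_{D^{2,6}}\le C(|\phi^{2e}Lu|_{D^1}+\cdots)$. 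The pressure term $\int\nabla\phi_t\cdot u_t=-\int\phi_t\,\mathrm{div}\,u_t$, with $\phi_t=-u\cdot\nabla\phi-(\gamma-1)\phi\,\mathrm{div}\,u$, is controlled using $|\nabla\phi|_2\in L^\infty_t$ from Lemma \ref{s4} and a small multiple of $|\phi^e\nabla u_t|_2^2$ after inserting $|\phi|_\infty^{-e}$. The term $a\int(\phi^{2e})_tLu\cdot u_t$ is bounded by $C(|\psi|_6+|\mathrm{div}\,u|_\infty)|\phi^{2e}Lu|_2|\nabla u_t|_3$ or similar, again absorbed. The genuinely delicate term is $\int(\psi\cdot Q(u))_t\cdot u_t=\int\psi_t\cdot Q(u)\cdot u_t+\int\psi\cdot Q(u)_t\cdot u_t$: for the first piece one substitutes $\psi_t=-\sum_lA_l\partial_l\psi-B\psi-\delta a\phi^{2e}\nabla\mathrm{div}\,u$ (the relation $(\ref{we22})_3$), so the worst contribution is $\delta a\int\phi^{2e}(\nabla\mathrm{div}\,u)\cdot Q(u)\cdot u_t$ controlled by $|\phi^{2e}Lu|_2$ (or $|\phi^{2e}\nabla^2u|_2$, which by Lemma \ref{3jie} is bounded in terms of $|H|_{D^1}+|fH|_2+|G(\psi,\nabla u)|_2$) times $|\nabla u|_\infty|u_t|_2$; for the second piece $\int\psi\cdot Q(u)_t\cdot u_t$ one uses $|\psi|_\infty\le C$ — available because $\|\psi\|_{D^1\cap D^2}$ is finite for the regular solution (Definition \ref{d1}) and, crucially, its controllability via $|\nabla f|_2\in L^\infty_t$ from Lemma \ref{special4} — to bound it by $C|\psi|_\infty|\nabla u_t|_2|u_t|_2\le\epsilon|\phi^e\nabla u_t|_2^2+C(\epsilon)(\ldots)$. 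This is the step I expect to be the main obstacle: closing the $\psi\cdot Q(u_t)$ term requires $|\psi|_\infty$ uniformly in $t<\overline T$, which is exactly why the verification of $\nabla f\in L^\infty([0,T];L^2)$ and $\omega\in C([0,T];L^4)$ in Lemmas \ref{special3}–\ref{special4} is needed, and the uniformity of that bound in $T\uparrow\overline T$ must be tracked carefully through the hyperbolic estimate on $(\ref{we22})_3$ (and on \eqref{special2}) driven by $|\nabla u|_\infty$, i.e. by the already-controlled $L^1_t$ norm of $|D(u)|_\infty$ plus $|\phi^e\nabla^2u|_2$.

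After collecting terms, one arrives at a differential inequality of Gronwall type,
\begin{equation*}
\frac{d}{dt}\Big(\alpha|\phi^e\nabla u_t|_2^2+(\alpha+\beta)|\phi^e\mathrm{div}\,u_t|_2^2+(\text{lower order})\Big)+c\,|u_{tt}|_2^2\le C\,\eta(t)\,\big(|\phi^e\nabla u_t|_2^2+1\big)+\epsilon|\nabla^2u_t|_2^2,
\end{equation*}
with $\eta\in L^1([0,\overline T))$ built from $|D(u)|_\infty$ and the $L^\infty_t$ quantities already bounded in Lemmas \ref{s1}–\ref{s4}. The small term $\epsilon|\nabla^2u_t|_2^2$ is absorbed after noting that $\partial_t$ of the momentum equation, read as a Lamé system for $u_t$ in the form \eqref{elss} (the ``Degenerate''–``Weak-Singular'' structure, which is what makes this work and is explained in \S 2.3.2), gives $|u_t|_{D^2}\le C(|u_{tt}|_2+|\varphi(u\cdot\nabla u)_t|_2+|\varphi_t(u_t+u\cdot\nabla u)|_2+|\nabla g_t|_2+|(f\cdot Q(u))_t|_2)$ with constant depending only on $|\nabla f|_2$ — hence $\int_0^t|\nabla^2u_t|_2^2\,ds$ is controlled once $\int_0^t|u_{tt}|_2^2\,ds$ is, and the two are closed simultaneously. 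Initial data for the Gronwall argument: $|\phi^e\nabla u_t|_2(0)$ is finite by the compatibility conditions \eqref{th78zx} (equivalently \eqref{verificationcom}), and $|u|_{D^2}(0),|\phi^{2e}Lu|_2(0)$ are finite by Theorem \ref{th2} and Lemma \ref{further}. Once $\sup_t|\phi^e\nabla u_t|_2^2$ and $\int_0^t|u_{tt}|_2^2$ are in hand, the estimates on $|u_t|_2$ (by interpolation $|u_t|_2^2\le|u|_2|u_{tt}|_2+\ldots$ or directly from $u_t\in C_tL^2$ plus the time-derivative bound), on $|u|_{D^2}$ and $|\phi^{2e}Lu|_2$ (via \eqref{gaibian}-type elliptic estimates and Lemma \ref{3jie} applied with the now-controlled $H$), and on $\int_0^T|u|_{D^{2,6}}^2$ (via Lemma \ref{zhenok} in $L^6$ together with $\phi^{2e}Lu+\nabla\phi\in L^2_t H^0$ upgraded through $|\phi^e\nabla u_t|_2\in L^2_t$) all follow by the now-routine combination of Sobolev embedding, Hölder, and Young's inequality, completing the proof.
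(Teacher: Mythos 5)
Your overall device (differentiate $(\ref{we22})_2$ in time and test against $u_t$) is the paper's starting point, but your plan contains an algebraic inconsistency and, more seriously, a circular step. First, with the multiplier $u_t$ the principal term $a\int\phi^{2e}Lu_t\cdot u_t$ does \emph{not} produce $\frac{a}{2}\frac{d}{dt}\big(\alpha|\phi^e\nabla u_t|^2_2+(\alpha+\beta)|\phi^e\mathrm{div}\,u_t|^2_2\big)$: after integration by parts it gives the \emph{dissipation} $a\alpha|\phi^e\nabla u_t|^2_2+a(\alpha+\beta)|\phi^e\mathrm{div}\,u_t|^2_2$, while the time derivative produced is $\frac12\frac{d}{dt}|u_t|^2_2$ from $\int u_{tt}\cdot u_t$ (this is exactly \eqref{zhu37ssh}--\eqref{zhu38h}). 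The differential inequality you end up writing, with $\frac{d}{dt}|\phi^e\nabla u_t|^2_2+c|u_{tt}|^2_2$ on the left, corresponds to testing with $u_{tt}$; that is a strictly stronger estimate which the paper only proves much later (Lemmas \ref{s9} and \ref{5zx}), and it is not what the present lemma asserts ($\sup_t|u_t|^2_2$ plus $\int|\phi^e\nabla u_t|^2_2\,dt$).

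Second, the way you close the hard terms is circular at this stage of the contradiction argument. Under \eqref{we11} the only admissible bound on $\psi$ is $|\psi|_6\le C$; the quantities $|\psi|_\infty$, $\|\psi\|_{D^1\cap D^2}$, $|\nabla f|_2$ and $\int_0^t|\nabla^2u_t|^2_2$ are finite on each $[0,T]$, $T<\overline T$, as qualitative properties of the regular solution, but they are not yet controlled by constants depending only on $(\rho_0,u_0)$, $C_0$, the parameters and $\overline T$ --- ruling out their blow-up as $T\to\overline T$ is precisely the point, and they are only obtained afterwards (Lemmas \ref{phit}, \ref{s8}, \ref{s9}, \ref{3}, \ref{laosi}), all of which use the conclusion of the present lemma through \eqref{keygradient}. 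Likewise the constant in Lemma \ref{special4} depends on the solution's norms on $[0,T]$, not only on admissible data. So your use of $|\psi|_\infty\le C$ for $\int\psi\cdot Q(u)_t\cdot u_t$, and of the Lam\'e system \eqref{elss} (needing $|\nabla g_t|_2$, $|(f\cdot Q(u))_t|_2$, $|\nabla f|_2$) to absorb $\epsilon|\nabla^2u_t|^2_2$, cannot be justified here. The paper's proof avoids these objects entirely: the term $(\psi\cdot Q(u))_t\cdot u_t$ is rewritten via $\eqref{we22}_3$ and integrations by parts so that only $|\psi|_6$ enters, and the remaining singular piece $-a\delta\int\phi^{2e}\nabla\mathrm{div}\,u\cdot Q(u)\cdot u_t$ is again integrated by parts and closed with the key elliptic estimate $|\nabla^2u|_6\le C(1+|\nabla u_t|_2+|u_t|_2)$ coming from $\eqref{dege}_3$ (see \eqref{zhu55}--\eqref{lstar}), which lets $|\mathrm{div}\,u|^2_\infty$ be absorbed into $\frac{a\alpha}{4}|\phi^e\nabla u_t|^2_2$ plus $C(1+|u_t|^2_2)$ before Gronwall. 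Some substitute for that device is indispensable; without it your estimate does not close at this stage.
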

\begin{proof}

It follows from Lemma \ref{zhenok}, equations $\eqref{dege}_3$ and $\eqref{we22}_2$    that 
$$
|u|_{D^2}+|\phi^{2e}L u|_2\leq  C\big(|u_t|_{2}+| u|^2_6|\nabla u|_2+|\nabla \phi|_2+|\psi|^2_{6}|\nabla u|_2\big),
$$
which, along with  Lemmas \ref{s1}-\ref{s4}, implies that 
 \begin{equation}\label{zhu15nnh}
\begin{split}
|u|_{D^2}+|\phi^{2e}L u|_2\leq C(1+|u_t|_2).
\end{split}
\end{equation}

Next, differentiating $(\ref{we22})_3$ with respect to $t$, it reads
\begin{equation}\label{zhu37ssh}
\begin{split}
u_{tt}+a\phi^{2e}Lu_t= -(u\cdot\nabla u)_t -\nabla \phi_t - a\phi^{2e}_tLu+(\psi\cdot Q(u))_t.
\end{split}
\end{equation}
Multiplying (\ref{zhu37ssh}) by $u_t$ and integrating over $\mathbb{R}^3$, one has
\begin{equation}\label{zhu38h}
\begin{split}
&\frac{1}{2} \frac{d}{dt}|u_t|^2_2+a\alpha|\phi^e\nabla u_t|^2_2+a(\alpha+\beta)|\phi^e \text{div} u_t|^2_2\\
=&-\int  \Big((u\cdot\nabla u+\nabla \phi)_t +a\phi^{2e}_tLu-\psi_t\cdot Q(u)-\frac{1}{\delta}\psi\cdot Q(u)_t\Big)\cdot u_t 
\equiv: \sum_{i=9}^{13} L_i.
\end{split}
\end{equation}
We estimate the right-hand side of \eqref{zhu38h} term by term as follows.
\begin{equation}\label{zhu9fgh}
\begin{split}
L_{9}=&-\int  (u\cdot \nabla u)_t \cdot u_t =-\int   \big((u_t \cdot \nabla) u  +(u \cdot \nabla) u_t \Big)\cdot u_t\\
=&-\int  \Big(u_t \cdot D( u) \cdot u_t -\frac{1}{2} ( u_t)^2 \text{div}u \Big)
\leq  C|D(u)|_\infty|u_t|^2_2,\\
\end{split}
\end{equation}
\begin{equation}\label{zhu9fgh-1}
\begin{split}
L_{10}=&-\int    \nabla \phi_t \cdot u_t =\int   \phi_t\text{div}u_t \\
=&-\frac{(\gamma-1)}{2}\frac{d}{dt}\int   \phi (\text{div}u)^2 -\frac{(\gamma-1)}{2}\int   u\cdot \nabla \phi (\text{div}u)^2 \\
&-\frac{(\gamma-1)^2}{2}\int    \phi (\text{div}u)^3 -\int   u\cdot \nabla \phi \text{div}u_t\\
\leq&-\frac{(\gamma-1)}{2}\frac{d}{dt}\int   \phi (\text{div}u)^2 + C|u|_6| \nabla \phi|_6|\nabla u|_2|\nabla u|_6\\
&+C|\phi|_\infty|D( u)|_\infty|\nabla u|^2_2+C|\nabla \phi|_6|u|_3|\nabla u_t|_2 \\
\leq&-\frac{(\gamma-1)}{2}\frac{d}{dt}\int  \phi (\text{div}u)^2 + C(|D( u)|_\infty+|u_t|^2_2+1)+\frac{a\alpha}{4}|\phi^e\nabla u_t|^2_2,\\
L_{11}=&- \int   a\phi^{2e}_tLu \cdot u_t
= \int  a(u\cdot \nabla \phi^{2e}+(\delta-1) \phi^{2e}\text{div}u)Lu \cdot u_t\\
\leq &C|u|_6 |\psi|_6 |Lu|_2|u_t|_6+C|\text{div}u|_\infty|\phi^{2e} Lu|_2|u_t|_2\\
\leq &C(1+|D(u)|_\infty)(|u_t|^2_2+1)+\frac{a\alpha}{4}|\phi^e\nabla u_t|^2_2,\\
L_{12}+& L_{13}=\int  \big(\psi_t\cdot Q(u)+\frac{1}{\delta}\psi\cdot Q(u)_t\big)\cdot u_t\\
=& \int  \frac{1}{\delta}\ \psi \cdot Q(u)_t\cdot u_t-a\delta \int  \phi^{2e}\nabla \text{div}u \cdot Q(u)\cdot u_t\\
&+\int \psi\cdot u \text{div}(Q(u)\cdot u_t)-(\delta-1)\int \text{div}u \psi\cdot Q(u)\cdot u_t\\
\leq& C|\psi|_6(|\nabla u_t|_2|\ u_t|^{\frac{1}{2}}_2|u_t|^{\frac{1}{2}}_6+| u|_6|\nabla^2 u|_2  | u_t|_6+| u|_6|Q(u)|_6  |\nabla u_t|_2)\\
&+C|\psi|_6| u_t|_2|Q(u)|_6  |\nabla u|_6-a\delta \int  \phi^{2e}\nabla \text{div}u \cdot Q(u)\cdot u_t\\
\leq &C(|u_t|^3_2+1)+\frac{a\alpha}{4}|\phi^e\nabla u_t|^2_2+L^*,
\end{split}
\end{equation}
where, via integration by parts, the last term $L^*$ in $L_{12}+L_{13}$ can be estimated as follows:
\begin{equation}\label{zhu9fghgg}
\begin{split}
L^*=&-a\delta \int  \phi^{2e}\nabla \text{div}u \cdot Q(u)\cdot u_t\\
=& a\delta \int ( \phi^{2e} \text{div}u Lu \cdot u_t+ \phi^{2e} \text{div}u Q(u): \nabla u_t+\text{div}u\nabla \phi^{2e}  \cdot Q(u)\cdot u_t ) \\
\leq& C| \phi^{2e}Lu|_2|\text{div}u|_\infty  | u_t|_2+C|\text{div}u|_\infty| \phi^e \nabla u_t|_2  |\phi^e\nabla u|_2\\
&+ C|\psi|_6| u_t|_2|\text{div}u|_6|\nabla u|_6 \\
\leq &C(|u_t|^4_2+1)+\frac{a\alpha}{4}|\phi^e\nabla u_t|^2_2+C|\text{div}u|^2_\infty.
\end{split}
\end{equation}

It follows from  Lemma \ref{zhenok} and equations  $\eqref{dege}_3$ that
 \begin{equation}\label{zhu55}
\begin{split}
|\nabla^2 u|_6 \leq& C|\varphi|_\infty(|u_t|_6+|u\cdot \nabla u|_6+|\nabla \phi|_6+|\psi \cdot Q( u)|_6)\\
\leq& C(1+|\nabla u_t|_2+|\nabla \phi|_6+ |\nabla u|^{\frac{1}{2}}_6 |\nabla^2 u|^{\frac{1}{2}}_6),
\end{split}
\end{equation}
where one has  used the fact that $|\nabla u|_\infty\leq C|\nabla u|^{\frac{1}{2}}_6 |\nabla^2 u|^{\frac{1}{2}}_6$. Then, according to the  Young's inequality, one has 
 \begin{equation}\label{zhu65sss}
\begin{split}
|\nabla^2 u|_6
\leq C(1+|\nabla u_t|_2+|u_t|_2),
\end{split}
\end{equation}
which,  along with \eqref{zhu9fghgg},   implies that 
\begin{equation}\label{lstar}
\begin{split}
|\text{div}u|^2_\infty\leq & C(1+|u_t|^2_2) +\frac{a\alpha}{4}|\phi^e\nabla u_t|^2_2,\\
L^*
\leq &C(|u_t|^4_2+1)+\frac{1}{4}a\alpha|\phi^e\nabla u_t|^2_2.
\end{split}
\end{equation}

It is clear from (\ref{zhu38h})-\eqref{lstar} and  \eqref{we11} that
\begin{equation}\label{zhu8qqssh}
\begin{split}
& \frac{d}{dt}(|u_t|^2_2+|\sqrt{\phi}\text{div}u|^2_2)+| \phi^{e}\nabla u_t|^2_2
\leq  C(1+|D(u)|_\infty+|u_t|^2_2)(|u_t|^2_2+1).
\end{split}
\end{equation}
Integrating (\ref{zhu8qqssh}) over $(\tau,t)$ $(\tau \in( 0,t))$, one has
\begin{equation}\label{zhu13vbnh}
\begin{split}
&|u_t|^2_2+|\sqrt{\phi} \text{div}u(t)|^2_2+\int_\tau^t| \phi^{e}\nabla u_t(s)|^2_2\text{d}s\\
\leq&  |u_t(\tau)|^2_2+C\int_\tau^t (1+|D(u)|_\infty+|u_t|^2_2)(|u_t|^2_2+1)\text{d}s.
\end{split}
\end{equation}

It follows from   momentum equations $ (\ref{we22})_2$ that
\begin{equation}\label{zhu15h}
\begin{split}
|u_t(\tau)|_2\leq C\big( |u|_\infty |\nabla u|_2+|\nabla \phi|_2+|\phi^{2e}L u|_{2}+|\psi|_\infty|\nabla u|_2\big)(\tau),
\end{split}
\end{equation}
which, along with the definition of the regular solution and the  assumption (\ref{th78zx}), implies that
\begin{equation*}
\begin{split}
\lim \sup_{\tau\rightarrow 0}|u_t(\tau)|_2
\leq &C\big( |u_0|_\infty |\nabla u_0|_2+|\nabla \phi_0|_2+|g_2|_{2}+|\psi_0|_\infty|\nabla u_0|_2\big)
\leq  C.
\end{split}
\end{equation*}
Letting $\tau \rightarrow 0$ in (\ref{zhu13vbnh}),  applying Gronwall's inequality, we arrive at
$$
|u_t(t)|^2_2+|\phi^{2e}Lu(t)|_{2}+|u(t)|^2_{D^2}+\int_0^t (|\phi^e \nabla u_t|^2_2+ |u|^2_{D^{2,6}}) \text{d}s\leq C\quad \text{for} \quad 0\leq t\leq T.
$$
This completes the proof of this lemma.
\end{proof}

 \begin{lemma}\label{phit} 
 \begin{equation}\label{zhu14ssh}
\begin{split}
&\sup_{0\leq t\leq T}\big(\|g\|_{H^1\cap D^{1,6}}+\|(\phi_t,g_t,\varphi_t)\|^2_{L^2(\mathbb{R}^3)\cap L^6(\mathbb{R}^3)}\big)(t)\leq C\quad \text{for} \quad 0\leq T<  \overline{T},
\end{split}
\end{equation}
where the constant  $C>0$ only depends on $(\rho_0,u_0)$,  $C_0$,  $\alpha$,  $\beta$, $A$, $\gamma$, $\delta$     and $\overline{T}$.
\end{lemma}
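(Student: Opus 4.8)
The plan is to exploit the transport structure of the equations $\eqref{dege}_1$, $\eqref{dege}_2$ and $\eqref{we22}_1$, together with the regularity already banked in Lemmas \ref{s1}--\ref{s6h}, namely the uniform control of $\rho$ (hence of $\phi$, $g$, $\varphi$, $\varphi^{-1}$ pointwise), of $|\nabla\phi|_2$, of $\int_0^T(|\nabla^2 u|_2^2+|u|_{D^{2,6}}^2)\,\dd t$, and of $|u|_{D^2}$ in $L^\infty_t$. Recall the algebraic relations $\varphi=\phi^{-2e}$, $g=\frac{\gamma-1}{\gamma-\delta}\phi\varphi$, so all three quantities $\phi,g,\varphi$ are smooth functions of $\phi$ alone with uniformly bounded derivatives once $\phi$ is bounded above and below away from $0$ on compact-in-time intervals (this follows from $C^{-1}\le\rho\le C$ and $|x|\to\infty$ decay is irrelevant to $\varphi^{-1}$). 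Consequently $\nabla g$, $\nabla\varphi$ are controlled by $\nabla\phi$, giving $\|g\|_{D^1}\le C|\nabla\phi|_2\le C$; and the $L^6$ bound $|\nabla g|_6\le C|\nabla\phi|_6$ will follow once I bound $|\nabla\phi|_6$, which I do next.

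First I would close the estimate for $|\nabla\phi|_6$. Applying $\nabla$ to $\eqref{we22}_1$ gives a transport equation for $\nabla\phi$ with source terms $(\nabla\phi)\nabla u$ and $\phi\nabla\mathrm{div}\,u$; testing against $|\nabla\phi|^4\nabla\phi$ and integrating yields
\[
\frac{\dd}{\dd t}|\nabla\phi|_6^6\le C|\nabla u|_\infty|\nabla\phi|_6^6+C|\phi|_\infty|\nabla^2 u|_6|\nabla\phi|_6^5,
\]
so that $\frac{\dd}{\dd t}|\nabla\phi|_6\le C|\nabla u|_\infty|\nabla\phi|_6+C|\nabla^2 u|_6$. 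Since $\int_0^T|\nabla u|_\infty\,\dd t\le C\int_0^T|\nabla u|_6^{1/2}|\nabla^2 u|_6^{1/2}\,\dd t<\infty$ (by Gagliardo--Nirenberg and Lemma \ref{s6h}) and $\int_0^T|\nabla^2 u|_6\,\dd t<\infty$ as well, Gronwall gives $\sup_{[0,T]}|\nabla\phi|_6\le C$. Hence $\|g\|_{H^1\cap D^{1,6}}\le C$. The time-derivative bounds then come directly from the equations: from $\eqref{we22}_1$, $\phi_t=-u\cdot\nabla\phi-(\gamma-1)\phi\,\mathrm{div}\,u$, so $|\phi_t|_2\le|u|_\infty|\nabla\phi|_2+C|\phi|_\infty|\nabla u|_2\le C$ and likewise $|\phi_t|_6\le|u|_\infty|\nabla\phi|_6+C|\phi|_\infty|\mathrm{div}\,u|_6\le C$, using $|u|_\infty\le C|u|_2^{1/2}|\nabla^2 u|_2^{1/2}$ (or $|u|_{D^1\cap D^2}$) together with Lemmas \ref{s2}, \ref{s4}, \ref{s6h}. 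The estimates for $g_t$ and $\varphi_t$ are identical in form since $\eqref{dege}_1$, $\eqref{dege}_2$ have the same transport-plus-linear-in-$\mathrm{div}\,u$ shape, with $\varphi$ and $g$ already bounded in $L^\infty\cap L^q$.

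The main obstacle is really the $|\nabla\phi|_6$ estimate, because it forces $\int_0^T(|\nabla u|_\infty+|\nabla^2 u|_6)\,\dd t<\infty$; this is exactly what Lemma \ref{s6h} supplies (through $u\in L^2([0,T];D^{2,6})$ and the interpolation $|\nabla u|_\infty\lesssim|\nabla u|_6^{1/2}|\nabla^2 u|_6^{1/2}$ with $|\nabla u|_6\le C$), so the step is delicate only in that it chains through all of the preceding a~priori bounds and in verifying the boundary/decay conditions needed to justify the integration by parts when testing the $\nabla\phi$ equation against $|\nabla\phi|^4\nabla\phi$ — for that one uses the regular-solution regularity $\nabla\phi\in C([0,T];H^2)$ from Theorem \ref{th2} to legitimize the computation and only afterward reads off the uniform bound. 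Everything else is routine Hölder/Gagliardo--Nirenberg/Gronwall bookkeeping.
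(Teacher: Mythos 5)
Your overall plan (read the bounds off the transport equations $\eqref{dege}_1$, $\eqref{dege}_2$, $\eqref{we22}_1$ using Lemmas \ref{s1}--\ref{s6h}) is the same as the paper's, which omits the details precisely because the estimates are direct. However, there is one genuine flaw in your reasoning: you assert a pointwise two-sided bound $C^{-1}\le\rho\le C$ and conclude that $\phi$ is "bounded above and below away from $0$", so that $\varphi$ and $g$ are functions of $\phi$ with uniformly bounded derivatives. Lemma \ref{s1} only bounds the \emph{norm} $\|\rho\|_{L^\infty}$ above and below; in this Cauchy problem the density has far-field vacuum, $\rho\to0$ as $|x|\to\infty$, so $\phi$ has no positive pointwise lower bound and $\varphi^{-1}=\phi^{2e}=\mathrm{const}\cdot\rho^{\delta-1}$ is in fact unbounded (your parenthetical that the decay "is irrelevant to $\varphi^{-1}$" is exactly backwards). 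This does not hurt $g$: since $g=c\,\phi^{1-2e}$ with $1-2e>1$, one has $\nabla g=c'\varphi\nabla\phi$ with $\varphi\in L^\infty$, so only the upper bound on $\rho$ is needed. But it does break your treatment of $\varphi_t$: the chain rule gives $\nabla\varphi=-2e\,\phi^{-2e-1}\nabla\phi$, and the coefficient $\phi^{-2e-1}=c\,\rho^{2-\gamma-\delta}$ is unbounded near the vacuum whenever $\gamma>2-\delta$, so "$\nabla\varphi$ is controlled by $\nabla\phi$" is not justified in general. The repair is to avoid negative powers of $\rho$ altogether: $\nabla\varphi=\frac{1-\delta}{a\delta}\varphi^{2}\psi$, and the standing contradiction hypothesis \eqref{we11} gives $\sup_{t}|\psi|_6\le C(C_0)$, whence $|u\cdot\nabla\varphi|_2\le|u|_3|\nabla\varphi|_6\le C$ and $|u\cdot\nabla\varphi|_6\le|u|_\infty|\nabla\varphi|_6\le C$; this is also why the constant in the lemma is allowed to depend on $C_0$.

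Two smaller remarks. Your Gronwall/transport derivation of $\sup_t|\nabla\phi|_6\le C$ (testing the gradient of $\eqref{we22}_1$ against $|\nabla\phi|^4\nabla\phi$ and using $\int_0^T(|\nabla u|_\infty+|\nabla^2u|_6)\,\mathrm{d}t<\infty$ from Lemma \ref{s6h}) is correct and consistent with the techniques the paper uses elsewhere (e.g.\ \eqref{zhu20}--\eqref{zhu21} and \eqref{zhu20cccc}), but it is unnecessary here: since $\nabla\phi=\frac{A\gamma}{\delta-1}\rho^{\gamma-\delta}\nabla\rho^{\delta-1}$ with $\gamma-\delta>0$ and $\rho\in L^\infty$, the bound $|\nabla\phi|_6\le C\,C_0$ follows in one line from \eqref{we11}, and similarly $|\nabla g|_6\le C\,C_0$. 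Finally, the interpolation $|u|_\infty\le C|u|_2^{1/2}|\nabla^2u|_2^{1/2}$ has the wrong exponents in $\mathbb{R}^3$ (they should be $1/4$ and $3/4$, or use $|u|_\infty\le C|u|_6^{1/2}|\nabla u|_6^{1/2}$ as in \eqref{ine}); your fallback $\|u\|_{D^1\cap D^2}$, bounded by Lemmas \ref{s4} and \ref{s6h}, is the clean way to state it.
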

The proof of this lemma can be directly obtained from the equations $\eqref{dege}_1$, $\eqref{dege}_2$ and $\eqref{we22}_1$,  and the conclusions of Lemmas \ref{s1}-\ref{s6h}. Here we omit its details.
 \subsection{Higher order estimates from  the ``Degenerate"-``Weak-Singular"   structure \eqref{dege}}

Lemma \ref{s6h}   implies that
 \begin{equation}\label{keygradient}
\int_0^t|\nabla u(\cdot, s)|^2_\infty \text{d}s\leq C\quad \text{for} \quad 0\leq t < \overline{T}, \end{equation}
where  $C>0$ is some  finite constant.  Noting that \eqref{we22} is essentially a hyperbolic-singular parabolic  system, it is very hard to   derive other higher order estimates for the regularity of
the regular solutions via using this structure directly. Indeed, we need to ask for help from the so-called  "Degenerate"--"Weak-Singular"   structure \eqref{dege}, which will be  shown  in the following 3 lemmas.

\begin{lemma}\label{s8}  
\begin{equation*}
\begin{split}
&\sup_{0\leq t\leq T}\big(\|(g,\phi)\|^2_{D^2}+|f|^2_{D^1}+\|(g_t,\phi_t)\|^2_1+|f_t|^2_{2}\big)(t)\\
&\quad+\int_{0}^{T}\Big(|u|^2_{D^{3}}+|\phi_{tt}|^2_{2}+|g_{tt}|^2_{2}\Big)\text{\rm d}t\leq C\quad \text{for} \quad 0\leq T<  \overline{T},
\end{split}
\end{equation*}
where the constant  $C>0$ only depends on $(\rho_0,u_0)$,  $C_0$,  $\alpha$,  $\beta$, $A$, $\gamma$, $\delta$     and $\overline{T}$.
 \end{lemma}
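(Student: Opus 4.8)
The strategy is to exploit the ``Degenerate''--``Weak-Singular'' structure \eqref{dege} to close the second-order estimates on the hyperbolic quantities $(g,\phi,f)$ together with the third-order estimate on $u$, bootstrapping from the lower-order bounds already obtained in Lemmas \ref{s1}--\ref{phit}, and crucially from \eqref{keygradient}, which supplies the $L^2_t L^\infty_x$ control of $\nabla u$ that will play the role of the BKM-type quantity throughout. First I would apply $\nabla^2$ to the transport equations $\eqref{dege}_1$ and $\eqref{dege}_2$ for $\phi$ and $g$, pair with $\nabla^2\phi$ resp.\ $\nabla^2 g$, integrate over $\mathbb{R}^3$, and absorb the commutator terms; the worst term is of the form $\int \nabla^2 u\cdot\nabla\phi\cdot\nabla^2\phi$, bounded by $|\nabla u|_\infty|\nabla\phi|_{D^1}^2$ plus a term carrying $|\nabla^3 u|_2$ times $|\nabla\phi|_2|\nabla^2\phi|_2^{1/2}\cdots$, to be hidden by an $\epsilon$-fraction of the $\int_0^T|u|_{D^3}^2$ we are simultaneously estimating. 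The same scheme applied to $\eqref{dege}_4$ gives the $|f|_{D^1}^2$ bound, where the delicate contribution is the source term $\delta a\,\nabla(\nabla\mathrm{div}\,u)$ paired against $\nabla f$, controlled by $|\nabla^3 u|_2|\nabla f|_2$ and again absorbed into the $u$-estimate.

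The $|u|_{D^3}^2$ estimate is obtained from the elliptic structure $\eqref{dege}_3$: rewriting $aLu = -\varphi(u_t+u\cdot\nabla u)-\nabla g + f\cdot Q(u)$ and invoking Lemma \ref{zhenok} with $k=1$, one gets $|u|_{D^3}\le C\big(|\varphi u_t|_{D^1}+|\varphi\, u\cdot\nabla u|_{D^1}+|\nabla g|_{D^1}+|f\cdot Q(u)|_{D^1}\big)$. Here $\varphi\in L^\infty$ and $\nabla\varphi=f\varphi/(a\delta)$ up to constants (from the relation $f=\psi\varphi$ and $\psi = \tfrac{a\delta}{\delta-1}\nabla\varphi^{-1}$), so $|\varphi u_t|_{D^1}\lesssim |\nabla u_t|_2 + |f|_\infty|u_t|_2$ and one needs $|f|_\infty$, which is available since $f\in L^6\cap D^{1,3}\cap D^2$ was part of the regularity in Theorem \ref{th2} and is propagated by the earlier lemmas; likewise $|f\cdot Q(u)|_{D^1}\lesssim |f|_\infty|\nabla^2 u|_2 + |\nabla f|_2|\nabla u|_\infty|\cdots$. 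Integrating in time over $[0,T]$ and using $\int_0^T|\nabla u_t|_2^2\,\mathrm{d}t\le C$ from Lemma \ref{s6h}, $\int_0^T|\nabla u|_\infty^2\,\mathrm{d}t\le C$ from \eqref{keygradient}, and the already-controlled $\|g\|_{D^{1,6}}$, $\|\phi\|_{D^{1,6}}$ from Lemma \ref{phit}, one arrives at $\int_0^T|u|_{D^3}^2\,\mathrm{d}t \le C + \epsilon\int_0^T(|f|_{D^1}^2 + \|(g,\phi)\|_{D^2}^2)\,\mathrm{d}t + \int_0^T (\text{l.o.t.})\,\mathrm{d}t$, which combines with the transport estimates above into one closed Gronwall inequality with integrating factor $\exp(C\int_0^T(1+|\nabla u|_\infty+|\nabla u|_\infty^2)\,\mathrm{d}t)$.

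The time-derivative bounds $\|(g_t,\phi_t)\|_1$, $|f_t|_2$ come directly by differentiating the transport equations once more and reading off: $g_t = -u\cdot\nabla g-(\gamma-\delta)g\,\mathrm{div}\,u$, so $\|g_t\|_1$ is controlled by $\|g\|_{D^2}$, $|u|_\infty$, $|\nabla u|_3$, $\|g\|_{1,6}$, all now bounded; similarly $f_t = -\sum_l A_l\partial_l f - B^* f - a\delta\nabla\mathrm{div}\,u$ gives $|f_t|_2\lesssim |\nabla u|_\infty|f|_2 + |\nabla u|_{D^1}\cdots + |\nabla^2 u|_2$. For the parabolic-time terms $\int_0^T(|\phi_{tt}|_2^2+|g_{tt}|_2^2)\,\mathrm{d}t$ I would differentiate $\eqref{dege}_1$--$\eqref{dege}_2$ in $t$, giving e.g.\ $\phi_{tt} = -u_t\cdot\nabla\phi - u\cdot\nabla\phi_t - (\gamma-1)(\phi_t\mathrm{div}\,u + \phi\,\mathrm{div}\,u_t)$, and bound the $L^2_tL^2_x$ norm by $|u_t|_6|\nabla\phi|_3$ (uses $\nabla\phi\in L^3$ from $\|\phi\|_{D^{1,6}}$), $|\nabla u|_\infty|\phi_t|_2$, and $|\phi|_\infty|\nabla u_t|_2$, all integrable in time by Lemma \ref{s6h}. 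The main obstacle I anticipate is the bookkeeping needed to ensure every appearance of $|\nabla^3 u|_2$ (or $|\nabla^2 u_t|_2$) on the right-hand sides is genuinely absorbable — i.e.\ that the constants and the smallness of $\epsilon$ are chosen consistently across the coupled estimates so that the linear-in-$u_{D^3}^2$ feedback closes — and the verification that $|f|_\infty$, $|\nabla f|_3$ and similar borderline norms are already under control from the preceding lemmas and from the propagated regularity of Theorem \ref{th2}; no single step is analytically deep, but the coupling must be organized carefully.
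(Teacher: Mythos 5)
Your plan follows the same skeleton as the paper's proof: a pointwise-in-time elliptic estimate for $|u|_{D^3}$ obtained by applying Lemma \ref{zhenok} to $aLu=-\varphi(u_t+u\cdot\nabla u)-\nabla g+f\cdot Q(u)$, coupled with second-order energy estimates for the transported quantities $\phi$, $g$, $f$, and a Gronwall argument whose integrability relies on $\int_0^T|\nabla u|_\infty\,\text{d}t$ (from \eqref{keygradient}) and $\int_0^T|\nabla u_t|^2_2\,\text{d}t$ (from Lemma \ref{s6h}); the bounds on $(g_t,\phi_t,f_t,\phi_{tt},g_{tt})$ are then read off the equations, exactly as in the paper. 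The organizational differences — estimating $\|g\|_{D^2}$ from its own transport equation instead of the algebraic relation \eqref{gphi}, and absorbing $|\nabla^3 u|_2$ through an $\epsilon$-fraction of $\int_0^T|u|^2_{D^3}\,\text{d}t$ in integral form rather than substituting the elliptic bound pointwise into the differential inequality — are harmless and essentially equivalent.

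There is, however, one genuine gap: in the elliptic step you bound $|\varphi u_t|_{D^1}$ and $|f\cdot Q(u)|_{D^1}$ using $|f|_\infty$ (and you also invoke $|\nabla f|_3$), asserting these are available because $f\in L^6\cap D^{1,3}\cap D^2$ appears in the regularity class of Theorem \ref{th2} and ``is propagated by the earlier lemmas.'' It is not: in the continuation argument every constant must depend only on $(\rho_0,u_0)$, $C_0$, $\overline{T}$ and the parameters, whereas the regularity of Theorem \ref{th2} is qualitative and holds only on the local existence interval; uniform control of $f=a\delta\nabla\log\rho$ in strong norms up to $\overline{T}$ is precisely what Lemmas \ref{s8} and \ref{s10} are in the process of establishing, so invoking $|f|_\infty$ here is circular. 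At this stage the only admissible bounds on $f$ are $|f|_6\le C$ (since $|\psi|_6\le C_0$ by the assumption \eqref{we11} and $|\varphi|_\infty\le C$ by Lemma \ref{s1}, with $\nabla\varphi=\tfrac{1-\delta}{a\delta}\varphi f$) and the quantity $|\nabla f|_2$ currently under estimation; $|f|_\infty$ only becomes available after Lemma \ref{s10} supplies $|f|_{D^2}$. The repair is exactly what the paper does: estimate $|\nabla\varphi\cdot u_t|_2\le C|f|_6|u_t|_3$ and $|f\cdot Q(u)|_{D^1}\le C\big(|\nabla f|_2|\nabla u|_\infty+|f|_6|\nabla^2u|_3\big)$, then interpolate $|\nabla^2u|_3$ and $|\nabla u|_\infty$ between $\|u\|_{D^2}$ (bounded by Lemma \ref{s6h}) and $|u|_{D^3}$ and absorb by Young's inequality, which gives $|u|_{D^3}\le C\big(1+|u_t|_{D^1}+|\phi|_{D^2}+|\nabla f|_2\big)$ and lets your Gronwall argument close using only $|f|_6$ and $|\nabla f|_2$.
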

 \begin{proof}

First, it follows from equations $\eqref{dege}_3$ and Lemma \ref{zhenok} that 
 \begin{equation}\label{zhu150}
\begin{split}
|u|_{D^3}\leq& C(|\varphi u_t|_{D^1}+|\varphi u\cdot \nabla u|_{D^1}+| \nabla g|_{D^1}+|f \cdot Q( u)|_{D^1})\\
\leq& C(1+|u_t|_{D^1}+| \phi|_{D^2}+ |\nabla^2 u|_3+|\nabla f|_2 |\nabla u|_\infty).
\end{split}
\end{equation}
where one has used the following relation 
\begin{equation}\label{gphi}
 \nabla^2 g=C_1\rho^{1-\delta} \nabla^2 \phi+C_2 \nabla \varphi\otimes \nabla \phi,
\end{equation}
for two constants $C_1>0$ and $C_2>0$.
Then it follows from  Young's inequality that 
 \begin{equation}\label{zhu1500}
|u|_{D^3}
\leq C(1+|u_t|_{D^1}+| \phi|_{D^2}+|\nabla f|_2).
\end{equation}

Next, applying $\nabla$ to $(\ref{dege})_4$, multiplying the resulting equations  by $2\nabla f$ and integrating over $\mathbb{R}^3$,  then according to \eqref{zhu1500}, one has
\begin{equation}\label{zhenzhen11}\begin{split}
\frac{d}{dt}|\nabla f|^2_2\leq& C\big|\nabla u\big|_\infty|\nabla f|^2_2+C|\nabla^3 u|_2|\nabla f|_2+|\nabla^2 u|_3 |f|_6|\nabla f|_2\\
\leq& C(1+|\nabla u|_\infty)|f|^2_{D^1}+C(1+|\phi|^2_{D^2}+|u_t|^2_{D^1}).
\end{split}
\end{equation}

On the other hand, applying $\nabla^2$ to $(\ref{we22})_1$,  multiplying the resulting equations by $ 2\nabla^2 \phi$ and integrating  over ${\mathbb {R}}^3$, one has 
\begin{equation}\label{zhuu12acc}
\begin{split}
\frac{d}{dt}|\phi|^2_{D^2}
\leq& C|\nabla u|_\infty|\phi|^2_{D^2}+C|\nabla \phi|_6|\phi|_{D^2}|\nabla^2 u|_3+C|\phi|_\infty|\phi|_{D^2}|\nabla^3 u|_2\\
\leq& C(1+|\nabla u|_\infty)(|\phi|^2_{D^2}+|f|^2_{D^1})+C(1+|\nabla u_t|^2_2),
\end{split}
\end{equation}
which, together with \eqref{zhenzhen11}, gives that
\begin{equation}\label{zhmm12acc}
\begin{split}
\frac{d}{dt}(|\phi|^2_{D^2}+|f|^2_{D^1})
\leq& C(1+|\nabla u|_\infty)(|\phi|^2_{D^2}+|f|^2_{D^1})+C(1+|\nabla u_t|^2_2).\end{split}
\end{equation}
Then  it follows from Gronwall's inequality, \eqref{zhmm12acc} and \eqref{keygradient}  that 
\begin{equation}\label{yijieguji}
\begin{split}
|\phi(t)|^2_{D^2}+|f(t)|^2_{D^1}+\int_{0}^{t}|u(s)|^2_{D^{3}}\text{d}s\leq C\quad \text{for} \quad 0\leq t\leq T.
\end{split}
\end{equation}

Finally, according to \eqref{gphi} and   the following relations
\begin{equation}\label{ghtuss}
\begin{split}
f_t=&-\nabla (u \cdot f)-\nabla \text{div} u,\quad\phi_t=-u\cdot \nabla \phi-(\gamma-1)\phi\text{div} u,\\
g_t=&-u\cdot \nabla g-(\gamma-\delta) g \text{div} u,\quad \phi_{tt}=-(u\cdot \nabla \phi)_t-(\gamma-1)(\phi \text{div} u)_t,\\
g_{tt}=&-(u\cdot \nabla g)_t-(\gamma-\delta) (g \text{div} u)_t,
\end{split}
\end{equation}
we conclude the proof of this lemma.
\end{proof}

In order to obtain higher order regularity, we need the following improved estimate.
\begin{lemma}\label{s9} 
\begin{equation*}
\begin{split}
\sup_{0\leq t\leq T}\big(|u_t|^2_{D^1}+|u|^2_{D^3}\big)(t)+\int_{0}^{T}(|\sqrt{\varphi}u_{tt}|^2_2+|u_t|^2_{D^2})\text{\rm d}t\leq C\quad \text{for} \quad 0\leq T<  \overline{T},
\end{split}
\end{equation*}
where the constant  $C>0$ only depends on $(\rho_0,u_0)$,  $C_0$,  $\alpha$,  $\beta$, $A$, $\gamma$, $\delta$     and $\overline{T}$.
 \end{lemma}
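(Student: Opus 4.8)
The plan is to differentiate the reformulated momentum equation once more in time and test against $u_{tt}$, thereby closing a Gronwall estimate for $|u_t|_{D^1}^2$ while picking up $\int_0^T|\sqrt{\varphi}u_{tt}|_2^2$ on the left. Starting from $\eqref{zhu37ssh}$, namely $u_{tt}+a\phi^{2e}Lu_t=-(u\cdot\nabla u)_t-\nabla\phi_t-a\phi^{2e}_tLu+(\psi\cdot Q(u))_t$, I would first rewrite it in terms of the ``Degenerate''--``Weak-Singular'' structure $\eqref{dege}$: multiplying through by $\varphi=\phi^{-2e}$ turns the singular elliptic term $a\phi^{2e}Lu_t$ into $aLu_t$ (no weight), at the cost of a $\textbf{WSS}$ term $f\cdot Q(u_t)$ plus lower-order commutators involving $\varphi_t$. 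Concretely one uses $\eqref{elss}$ differentiated once more in $t$, i.e. one considers $aLu_{tt}$ from the structure $\eqref{dege}$ applied to $u_{tt}$, or equivalently tests the $t$-differentiated equation $\eqref{elss}$ against $u_{tt}$ directly. The point of switching to $\eqref{dege}$ here, as explained in \S2.3.2 around $\eqref{possibleguji}$--$\eqref{zhss}$, is that the elliptic estimate for $|u_t|_{D^2}$ then depends only on $|\nabla f|_2$ (controlled in Lemma \ref{s8}) and not on $|\nabla\psi|_2$.

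The key steps, in order, would be: \emph{(i)} From $\eqref{dege}_3$ differentiated in $t$, write $aLu_t=-\varphi u_{tt}-\varphi(u\cdot\nabla u)_t-\varphi_t(u_t+u\cdot\nabla u)-\nabla g_t+(f\cdot Q(u))_t$, and use Lemma \ref{zhenok} to get the bound $|u_t|_{D^2}\le C|aLu_t|_2$ by the right-hand side in $L^2$; then, by interpolation, $|\nabla^2u_t|_2$ is controlled by $|\sqrt{\varphi}u_{tt}|_2$ plus already-estimated quantities plus a controllable $\epsilon|\nabla u_t|_2$ term (using $|f|_\infty\le C|f|_{D^1\cap D^2}$-type bounds, or rather $|\psi|_\infty$ from Lemma \ref{s8}'s control of $\|\phi\|_{D^2}$ and $|f|_{D^1}$). \emph{(ii)} Multiply the $t$-differentiated momentum equation by $u_{tt}$ and integrate; the leading left-hand term is $\frac12\frac{d}{dt}\big(a\alpha|\phi^e\nabla u_t|_2^2+a(\alpha+\beta)|\phi^e\operatorname{div}u_t|_2^2\big)+|\sqrt{\varphi}u_{tt}|_2^2$ (or the appropriate weighted version — here one must be careful: testing $\eqref{zhu37ssh}$ against $u_{tt}$ gives $|u_{tt}|_2^2$ with the weight $\phi^{2e}$ on the elliptic term, so one actually obtains $|u_{tt}|_2^2$ directly, and the $\sqrt{\varphi}$-weighted form comes from the $\eqref{dege}$ version). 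I would use the $\eqref{dege}_3$ version tested against $u_{tt}$ so that the dissipation term is exactly $\int\varphi|u_{tt}|^2=|\sqrt{\varphi}u_{tt}|_2^2$. \emph{(iii)} Estimate the resulting trouble term $J^*=\int(\psi\cdot Q(u))_t\cdot u_{tt}$ — rather, its $\eqref{dege}$-analogue $\int(f\cdot Q(u))_t\cdot u_{tt}$ — by splitting $f_t\cdot Q(u)+f\cdot Q(u)_t$, controlling $|f_t|_2$ (Lemma \ref{s8}), $|f|_6$, $|Q(u)_t|_2\le C|\nabla u_t|_2$, and absorbing $|\nabla u_t|_2$ via the interpolation from step (i); the remaining terms $\int\varphi(u\cdot\nabla u)_{tt}\cdot u_{tt}$, $\int\varphi_{tt}(\cdots)\cdot u_{tt}$, $\int\nabla g_{tt}\cdot u_{tt}$ are handled with Lemmas \ref{s6h}--\ref{s8} and $\eqref{keygradient}$, noting $\int_0^T|\phi_{tt}|_2^2+|g_{tt}|_2^2<\infty$ is already available. \emph{(iv)} Control the initial value $\limsup_{\tau\to0}|u_t(\tau)|_{D^1}$ using the compatibility conditions $\eqref{th78zx}$ (the third one, $\nabla(\rho_0^{\delta-1}Lu_0)=\rho_0^{(1-\delta)/2}g_3$, is exactly what bounds $|\nabla u_t(0)|_2$ after applying $\nabla$ to $\eqref{dege}_3$ at $t=0$), then integrate the Gronwall inequality from $\tau$ to $t$ and let $\tau\to0$, invoking $\eqref{keygradient}$ to absorb the $\int_0^T|\nabla u|_\infty$ factor. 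Finally $|u|_{D^3}^2$ follows from $\eqref{zhu1500}$ combined with the now-established bound on $|u_t|_{D^1}$ and $|\phi|_{D^2}$, $|\nabla f|_2$.

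The main obstacle I anticipate is the interplay between $|u_{tt}|_2$ (or $|\sqrt{\varphi}u_{tt}|_2$) and $|\nabla u_t|_2$: the term $a\phi^{2e}_tLu\cdot u_{tt}$ and the convection term $(u\cdot\nabla u)_t\cdot u_{tt}$ generate factors of $|\nabla u_t|_2$ that must be absorbed, but $|\nabla u_t|_2$ is \emph{not} directly on the left-hand side — only $|\phi^e\nabla u_t|_2$ is. One therefore needs the elliptic gain from step (i), $|\nabla^2u_t|_2\le C(|\sqrt{\varphi}u_{tt}|_2+\cdots)$, together with Gagliardo--Nirenberg $|\nabla u_t|_2\le C|u_t|_2^{1/2}|\nabla^2u_t|_2^{1/2}$ (using $u_t\in L^6\cap D^1$ from Lemma \ref{s6h}), to trade $|\nabla u_t|_2$ against a small multiple of $|\sqrt{\varphi}u_{tt}|_2$ plus lower order. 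Making this absorption genuinely closed — rather than circular — requires ordering the estimates so that the $D^2$-bound for $u_t$ is derived \emph{simultaneously} with the energy inequality, and keeping the weight bookkeeping ($\phi^e$ vs.\ $\phi^{2e}$ vs.\ $\varphi$, and their uniform positive lower and upper bounds from Lemma \ref{s1}) scrupulously consistent. Once that is arranged, Gronwall plus $\eqref{keygradient}$ finishes the argument exactly as in the proof of Lemma \ref{s6h}.
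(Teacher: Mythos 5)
Your overall skeleton coincides with the paper's: the elliptic bound for $|u_t|_{D^2}$ from the ``Degenerate''--``Weak-Singular'' form \eqref{elss} via Lemma \ref{zhenok}, the energy identity obtained by testing the once-$t$-differentiated equation in its \eqref{dege}-form against $u_{tt}$ so that the dissipation is $|\sqrt{\varphi}u_{tt}|^2_2$, the use of the third compatibility condition in \eqref{th78zx} for $\limsup_{\tau\rightarrow 0}|\nabla u_t(\tau)|_2$, and Gronwall together with \eqref{keygradient} and Lemma \ref{s8}. However, your step (iii) has a genuine gap exactly at the terms in which $u_{tt}$ appears without a $\sqrt{\varphi}$ weight, namely $-\int\nabla g_t\cdot u_{tt}$ and $\int f_t\cdot Q(u)\cdot u_{tt}$. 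You propose to bound the latter directly using $|f_t|_2$ from Lemma \ref{s8}, but the only dissipation on the left is $|\sqrt{\varphi}u_{tt}|^2_2$, and $\varphi=a\rho^{1-\delta}$ vanishes at the far field, so neither $|u_{tt}|_2$ nor $|u_{tt}|_6$ is controlled at this stage (such bounds appear only later, in Lemmas \ref{5zx} and \ref{5}). To pair with $\sqrt{\varphi}u_{tt}$ you would need $\varphi^{-\frac12}f_t\cdot Q(u)\in L^2$, and $\varphi^{-\frac12}f_t$ contains $\phi^{e}\nabla \text{div}u$, which is also not yet available (it is first bounded in Lemma \ref{laosi}, after $\|\psi\|_{D^1\cap D^2}$ is obtained in Lemma \ref{3}); so the direct estimate is circular. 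The paper's device, absent from your proposal, is an integration by parts in time: one writes $-\int\nabla g_t\cdot u_{tt}=\frac{d}{dt}\int g_t\,\text{div}u_t-\int g_{tt}\,\text{div}u_t$ and $\int f_t\cdot Q(u)\cdot u_{tt}=\frac{d}{dt}\int f_t\cdot Q(u)\cdot u_t-\int f_{tt}\cdot Q(u)\cdot u_t-\int f_t\cdot Q(u_t)\cdot u_t$, absorbs the two exact derivatives into the energy functional (harmless by \eqref{liu01}), controls $g_{tt}$ in $L^2_tL^2_x$ by Lemma \ref{s8}, and handles $f_{tt}$ through the transport equation $\eqref{dege}_4$ plus a spatial integration by parts; then the only place where $|\nabla u_t|_3$, hence $|\nabla^2u_t|_2$, enters is $\int f\cdot Q(u_t)\cdot u_{tt}$, and there your interpolation-plus-\eqref{zhu1500x} absorption does work, as in \eqref{zhu19qqss}.

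Two smaller points. The ``remaining terms'' you list, $\int\varphi(u\cdot\nabla u)_{tt}\cdot u_{tt}$, $\int\varphi_{tt}(\cdots)\cdot u_{tt}$ and $\int\nabla g_{tt}\cdot u_{tt}$, belong to the twice-$t$-differentiated equation; that is the later estimate of Lemma \ref{5}, where the dissipation is $|\phi^{e}\nabla u_{tt}|^2_2$ and one must multiply by $t$ because no initial bound for $u_{tt}$ is available, whereas in the present lemma only first time derivatives of the coefficients occur. Also, your anticipated ``main obstacle'' is moot: testing the \eqref{dege}-form against $u_{tt}$ places exactly $\frac{a}{2}\frac{d}{dt}\big(\alpha|\nabla u_t|^2_2+(\alpha+\beta)|\text{div}u_t|^2_2\big)$, i.e.\ the unweighted $D^1$ energy, on the left (and in any case $\phi^{e}$ is bounded below since $\rho$ is bounded above and $e<0$, so a $\phi^{e}$-weighted gradient norm would dominate the unweighted one); the genuine difficulty is the missing weight on $u_{tt}$ described above, not on $\nabla u_t$.
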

\begin{proof} First, 
 \begin{equation}\label{elliptictt}
aLu_t=-\varphi u_{tt} -\varphi (u\cdot\nabla u)_t -\varphi_t (u_t+u\cdot\nabla u)-\nabla g_t+(f\cdot Q(u))_t,
\end{equation}
  Lemmas \ref{zhenok}  and \ref{phit} yield
 \begin{equation}\label{zhu150z}
\begin{split}
|u_t|_{D^2}\leq& C(|\varphi u_{tt}|_2+ |\varphi (u\cdot\nabla u)_t|_2+|\varphi_t u_t|_2 +|\varphi_t u\cdot\nabla u|_2\\[6pt]
&+|\nabla g_t|_2+|(f\cdot Q(u))_t)|_2)\\
\leq& C(1+|\sqrt{\varphi}u_{tt}|_{2}+|\nabla u_t|_2+|\nabla u_t|_3+|\nabla u|_\infty),
\end{split}
\end{equation}
which implies, with the help of Young's inequality, that
 \begin{equation}\label{zhu1500x}
|u_t|_{D^2}
\leq C(1+|\sqrt{\varphi}u_{tt}|_{2}+|\nabla u_t|_2+|u|_{D^{2,6}}).
\end{equation}

Now, multiplying  $(\ref{zhu37ssh})$ by $u_{tt}$ and integrating over $\mathbb{R}^3$,  one has
\begin{equation}\label{zhu19ss}
\begin{split}
& \frac{a}{2}\frac{d}{dt}\Big(\alpha|\nabla u_t|^2_2+(\alpha+\beta)|\text{div}u_t|^2_2\Big)+| \sqrt{\varphi}u_{tt}|^2_2\\
=&\int  \Big(-\varphi (u\cdot \nabla u)_t-\varphi_t(u_t+u\cdot \nabla u)-\nabla g_t+(f \cdot Q(u))_t\Big)\cdot u_{tt}  \equiv: \sum_{i=14}^{18} L_i.
\end{split}
\end{equation}
For the terms $L_{14}$--$L_{18}$, we perform the following estimates:
\begin{equation}\label{zhu201xc}
\begin{split}
L_{14}=&-\int  \varphi (u\cdot \nabla u)_t \cdot u_{tt}\\
 \leq  & C|\varphi|^{\frac{1}{2}}_\infty(|u_t|_{6}|\nabla u|_{3}+ |u|_\infty |\nabla u_t|_2)|\sqrt{\varphi}u_{tt}|_2\\
 \leq & C|\nabla u_t|^2_2+\frac{1}{10}|\sqrt{\varphi}u_{tt}|^2_2,\\ 
L_{15}=&-\int  \varphi_t u_t \cdot u_{tt} 
= \int (u\cdot \nabla \varphi+(1-\delta)\varphi \text{div}u) u_t \cdot u_{tt} \\
 \leq & C\big(|\varphi|^{\frac{3}{2}}_\infty|u|_{6}|\psi|_{6}+|\varphi|^{\frac{1}{2}}_\infty  |\nabla u|_3\big)|u_t|_{6} |\sqrt{\varphi}u_{tt}|_2\\
  \leq  & C|\nabla u_t|^2_2+\frac{1}{10}|\sqrt{\varphi}u_{tt}|^2_2,\\  
  L_{16}=&-\int  \varphi_t (u\cdot \nabla) u \cdot u_{tt} \\
= & \int (u\cdot \nabla \varphi+(1-\delta)\varphi \text{div}u) (u\cdot \nabla) u \cdot u_{tt} \\
\leq & C\big(|\varphi|^{\frac{3}{2}}_\infty|u|_\infty|\psi|_{6} +|\varphi|^{\frac{1}{2}}_\infty|\nabla u|_6\big) |u|_6 |\nabla u|_6|\sqrt{\varphi}u_{tt}|_2\leq C|\sqrt{\varphi}u_{tt}|_2,\\
L_{17}=&-\int  \nabla g_t \cdot u_{tt} 
=\frac{d}{dt}\int   g_t  \text{div}u_{t} -\int  g_{tt}  \text{div}u_{t} \\
\leq&\frac{d}{dt}\int  g_t \text{div}u_{t} + C|\nabla u_t|_2|g_{tt}|_2
\leq  \frac{d}{dt}\int  g_t \text{div}u_{t} + C(|\nabla u_t|^2_2+|g_{tt}|^2_2),\\  
 \end{split}
\end{equation}
\begin{equation}\label{zhu201}
\begin{split} 
L_{18}=&\int  (f \cdot Q(u))_t\cdot u_{tt}
=\int  f \cdot Q(u)_t\cdot u_{tt}+\int  f_t \cdot Q(u)\cdot u_{tt}\\
\leq& C|\varphi|^{\frac{1}{2}}_\infty|\psi|_6 |\nabla u_t|_3 |\sqrt{\varphi} u_{tt}|_2+\frac{d}{dt}\int  f_t \cdot Q(u)\cdot u_{t}  \\
&-\int  f_{tt} \cdot Q(u)\cdot u_{t} -\int  f_t \cdot Q(u_t)\cdot u_{t} \\
\leq & C|\varphi|^{\frac{1}{2}}_\infty|\psi|_6 |\nabla u_t|_3 |\sqrt{\varphi} u_{tt}|_2+C|f_t|_2|\nabla u_t|_6|u_t|_3+\frac{d}{dt}\int  f_t \cdot Q(u)\cdot u_{t}\\
&-\int  (u\cdot f)_t \text{div}(Q(u)\cdot u_{t})+\int  \nabla \text{div}u_t \cdot Q(u)\cdot u_{t}\\
\leq & \frac{d}{dt}\int  f_t \cdot Q(u)\cdot u_{t}+ C\big(|\varphi|^{\frac{1}{2}}_\infty|\psi|_6 |\nabla u_t|_3 |\sqrt{\varphi} u_{tt}|_2+|f_t|_2|\nabla u_t|_6|u_t|_3\big)\\
&+C|u_t|_6|f|_6(|\nabla^2 u|_2|u_t|_6+|\nabla u|_6|\nabla u_t|_2)\\
&+C|u|_\infty|f_t|_2(|\nabla^2 u|_3|u_t|_6+|\nabla u|_\infty|\nabla u_t|_2)+C|\nabla^2 u_t|_2|\nabla u|_3|u_t|_6\\
\leq &\frac{d}{dt}\int  f_t \cdot Q(u)\cdot u_{t}+\frac{1}{10}|\sqrt{\varphi}u_{tt}|^2_2+C(1+|\nabla u_t|^2_2+|\nabla^2 u|^2_6).
\end{split}
\end{equation}
Therefore, \eqref{zhu19ss}-\eqref{zhu201} imply that
\begin{equation}\label{zhu19qqss}
\begin{split}
& \frac{a}{2}\frac{d}{dt}\Big(|\nabla u_t|^2_2+(\alpha+\beta)|\text{div}u_t|^2_2-\int  (g_t \cdot \text{div}u_{t}+ f_t \cdot Q(u)\cdot u_{t})\Big)+|\sqrt{\varphi} u_{tt}|^2_2\\
\leq& C(1+|\nabla u_t|^2_2+|\nabla^2 u|^{2}_6+|g_{tt}|^2_2),
\end{split}
\end{equation}
which, upon integrating over $(\tau,t)$, along with Lemma \ref{s8},  yields
\begin{equation}\label{liu03}
\begin{split}
|\nabla u_t(t)|^2_2+\int_{\tau}^t| \sqrt{\varphi}u_{tt}(s)|^2_2\text{d}s\leq C +|\nabla u_t(\tau)|^2_2, \quad  0\leq t \leq T,
\end{split}
\end{equation}
where one has  used the fact that for any $\epsilon>0$,
\begin{equation}\label{liu01}
\begin{split}
\int   \big(g_t \cdot \text{div}u_{t}+ f_t \cdot Q(u)\cdot u_{t}\big) 
\leq& \epsilon|\nabla u_t|^2_2+C.
\end{split}
\end{equation}
On the other hand,  it follows from the momentum equations $ (\ref{we22})_2$ that
\begin{equation}\label{zhu15wsx}
\begin{split}
|\nabla u_t(\tau)|_2\leq& \big( |\nabla \big(u\cdot \nabla u+ \nabla \phi+ a \phi^{2e} Lu-\psi\cdot Q(u)\big)|_{2}\big)(\tau).
\end{split}
\end{equation}
Then by the assumption (\ref{th78zx}), one has
\begin{equation}\label{zhu15vbwsx}
\begin{split}
\lim \sup_{\tau\rightarrow 0}|\nabla u_t(\tau)|_2\leq &C( |\nabla (u_0\cdot \nabla u_0)|_2+|\nabla^2\phi_0|_2)\\
&+C(|\nabla (\psi_0\cdot Q(u_0))|_2+| \rho^{\frac{1-\delta}{2}}_0g_3|_{2})\leq C.
\end{split}
\end{equation}

Letting $\tau \rightarrow 0$ in (\ref{liu03}), we finally prove that
\begin{equation}\label{zhu22wsxuy}
\begin{split}
|\nabla u_t(t)|^2_2+\int_{0}^t| \sqrt{\varphi}u_{tt}(s)|^2_2\text{d}s\leq C, \quad  0\leq t \leq T.
\end{split}
\end{equation}
The rest of desired estimates follows quickly  from  (\ref{zhu1500}),  (\ref{zhu1500x}) and \eqref{zhu22wsxuy}.

\end{proof}

It remains to prove the following lemma for the required regularity estimate.

\begin{lemma}\label{s10} 
\begin{equation*}
\begin{split}
\sup_{0\leq t\leq T}(|\phi|^2_{D^3}+|f|^2_{D^2}+| \varphi|^2_{D^{2,3}}+\|\phi_t\|^2_2+|\phi_{tt}|_2)(t)&\\
+\sup_{0\leq t\leq T}|f_t(t)|^2_{D^1}+\int_{0}^{T}(|u|^2_{D^{4}}+|\varphi_{tt}|^2_2)\text{\rm d}t\leq &C \quad \text{for} \quad 0\leq T<  \overline{T},
\end{split}
\end{equation*}
where the constant  $C>0$ only depends on $(\rho_0,u_0)$,  $C_0$,  $\alpha$,  $\beta$, $A$, $\gamma$, $\delta$     and $\overline{T}$.
 \end{lemma}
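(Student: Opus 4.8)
The plan is to run one more, highest-order, round of the same two-structure scheme used in Lemmas \ref{s8}--\ref{s9}: a hyperbolic (transport) estimate for the density-type quantities $\phi,f,g,\varphi$, coupled to an elliptic estimate for $u$ in $D^4$ coming from the degenerate structure $(\ref{dege})$. Write $Y(t):=|\phi(t)|^2_{D^3}+|f(t)|^2_{D^2}$. First I would apply $\nabla^2$ to the momentum equations $(\ref{dege})_3$ and invoke Lemma \ref{zhenok} with $k=2,\,q=2$; converting $|\nabla g|_{D^2}$ into $|\phi|_{D^3}$-type terms via (a once-more-differentiated) relation \eqref{gphi}, and controlling all weights through the identities $\nabla\varphi=\tfrac{1-\delta}{a\delta}\varphi f$, $\nabla^2\varphi\sim\varphi\nabla f+(\nabla\varphi)f$ (so that $|\varphi|_\infty,|f|_\infty,|\nabla\varphi|_\infty$ are bounded and $|\nabla^2\varphi|_3\lesssim 1+|f|_{D^2}$), this yields the a priori bound
\begin{equation*}
|u|_{D^4}\le C\big(1+|u_t|_{D^2}+|u|_{D^{2,6}}+(1+|u|_{D^{2,6}}+|\nabla u|_\infty)\sqrt{Y}\,\big),
\end{equation*}
where $C$ depends only on the quantities already controlled in Lemmas \ref{s1}--\ref{s9}. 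The structural point is that, because the vacuum is confined to the far field, the weight $\varphi$ in front of $u_t$ and $u\cdot\nabla u$ is harmless, $|g|_{D^3}\lesssim|\phi|_{D^3}+|f|_{D^2}$, and the only top-order contributions are $|u_t|_{D^2}$ and the mixed norm $|u|_{D^{2,6}}$, both of which are already known to lie in $L^2_t$ by Lemmas \ref{s6h} and \ref{s9}.

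Next I would produce the transport estimates. Differentiating $(\ref{we22})_1$ three times, testing with $\nabla^3\phi$ and integrating, every commutator from $u\cdot\nabla$ is of the form $\nabla^j u\cdot\nabla^{4-j}\phi$ with $1\le j\le 3$, and is absorbed using $|u|_{D^3}\le C$ and $|\nabla\phi|_\infty\lesssim|\phi|_{D^2}^{1/2}|\phi|_{D^3}^{1/2}$; the forcing $\nabla^3(\phi\,\mathrm{div}\,u)$ contributes $|\phi|_\infty|u|_{D^4}+(1+|\nabla u|_\infty)|\phi|^2_{D^3}$. Differentiating $(\ref{dege})_4$ twice and testing with $\nabla^2 f$ is analogous: since $A_l$ and $B^\ast$ are first order in $u$, the commutator $[\nabla^2,\sum A_l\partial_l]f$ only involves $\nabla^2 u\cdot\nabla f$ and $\nabla u\cdot\nabla^2 f$, and $\nabla^2(B^\ast f)$ produces at worst $\nabla^3 u\cdot f$, all controlled via $|u|_{D^3}\le C$, $|f|_\infty\le C$ and $|\nabla^2 u|_3\lesssim|u|_{D^2}^{1/2}|u|_{D^3}^{1/2}$; the forcing $a\delta\,\nabla^3\mathrm{div}\,u$ gives $|u|_{D^4}|f|_{D^2}$. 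Adding the two inequalities, inserting the elliptic bound for $|u|_{D^4}$ and absorbing by Young's inequality (using $|u|_{D^4}\sqrt Y\lesssim(1+Y)+|u_t|_{D^2}\sqrt Y+|u|_{D^{2,6}}(1+Y)+|\nabla u|_\infty Y$ and $|u_t|_{D^2}\sqrt Y\le\tfrac12|u_t|^2_{D^2}+\tfrac12Y$), I obtain
\begin{equation*}
\frac{d}{dt}Y\le C\big(1+|\nabla u|_\infty+|u_t|^2_{D^2}+|u|_{D^{2,6}}\big)(1+Y).
\end{equation*}
Since $\int_0^T\big(|\nabla u|_\infty+|u_t|^2_{D^2}+|u|_{D^{2,6}}\big)\,\mathrm dt\le C$ on $[0,\overline T)$ by \eqref{keygradient} and Lemmas \ref{s6h}, \ref{s9}, Gronwall's inequality gives $\sup_{[0,T]}Y\le C$, and then $\int_0^T|u|^2_{D^4}\,\mathrm dt\le C$ from the elliptic bound.

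The remaining quantities in the statement then follow from $Y$, the earlier lemmas, and the transport/algebraic relations: $|\varphi|_{D^{2,3}}$ from $\nabla^2\varphi\sim\varphi\nabla f+(\nabla\varphi)f$; $\|\phi_t\|_2$ and $|f_t|_{D^1}$ from $\phi_t=-u\cdot\nabla\phi-(\gamma-1)\phi\,\mathrm{div}\,u$ and $f_t=-\nabla(u\cdot f)-\nabla\mathrm{div}\,u$ respectively; $|\phi_{tt}|_2$ and $\int_0^T|\varphi_{tt}|^2_2\,\mathrm dt$ by differentiating these once more in $t$ (the $\varphi_{tt}$ bound is only $L^2_t$ precisely because it inherits a $|\nabla u|_\infty\in L^2_t$ factor through $\nabla\varphi_t$). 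I expect the main obstacle to be organizing this top-order feedback loop so that Gronwall actually closes: $|u|_{D^4}$ must be eliminated in favor of $|u_t|_{D^2}$ and $|u|_{D^{2,6}}$ — which are known in $L^2_t$ but not $L^\infty_t$ — and of $\sqrt Y$, and one must verify that at every occurrence $|u|_{D^{2,6}}$ and $|\nabla u|_\infty$ multiply only $(1+Y)$ and never an unbounded quantity. This forces careful bookkeeping of which norms are $L^\infty_t$ (e.g.\ $|u|_{D^3}$, $|\nabla u_t|_2$, $|f|_\infty$, $|\varphi|_\infty$, $|\nabla\varphi|_\infty$) versus merely $L^2_t$, together with the harmless but genuinely circular appearance of $\varphi\in D^{2,3}$ and $\phi\in D^3$ inside the Gronwall argument, and a check that the sources $\varphi u_t,\ \varphi(u\cdot\nabla u),\ \nabla g,\ f\cdot Q(u)$ have the decay needed to apply Lemma \ref{zhenok} at this order.
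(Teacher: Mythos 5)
Your proposal follows essentially the same route as the paper: an elliptic $D^4$ bound for $u$ from $(\ref{dege})_3$ via Lemma \ref{zhenok} (with $|\nabla g|_{D^2}$ converted through \eqref{gphi} and $|\nabla^2\varphi|_3$ controlled by $|f|_{D^2}$), coupled with energy estimates for $\nabla^3\phi$ from $(\ref{we22})_1$ and $\nabla^2 f$ from $(\ref{dege})_4$, closed by Gronwall using the time-integrability of $|u_t|^2_{D^2}$, $|u|_{D^{2,6}}$ and $|\nabla u|_\infty$ from Lemmas \ref{s6h}, \ref{s9} and \eqref{keygradient}, after which the time-derivative quantities follow from \eqref{ghtuss} and the $\varphi_{tt}$ identity. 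This matches the paper's argument in both structure and the key estimates, with only cosmetic differences in how the coefficient of $\sqrt{Y}$ in the elliptic bound is bookkept.
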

 \begin{proof} 
First, for the $D^2$ estimate of $\varphi$, it follows from direct calculations that 
\begin{equation*}
\begin{split}
\nabla^2 \varphi=&-2e\nabla (\phi^{-2e}\nabla \log \phi)
=4e^2 \phi^{-2e} \nabla \log \phi \otimes  \nabla \log \phi-2e \phi^{-2e} \nabla^2 \log \phi,
\end{split}
\end{equation*}
which, along with Lemma \ref{s1} and \eqref{yijieguji}, implies that 
$$
|\nabla^2 \varphi|_3\leq C(|f|^2_6|\phi^{-2e}|_\infty+|\phi^{-2e}|_\infty |\nabla^2 \log \phi|_3)\leq C(1+|f|^{\frac{1}{2}}_{D^2}) \quad \text{for} \quad 0\leq t\leq T.
$$
 
It follows from equations $\eqref{dege}_3$, Lemma \ref{zhenok} and the relation \eqref{gphi} that 
 \begin{equation}\label{zhu150jk}
\begin{split}
|u|_{D^4}\leq& C(|\varphi u_t|_{D^2}+|\varphi u\cdot \nabla u|_{D^2}+| \nabla g|_{D^2}+|f \cdot Q( u)|_{D^2})\\
\leq& C(1+|u_t|_{D^2}+| \phi|_{D^3}+|\nabla^2 f|_2).
\end{split}
\end{equation}

Next, applying $\nabla^2$ to $(\ref{dege})_4$, multiplying the resulting equations  by $2\nabla^2 f$ and then  integrating over $\mathbb{R}^3$,  one has
\begin{equation}\label{zhenzhen11jk}\begin{split}
\frac{d}{dt}|\nabla^2 f|^2_2\leq& C\big(| f|^2_{D^2}+|\nabla^4 u|_2|f|_{D^2}+1\big)\\
\leq & C\big(| f|^2_{D^2}+|\phi|_{D^3}+|u_t|^2_{D^2}+1\big).
\end{split}
\end{equation}

On the other hand,applying $\nabla^3$ to $(\ref{we22})_1$,  multiplying the resulting equations by $ 2\nabla^3\phi$ and then  integrating over ${\mathbb {R}}^3$, one has 
\begin{equation}\label{zhuu12accjk}
\begin{split}
\frac{d}{dt}|\phi|^2_{D^3}
\leq& C\big(|\phi|^2_{D^3}+|\nabla^4 u|_2|\phi|_{D^3}+1\big)\\
\leq & C\big(|\phi|^2_{D^3}+|u_t|^2_{D^2}+| f|^2_{D^2}+1\big),
\end{split}
\end{equation}
which, together with \eqref{zhu150jk}-\eqref{zhenzhen11jk}, gives that
\begin{equation}\label{yijiegujijk}
\begin{split}
|\phi(t)|^2_{D^3}+|f(t)|^2_{D^2}+\int_{0}^{t}|u|^2_{D^{4}}\text{d}s\leq C\quad \text{for} \quad 0\leq t\leq T.
\end{split}
\end{equation}

Finally, the rest of the estimates follows from the relation \eqref{ghtuss} and 
$$
\varphi_{tt}=-u_t\cdot \nabla \varphi-u\cdot \nabla \varphi_t-(1-\delta)\varphi_t\text{div} u-(1-\delta)\varphi\text{div} u_t.
$$
\end{proof}

 \subsection{Higher  order estimates from  the strong  singular   structure \eqref{we22}}
\begin{lemma}\label{3} 
\begin{equation*}
\begin{split}
\text{ess}\sup_{0\leq t \leq T}\big(\|\psi\|^2_{D^1\cap D^2}+|\psi_t|^2_{D^1}+ |\phi^{2e}\nabla^2u|^2_{D^1} \big)(t)+\int_{0}^{T}|\phi^{2e}\nabla^2u|^2_{D^2}\text{\rm d}t\leq C
\end{split}
\end{equation*}
for $ 0\leq T <\overline{T}$,  where the constant  $C>0$ only depends on $(\rho_0,u_0)$,  $C_0$,  $\alpha$,  $\beta$, $A$, $\gamma$, $\delta$     and $\overline{T}$.

\end{lemma}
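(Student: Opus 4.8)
The plan is to close a single coupled estimate: bound $\psi$ in $D^1\cap D^2$ from the symmetric hyperbolic equation $(\ref{we22})_3$, and \emph{simultaneously} the singular-weighted velocity derivatives $\phi^{2e}\nabla^3u$ and $\phi^{2e}\nabla^4u$ from the momentum equation $(\ref{we22})_2$ via the elliptic Lemmas~\ref{3jie} and \ref{4jie}; the bounds on $\psi_t$ in $D^1$, on $\phi^{2e}\nabla^2u$ in $D^1$, and the space-time bound $\int_0^T|\phi^{2e}\nabla^2u|^2_{D^2}$ will then drop out. I will write $\Psi(t):=|\nabla\psi(t)|^2_2+|\nabla^2\psi(t)|^2_2$ and $m(t):=\sup_{[0,t]}\Psi$, and use Lemmas~\ref{s1}--\ref{s10}: $|\psi|_6\le C$, $\phi^{2e}$ bounded below, $\sup_{[0,T]}(|\nabla u_t|_2+|u|_{D^3}+\|\nabla\phi\|_1+\|f\|_{D^1\cap D^2})\le C$, $\int_0^T(|u_t|^2_{D^2}+|u|^2_{D^{2,6}}+|u|^2_{D^4})\le C$, and — the decisive ingredient — the reduced gradient bound $\int_0^T|\nabla u|^2_\infty\le C$ from \eqref{keygradient}.

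First I would apply $\nabla$ to $(\ref{we22})_3$, pair with $2\nabla\psi$ and integrate. The commutators from $\nabla(A_l\partial_l\psi)$ and $\nabla(B\psi)$ are of the form $\nabla u\cdot\nabla\psi$ or $\nabla^2u\cdot\psi$, hence controlled by $|\nabla u|_\infty|\nabla\psi|^2_2+|\nabla^2u|_3|\psi|_6|\nabla\psi|_2$ with $|\nabla^2u|_3\le C|\nabla^2u|^{1/2}_2|\nabla^3u|^{1/2}_2$ bounded (Lemma~\ref{s9}); and since $\nabla\phi^{2e}=\frac{\delta-1}{a\delta}\psi$, the source reduces to $|\nabla(\phi^{2e}\nabla\mathrm{div}\,u)|_2\le C(1+|\phi^{2e}\nabla^2u|_{D^1})$. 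Feeding in Lemma~\ref{3jie} (after moving its commutator pieces $\psi\cdot\nabla^2u$, $\nabla\psi\cdot\nabla u$ to the right, both bounded since $|\psi|_6|\nabla^2u|_3\le C$) one gets $|\phi^{2e}\nabla^2u|_{D^1}\le C(1+|\nabla u_t|_2+|\nabla^2\psi|_2+|\nabla u|_\infty|\nabla\psi|_2)$, and Young's inequality yields
$$
\frac{d}{dt}|\nabla\psi|^2_2\le C\big(1+|\nabla u|_\infty+|\nabla u|^2_\infty\big)\Psi+C\big(1+|\nabla u_t|^2_2\big).
$$

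For the second derivatives I would apply $\nabla^2$ to $(\ref{we22})_3$ and pair with $2\nabla^2\psi$; the new commutators $\nabla^2u\cdot\nabla\psi$, $\nabla^3u\cdot\psi$ are absorbed using $|\psi|_\infty\le C\Psi^{1/2}$ and $|\nabla\psi|_3\le C|\nabla\psi|^{1/2}_2|\nabla^2\psi|^{1/2}_2$, while the source contributes $C|\nabla^2(\phi^{2e}\nabla\mathrm{div}\,u)|_2|\nabla^2\psi|_2$, whose leading piece $\phi^{2e}\nabla^4u$ is handled by Lemma~\ref{4jie} (using $\nabla^2(\psi\cdot Q(u))\sim\psi\cdot\nabla^3u+\nabla\psi\cdot\nabla^2u+\nabla^2\psi\cdot\nabla u$, with $|\psi|_\infty|\nabla^3u|_2\le C\Psi^{1/2}$ and $|\nabla\psi|_6|\nabla^2u|_3\le C|\nabla^2\psi|_2$), giving $|\phi^{2e}\nabla^2u|_{D^2}\le C(1+|\nabla^2u_t|_2+(1+|\nabla u|_\infty)\Psi^{1/2})$; Young then produces
$$
\frac{d}{dt}|\nabla^2\psi|^2_2\le C\big(1+|\nabla u|_\infty+|\nabla u|^2_\infty+|u|^2_{D^{2,6}}\big)\Psi+C\big(1+|\nabla^2u_t|^2_2\big)+\epsilon\,|\phi^{2e}\nabla^2u|^2_{D^2}
$$
with $\epsilon$ small. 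The last term is not on the left, but squaring and integrating the Lemma~\ref{4jie} bound gives $\int_0^t|\phi^{2e}\nabla^2u|^2_{D^2}\le C(1+m(t))$ (its right-hand side has $L^1(0,T)$ coefficients and is otherwise bounded). So, adding the two displays, integrating, taking $\sup_{[0,t]}$ and choosing $\epsilon$ small, the $\epsilon$-term is swallowed into $m(t)$ and one is left with $m(t)\le C+C\int_0^t\big(1+|\nabla u|_\infty+|\nabla u|^2_\infty+|u|^2_{D^{2,6}}\big)m(s)\,ds$; since the coefficient lies in $L^1(0,T)$ by \eqref{keygradient} and Lemma~\ref{s6h}, Gronwall gives $m(T)\le C$. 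Then $\psi_t\in L^\infty([0,T];D^1)$ follows by reading $\nabla\psi_t$ off $(\ref{we22})_3$, $\phi^{2e}\nabla^2u\in L^\infty([0,T];D^1)$ from Lemma~\ref{3jie} (whose right side is now bounded in $L^\infty_t$), and $\int_0^T|\phi^{2e}\nabla^2u|^2_{D^2}\le C$ from Lemma~\ref{4jie} integrated.

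The hard part is the bootstrap loop itself: the transport of $\psi$ needs $\phi^{2e}\nabla^{k+1}u$ on its source side, while the singular elliptic Lemmas~\ref{3jie}--\ref{4jie} feed the top derivative of $\psi$ back into $\phi^{2e}\nabla^{k+1}u$. Closing it requires a careful accounting ensuring that every occurrence of the top quantities $\nabla^2\psi$ and $\phi^{2e}\nabla^4u$ carries its non-$L^\infty_t$ factor ($|\nabla u|_\infty$, $|u|_{D^{2,6}}$ or $|\nabla^2u_t|_2$) only to the first power under a time integral — so that \eqref{keygradient} and Lemma~\ref{s9} make the Gronwall coefficient integrable — and that the genuinely top-order term $\phi^{2e}\nabla^4u$ appears with a small constant, to be swallowed by the monotone quantity $m(t)$. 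A secondary technical point is checking the prerequisites of Lemmas~\ref{3jie}--\ref{4jie}, namely the integrability $\phi^{e}\nabla u\in H^1$ and $\phi^{2e}\nabla^ku\in L^6$ noted after Lemma~\ref{4jie}, which are supplied by Lemmas~\ref{s4}--\ref{s10}, and justifying the energy identities for $\psi$ at this regularity by a standard spatial mollification/cutoff argument in the spirit of the proof of Lemma~\ref{special4}.
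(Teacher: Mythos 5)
Your proposal is correct and follows essentially the same route as the paper: energy estimates on $\nabla\psi$ and $\nabla^2\psi$ from the transport equation $(\ref{we22})_3$, coupled with Lemmas \ref{3jie}--\ref{4jie} to express $\|\phi^{2e}\nabla^2u\|_{D^1\cap D^2}$ in terms of $\|\psi\|_{D^1\cap D^2}$ and $|u_t|_{D^2}$, then Gronwall using the time-integrability supplied by \eqref{keygradient} and Lemma \ref{s9}, after which $\psi_t\in D^1$ and the space-time bound on $\phi^{2e}\nabla^2u$ are read off. The only (harmless) difference is bookkeeping: the paper exploits the uniform bounds on $|\nabla u|_\infty$, $|\nabla^2u|_3$, $|\nabla^3u|_2$ already available from Lemmas \ref{s6h}--\ref{s10}, so its Gronwall coefficient is constant and the forcing is $C(1+|\sqrt{\varphi}u_{tt}|_2)$ via \eqref{zhu150011}, whereas you keep $L^1$-in-time coefficients and an $\epsilon$-absorption that the paper does not need.
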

\begin{proof}

First,  set $\varsigma=(\varsigma_1,\varsigma_2,\varsigma_3)^\top$ ($1\leq |\varsigma|\leq 2$ and $\varsigma_i=0,1,2$). Applying  $\partial_{x}^{\varsigma} $ to $(\ref{we22})_3$,
%\begin{equation}\label{hyp}\begin{split}
%&(\partial_{x}^{\varsigma}  \psi)_t+\sum_{l=1}^3 A_l \partial_l\partial_{x}^{\varsigma}  \psi+B\partial_{x}^{\varsigma}  \psi+ \delta a\partial_{x}^{\varsigma} (h^{2e} \nabla \text{div} v) \\
%=&\Big(-\partial_{x}^{\varsigma} (B\psi)+B\partial_{x}^{\varsigma}  \psi\Big)+\sum_{l=1}^3 \Big(-\partial_{x}^{\varsigma} (A_l \partial_l \psi)+A_l \partial_l\partial_{x}^{\varsigma}  \psi\Big)=f+f_2.
%\end{split}
%\end{equation}
multiplying by $2\partial_{x}^{\varsigma} \psi$ and then integrating over $\mathbb{R}^3$, one can get
\begin{equation}\label{zhenzhen}\begin{split}
\frac{d}{dt}|\partial_{x}^{\varsigma}  \psi|^2_2
\leq & \Big(\sum_{l=1}^{3}|\partial_{l}A_l|_\infty+|B|_\infty\Big)|\partial_{x}^{\varsigma}  \psi|^2_2+|\Theta_\varsigma |_2|\partial_{x}^{\varsigma}  \psi|_2,
\end{split}
\end{equation}
where
\begin{equation*}
\begin{split}
\Theta_\varsigma=\partial_{x}^{\varsigma} (B \psi)-B\partial_{x}^{\varsigma}  \psi+\sum_{l=1}^{3}\big(\partial_{x}^{\varsigma} (A_l \partial_l \psi)-A_l \partial_l\partial_{x}^{\varsigma}  \psi\big)+ a\delta \partial_{x}^{\varsigma}\big(\phi^{2e}\nabla \text{div} u\big).
\end{split}
\end{equation*}

For $|\varsigma|=1$, it is easy  to obtain
\begin{equation}\label{zhen2}
\begin{split}
|\Theta_\varsigma |_2\leq &C\big(|\nabla^2 u|_2|\psi|_\infty+|\nabla u|_\infty|\nabla \psi|_2+|\phi^{2e}\nabla^2 u|_{D^1}\big).
\end{split}
\end{equation}
Similarly, for $|\varsigma|=2$, one has
\begin{equation}\label{zhen2}
\begin{split}
|\Theta_\varsigma |_2\leq& C\big(|\nabla u|_\infty|\nabla^2\psi|_2+|\nabla^2 u|_3|\nabla\psi|_6+|\nabla^3 u|_2|\psi|_\infty+|\phi^{2e}\nabla^2 u|_{D^2}\big).
\end{split}
\end{equation}
It follows from
(\ref{zhenzhen})-(\ref{zhen2}) and the Gagliardo-Nirenberg inequality that
\begin{equation}\label{zuihou}
\frac{d}{dt}\|\psi(t)\|_{D^1\cap D^2}\leq C\|\psi(t)\|_{D^1\cap D^2}+C\|\phi^{2e}\nabla^2 u\|_{D^1\cap D^2}.
\end{equation}

Second, for the estimates of $\|\phi^{2e}\nabla^2 u\|_{D^1\cap D^2}$, it follows from direct calculations and Lemmas \ref{3jie}-\ref{4jie} that 
\begin{equation}\label{singular1}
\begin{split}
|\phi^{2e}\nabla^2u|_{D^1}\leq & C(|\phi^{2e}\nabla u|_{D^2}+|\psi|_6|\nabla^2u|_3+|\nabla u|_\infty|\nabla \psi|_2)\\
\leq & C(1+\|\psi(t)\|_{D^1\cap D^2}+|H|_{D^1}+|fH|_2+|G(\psi,\partial_k u)|_2)\\
\leq &  C(1+\|\psi(t)\|_{D^1\cap D^2}),\\
|\phi^{2e} \nabla^2 u|_{D^2}
\leq& C\big(|\nabla^2 H|_2+|f \cdot  H|_{D^1}+||f|^2H|_2+|G(\nabla \phi^{2e},\nabla^2 u )|_2 \big)\\
\leq & C\big(1+|u_t|_{D^2}+\|\psi(t)\|_{D^1\cap D^2}\big).
\end{split}
\end{equation}

According to \eqref{zhu1500x}, one has 
 \begin{equation}\label{zhu150011}
|u_t|_{D^2}
\leq C(1+|\sqrt{\varphi}u_{tt}|_{2}+|\nabla u_t|_2+|u|_{D^{2,6}})\leq C(1+|\sqrt{\varphi}u_{tt}|_{2}),
\end{equation}
which, together with \eqref{zuihou}-\eqref{singular1}, implies that 
\begin{equation*}
\frac{d}{dt}\|\psi(t)\|_{D^1\cap D^2}\leq C\|\psi(t)\|_{D^1\cap D^2}+C(1+|\sqrt{\varphi}u_{tt}|_{2}).
\end{equation*}
Then according to  Gronwall's inequality, one has 
 \begin{equation}\label{zhu150022}
\|\psi(t)\|_{D^1\cap D^2}\leq C\Big(1+\int_0^t |\sqrt{\varphi}u_{tt}|_{2} \text{d}s\Big)\exp (Ct)\leq C(T).
\end{equation}

Second, according to  equations $(\ref{we22})_3$,
for  $0\leq t \leq T$, it holds that
\begin{equation}\label{uu3}
\begin{split}
|\nabla \psi_t(t)|_{2}\leq & C \big(|u|_{\infty}| \psi|_{D^2}+|\nabla u|_\infty|\nabla \psi|_2+|\nabla^2 u|_3|\psi|_{6}+|\phi^{2e}\nabla^2 u|_{D^1}\big)\leq C.
\end{split}
\end{equation}

\end{proof}

\begin{lemma}\label{laosi}
$$
\sup_{0\leq t\leq T}\big(|\phi^e\nabla^2 u|_2+  |\omega|_4)(t)\leq C \quad \text{for} \quad 0\leq  t \leq T.
$$
where the constant  $C>0$ only depends on $(\rho_0,u_0)$,  $C_0$,  $\alpha$,  $\beta$, $A$, $\gamma$, $\delta$     and $\overline{T}$.
\end{lemma}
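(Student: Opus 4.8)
The plan is to bootstrap from the uniform‑in‑time bounds already collected in Lemmas \ref{s1}--\ref{3} (all of which, under the contradiction hypothesis \eqref{we11}, depend only on $(\rho_0,u_0)$, $C_0$, $(\alpha,\beta,A,\gamma,\delta)$ and $\overline{T}$). First I would record the background facts to be used throughout: from Lemma \ref{s1}, $\phi^{-e}\in L^\infty$; from Lemma \ref{3}, $\psi\in L^6\cap L^\infty$ and $\nabla\psi\in L^2\cap L^3\cap L^6$; from the identities $\phi^e=c_1\rho^{\frac{\delta-1}{2}}$, $\omega=\tfrac{\delta-1}{2\delta}\rho^{\frac{1-\delta}{2}}\psi$, $\nabla\phi^{e}=c_1\omega$ and $\nabla\phi^{-e}=c_2\rho^{1-\delta}\omega$ (direct differentiation using \eqref{bianliang} and \eqref{xishu}), also $\omega,\nabla\phi^{\pm e}\in L^6\cap L^\infty$; and from Lemmas \ref{s2}, \ref{s4}, \ref{s6h}, \ref{s9} together with $H^2(\RR^3)\hookrightarrow L^\infty$, both $u,\nabla u\in L^\infty$ --- all uniformly in $t<\overline{T}$. (Recall also that $\int_0^T|\nabla u|_\infty^2\,\mathrm dt\le C$ is \eqref{keygradient}.)

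The heart of the matter is the uniform weighted second‑derivative bound $\sup_{[0,T]}|\phi^e\nabla^2u|_2\le C$. For this I would invoke the density‑involved elliptic estimate of Lemma \ref{5jie}: for $k=1,2,3$,
\[
|\phi^{e}\partial_k u|_{D^2}\le C\bigl(|\phi|^{-e}_\infty|\partial_kH|_2+|\partial_k \phi^{-e}\,H|_2+|G( \phi^{-e}\psi,\partial_ku)|_2\bigr),
\]
where $H=u_t+u\cdot\nabla u+\nabla\phi-\psi\cdot Q(u)$, and then bound each term on the right uniformly. One checks $|H|_2+|H|_3\le C$ (from $|u_t|_2\le C$, $\nabla u\in L^2\cap L^6$, $\nabla\phi\in L^2\cap L^6$, $\psi\in L^6$); $|\partial_kH|_2\le C$ (the critical pieces being $|\nabla u_t|_2\le C$ from Lemma \ref{s9}, $|u|_\infty|\nabla^2u|_2$, $|\nabla^2\phi|_2\le C$ from Lemma \ref{s8}, and $|\nabla\psi|_3|\nabla u|_6+|\psi|_\infty|\nabla^2u|_2$); $|\partial_k\phi^{-e}H|_2\le|\nabla\phi^{-e}|_6|H|_3\le C$; and $|G(\phi^{-e}\psi,\partial_ku)|_2\le C$ by H\"older, since every factor entering $G$ --- namely $\phi^{-e}\psi\in L^\infty$, $\nabla(\phi^{-e}\psi)$ (split as $\nabla\phi^{-e}\,\psi\in L^6$ paired with $\nabla u\in L^6$, plus $\phi^{-e}\nabla\psi\in L^2$ paired with $\nabla u\in L^\infty$), $\nabla u\in L^6\cap L^\infty$, $\nabla^2u\in L^2$ --- is uniformly controlled. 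Hence $|\phi^{e}\partial_k u|_{D^2}\le C$. Since $\phi^e\partial_k u\in L^2$ with $|\phi^e\partial_k u|_2\le|\phi^e\nabla u|_2\le C$ (Lemma \ref{s4}), the Plancherel interpolation $\|\nabla v\|_2^2\le\|v\|_2\|\nabla^2v\|_2$ gives $|\nabla(\phi^e\partial_k u)|_2\le C$; writing $\phi^e\partial_j\partial_k u=\partial_j(\phi^e\partial_k u)-c_1\,\omega\,\partial_k u$ and using $\omega\in L^\infty$, $\nabla u\in L^2$ yields $\sup_{[0,T]}|\phi^e\nabla^2u|_2\le C$. (The qualitative membership $\phi^e\nabla^2u\in L^2$ is part of the regular‑solution class of Definition \ref{d1}, so only the uniformity is at issue; an equivalent route bounds $|\phi^{2e}\nabla^2u|_2$ via Lemma \ref{3jie} and multiplies by $\phi^{-e}\in L^\infty$.)

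For the remaining bound $\sup_{[0,T]}|\omega|_4\le C$ I would rerun the $L^4$ energy argument of Lemma \ref{special4} on the transport equation \eqref{special2}: multiply by $4\omega|\omega|^2 f_R(x)$, integrate over $\RR^3$, and let $R\to\infty$ by Fatou exactly as there (legitimate since $\omega\in L^4$ already holds by Lemma \ref{special4}). The contributions of $(u\cdot\nabla)\omega$ and $A^*(u)\omega$ are controlled by $C|\nabla u|_\infty|\omega|_4^4$, while the only dangerous term, $\tfrac{\delta-1}{2}\sqrt a\,\phi^e\nabla\mathrm{div}\,u$, is estimated by $C|\phi^e\nabla^2u|_2\,|\omega|_6^3$, which is uniformly bounded by the previous step and $|\omega|_6\le C|\psi|_6\le C$. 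Thus $\frac{\mathrm d}{\mathrm dt}|\omega|_4^4\le C|\nabla u|_\infty|\omega|_4^4+C$, and since $|\nabla u|_\infty\le C$ uniformly (or merely $\int_0^T|\nabla u|_\infty^2\,\mathrm dt\le C$ by \eqref{keygradient}), Gronwall on the finite interval $[0,\overline{T})$ closes the estimate.

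The main obstacle is exactly the uniform weighted bound $\phi^e\nabla^2u\in L^2$: because $\delta\in(0,1)$ the weight $\phi^e\sim\rho^{(\delta-1)/2}$ \emph{degenerates by blowing up} where $\rho\to0$ in the far field, so $|\phi^e\nabla^2u|_2$ can neither be dominated by $|\nabla^2u|_2$ nor recovered by interpolating the higher‑integrability bound $\phi^{2e}\nabla^2u\in L^6$ of Lemma \ref{3} against a negative power $\phi^{-e}$ that need not lie in $L^3$; the device that resolves it is the elliptic reformulation of Lemma \ref{5jie} (or \ref{3jie}), which lifts $\phi^e\partial_k u$ to $\dot H^2$, after which one descends one derivative using the a priori bound $\phi^e\nabla u\in L^2$ from Lemma \ref{s4}. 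Everything else is bookkeeping on the uniform estimates of Lemmas \ref{s1}--\ref{3}, which in turn rest on the contradiction hypothesis \eqref{we11}.
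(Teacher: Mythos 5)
Your proposal is correct and follows essentially the same route as the paper: the weighted bound $|\phi^e\nabla^2u|_2$ is obtained from the elliptic estimate of Lemma \ref{5jie} (with the right-hand side controlled by Lemmas \ref{s1}--\ref{3}) together with interpolation against $|\phi^e\nabla u|_2$ from Lemma \ref{s4}, and $|\omega|_4$ is obtained by the $L^4$ energy estimate for the transport equation \eqref{special2}, where the singular term is bounded by $C|\omega|_6^3|\phi^e\nabla\mathrm{div}\,u|_2$ and Gronwall closes the argument. The only differences are presentational (you make the commutator $\nabla\phi^e\otimes\nabla u$ and the term-by-term bounds on $\partial_kH$, $\partial_k\phi^{-e}H$, $G(\phi^{-e}\psi,\partial_ku)$ explicit, and optionally retain the cutoff/Fatou step that the paper omits since $\omega\in L^4$ is already known).
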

\begin{proof}
First,  according to Lemmas \ref{5jie} and  \ref{s9}-\ref{3}, one has 
\begin{equation*}|\phi^{e}\partial_k u|_{D^2}\leq C(|\phi|^{-e}_\infty|\partial_kH|_2+|\partial_k \phi^{-e}H|_2+|G( \phi^{-e}\psi,\partial_ku)|_2)\leq C,
\end{equation*}
Then it follow from  Lemmas \ref{s4} and \ref{gag111} that
$$
\|\phi^{e}\partial_k u\|_{1} \leq C \|\phi^{e}\partial_k u\|^{\frac{1}{2}}_0 \|\phi^{e}\partial_k u\|^{\frac{1}{2}}_2,
$$
which,  implies that 
\begin{equation}\label{decom4}
|\phi^{e}\nabla u|_{D^1}+|\phi^{e}\nabla^2 u|_{2}\leq C \quad \text{for} \quad 0\leq  t \leq T.
\end{equation}

Second, since the equations \eqref{special2} holds almost everywhere, one can multiply \eqref{special2}  by $4\omega |\omega|^2$ on its both sides and integrate with respect to $x$ over $\mathbb{R}^3$,
then one has
\begin{equation}\label{special7uyy}
\begin{split}
\frac{d}{dt}\int |\omega|^4 =&  \int \sum_{l=1}^3 \partial_lA_l   |\omega|^4-\int 4\Big(A^*(u)\omega+\frac{\delta-1}{2}\sqrt{a} \phi^{e}\nabla \text{div} u\Big)\cdot \omega |\omega|^2\\
\leq &C|\nabla u|_\infty  |\omega|^4_4 +C|\omega|^3_6|\phi^{e}\nabla \text{div} u|_2\leq C(|\omega|^4_4+1),
\end{split}
\end{equation}
which, along with  Gronwall’s inequality, implies the desired estimate.

\end{proof}

\begin{lemma}\label{5zx} 
\begin{equation*}
\begin{split}
\text{ess}\sup_{0\leq t\leq T}\big(|\phi^e\nabla u_t|^2_2+  |\phi^e \nabla \big(\phi^{2e}Lu\big)|^2_2)(t)+\int_{0}^{T}\Big(|u_{tt}|^2_{2}+|\phi^{2e}Lu_t|^2_{2}\Big)\text{\rm d}t\leq& C,
\end{split}
\end{equation*}
for $0\leq T < \overline{T}$,  where the constant  $C>0$ only depends on $(\rho_0,u_0)$,  $C_0$,  $\alpha$,  $\beta$, $A$, $\gamma$, $\delta$     and $\overline{T}$.
 \end{lemma}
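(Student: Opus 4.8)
The plan is to run a weighted energy estimate on the $t$-differentiated momentum equation \eqref{zhu37ssh}, rewritten as $u_{tt}+a\phi^{2e}Lu_t=\mathcal R$ with $\mathcal R:=-(u\cdot\nabla u)_t-\nabla\phi_t-a\phi^{2e}_tLu+(\psi\cdot Q(u))_t$. First I would multiply this identity by $-a\phi^{2e}Lu_t$ and integrate over $\mathbb R^3$. The product $a\phi^{2e}Lu_t\cdot(-a\phi^{2e}Lu_t)$ gives the dissipation $a^2|\phi^{2e}Lu_t|_2^2$, while $-a\int\phi^{2e}u_{tt}\cdot Lu_t$, after integration by parts in $x$, produces $-\tfrac a2\tfrac{d}{dt}\bigl(\alpha|\phi^e\nabla u_t|_2^2+(\alpha+\beta)|\phi^e\operatorname{div}u_t|_2^2\bigr)$ together with commutators in which $\nabla\phi^{2e}=\tfrac{\delta-1}{a\delta}\psi$ or $(\phi^{2e})_t$ appears. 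The $\nabla\phi^{2e}$-commutators are bounded by $C|\psi|_\infty|u_{tt}|_2|\nabla u_t|_2$; the $(\phi^{2e})_t$-commutator equals $\tfrac a2\int\tfrac{(\phi^{2e})_t}{\phi^{2e}}\bigl(\alpha|\phi^e\nabla u_t|^2+(\alpha+\beta)|\phi^e\operatorname{div}u_t|^2\bigr)$ with $\bigl|\tfrac{(\phi^{2e})_t}{\phi^{2e}}\bigr|_\infty\le C(1+|\nabla u|_\infty)$ since $\nabla\log\phi=\tfrac{\gamma-1}{a\delta}f\in L^\infty$. On the right-hand side one estimates $\bigl|a\int\mathcal R\cdot\phi^{2e}Lu_t\bigr|\le\varepsilon|\phi^{2e}Lu_t|_2^2+C|\mathcal R|_2^2$, and the key point is that $|\mathcal R|_2\in L^2(0,\overline T)$; this is checked term by term from Lemmas \ref{s1}--\ref{laosi} and \eqref{keygradient} (for instance $|(u\cdot\nabla u)_t|_2+|\nabla\phi_t|_2\le C$, $|\phi^{2e}_tLu|_2\le C(1+|\nabla u|_\infty)$ after splitting $\phi^{2e}_tLu$ into $(u\cdot\psi)Lu$ controlled by $|Lu|_2\le C|\nabla^2u|_2$ and $\operatorname{div}u\,(\phi^{2e}Lu)$ controlled by $|\phi^{2e}Lu|_2\le C$, and $|(\psi\cdot Q(u))_t|_2\le C(1+|\nabla u|_\infty)+C|\psi|_6|\nabla u_t|_3$ with $|\nabla u_t|_3\in L^2(0,\overline T)$ by Lemma \ref{s9}). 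The remaining $u_{tt}$-factors are disposed of through $|u_{tt}|_2\le|\mathcal R|_2+a|\phi^{2e}Lu_t|_2$, absorbing the resulting $|\phi^{2e}Lu_t|_2^2$.

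This produces a differential inequality $\tfrac{d}{dt}\bigl(\alpha|\phi^e\nabla u_t|_2^2+(\alpha+\beta)|\phi^e\operatorname{div}u_t|_2^2\bigr)+c|\phi^{2e}Lu_t|_2^2\le C(1+|\nabla u|_\infty)|\phi^e\nabla u_t|_2^2+C\Phi(t)$ with $\Phi\in L^1(0,\overline T)$ and $1+|\nabla u|_\infty\in L^1(0,\overline T)$ (the latter by \eqref{keygradient} and Cauchy--Schwarz), so Gronwall's inequality applies after integrating over $(\tau,t)$ and sending $\tau\to0^+$; here $\limsup_{\tau\to0^+}|\phi^e\nabla u_t(\tau)|_2$ is bounded by the data through $(\ref{we22})_2$ and the compatibility condition $\nabla(\rho^{\delta-1}_0Lu_0)=\rho^{\frac{1-\delta}{2}}_0g_3$ of \eqref{th78zx}, exactly as $\nabla u_t(\tau)$ was handled in \eqref{zhu15vbwsx}. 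We thereby obtain $\phi^e\nabla u_t\in L^\infty(0,T;L^2)$ and $\phi^{2e}Lu_t\in L^2(0,T;L^2)$, and then $u_{tt}\in L^2(0,T;L^2)$ follows at once from $u_{tt}=\mathcal R-a\phi^{2e}Lu_t$.

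It remains to control $\phi^e\nabla(\phi^{2e}Lu)$ in $L^\infty(0,T;L^2)$, for which I would use the algebraic identity $a\phi^{2e}Lu=-H$, $H=u_t+u\cdot\nabla u+\nabla\phi-\psi\cdot Q(u)$, so that $a\phi^e\nabla(\phi^{2e}Lu)=-\phi^e\nabla u_t-\phi^e\nabla(u\cdot\nabla u)-\phi^e\nabla^2\phi+\phi^e(\nabla\psi)\cdot Q(u)+\phi^e\psi\cdot\nabla Q(u)$, estimating term by term: $\phi^e\nabla u_t$ has just been bounded; $\phi^e\nabla(u\cdot\nabla u)$ and $\phi^e\psi\cdot\nabla Q(u)$ are controlled by $|\phi^e\nabla u|_6+|\phi^e\nabla^2u|_2\le C$ from Lemma \ref{laosi} together with $|u|_\infty+|\psi|_\infty\le C$; for $\phi^e(\nabla\psi)\cdot Q(u)$ one moves the weight onto $Q(u)$, $(\phi^e\nabla\psi)\cdot Q(u)=\nabla\psi\cdot(\phi^eQ(u))$, whence $|\phi^e(\nabla\psi)\cdot Q(u)|_2\le C|\nabla\psi|_3|\phi^e\nabla u|_6\le C$ by Lemmas \ref{3} and \ref{laosi}; and $\phi^e\nabla^2\phi$ is handled by expressing it, via $\phi=\tfrac{A\gamma}{\gamma-1}\rho^{\gamma-1}$, as a bounded power of $\rho$ (using $\gamma-\tfrac{\delta+1}{2}>0$) multiplying the $L^2$ quantities $\nabla^2\rho^{\delta-1}$ and $\omega\otimes\omega$ with $\omega\in L^4$ (Lemma \ref{laosi}).

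The main obstacle throughout is the bookkeeping imposed by the singular weight $\phi^e$, which tends to $\infty$ in the far field: each term must be rearranged so that $\phi^e$ multiplies a factor whose decay compensates, which is exactly what the weighted bounds of Lemmas \ref{3}--\ref{laosi} ($\phi^e\nabla u\in D^1$, $\phi^e\nabla^2u\in L^2$, $\omega\in L^4$) supply, and --- because $\psi$ is only known to lie in $L^6\cap D^1\cap D^2$ rather than in $L^2$ --- each H\"older splitting must be chosen with care. A secondary delicate point is identifying the initial trace of $\phi^e\nabla u_t$ from the compatibility conditions so that the $\tau\to0^+$ limit can be carried out; this, together with the control of $\int_0^T|\nabla u|_\infty\,dt$ and of $\|\psi\|_{D^1\cap D^2}$, is why those quantities had to be estimated in the preceding lemmas.
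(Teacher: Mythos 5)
Your proposal is correct and, in substance, it is the paper's own argument: the same $t$-differentiated momentum equation \eqref{zhu37ssh}, the same weighted energy $\alpha|\phi^e\nabla u_t|^2_2+(\alpha+\beta)|\phi^e\mathrm{div}\,u_t|^2_2$ closed by Gronwall with the initial trace taken from the compatibility conditions \eqref{th78zx} (via Lemma \ref{further} and the $\omega\in L^4$ decomposition), and the same algebraic identity $a\phi^{2e}Lu=-H$ for the bound on $\phi^e\nabla(\phi^{2e}Lu)$. The only deviation is cosmetic: you test with $-a\phi^{2e}Lu_t$, so that $|\phi^{2e}Lu_t|^2_2$ is the dissipation and $u_{tt}$ is recovered from the equation, whereas the paper tests with $u_{tt}$ and recovers $|\phi^{2e}Lu_t|_2\leq C(1+|u_{tt}|_2)$ afterwards in \eqref{gaijia1}; since $u_{tt}+a\phi^{2e}Lu_t=\mathcal{R}$ with $\mathcal{R}\in L^2([0,T];L^2)$ by the earlier lemmas, the two are equivalent.
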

\begin{proof}
Multiplying  (\ref{zhu37ssh}) by $u_{tt}$ and integrating over $\mathbb{R}^3$ give
\begin{equation}\label{zhu19}
\begin{split}
& \frac{1}{2}\frac{d}{dt}\Big(a\alpha|\phi^e\nabla u_t|^2_2+a(\alpha+\beta)|\phi^e\text{div} u_t|^2_2\Big)+| u_{tt}|^2_2\\
=&\int \Big(-(u\cdot \nabla u)_t-\nabla \phi_t-a \phi^{2e}_t Lu -a \nabla \phi^{2e} \cdot Q(u_t) \Big)\cdot u_{tt} \\
&+\int \Big(\frac{a}{2} \phi^{2e}_t\big(\alpha|\nabla u_t|^2+(\alpha+\beta)|\text{div}u_t|^2\big)+(\psi \cdot Q(u))_t\cdot u_{tt} \Big) 
= \sum_{i=19}^{24} L_i.
\end{split}
\end{equation}

For the terms $L_{19}$--$L_{24}$, we perform the following estimates:
\begin{equation}\label{zhu201xc}
\begin{split}
L_{19}=&-\int   (u\cdot \nabla u)_t \cdot u_{tt}\\
 \leq  & C\big(|u_t|_{2}|\nabla u|_{\infty}+|u|_\infty |\nabla u_t|_2\big)|u_{tt}|_2\leq C( |\phi^e\nabla u_t|^2_2+1)+\frac{1}{10}| u_{tt}|^2_2,\\
L_{20}=&-\int  \nabla \phi_t \cdot u_{tt} 
\leq C|\nabla \phi_t|_2|u_{tt}|_2\leq C|u_{tt}|_2,\\
L_{21}=&-\int a \phi^{2e}_t Lu \cdot u_{tt} 
= a\int (u\cdot \nabla \phi^{2e}+(\delta-1)\phi^{2e}\text{div}u) Lu \cdot u_{tt} \\
\leq & C(|u|_\infty|\psi|_{\infty}|Lu|_{2} +C|\phi^{2e}Lu|_2 |\nabla u|_{\infty})|u_{tt}|_2
\leq C +\frac{1}{10}|u_{tt}|^2_2,\\
\end{split}
\end{equation}
\begin{equation}\label{gugugu}
\begin{split}
L_{22}=&-\int  a \nabla \phi^{2e} \cdot Q(u_t)\cdot u_{tt} \\
\leq & C|\nabla u_t|_2|\psi|_\infty|u_{tt}|_2\leq C |\phi^e\nabla u_t|^2_2+\frac{1}{10}| u_{tt}|^2_2,\\
L_{23}=&\int  \frac{a}{2} \phi^{2e}_t\big(\alpha|\nabla u_t|^2+(\alpha+\beta)|\text{div}u_t|^2\big)\\
=&-\int \frac{a}{2} (u\cdot \nabla \phi^{2e}+(\delta-1)\phi^{2e}\text{div}u)\big(\alpha|\nabla u_t|^2+(\alpha+\beta)|\text{div}u_t|^2\big)\\
\leq& C|u|_\infty|\psi|_\infty |\nabla u_t|^2_2 +C|\nabla u|_\infty|\phi^e\nabla u_t|^2_2,\\
L_{24}=&\int  (\psi \cdot Q(u))_t\cdot u_{tt}
=\int  \psi \cdot Q(u)_t\cdot u_{tt}\text{d}x+\int  \psi_t \cdot Q(u)\cdot u_{tt}\\
\leq& C|\psi|_\infty |\nabla u_t|_2 | u_{tt}|_2-\int (u\cdot \nabla \psi+\delta \psi \text{div}u+a\delta \phi^{2e} \nabla \text{div}u)\cdot Q(u)\cdot u_{tt}\\
\leq &C(|\psi|_\infty |\nabla u_t|_2+|\nabla u|_3(|u|_\infty|\nabla \psi|_6+|\psi|_\infty|\nabla u|_6+|\phi^{2e}\nabla^2 u|_6)) | u_{tt}|_2\\
\leq & C |\phi^e\nabla u_t|^2_2+\frac{1}{10}| u_{tt}|^2_2+C,
\end{split}
\end{equation}
which, along with \eqref{zhu19}-\eqref{zhu201xc}, implies that 
\begin{equation}\label{zhu19x}
\begin{split}
& \frac{1}{2}\frac{d}{dt}\Big(a\alpha|\phi^e\nabla u_t|^2_2+a(\alpha+\beta)|\phi^e\text{div} u_t|^2_2\Big)+| u_{tt}|^2_2
\leq C|\phi^e\nabla u_t|^2_2+C.\end{split}
\end{equation}

Integrating (\ref{zhu19}) over $(\tau,t)$ shows that for $0\leq t \leq T$,
\begin{equation}\label{zhu22g}
\begin{split}
&|\phi^e\nabla u_t(t)|^2_2+\int_{\tau}^t| u_{tt}|^2_2\text{d}s
\leq  C|\phi^e\nabla u_t(\tau)|^2_2+C.
\end{split}
\end{equation}

On the other hand,  it follows from the momentum equations $ (\ref{we22})_2$ that
\begin{equation}\label{zhu15wsx}
\begin{split}
|\phi^e\nabla u_t(\tau)|_2\leq& \big( |\phi^e\nabla \big(u\cdot \nabla u+ \nabla \phi+ a \phi^{2e}  Lu-\psi\cdot Q(u)\big)|_{2}\big)(\tau).
\end{split}
\end{equation}
Then according to the assumptions \eqref{th78}-(\ref{th78zx}) and Lemma \ref{further}, one has
\begin{equation}\label{zhu15vbwsx}
\begin{split}
\lim \sup_{\tau\rightarrow 0}|\phi^e\nabla u_t(\tau)|_2\leq &C( |\phi^e_0\nabla (u_0\cdot \nabla u_0)|_2+|\phi^e_0\nabla^2\phi_0|_2\\
&+|\phi^e_0\nabla (\psi_0\cdot Q(u_0))|_2+| g_3|_{2})\\
\leq& C\big( |\phi^e_0\nabla^2 u_0|_2|u_0|_\infty+|\nabla u_0|_\infty |\phi^e_0\nabla u_0|_2|+|\phi^e_0\nabla^2\phi_0|_2\big)\\
+& C\big( |\phi^e_0\nabla^2u_0|_2|\psi_0|_\infty+|\nabla \psi_0|_3 |\phi^e_0\nabla u_0|_6+1\big)\leq C.
\end{split}
\end{equation}

Actually,  for the estimates of $|\phi^e \nabla^2 \phi|_2$,  due to 
\begin{equation}\label{decom1}
\begin{split}
\phi^e \nabla^2 \phi=&C_3(\rho^{\frac{\delta-1}{2}}\nabla^2 \rho^{\gamma-1})\\
=&C_3\rho^{\frac{\delta-1}{2}} \big((\gamma-1)(\gamma-2)\rho^{\gamma-3}\nabla \rho\otimes \nabla \rho+(\gamma-1) \rho^{\gamma-2}\nabla^2 \rho \big)\\
=&C_4 \rho^{\gamma+\frac{1-3\delta}{2}}\psi\otimes \psi+C_5\rho^{\gamma+\frac{\delta-5}{2}}\nabla^2 \rho,\\
 \nabla^2 \phi^{2e}=&C_6\nabla^2 \rho^{\delta-1}\\
=&C_7 \big((\delta-1)(\delta-2)\rho^{\delta-3}\nabla \rho\otimes \nabla \rho+(\delta-1) \rho^{\delta-2}\nabla^2 \rho \big)\\
=&C_8 \omega \otimes \omega+C_9\rho^{\delta-2}\nabla^2 \rho\in L^2(\mathbb{R}^3),
 \end{split}
\end{equation}
for some constants $C_i$ ($i=3,...,9$),  one can obtain  that 
\begin{equation}\label{decom2}
|\rho^{\delta-2}\nabla^2 \rho|_2+|\rho^{\frac{\delta-1}{2}}\nabla^2 \rho|_2+  |\phi^e \nabla^2 \phi|_2   \leq C \quad \text{for} \quad 0\leq  t \leq T.
\end{equation}

Letting $\tau \rightarrow 0$ in (\ref{zhu22g}), one can obtain that 
\begin{equation}\label{zhu22wsx}
\begin{split}
|\phi^e\nabla u_t|^2_2+ |\nabla u_t|^2_2+\int_{0}^t | u_{tt}|^2_2\text{d}s\leq & C \quad \text{for} \quad  0\leq t \leq T.
\end{split}
\end{equation}

It follows from $(\ref{we22})_2$ that
\begin{equation}\label{zhu77qq}
\begin{split}
a\phi^{2e} Lu_t=&-u_{tt}-(u\cdot\nabla u)_t -\nabla \phi_t-a\phi^{2e} _t Lu+(\psi\cdot Q(u))_t,
\end{split}
\end{equation}
which implies that 
\begin{equation}\label{gaijia1}
\begin{split}
|\phi^{2e} Lu_t(t)|_{2}\leq& C\big| (u_{tt}+(u\cdot\nabla u)_t+\nabla \phi_t-(\psi\cdot Q(u))_t+a\phi^{2e}_t Lu)\big|_2\\
\leq& C\Big(|u_{tt}|_2+|u_t|_6|\nabla u|_3+|u|_\infty|\nabla u_t|_2+|\nabla \phi_t|_2+|\psi|_\infty|\nabla u_t|_2\Big)\\
&+C\Big(|\psi_t|_6|\nabla u|_3+|\psi|_\infty|u|_\infty  |\nabla^2 u|_2+|\phi^{2e}Lu|_6|\text{div}u|_3\Big)\\
\leq & C(1+|u_{tt}|_2).
\end{split}
\end{equation}

Using the  equations $ (\ref{we22})_2$, for multi-index $\xi\in \mathbb{R}^3$ with $|\xi|=2$, one has
\begin{equation}\label{jiegou1}
\begin{split}
|a\phi^e \nabla \big(\phi^{2e}Lu)|_2=&|-a\phi^e \nabla \big(u_t+u\cdot\nabla u +\nabla \phi-\psi \cdot Q(u))|_2\\
\leq & C(|\phi^e\nabla u_t(t)|_2+|\nabla u|_\infty|\phi^e\nabla u|_2+|u|_\infty|\phi^e\nabla^2 u|_2)\\
&+C(|\phi^e \nabla^2 \phi|_2+|\nabla \psi|_3|\phi^e\nabla u|_6+|\psi|_\infty|\phi^e\nabla^2 u|_2)\\
\leq &C(1+|\phi^e \nabla^2 \phi|_2+|\phi^e\nabla^2 u|_2).
\end{split}
\end{equation}

Then according to \eqref{gaijia1},  \eqref{jiegou1}, \eqref{decom2} and \eqref{decom4}, one finally gets
$$
|\phi^e \nabla \big(\phi^{2e}Lu\big)|^2_2 +   \int_{0}^{t}|\phi^{2e}Lu_t|^2_{2}\text{d}s\leq C\quad \text{for} \quad 0\leq  t \leq T.$$

\end{proof}

\begin{lemma}\label{5} 
\begin{equation*}
\begin{split}
\sup_{0\leq t\leq T}\big(t|u_{tt}(t)|^2_2\big)+\int_{0}^{T}t|\phi^e u_{tt}|^2_{D^1}\text{\rm d}t\leq C \quad \text{for} \quad 0\leq  T <\overline{T},\end{split}
\end{equation*}
  where the constant  $C>0$ only depends on $(\rho_0,u_0)$,  $C_0$,  $\alpha$,  $\beta$, $A$, $\gamma$, $\delta$     and $\overline{T}$.
 \end{lemma}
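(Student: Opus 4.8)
The plan is to run a time-weighted $L^2$ energy estimate at the level of $u_{tt}$. Differentiating the equation $\eqref{zhu37ssh}$ once more in $t$ gives
$$u_{ttt}+a\phi^{2e}Lu_{tt}=-2a\phi^{2e}_tLu_t-(u\cdot\nabla u)_{tt}-\nabla\phi_{tt}-a\phi^{2e}_{tt}Lu+(\psi\cdot Q(u))_{tt}.$$
Multiplying by $2tu_{tt}$, integrating over $\mathbb{R}^3$, and integrating the viscous term by parts (the commutator with $\phi^{2e}$ producing only terms proportional to $\nabla\phi^{2e}\sim\psi$), one obtains an identity of the form
$$\frac{d}{dt}\big(t|u_{tt}|_2^2\big)+2at\big(\alpha|\phi^e\nabla u_{tt}|_2^2+(\alpha+\beta)|\phi^e\text{div}u_{tt}|_2^2\big)=|u_{tt}|_2^2+t\,(\textrm{RHS}),$$
where RHS collects the five right-hand side terms above together with the viscous commutator. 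The term $|u_{tt}|_2^2$ is integrable in time by Lemma \ref{5zx}, so the task reduces to bounding $t\,(\textrm{RHS})$.

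Each right-hand side term I would estimate into the form $\varepsilon\,t|\phi^e\nabla u_{tt}|_2^2+C\,t\,\Lambda(t)\,(1+|u_{tt}|_2^2)$ with $\Lambda\in L^1([0,T])$, after which absorbing the $\varepsilon$-terms and applying Gronwall closes the estimate. The structural ingredients: (i) Lemma \ref{s1} gives $|\phi^{-e}|_\infty\le C$ on $[0,T]$, hence $|\nabla u_{tt}|_2\le C|\phi^e\nabla u_{tt}|_2$ and, by Sobolev, $|u_{tt}|_6\le C|\phi^e\nabla u_{tt}|_2$; this makes all the genuinely $u_{tt}$-quadratic pieces absorbable — the viscous commutator $\nabla\phi^{2e}\cdot(u_{tt},\nabla u_{tt})$, the contribution $\int\psi\cdot Q(u_{tt})\cdot u_{tt}$ (using $|\psi|_\infty\le C$ from Lemma \ref{3}), and $\int\nabla\phi_{tt}\cdot u_{tt}=\int\phi_{tt}\,\text{div}\,u_{tt}$; (ii) the transport identities $\eqref{ghtuss}$ together with Lemmas \ref{s6h}, \ref{phit}, \ref{s8}, \ref{s10} control $(\phi^{2e})_t,(\phi^{2e})_{tt},Q(u)_t,\phi_{tt}$ in the norms that occur; (iii) for the top-order factors $\psi_{tt}$ and $\phi^{2e}\,\text{div}\,u_t$, I would differentiate $(\ref{we22})_3$ in $t$ and invoke the elliptic Lemmas \ref{3jie}--\ref{4jie} to bound $\phi^{2e}\nabla\text{div}\,u_t$ by $\phi^{2e}Lu_t$ and lower-order data, so that $\int_0^T|\psi_{tt}|_2^2\,\text{d}t\le C\big(1+\int_0^T|\phi^{2e}Lu_t|_2^2\,\text{d}t\big)\le C$ by Lemma \ref{5zx}; (iv) whenever a term needs only an unweighted $L^2_t$ bound, I would use $t\le T<\infty$ with the relevant estimate from Lemmas \ref{s8}--\ref{5zx}.

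Since $u_{tt}(0)$ need not lie in $L^2$, I would integrate the differential inequality over $(\tau,t)$ with $\tau\in(0,t)$; because $\int_0^T|u_{tt}|_2^2\,\text{d}t\le C$ (Lemma \ref{5zx}), there is a sequence $\tau_n\downarrow0$ with $\tau_n|u_{tt}(\tau_n)|_2^2\to0$, and passing to the limit along $\tau_n$ and applying Gronwall yields $\sup_{0\le t\le T}t|u_{tt}|_2^2+\int_0^Tt|\phi^e\nabla u_{tt}|_2^2\,\text{d}t\le C$; since $|\phi^e u_{tt}|_{D^1}=|\phi^e\nabla u_{tt}|_2$ this is exactly the claim. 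The main obstacle is twofold. First, $u_{ttt}$ carries no a priori integrability, so the test-function computation with $tu_{tt}$ is only formal and must be justified by a time-mollification (or Galerkin) argument, with the convergence of all commutators checked — this is standard but requires care because of the singular weight $\phi^{2e}\to\infty$ in the far field. Second, and more substantively, the right-hand side involves the highest time derivatives $(\phi^{2e})_{tt}$, $\psi_{tt}$, $Q(u)_{tt}$, and one must verify these are controlled in the integrated ($L^1_t$ or $L^2_t$) sense — not merely pointwise in $t$ — by the bounds of Lemmas \ref{s8}--\ref{5zx}; the weight $t$ is precisely what upgrades the $L^2_t$ (rather than $L^\infty_t$) control of $|u_{tt}|_2$ and $|\phi^{2e}Lu_t|_2$ into a closed Gronwall inequality.
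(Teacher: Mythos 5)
Your proposal follows essentially the same route as the paper: differentiate \eqref{zhu37ssh} once more in time, test with $u_{tt}$ (the paper multiplies the resulting differential inequality by $t$ rather than testing with $tu_{tt}$, which is equivalent), estimate the right-hand side terms using the bounds of Lemmas \ref{s8}--\ref{5zx} so that only $|u_{tt}|_2^2$, $|\nabla^2 u_t|_2^2$, $|\phi_{tt}|_2^2$ and absorbable $\varepsilon$-terms remain, and remove the initial layer by integrating over $(\tau,t)$ and passing to the limit along a sequence $\tau_n\downarrow 0$ with $\tau_n|u_{tt}(\tau_n)|_2^2\to 0$ (this is exactly Lemma \ref{1} combined with $\int_0^T|u_{tt}|_2^2\,\text{d}t\le C$ from Lemma \ref{5zx}). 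The argument and the supporting lemmas you invoke coincide with the paper's proof, so the proposal is correct and not genuinely different.
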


\begin{proof}

Now applying $\partial_t$ to (\ref{zhu37ssh}) yields
\begin{equation}\label{zhu25}
\begin{split}
u_{ttt}+a\phi^{2e}Lu_{tt}
=&-(u \cdot\nabla u)_{tt} -\nabla \phi_{tt}-a\phi^{2e}_{tt}Lu-2a\phi^{2e}_tLu_t\\
&+2\psi_t\cdot Q(u_t)+\psi_{tt}\cdot Q(u)+ \psi\cdot Q(u_{tt}).
\end{split}
\end{equation}
Multiplying  (\ref{zhu25}) by $u_{tt}$ and integrating over $\mathbb{R}^3$ give
\begin{equation}\label{zhu27}
\begin{split}
&\frac{1}{2} \frac{d}{dt}|u_{tt}|^2_2+a\alpha|\phi^{e}\nabla u_{tt}|^2_2+a(\alpha+\beta)|\phi^{e}\text{div} u_{tt}|^2_2\\
=&\int \Big(-(u\cdot\nabla u)_{tt} -\nabla \phi_{tt}-a \nabla \phi^{2e} \cdot Q(u)_{tt}-a\phi^{2e}_{tt}Lu\Big)\cdot u_{tt}\\
&+\int\Big(-2a\phi^{2e}_tLu_t+2\psi_t\cdot Q(u_t)+\psi_{tt}\cdot Q(u)+ \psi\cdot Q(u_{tt}) \Big)\cdot u_{tt}  
= \sum_{i=25}^{32}L_i.
\end{split}
\end{equation}

For the terms $L_{25}$--$L_{32}$, we perform the following estimates:
\begin{equation}\label{zhu201xcxxyy}
\begin{split}
L_{25}=&-\int  (u\cdot\nabla u)_{tt} \cdot u_{tt}\\
\leq & C\big(|\nabla u_t|_6|u_t|_3+ |\nabla u|_\infty | u_{tt}|_2+|u|_\infty| \nabla u_{tt}|_2)| u_{tt}|_2\\
\leq &C (1+|u_{tt}|^2_2+|\nabla^2 u_t|^2_2)+\frac{a\alpha }{10}|\phi^{e}\nabla u_{tt}|^2_2,\\
L_{26}=&-\int  \nabla \phi_{tt}\cdot u_{tt}\\
 \leq & C|\phi_{tt}|_2|\phi^e \nabla u_{tt}|_2|\varphi|^{\frac{1}{2}}_\infty\leq C|\phi_{tt}|^2_2+\frac{a\alpha }{10}|\phi^{e}\nabla u_{tt}|^2_2,\\
L_{27}=&-\int  a \nabla \phi^{2e} \cdot Q(u)_{tt}\cdot u_{tt} \\
\leq &C|\psi|_\infty|u_{tt}|_2  |\phi^e \nabla u_{tt}|_2|\varphi|^{\frac{1}{2}}_\infty\leq C|u_{tt}|^2_2+\frac{a\alpha }{10}|\phi^{e}\nabla u_{tt}|^2_2, \\
L_{28}=&-\int a\phi^{2e}_{tt}Lu\cdot u_{tt}= a\int u\cdot \nabla \phi^{2e}_t  Lu\cdot u_{tt}\\
&-a\int ((\delta-1)\text{div}uu\cdot \nabla \phi^{2e}+(\delta-1)^2\phi^{2e}(\text{div}u)^2)  Lu\cdot u_{tt} \\
&-a\int (u_t\psi+(\delta-1)\phi^{2e}\text{div}u_t)  Lu\cdot u_{tt} \\
\leq& C(|\nabla^2 u|_6|\psi_t|_6|u|_6+|u|_\infty|\nabla u|_\infty|\psi|_\infty|\nabla^2 u|_2+|\phi^{2e}Lu|_2|\text{div}u|^2_\infty)|u_{tt}|_2\\
&+C(|u_t|_6|\psi|_6|Lu|_6|u_{tt}|_2+|\phi^{2e}Lu|_6|\nabla u_t|_2|u_{tt}|_3)\leq C \|u_{tt}\|_1,\\
L_{29}=&-\int  2a\phi^{2e}_tLu_t\cdot u_{tt}\\
=&-\int  2a(u\cdot \nabla \phi^{2e}+(\delta-1)\phi^{2e}\text{div}u)Lu_t\cdot u_{tt}\\
\leq &C(|\psi|_\infty|u|_\infty |Lu_t|_2+|\phi^{2e}Lu_t|_2|\text{div}u|_\infty) | u_{tt}|_2\\
\leq & C (1+|u_{tt}|^2_2+|\nabla^2 u_t|^2_2),\\
L_{30}=&\int  2\psi_t\cdot Q(u_t)\cdot u_{tt}\\
\leq &C|\psi_t|_6 |\nabla u_t|_2 | u_{tt}|_3\leq C (1+|u_{tt}|^2_2)+\frac{a\alpha }{10}|\phi^{e}\nabla u_{tt}|^2_2,\\
L_{31}=&\int  \psi_{tt}\cdot Q(u)\cdot u_{tt}\\
\leq &C(|u_t|_6 |\nabla \psi|_3+|\nabla \psi_t|_2|u|_\infty+|\text{div}u|_3|\psi_t|_6)|\nabla u|_\infty | u_{tt}|_2\\
&+C(|\psi|_\infty\nabla u_t|_2+|\psi|_\infty|u|_\infty|\nabla^2 u|_2)|\nabla u|_\infty | u_{tt}|_2\\
&+C(|\phi^e\nabla u|_3|\nabla^2 u_t|_2|\phi^e u_{tt}|_6+|\phi^e\nabla u|^2_6|\nabla^2 u|_6| u_{tt}|_2)\\
\leq &C (|u_{tt}|^2_2+|\nabla^2 u_t|^2_2)+\frac{a\alpha }{10}|\phi^{e}\nabla u_{tt}|^2_2,\\
L_{32}=&\int  \psi\cdot Q(u_{tt})\cdot u_{tt}\\
\leq  & C|\psi|_\infty |\nabla u_{tt}|_2 | u_{tt}|_2\leq C |u_{tt}|^2_2+\frac{a\alpha }{10}|\phi^{e}\nabla u_{tt}|^2_2,
\end{split}
\end{equation}
which, along with \eqref{gaijia1} and   \eqref{zhu27}, implies that 
\begin{equation}\label{zhu27x}
\begin{split}
&\frac{1}{2} \frac{d}{dt}|u_{tt}|^2_2+a\alpha|\phi^{e}\nabla u_{tt}|^2_2+a(\alpha+\beta)|\phi^{e}\text{div} u_{tt}|^2_2\\
\leq &C (|u_{tt}|^2_2+|\nabla^2 u_t|^2_2+ |\phi_{tt}|^2_2+1).
\end{split}
\end{equation}

Multiplying both sides of (\ref{zhu27x}) by $t$  and  integrating over $(\tau,t)$, one can get
\begin{equation}\label{zhu29}
\begin{split}
&t|u_{tt}(t)|^2_2+\frac{a\alpha}{2}\int_\tau^t s|\phi^e \nabla u_{tt}|^2_2\text{d}s
\leq \tau |u_{tt}(\tau)|^2_2+C.
\end{split}
\end{equation}

It follows  from \eqref{zhu22wsx} and Lemma \ref{1} that
there exists a sequence $s_k$ such that
$$
s_k\rightarrow 0, \quad \text{and}\quad s_k |u_{tt}(s_k,x)|^2_2\rightarrow 0, \quad \text{as} \quad k\rightarrow+\infty.
$$
The desired estimates follows from  taking $\tau=s_k$ and letting $ k\rightarrow+\infty$ in (\ref{zhu29}).
%\begin{equation}\label{zhu30}
%\begin{split}
%&t|u_{tt}(t)|^2_2+\int_0^ts|\sqrt{h}\nabla u_{tt}|^2_2\text{d}s\leq Cc^3_4c_5+ Cc^2_4c_5\int_\tau^t s|u_{tt}|^2_2\text{d}s
%\end{split}
%\end{equation}

\end{proof}

\subsection{Second order estimates on the velocity with singular weight} Finally, we show that 
  \begin{lemma}\label{s4jjk} 
\begin{equation*}
\begin{split}
\sup_{0\leq t\leq T}|\phi^{2e} \nabla^2 u(t)|_{ 2}+\int_0^T |\phi^{2e}\nabla^2 u_t|^2_2 \text{\rm d}t\leq C \quad \text{for} \quad  0\leq T<  \overline{T}, 
\end{split}
\end{equation*}
  where the constant  $C>0$ only depends on $(\rho_0,u_0)$,  $C_0$,  $\alpha$,  $\beta$, $A$, $\gamma$, $\delta$     and $\overline{T}$.
 \end{lemma}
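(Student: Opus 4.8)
The plan is to run a time-dependent energy estimate on the singularly-weighted parabolic structure $(\ref{we22})_2$. Note that Lemma \ref{3} already delivers $\nabla(\phi^{2e}\nabla^2 u)\in L^\infty([0,T];L^2)$, hence $\phi^{2e}\nabla^2 u\in L^\infty([0,T];L^6)$; the content of the present lemma is to upgrade this to an $L^2$ (rather than $L^6$) bound, and, because $\phi^{2e}\to\infty$ in the far field, this cannot be recovered from any Sobolev or elliptic embedding and must be produced directly from the equation.

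First I would reduce and reformulate. Using $\nabla\phi^{2e}=\tfrac{\delta-1}{a\delta}\psi$ and $\rho^{\delta-1}=c\phi^{2e}$, one has $\phi^{2e}\nabla^2 u=\nabla(\phi^{2e}\partial_k u)-\tfrac{\delta-1}{a\delta}\psi\otimes\partial_k u$ with $|\psi\otimes\nabla u|_2\le|\psi|_\infty|\nabla u|_2\le C$ by Lemmas \ref{s1}, \ref{s4}, \ref{3} and \ref{laosi}, so it suffices to control $|\nabla(\phi^{2e}\partial_k u)|_2$ and, for the second conclusion, $\int_0^T|\nabla(\phi^{2e}\partial_k u_t)|^2_2\,\mathrm{d}t$. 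Differentiating $(\ref{we22})_2$ in $x_jx_k$ and handling the result through the Lamé identity $aL(\phi^{2e}\partial_k v)=a\phi^{2e}L\partial_k v-aG(\nabla\phi^{2e},\partial_k v)$ exactly as in the proofs of Lemmas \ref{3jie}--\ref{4jie}, I get a weighted parabolic equation $\partial_t P+a\phi^{2e}LP=\mathcal{M}$ with $P=\partial_j\partial_k u$, where $\mathcal{M}$ collects $-\partial_j\partial_k(u\cdot\nabla u)$, $-\nabla\partial_j\partial_k\phi$, $\partial_j\partial_k(\psi\cdot Q(u))$ and the lower-order commutator terms $-a[\partial_j\phi^{2e}L\partial_k u+\partial_k\phi^{2e}L\partial_j u+\partial_j\partial_k\phi^{2e}Lu]$.

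The heart of the proof is the choice of test function. For $\sup_t|\phi^{2e}\nabla^2 u|_2$, multiply $\partial_t P+a\phi^{2e}LP=\mathcal{M}$ by $\phi^{4e}P$ and integrate over $\mathbb{R}^3$: the time term produces $\tfrac12\tfrac{\mathrm{d}}{\mathrm{d}t}|\phi^{2e}P|^2_2$ up to a harmless $\int(\phi^{4e})_t|P|^2$ (bounded by $C|\nabla u|_\infty|\phi^{2e}P|^2_2$, with $|\nabla u|_\infty\le C$ from Lemma \ref{s6h} and \eqref{zhu65sss}); the elliptic term, after integration by parts and using $\nabla\phi^{6e}\propto\phi^{4e}\psi$, yields the nonnegative dissipation $\sim|\phi^{3e}\nabla^3 u|^2_2$ plus cross terms absorbed into it; and $\int\mathcal{M}\cdot\phi^{4e}P$ is estimated term by term --- each genuinely third-order contribution (from $u\cdot\nabla^3 u$, $\psi\cdot\nabla^3 u$, $\nabla^3\phi$) is, after one integration by parts, paired either with the $\phi^{3e}$-dissipation or, via the splitting $\phi^{4e}=\phi^{2e}\cdot\phi^{2e}$, with the uniform bound $|\phi^{2e}\nabla^3 u|_2\le C$ implied by Lemma \ref{3} (cf. \eqref{singular1}), while the remaining contributions pair $\phi^{2e}$-weighted factors of order $\le 2$ in $L^2$ against $\phi^{2e}P$, using $|\nabla u|_\infty\le C$, $|\nabla^2 u|_6\le C$, $\|\psi\|_{D^1\cap D^2}\le C$, $|\phi^e\nabla^2 u|_2\le C$, $|\phi^e\nabla^2\phi|_2\le C$, $|\omega|_4\le C$ from Lemmas \ref{s6h}, \ref{s9}, \ref{laosi} and \ref{5zx}. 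This gives $\tfrac{\mathrm{d}}{\mathrm{d}t}|\phi^{2e}\nabla^2 u|^2_2\le C(1+\eta(t))(1+|\phi^{2e}\nabla^2 u|^2_2)$ with $\int_0^{\overline{T}}\eta(t)\,\mathrm{d}t<\infty$; Gronwall's inequality then closes it once $|\phi^{2e}_0\nabla^2 u_0|_2<\infty$ is known, which follows (as in the argument quoted in Lemma \ref{further}) from the compatibility condition $Lu_0=\rho^{1-\delta}_0 g_2$ in \eqref{th78zx} --- so that $\phi^{2e}_0Lu_0\in L^2$ --- together with the Lamé structure for $\phi^{2e}_0\partial_k u_0$. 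For the dissipation $\int_0^T|\phi^{2e}\nabla^2 u_t|^2_2\,\mathrm{d}t$, one may either multiply the same differentiated equation by $\phi^{4e}P_t$ --- now the time term is exactly $|\phi^{2e}\nabla^2 u_t|^2_2$ and the elliptic term produces $\tfrac{\mathrm{d}}{\mathrm{d}t}|\phi^{3e}\nabla^3 u|^2_2$, whose supremum is controlled by Lemma \ref{5zx} --- handling $\int\mathcal{M}\cdot\phi^{4e}P_t$ by the $\tfrac{\mathrm{d}}{\mathrm{d}t}$-trick (shift the time derivative onto the lower-order factors, integrate in $t$, bound the resulting boundary terms by the quantities just estimated); or, more directly, read it off from $\int_0^T|\phi^{2e}Lu_t|^2_2\,\mathrm{d}t\le C$ of Lemma \ref{5zx} (via \eqref{zhu77qq}) combined with the Lamé identity for $\phi^{2e}\partial_k u_t$.

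The main obstacle is precisely the far-field degeneracy. Since $\phi^{2e}$ is unbounded one can never simply insert an extra weight, so $|\phi^{2e}\nabla^2 u|_2$ is genuinely inaccessible through the weighted elliptic estimates of Lemmas \ref{zhenok} and \ref{3jie}--\ref{4jie} (these return only the seminorms $|\phi^{2e}\partial_k u|_{D^2}$, i.e.\ third- and higher-order information), and the time-energy method is forced. Keeping the bookkeeping of $\phi$-powers consistent --- always reducing a factor $\phi^{4e}$ against two $\phi^{2e}$-weighted second-order factors, or against the $\phi^{3e}$-dissipation, and never generating a term such as $\phi^{4e}\nabla^3 u$ or $\phi^{2e}\nabla^2\psi$ that no earlier estimate controls --- is what dictates which integration by parts to perform in each term of $\mathcal{M}$, and is where all the care goes.
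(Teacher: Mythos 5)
Your proposal takes a genuinely different route from the paper, and its central step is not justified. The premise that the $L^2$ bound ``cannot be recovered from any elliptic embedding'' and that ``the time-energy method is forced'' is the opposite of what the paper does: the paper's proof is purely elliptic. It replaces the singular weight by the bounded one $(\phi+\eta)^{2e}$, $\eta>0$, writes $aL((\phi+\eta)^{2e}u)=a(\phi+\eta)^{2e}Lu-aG(\nabla(\phi+\eta)^{2e},u)=\Lambda_\eta$, checks via \eqref{special11}--\eqref{special12} that $|\Lambda_\eta|_2\le C$ uniformly in $\eta$ (using $\phi^{2e}Lu\in L^\infty([0,T];L^2)$ from Lemma \ref{s6h}), applies the standard Lam\'e estimate of Lemma \ref{zhenok} --- legitimate precisely because for $\eta>0$ the weight is bounded, so $(\phi+\eta)^{2e}u$ lies in the admissible class --- to get $|(\phi+\eta)^{2e}\nabla^2u|_2\le C$ uniformly in $\eta$, and concludes with Fatou's lemma (Lemma \ref{Fatou}); the bound on $\int_0^T|\phi^{2e}\nabla^2u_t|^2_2\,\mathrm{d}t$ is obtained by the same $\eta$-scheme applied to $u_t$, fed by $\int_0^T|\phi^{2e}Lu_t|^2_2\,\mathrm{d}t\le C$ from Lemma \ref{5zx} via \eqref{zhu77qq}.

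The gap in your scheme is exactly the far-field degeneracy you name and then bypass. Multiplying the twice-differentiated momentum equation by $\phi^{4e}P$ produces integrals carrying weights $\phi^{4e}$ and $\phi^{6e}$ (the dissipation $|\phi^{3e}\nabla^3u|^2_2$, the cross terms with $\nabla\phi^{6e}$, the source pairings), none of which are known to be finite for the regular solution --- Definition \ref{d1} and \eqref{reg11} provide at most $\phi^{2e}$-weights on $\nabla^2u$, $\nabla^3u$ --- and the integrations by parts need decay of these weighted quantities at infinity, which is unavailable since $\phi^{2e}\to\infty$ there; this is precisely the obstruction the paper spells out around \eqref{possibleguji}. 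Without regularizing the weight the computation is formal, and with a regularization the bookkeeping would have to be redone and is not automatic (for instance $\int\nabla^2(\psi\cdot Q(u))\cdot\phi^{4e}\nabla^2u$ forces either $|\phi^{4e}\nabla^2u|_2$ or $|\phi^{2e}\nabla u|_\infty$, neither of which is among your listed tools, and the $\nabla^3\phi$ term needs $|\phi^{2e}\nabla^2\phi|_2$, not just $|\phi^{e}\nabla^2\phi|_2$). The same objection defeats your ``more direct'' route for the $u_t$-part: applying Lemma \ref{zhenok} to $\phi^{2e}\partial_k u_t$ presupposes that $\phi^{2e}u_t$ (or $\phi^{2e}\nabla u_t$) lies in a $D^{1,q}_0$-type class, which is exactly the step the paper warns ``might not hold'' in \eqref{possibleguji}; the repair is the $(\phi+\eta)^{2e}$-regularization plus Fatou, at which point your argument collapses onto the paper's. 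Finally, your initialization of Gronwall leans on $\phi_0^{2e}\nabla^2u_0\in L^2$, which the paper itself obtains only through this regularization argument (Lemma \ref{further} cites the proof of the present lemma), so it cannot be assumed independently.
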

\begin{proof}
First, it follows from the equations $\eqref{we22}_2$ that  for any constant $\eta>0$, 
\begin{equation}\label{elliptic-yijie1jjk}
\begin{split}
a L((\phi+\eta)^{2e}u)=&a (\phi+\eta)^{2e}Lu-a G(\nabla (\phi+\eta)^{2e},u)=\Lambda_\eta.
\end{split}
\end{equation}
First, due to 
\begin{equation}\label{special11}
\begin{split}
\nabla (\phi+\eta)^{2e}=&\frac{\phi^{-2e+1}}{(\phi+\eta)^{-2e+1}}\nabla \phi^{2e},\\
\nabla^2 (\phi+\eta)^{2e}=&\nabla \Big(\frac{\phi^{-2e+1}}{(\phi+\eta)^{-2e+1}}\nabla \phi^{2e}\Big)\\
=& \frac{\phi^{-2e+1}}{(\phi+\eta)^{-2e+1}}\nabla^2 \phi^{2e}-(2e-1) \frac{2\eta\phi^{-2e+1}}{e(\phi+\eta)^{-2e+2}}\nabla \phi^{e}\otimes \nabla \phi^{e},
\end{split}
\end{equation}
one quickly obtains that 
\begin{equation}\label{special12}
\begin{split}
\|\nabla (\phi+\eta)^{2e}\|_{L^6\cap L^\infty \cap D^1}+|\Lambda_\eta|_2\leq C  \quad \text{for} \quad  0\leq t\leq T, \end{split}
\end{equation}
for some constant $C>0$ that is independent of $\eta$.

Second, due to $\eta>0$ and \eqref{special12}, one has $(\phi+\eta)^{2e}u\in L^\infty([0,T];H^2)$. Then according to Lemma \ref{zhenok} and \eqref{special11}, one has
\begin{equation}\label{special12uyyy}
\begin{split}
| (\phi+\eta)^{2e}u|_{D^2}\leq C|\Lambda_\eta|_2\leq C  \quad \text{for} \quad  0\leq t\leq T, \end{split}
\end{equation}
for some constant $C>0$ that is independent of $\eta$.

Thus one has
\begin{equation}\label{special13}
\begin{split}
|(\phi+\eta)^{2e}\nabla^2 u|_{2}\leq C(1+|\nabla (\phi+\eta)^{2e} |_6|\nabla u|_3+|\nabla^2(\phi+\eta)^{2e}|_2|u|_\infty)\leq C, \end{split}
\end{equation}
for  $0\leq t\leq T$, where the constant $C>0$ is also independent of  $\eta$.

It is very easy to see that, for every $(t,x)\in [0,T]\times \mathbb{R}^3$,
$$
(\phi+\eta)^{2e}\nabla^2 u \rightarrow \phi^{2e}\nabla^2 u \quad \text{a.e.} \quad \text{as} \quad \eta \rightarrow 0,
$$
which, along with Fatou's lemma (see Lemma \ref{Fatou}), implies that 
$$
\int |\phi^{2e}\nabla^2 u|^2 \leq \liminf_{\eta\rightarrow 0} \int  |(\phi+\eta)^{2e}\nabla^2 u|^2 \leq C \quad \text{for} \quad  0\leq t\leq T.
$$

Finally, the estimate on  $\phi^{2e}\nabla^2 u_t$ follows from the following approximate scheme 
$$
a L((\phi+\eta)^{2e}u_t)=a (\phi+\eta)^{2e}Lu_t-a G(\nabla (\phi+\eta)^{2e},u_t),
$$
and the similar argument used above for the estimate on $\phi^{2e}\nabla^2 u$.

 \end{proof}

\subsection{Contradiction argument}\ \\

Now we know from Lemmas \ref{s1}-\ref{s4jjk} that, if the regular solution $(\rho, u)(t,x)$ exists up to the time ${\overline{T}}>0$, with the maximal time ${\overline{T}}<+\infty$ such that  the assumption \eqref{we11} holds,
then 
$$(\rho^{\gamma-1},\nabla \rho^{\delta-1},u)|_{t=\overline{T}} =\lim_{t\rightarrow \overline{T}}(\rho^{\gamma-1},\nabla \rho^{\delta-1},u)$$
 satisfy the conditions imposed on the initial data $(\ref{th78})$. Moreover, if follows from Lemma \ref{verificationcom} and Lemmas \ref{s1}-\ref{s4jjk} that, $$ \sup_{\tau \leq t\leq \overline{T}}\Big(|\phi^e \nabla u(t)|^2_{ 2}+|\phi^{2e}L u(t)|^2_2+|\phi^e \nabla \big(\phi^{2e}Lu(t)\big)|^2_2\Big)\leq C,  $$
for any $\tau\in (0,\overline{T})$, which means that the   compatibility conditions \eqref{th78zx} still hold at $t={\overline{T}}$.
 If we solve the system (\ref{eq:1.1}) with the initial time ${\overline{T}}$, then Theorem 2.1 ensures that $(\rho, u)(t,x)$ extends beyond ${\overline{T}}$ as the unique regular solution. This contradicts to the fact that ${\overline{T}}$ is the maximal existence time. 

Until now,  we have  completed  the proof of Theorem \ref{th3s}.

Finally, we give a detail explanation for  Remark \ref{classicalsense}.
\subsection{The classical solution to the  Cauchy problem   (\ref{eq:1.1})-\eqref{far}}

Now we  show   that, if $\gamma \in (1,2]$,  the regular solution  obtained in the above step is indeed a classical one  in $(0, \overline{T}]\times \mathbb{R}^3$.
First, due to  $1<\gamma \leq 2$, one has
$$(\rho, \nabla \rho,  \rho_t,  u,\nabla u)\in C( [0,\overline{T}]\times \mathbb{R}^3).$$

Second, it follows from the classical Sobolev embedding  theorem:
\begin{equation}\label{qian}\begin{split}
&L^2([0,\overline{T}];H^1)\cap W^{1,2}([0,\overline{T}];H^{-1})\hookrightarrow C([0,\overline{T}];L^2),
\end{split}
\end{equation}
and the regularity (\ref{reg11}) that for any $\tau\in (0,\overline{T})$,
\begin{equation}\label{qiaf}
 tu_t\in C([0,\overline{T}]; H^2), \quad \text{and} \quad u_t\in C([\tau,\overline{T}]\times \mathbb{R}^3).
\end{equation}

Finally, it remains to  show that
 $ Lu \in C([\tau,\overline{T}]\times \mathbb{R}^3)$. According to  the following elliptic system
\begin{equation}\label{ekki}
\begin{split}
aLu=-\phi^{-2e}\big(u_t+u\cdot\nabla u +\nabla \phi-\psi \cdot Q(u))=\phi^{-2e}H,
\end{split}
\end{equation}
the regularity (\ref{reg11}) implies that
\begin{equation}\label{er1}
\begin{split}
t \phi^{-2e} H \in L^\infty([0,\overline{T}]; H^2).
\end{split}
\end{equation}
Note that
\begin{equation} \label{chejj}
\begin{split}
(t\phi^{-2e} H)_t=&\phi^{-2e} H+t\phi^{-2e}_t H +t\phi^{-2e}H_t \in L^2([0,\overline{T}]; L^2).
\end{split}
\end{equation}
So it follows from the   classical  Sobolev imbedding theorem:
 \begin{equation}\label{yhnh}
\begin{split}
L^\infty([0,\overline{T}];H^1)\cap W^{1,2}([0,\overline{T}];H^{-1})\hookrightarrow C([0,\overline{T}];L^q),
\end{split}
\end{equation}
for any $q\in [2,6)$, and (\ref{ekki})-(\ref{chejj}) that
$$
t\phi^{-2e} H\in C([0,\overline{T}]; W^{1,4}), \quad
tL u \in C([0,\overline{T}]; W^{1,4}).
$$
Again  the  Sobolev embedding theorem implies that
$
L u \in C((0,\overline{T}]\times \mathbb{R}^3)
$.

\section{Periodic problem away from the vacuum}

This section will be devoted to proving   Theorem \ref{th3t}, and we always assume that \eqref{canshu} and \eqref{bdrelation} hold, and $\Omega=\mathbb{T}^3$.   
When $\inf_x \rho_0(x) > 0$, the local existence  of the unique  classical solution to  the periodic problem \eqref{eq:1.1}-\eqref{initial} stated  in Theorem 1.3 follows from the standard theory of the  symmetric hyperbolic-parabolic structure which satisfies the well-known Kawashima’s condition, c.f. \cite{KA, nash}.

   In order to prove (\ref{crit}), we use a contradiction argument. 
Let $( \rho, u)$ be the unique classical  solution  stated in Theorem 1.3 to  the periodic  problem  (\ref{eq:1.1})-\eqref{initial} with the life span $\overline{T}$ .
 We assume that
$\overline{T}<+\infty$ and
\begin{equation}\label{we11ty}
\begin{split}
\displaystyle\lim_{T\mapsto \overline{T}} \int_0^T \|D( u)(t,\cdot)\|^2_ {L^\infty(\mathbb{T}^3)}\ \text{d}t=C_0<\infty,
\end{split}
\end{equation}
for some constant $C_0>0$.
We will show that under the assumption \eqref{we11ty}, $\overline{T}$ is actually not the maximal existence time for this classical  solution.

First, it  follows from the  assumption (\ref{we11ty}) and the system (\ref{eq:1.1}) that  $\rho$ is uniformly bounded.
 \begin{lemma}\label{bs1}
\begin{equation*}
\begin{split}
C^{-1}\leq \rho(t,x) \leq C \quad \text{for} \quad 0\leq t< \overline{T},
\end{split}
\end{equation*}
where the constant $C>0$ only depends on $(\rho_0,u_0)$,  $C_0$   and $\overline{T}$.
 \end{lemma}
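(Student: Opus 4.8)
The plan is to exploit the transport structure of the continuity equation $\rho_t + \mathrm{div}(\rho u) = 0$, equivalently $\frac{d}{dt}\rho + u\cdot\nabla\rho + \rho\,\mathrm{div}\,u = 0$, along the flow map. First I would introduce the characteristic curves $W(s,t,x)$ solving $\frac{d}{ds}W = u(s,W)$, $W(t,t,x)=x$, which are well-defined on $[0,\overline{T})$ because $u\in C([0,T];H^3)$ gives $u\in C([0,T];C^1(\mathbb{T}^3))$ for each $T<\overline{T}$. Along these curves the density satisfies the ODE $\frac{d}{ds}\rho(s,W(s,t,x)) = -\rho(s,W(s,t,x))\,\mathrm{div}\,u(s,W(s,t,x))$, which integrates to the explicit representation
\begin{equation}\label{bs1rep}
\rho(t,x) = \rho_0\big(W(0,t,x)\big)\exp\!\Big(-\!\int_0^t \mathrm{div}\,u\big(s,W(s,t,x)\big)\,\mathrm{d}s\Big).
\end{equation}

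The key observation is that $|\mathrm{div}\,u(s,\cdot)|_\infty \le C\,|D(u)(s,\cdot)|_\infty$ pointwise (the divergence is the trace of $2D(u)$), so that the exponent in \eqref{bs1rep} is controlled by $\int_0^t |D(u)(s,\cdot)|_\infty\,\mathrm{d}s$. Under the contradiction hypothesis \eqref{we11ty} we only have a bound on $\int_0^T |D(u)(t,\cdot)|^2_{L^\infty}\,\mathrm{d}t$, but by Cauchy–Schwarz on the bounded time interval $[0,\overline{T})$ this gives $\int_0^T |D(u)(s,\cdot)|_\infty\,\mathrm{d}s \le \overline{T}^{1/2}\big(\int_0^T |D(u)(s,\cdot)|^2_\infty\,\mathrm{d}s\big)^{1/2} \le \overline{T}^{1/2}C_0^{1/2}$, a finite constant depending only on $C_0$ and $\overline{T}$. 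Combining this with $\inf_x\rho_0>0$ and $\rho_0\in H^3\hookrightarrow L^\infty$ yields, from \eqref{bs1rep},
\begin{equation*}
|\rho_0|_\infty^{-1}\,e^{-C\overline{T}^{1/2}C_0^{1/2}} \;\le\; \big(\inf_x\rho_0\big)\,e^{-C\overline{T}^{1/2}C_0^{1/2}} \;\le\; \rho(t,x) \;\le\; |\rho_0|_\infty\,e^{C\overline{T}^{1/2}C_0^{1/2}},
\end{equation*}
which is exactly the asserted two-sided bound with a constant depending only on $(\rho_0,u_0)$, $C_0$, and $\overline{T}$.

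I do not expect a serious obstacle here; this is the direct analogue of Lemma \ref{s1}, only easier, since in the periodic non-vacuum setting $\rho_0$ is already bounded away from $0$ and $\infty$, so no $L^q$ estimate on $\phi$ is needed. The one point requiring a line of care is the passage from the $L^2_t L^\infty_x$ control of $D(u)$ to $L^1_t L^\infty_x$ control of $\mathrm{div}\,u$, handled by Hölder as above; everything else follows from the standard method of characteristics and the regularity $u\in C([0,T];H^3)$ guaranteed by the hypotheses of Theorem \ref{th3t}.
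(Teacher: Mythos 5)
Your proposal is correct and takes essentially the same route the paper intends: Lemma \ref{bs1} is proved exactly as Lemma \ref{s1}, via the characteristic representation of $\rho$, the only new ingredient being the Cauchy--Schwarz step that converts the $L^2_tL^\infty_x$ bound \eqref{we11ty} on $D(u)$ into an $L^1_tL^\infty_x$ bound on $\text{div}\,u$, which you handle correctly. One small blemish: the first inequality in your final display, $|\rho_0|_\infty^{-1}\le \inf_x\rho_0$, need not hold and should simply be dropped, since the bound $\big(\inf_x\rho_0\big)e^{-C\overline{T}^{1/2}C_0^{1/2}}\le\rho\le|\rho_0|_\infty e^{C\overline{T}^{1/2}C_0^{1/2}}$ (with $\inf_x\rho_0>0$ by continuity of $\rho_0>0$ on the compact torus) is all that is needed.
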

 The proof is similar to that of Lemma \ref{s1}. Here we omit it. 
 
 Second, we give the well-known B-D entropy estimate.
  \begin{lemma}\cite{bd}\label{bskk2}
\begin{equation*}
\begin{split}
\int \Big(\frac{1}{2}\rho\Big|u+\frac{2\alpha \delta}{\delta-1}\nabla \rho^{\delta-1}\Big|^2+\frac{P}{\gamma-1}\Big)(t,\cdot)&\\
+\int_0^t \int_{\mathbb{T}^3} \Big(\frac{2P'(\rho)\mu'(\rho)}{\rho}|\nabla \rho|^2+\frac{1}{2}\mu(\rho)|\nabla u-\nabla u^\top|^2\Big)\text{\rm d}x\text{\rm d}s &\leq C,
\end{split}
\end{equation*}
for $0\leq t< \overline{T}$, where the constant $C>0$ only depends on $(\rho_0,u_0)$, $A$,  $\gamma$, $\alpha$   and $\delta$.
 \end{lemma}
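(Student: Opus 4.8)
This is the classical Bresch--Desjardins (BD) entropy estimate, and the plan is to reproduce its derivation, which is fully rigorous here because the solution in Theorem \ref{th3t} is classical and, by Lemma \ref{bs1}, the density stays bounded above and away from $0$ on $[0,\overline T)$; hence all the integrations by parts below are legitimate on $\mathbb{T}^3$ (no boundary terms, no renormalization needed). The device is the \emph{effective velocity}
$$
v:=u+\nabla\varphi(\rho),\qquad \varphi(\rho):=\frac{2\alpha\delta}{\delta-1}\rho^{\delta-1},\qquad \varphi'(\rho)=\frac{2\mu'(\rho)}{\rho}=2\alpha\delta\,\rho^{\delta-2},
$$
so that the weighted kinetic energy in the statement is exactly $\tfrac12\rho|v|^2$, and the B--D relation \eqref{bdrelation}, written in the sharp form $\lambda(\rho)+2\mu(\rho)=2\mu'(\rho)\rho$, is precisely what makes the symmetric viscous terms collapse onto the antisymmetric part $A(u):=\tfrac12(\nabla u-(\nabla u)^\top)$ of $\nabla u$.

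First I would record the usual energy identity: multiplying $(\ref{eq:1.1})_2$ by $u$, integrating over $\mathbb{T}^3$, and using the pressure balance $P_t+\text{div}(Pu)+(\gamma-1)P\,\text{div}u=0$ gives
$$
\frac{d}{dt}\int\Big(\frac12\rho|u|^2+\frac{P}{\gamma-1}\Big)+\int\big(2\mu(\rho)|D(u)|^2+\lambda(\rho)(\text{div}u)^2\big)=0 .
$$
Next I would differentiate $\int\big(\rho u\cdot\nabla\varphi(\rho)+\tfrac12\rho|\nabla\varphi(\rho)|^2\big)$ in time. From the continuity equation one has the transport identity $\partial_t\nabla\varphi(\rho)+u\cdot\nabla(\nabla\varphi(\rho))=-(\nabla u)^\top\nabla\varphi(\rho)-\nabla\big(\rho\varphi'(\rho)\,\text{div}u\big)$, together with the algebraic relation $\rho\nabla\varphi(\rho)=2\nabla\mu(\rho)$. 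Feeding these, together with $\partial_t(\rho u)$ from $(\ref{eq:1.1})_2$, into the time derivative of the cross and quadratic terms and integrating by parts on $\mathbb{T}^3$, the pressure produces $-\int\nabla P\cdot\nabla\varphi(\rho)=-\int P'(\rho)\varphi'(\rho)|\nabla\rho|^2=-\int\tfrac{2P'(\rho)\mu'(\rho)}{\rho}|\nabla\rho|^2$, while the viscous contributions — this is where \eqref{bdrelation} is used decisively — combine with the $-2\mu|D(u)|^2-\lambda(\text{div}u)^2$ from the energy identity to leave exactly $-2\mu(\rho)|A(u)|^2$. Adding the two balances yields the differential identity
$$
\frac{d}{dt}\int\Big(\frac12\rho|v|^2+\frac{P}{\gamma-1}\Big)+\int\frac{2P'(\rho)\mu'(\rho)}{\rho}|\nabla\rho|^2+\int 2\mu(\rho)|A(u)|^2=0 .
$$

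Integrating in time on $(0,t)$ for $t<\overline T$, and using $2|A(u)|^2=\tfrac12|\nabla u-(\nabla u)^\top|^2$, gives precisely the claimed estimate with
$$
C=\int\Big(\frac12\rho_0\big|u_0+\tfrac{2\alpha\delta}{\delta-1}\nabla\rho_0^{\delta-1}\big|^2+\frac{A\rho_0^{\gamma}}{\gamma-1}\Big),
$$
which is finite since \eqref{bdchuzhi} gives $(\rho_0,u_0)\in H^3(\mathbb{T}^3)$ with $\rho_0>0$, hence $\rho_0$ bounded away from $0$ and $\nabla\rho_0^{\delta-1}\in H^2\hookrightarrow L^2$; so $C$ depends only on $(\rho_0,u_0,A,\gamma,\alpha,\delta)$. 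I expect the only real obstacle to be the algebraic step just described, namely that the $\nabla\varphi(\rho)$-contributions convert $2\mu|D(u)|^2+\lambda(\text{div}u)^2$ into $2\mu|A(u)|^2$, which forces one to invoke \eqref{bdrelation} in exactly the form $\lambda+2\mu=2\mu'\rho$; everything else (the transport identity for $\nabla\varphi(\rho)$, the integrations by parts) is routine given the classical regularity of the solution and Lemma \ref{bs1}. Should the bookkeeping instead leave lower-order remainders in a less favourable form, they are controlled by $|D(u)|_{L^\infty}^2$ and the already-closed energy bound, so one closes the estimate via Gronwall's inequality against $\int_0^T|D(u)|_{L^\infty}^2\,\text{d}t=C_0<\infty$.
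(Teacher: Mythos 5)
Your proposal is correct and is exactly the classical Bresch--Desjardins entropy computation that the paper itself invokes by citing \cite{bd} (the paper omits the proof): the effective-velocity identity for $v=u+\frac{2\alpha\delta}{\delta-1}\nabla\rho^{\delta-1}$, with the BD relation collapsing the symmetric viscous dissipation onto $\frac12\mu(\rho)|\nabla u-\nabla u^\top|^2$, is the intended argument, and it is rigorous here thanks to the classical regularity and the positive lower bound on $\rho$ from Lemma \ref{bs1}. The only remark is that your Gronwall fallback is unnecessary and should not be used, since it would make the constant depend on $C_0$ and $\overline{T}$, whereas the exact identity already gives $C$ depending only on $(\rho_0,u_0)$, $A$, $\gamma$, $\alpha$, $\delta$ as stated.
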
 

The proof of this lemma can be found in \cite{bd}. Here we omit it. Now we give the basic energy estimate.
  \begin{lemma}\cite{bd}\label{bskk3}
\begin{equation*}
\begin{split}
\int \Big(\frac{1}{2}\rho|u|^2+\frac{P}{\gamma-1}\Big)(t,\cdot)+\int_0^t \int_{\mathbb{T}^3}  \Big(\frac{1}{2}\mu(\rho)|\nabla u+\nabla u^\top|^2+\lambda(\rho)(\text{div}u)^2\Big)\text{\rm d}x\text{\rm d}s &\leq C,
\end{split}
\end{equation*}
for $0\leq t< \overline{T}$, where the constant $C>0$ only depends on $(\rho_0,u_0)$, $A$ and   $\gamma$.
 \end{lemma}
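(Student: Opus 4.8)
The plan is to derive the classical energy identity for \eqref{eq:1.1} and integrate it in time. The starting point is to test the momentum equation $(\ref{eq:1.1})_2$ with $u$ and integrate over $\mathbb{T}^3$; since $\mathbb{T}^3$ has no boundary, no boundary terms appear in any integration by parts below. First I would rewrite the inertial part using the continuity equation $(\ref{eq:1.1})_1$, which gives the pointwise identity $(\rho u)_t+\text{div}(\rho u\otimes u)=\rho(u_t+u\cdot\nabla u)$. Testing this with $u$ and integrating by parts in the convective term, $\int_{\mathbb{T}^3}\big((\rho u)_t+\text{div}(\rho u\otimes u)\big)\cdot u=\int_{\mathbb{T}^3}\rho\,u_t\cdot u-\tfrac12\int_{\mathbb{T}^3}|u|^2\,\text{div}(\rho u)$, and using $\text{div}(\rho u)=-\rho_t$ once more this equals $\tfrac{\mathrm d}{\mathrm dt}\int_{\mathbb{T}^3}\tfrac12\rho|u|^2$.

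Next I would handle the pressure and viscous contributions. For the pressure, integrating by parts, $\int_{\mathbb{T}^3}\nabla P\cdot u=-\int_{\mathbb{T}^3}P\,\text{div}\,u$; on the other hand, by $(\ref{eq:1.1})_1$ and \eqref{eq:1.2}, $\tfrac{\mathrm d}{\mathrm dt}\int_{\mathbb{T}^3}\tfrac{P}{\gamma-1}=\tfrac{A}{\gamma-1}\tfrac{\mathrm d}{\mathrm dt}\int_{\mathbb{T}^3}\rho^\gamma=-\int_{\mathbb{T}^3}A\rho^\gamma\,\text{div}\,u=-\int_{\mathbb{T}^3}P\,\text{div}\,u$, so that the pressure work is exactly $\tfrac{\mathrm d}{\mathrm dt}\int_{\mathbb{T}^3}\tfrac{P}{\gamma-1}$. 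For the viscous term, $\int_{\mathbb{T}^3}\text{div}\,\mathbb{T}\cdot u=-\int_{\mathbb{T}^3}\mathbb{T}:\nabla u$; by \eqref{eq:1.3} and \eqref{fandan}, and using that $D(u)$ is symmetric — so $D(u):\nabla u=D(u):D(u)=\tfrac14|\nabla u+\nabla u^\top|^2$ — together with $\text{tr}\,\nabla u=\text{div}\,u$, this becomes $-\int_{\mathbb{T}^3}\big(\tfrac12\mu(\rho)|\nabla u+\nabla u^\top|^2+\lambda(\rho)(\text{div}\,u)^2\big)$.

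Assembling the three computations gives the energy identity
\[
\tfrac{\mathrm d}{\mathrm dt}\int_{\mathbb{T}^3}\Big(\tfrac12\rho|u|^2+\tfrac{P}{\gamma-1}\Big)+\int_{\mathbb{T}^3}\Big(\tfrac12\mu(\rho)|\nabla u+\nabla u^\top|^2+\lambda(\rho)(\text{div}\,u)^2\Big)=0,
\]
and integrating over $[0,t]$ yields the asserted bound with $C=\int_{\mathbb{T}^3}\big(\tfrac12\rho_0|u_0|^2+\tfrac{A}{\gamma-1}\rho_0^{\gamma}\big)$, which is finite (because $(\rho_0,u_0)\in H^3\hookrightarrow L^\infty(\mathbb{T}^3)$ and $\rho_0>0$) and depends only on $(\rho_0,u_0)$, $A$ and $\gamma$. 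In fact the left-hand side of the lemma then equals this constant exactly; note in particular that the inequality persists even when $\alpha+\beta$ is such that the dissipation integrand fails to be pointwise nonnegative, because $\int_{\mathbb{T}^3}\big(\tfrac12\rho|u|^2+\tfrac{P}{\gamma-1}\big)\ge 0$ under \eqref{eq:1.2} with $\gamma>1$. There is no genuine analytic obstacle here: the only point requiring care is the bookkeeping that turns both the material-derivative term and the pressure work into exact time derivatives by repeated use of $(\ref{eq:1.1})_1$, and all the integrations by parts are legitimate because the solution class $\rho\in C([0,T];H^3)$, $u\in C([0,T];H^3)\cap L^2([0,T];H^4)$ makes every product above integrable (if desired, one may first carry out the computation on the smooth approximations used in the local existence construction and then pass to the limit).
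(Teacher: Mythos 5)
Your proof is correct: it is precisely the classical energy identity (test $(\ref{eq:1.1})_2$ with $u$, use $(\ref{eq:1.1})_1$ to turn the kinetic and pressure-work terms into exact time derivatives, and identify $\mathbb{T}:\nabla u$ with the stated dissipation), which is exactly the argument the paper has in mind when it cites \cite{bd} and omits the proof as classical. No discrepancy worth noting; the justification of the integrations by parts via the solution's $H^3$ regularity and the dependence of $C$ only on $(\rho_0,u_0)$, $A$, $\gamma$ are both as in the statement.
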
  

The conclusion obtained above is classical. Here we omit its proof.

 It follows quickly from Lemmas \ref{bs1}-\ref{bskk3} that 
  \begin{lemma}\label{ts2}    
\begin{equation*}
\begin{split}
\sup_{0\leq t\leq T}\big(|u|^2_{ 2}+|\nabla \rho|^2_2\big)(t)+\int_{0}^{T}|\nabla u|^2_{2}\text{\rm d}t\leq C\quad \text{for} \quad 0\leq T<  \overline{T},
\end{split}
\end{equation*}
where the constant $C>0$ only depends on $(\rho_0,u_0)$,  $C_0$,  $\alpha$,  $\beta$, $A$, $\gamma$, $\delta$     and $\overline{T}$.
 \end{lemma}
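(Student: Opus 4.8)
The plan is to read all three bounds directly off Lemmas \ref{bs1}--\ref{bskk3}, using the uniform two-sided bound $C^{-1}\le\rho(t,x)\le C$ of Lemma \ref{bs1} to strip the density weights. First, from the basic energy estimate of Lemma \ref{bskk3} we have $\int\big(\tfrac12\rho|u|^2+\tfrac{P}{\gamma-1}\big)(t,\cdot)\le C$ for all $t<\overline{T}$; since $\rho\ge C^{-1}$ this gives at once $\sup_{0\le t\le T}|u(t)|_2^2\le C$.

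Next, for $|\nabla\rho|_2$ I would use the Bresch--Desjardins entropy estimate of Lemma \ref{bskk2}, whose pointwise-in-time part reads $\int\tfrac12\rho\big|u+\tfrac{2\alpha\delta}{\delta-1}\nabla\rho^{\delta-1}\big|^2(t,\cdot)\le C$. Again using $\rho\ge C^{-1}$ we obtain $\big|u+\tfrac{2\alpha\delta}{\delta-1}\nabla\rho^{\delta-1}\big|_2\le C$, and combining with the $|u|_2$ bound just proved, $|\nabla\rho^{\delta-1}|_2\le C$ for $0\le t\le T$. Since $\nabla\rho=\tfrac{1}{\delta-1}\rho^{2-\delta}\nabla\rho^{\delta-1}$ and $\rho^{2-\delta}$ is bounded on $[0,T]\times\mathbb{T}^3$, this is equivalent to $\sup_{0\le t\le T}|\nabla\rho(t)|_2^2\le C$.

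It remains to bound $\int_0^T|\nabla u|_2^2$. For this I would return to the viscous dissipation in Lemma \ref{bskk3}, namely $\int_0^t\!\int_{\mathbb{T}^3}\big(\tfrac12\mu(\rho)|\nabla u+\nabla u^\top|^2+\lambda(\rho)(\text{div}\,u)^2\big)\le C$, and rewrite its integrand as $2\mu(\rho)|D(u)|^2+\lambda(\rho)(\text{div}\,u)^2$. Under the standing hypotheses on the viscosities together with the B-D relation $\lambda(\rho)=2\alpha(\delta-1)\rho^\delta$, this quadratic form in $\nabla u$ is coercive: it is bounded below by a positive multiple of $\rho^\delta|D(u)|^2$, hence by $cC^{-1}|D(u)|^2$ by Lemma \ref{bs1}. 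Finally, on $\mathbb{T}^3$ the integration-by-parts identity
$$\int 2|D(u)|^2=\int|\nabla u|^2+\int(\text{div}\,u)^2$$
gives $\int|\nabla u|^2\le 2\int|D(u)|^2$, whence $\int_0^T|\nabla u|_2^2\le C$, and the lemma is proved.

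The only step that is not entirely automatic is the coercivity used in the last paragraph. Unlike in the constant-viscosity case, $\lambda(\rho)<0$ when $\delta<1$, so the negative bulk contribution $\lambda(\rho)(\text{div}\,u)^2$ must be dominated by the shear term $2\mu(\rho)|D(u)|^2$; this is exactly where the algebraic structure of the B-D viscosities and the sign requirement on $(\alpha,\beta,\delta)$ enter (for the pair $\mu(\rho)=\alpha\rho^\delta$, $\lambda(\rho)=2\alpha(\delta-1)\rho^\delta$ the relevant condition is that $2\mu(\rho)+3\lambda(\rho)$, or, after the integration by parts of the mixed term $\int\mu(\rho)\,\partial_iu^j\partial_ju^i$, the combination $\mu(\rho)+\lambda(\rho)=(\alpha+\beta)\rho^\delta$, be nonnegative), while the uniform lower and upper bounds on $\rho$ keep all constants harmless. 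Granting this point, every assertion of the lemma is an immediate consequence of Lemmas \ref{bs1}--\ref{bskk3}.
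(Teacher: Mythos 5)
Your first two bounds are fine and are essentially what the paper intends (it gives no details, asserting the lemma ``follows quickly'' from Lemmas \ref{bs1}--\ref{bskk3}): the uniform bounds $C^{-1}\le\rho\le C$ of Lemma \ref{bs1} strip the weights from Lemma \ref{bskk3} to give $\sup_t|u|_2$, and from the kinetic part of Lemma \ref{bskk2} to give $\sup_t|\nabla\rho^{\delta-1}|_2$, hence $\sup_t|\nabla\rho|_2$. The gap is in your third step. Under the hypotheses of Theorem \ref{th3t}, the B-D relation forces $\beta=2\alpha(\delta-1)$, and the standing condition $\alpha+\beta\ge0$ from \eqref{canshu} only guarantees $\delta\ge\tfrac12$. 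Pointwise, $2\mu(\rho)|D(u)|^2+\lambda(\rho)(\text{div}\,u)^2=2\alpha\rho^\delta\big(|D(u)|^2-(1-\delta)(\text{div}\,u)^2\big)$, and since $(\text{div}\,u)^2$ can equal $3|D(u)|^2$ (e.g.\ where $D(u)$ is a multiple of $\mathbb{I}_3$), this is bounded below by a positive multiple of $\rho^\delta|D(u)|^2$ only when $\delta>\tfrac23$; your fallback condition $2\mu+3\lambda\ge0$ is likewise $\delta\ge\tfrac23$ and is not implied by the hypotheses. Nor can you invoke $\mu+\lambda=(\alpha+\beta)\rho^\delta\ge0$ ``after integration by parts of the mixed term'': with density-dependent viscosity, $\int\mu(\rho)\,\partial_iu^j\partial_ju^i$ differs from $\int\mu(\rho)(\text{div}\,u)^2$ by commutator terms involving $\nabla\mu(\rho)=\alpha\delta\rho^{\delta-1}\nabla\rho$ (of the type $\int(u\cdot\nabla\mu)\,\text{div}\,u$ and $\int\nabla\mu\cdot(u\cdot\nabla)u$), which are not controlled at this stage. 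Note also that Lemma \ref{bskk3} only states an upper bound for a space-time integral whose integrand is sign-indefinite when $\delta<\tfrac23$, so by itself it does not yield $\int_0^T\!\int\mu(\rho)|D(u)|^2\le C$.

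What rescues the estimate — and what you never use — is the contradiction hypothesis \eqref{we11ty}, $\int_0^T\|D(u)(t,\cdot)\|^2_{L^\infty(\mathbb{T}^3)}\,\text{d}t\le C_0$; this is exactly why the constant in the lemma is allowed to depend on $C_0$. On the torus, $|D(u)(t)|_2^2\le|\mathbb{T}^3|\,|D(u)(t)|_\infty^2$, and your own identity $\int|\nabla u|^2=2\int|D(u)|^2-\int(\text{div}\,u)^2\le 2\int|D(u)|^2$ (or Lemma \ref{Korn} combined with the $|u|_2$ bound) then gives $\int_0^T|\nabla u|_2^2\,\text{d}t\le C C_0$ at once. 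Equivalently, \eqref{we11ty} bounds the negative contribution $\int_0^T\!\int|\lambda(\rho)|(\text{div}\,u)^2\le CC_0$, after which Lemma \ref{bskk3} does control $\int_0^T\!\int\mu(\rho)|D(u)|^2$ and your argument closes. Without invoking \eqref{we11ty}, your proof of the dissipation bound fails on the admissible range $\delta\in[\tfrac12,\tfrac23)$.
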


Now we improve the energy estimate obtained in Lemma \ref{ts2}.
  \begin{lemma}\label{abs3}
\begin{equation}\label{keyq}
\begin{split}
&\frac{d}{dt}\int \varphi |u|^4(t,\cdot)+ \int  |u|^2 |\nabla u|^2(t,\cdot)\\
\leq & C\big(1+|\text{div}u|_\infty|\sqrt{\varphi}|u|^2|^2_2+|D(u)|_\infty||u|^3|_2\big)\quad  \text{for} \quad 0\leq t< \overline{T},
\end{split}
\end{equation}
where the constant $C>0$ only depends on $(\rho_0,u_0)$,  $C_0$,  $\alpha$,  $\beta$, $A$, $\gamma$, $\delta$     and $\overline{T}$.

 \end{lemma}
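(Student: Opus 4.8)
The plan is to test the momentum equation $(\ref{eq:1.1})_2$ against $|u|^2 u$ in order to produce the quantity $\frac{d}{dt}\int \varphi|u|^4$ on the left, together with a good term of the form $\int |u|^2|\nabla u|^2$ coming from the viscous part. Since $\varphi = a\rho^{1-\delta}$ satisfies $(\ref{dege})_1$, i.e. $\varphi_t + \mathrm{div}(\varphi u) = -(1-\delta)\varphi\,\mathrm{div}u - u\cdot\nabla\varphi + \mathrm{div}(\varphi u) + \cdots$; more precisely from $(\ref{dege})_1$ one has $\varphi_t + u\cdot\nabla\varphi + (1-\delta)\varphi\,\mathrm{div}u = 0$, and combining with the mass equation $(\ref{eq:1.1})_1$ one has the usual identity $\partial_t(\varphi|u|^2)+\mathrm{div}(\varphi u|u|^2)=\varphi(\partial_t+u\cdot\nabla)|u|^2 + |u|^2\big(\varphi_t+\mathrm{div}(\varphi u)\big)$, where the last bracket equals $\delta\varphi\,\mathrm{div}u$ after using $(\ref{dege})_1$ and $(\ref{eq:1.1})_1$ with $\rho=(\varphi/a)^{1/(1-\delta)}$. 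First I would multiply $(\ref{eq:1.1})_2$ — written as $\rho(u_t+u\cdot\nabla u)+\nabla P = \mathrm{div}\,\mathbb{T}$ — by $\frac{2}{a}|u|^2 u$ (so $\rho = \varphi/a$ times the time-evolution part reproduces $\varphi(\partial_t+u\cdot\nabla)$), integrate over $\mathbb{T}^3$, and use the identity above to get, on the left, $\frac12\frac{d}{dt}\int\varphi|u|^4$ plus error terms $\int\varphi|u|^4\mathrm{div}u$ controlled by $|\mathrm{div}u|_\infty|\sqrt\varphi|u|^2|_2^2$.

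Next I would deal with the right-hand side term by term. The pressure term, after integration by parts, is $\int P\,\mathrm{div}(|u|^2u) = \int P(|u|^2\mathrm{div}u + u\cdot\nabla|u|^2)$, which is bounded by $C|P|_\infty(|\mathrm{div}u|_\infty + 1)(\||u|^2\|_2^2 + \||u|^2\|_2\|\,|u|\,\nabla u\|_2)$ — using $|P|_\infty \le C$ from Lemma 4.1 and absorbing the gradient factor into the good term with Young's inequality. The viscous term $\int \mathrm{div}(2\mu(\rho)D(u)+\lambda(\rho)\mathrm{div}u\,\mathbb{I}_3)\cdot |u|^2 u$, integrated by parts, produces $-\int (2\mu(\rho)D(u)+\lambda(\rho)\mathrm{div}u\,\mathbb{I}_3):\nabla(|u|^2 u)$; expanding $\nabla(|u|^2u) = |u|^2\nabla u + u\otimes\nabla|u|^2$, the leading piece is $-\int |u|^2(2\mu|D(u)|^2 + \lambda(\mathrm{div}u)^2)$, which is coercive (producing the good $\int|u|^2|\nabla u|^2$ term, since $\mu=\alpha\rho^\delta\ge C^{-1}>0$ by Lemma 4.1 and $\alpha+\beta\ge0$ gives the right sign after combining $|D(u)|^2$ and $(\mathrm{div}u)^2$), while the remaining cross terms are of the form $\int \mu(\rho)D(u):(u\otimes\nabla|u|^2)$, bounded by $C|D(u)|_\infty \||u|\|\,\|\,|u|\,\nabla u\|_2 \le C|D(u)|_\infty\||u|^3\|_2 + \frac14\||u|^2|\nabla u|^2\|_1$ via Young. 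Assembling these and using $|\mathbb{T}^3|<\infty$ (so constants like $|P|_\infty$ absorb cleanly), one arrives at \eqref{keyq}.

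The main obstacle I anticipate is bookkeeping the correct coefficient in front of $\frac{d}{dt}\int\varphi|u|^4$ and verifying that the time-evolution manipulation using $(\ref{dege})_1$ together with $(\ref{eq:1.1})_1$ really yields only a $\mathrm{div}u$-type remainder (no uncontrolled lower-order term), since $\varphi$ is a nonlinear power of $\rho$ rather than $\rho$ itself. A secondary technical point is ensuring the viscous cross terms are genuinely absorbable: one must check that after completing the square in $D(u)$ and $\mathrm{div}u$ the coercive term dominates, which requires the structural constraints $\alpha>0$, $\alpha+\beta\ge0$ from \eqref{10000} and the two-sided bound on $\rho$ from Lemma 4.1 — without the latter, $\mu(\rho)=\alpha\rho^\delta$ could degenerate and the good term would be lost. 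Since this is the periodic problem with density bounded away from zero, that bound is exactly what makes the estimate clean, and I would flag its use explicitly. The growth terms $|\mathrm{div}u|_\infty|\sqrt\varphi|u|^2|_2^2$ and $|D(u)|_\infty\||u|^3\|_2$ are precisely the ones that will later be controlled by the Gronwall argument using the hypothesis \eqref{we11ty} that $\int_0^T|D(u)|_\infty^2\,dt<\infty$.
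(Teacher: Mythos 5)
Your plan is close in spirit to the paper's (test with $|u|^{2}u$, get $\tfrac{d}{dt}\int\varphi|u|^{4}$ from the transport structure, extract $\int|u|^{2}|\nabla u|^{2}$ from the viscosity), but two steps do not go through as written. First, a weight mismatch: multiplying $(\ref{eq:1.1})_2$ by $\tfrac{2}{a}|u|^{2}u$ produces $\tfrac{d}{dt}\int\rho|u|^{4}$, not $\tfrac{d}{dt}\int\varphi|u|^{4}$, since $\varphi/a=\rho^{1-\delta}\neq\rho$; to obtain the $\varphi$-weighted kinetic term you must first divide the equation by $\rho^{\delta}$, and then the viscous term is no longer in clean divergence form — it becomes the constant-coefficient Lam\'e operator $aLu$ plus the commutator source $f\cdot Q(u)$, i.e.\ exactly the reformulated system $(\ref{dege})_3$ that the paper tests against $r|u|^{r-2}u$ in \eqref{lz1}. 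You flag this as a "bookkeeping" worry but do not resolve it, and it cannot be waved away: you cannot simultaneously keep the $\varphi$ weight on the time evolution and the untouched divergence-form stress tensor.

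Second, and more seriously, the coercivity claim is a genuine gap. You assert that the leading viscous piece $-\int|u|^{2}\big(2\mu(\rho)|D(u)|^{2}+\lambda(\rho)(\text{div}u)^{2}\big)$ "is coercive, producing the good $\int|u|^{2}|\nabla u|^{2}$ term." Pointwise, $|D(u)|^{2}$ does not dominate $|\nabla u|^{2}$ (it vanishes on antisymmetric gradients); passing from $\int|D(u)|^{2}$ to $\int|\nabla u|^{2}$ is Korn's inequality, an integral statement that is not available with the weight $|u|^{2}$, and attempting to recover the full gradient by further integration by parts regenerates weighted cross terms and $\nabla\rho^{\delta}$ terms — which is precisely the difficulty the lemma must overcome. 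Moreover, in the admissible range of Theorem \ref{th3t} (BD relation \eqref{bdrelation} with $\beta=2\alpha(\delta-1)<0$ and only $\alpha+\beta\geq0$, e.g.\ $\delta\in[\tfrac12,\tfrac23)$ where $2\alpha+3\beta<0$), the integrand $2\mu|D(u)|^{2}+\lambda(\text{div}u)^{2}$ need not even be pointwise nonnegative, so "$\alpha+\beta\ge0$ gives the right sign" is not correct as stated. The paper's route avoids both issues: after testing $(\ref{dege})_3$, the dissipation appears as $H_r=r|u|^{r-2}\big(\alpha|\nabla u|^{2}+(\alpha+\beta)(\text{div}u)^{2}+\alpha(r-2)|\nabla|u||^{2}\big)$ — the full gradient enters directly, the variable-viscosity part is isolated in $f\cdot Q(u)$ and estimated as $M_3\le C|\nabla\rho|_2|D(u)|_\infty\,||u|^{3}|_2$ (this is the analogue of your harmless cross term) — and the only delicate object is the cross term $-ar(r-2)(\alpha+\beta)\int\text{div}u\,|u|^{r-3}u\cdot\nabla|u|$. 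The bulk of the paper's proof (the dichotomy \eqref{ghu}/\eqref{ghu11}, the identities $|\nabla u|^{2}=|u|^{2}|\nabla(u/|u|)|^{2}+|\nabla|u||^{2}$ and $\text{div}u=|u|\,\text{div}(u/|u|)+\tfrac{u\cdot\nabla|u|}{|u|}$, the positivity of $\Gamma$, and the choice of $\epsilon_0<\min\{2(1-\delta),\tfrac14,\dots\}$ for $r=4$) exists precisely to absorb that term while retaining $\int|u|^{2}|\nabla u|^{2}$. To repair your argument you would need to pass to the Lam\'e form and then carry out this absorption (or supply a weighted Korn-type substitute, which you do not have); as proposed, the key estimate is not established.
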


\begin{proof}

 First, multiplying $ (\ref{dege})_3$ by $r|u|^{r-2}u$ $(r\geq 3)$ and integrating the resulting equation over $\mathbb{T}^3$ by parts, then one has
\begin{equation}\label{lz1}
\begin{split}
 \frac{d}{dt}\int & \varphi |u|^r+a\int H_r
=-ar(r-2)(\alpha+\beta)\int  \text{div}u |u|^{r-3}u\cdot \nabla |u|\\
&+\int  \Big(\delta \varphi \text{div}u |u|^r+r g \text{div }(|u|^{r-2}u)+rf\cdot Q(u)\cdot |u|^{r-2}u\Big),
\end{split}
\end{equation}
where
$$
H_r=r|u|^{r-2}\big(\alpha |\nabla u|^2+(\alpha+\beta)|\text{div}u|^2+\alpha(r-2)|\nabla |u||^2\big).
$$

For any given $\epsilon_1\in (0,1)$, we define a nonnegative function which will be determined in $\textbf{Step}\ 2$ as follows
$$
\Phi(\epsilon_0,\epsilon_1,r)=\left\{ \begin{array}{llll}
\frac{\alpha \epsilon_1(r-1)}{3\big(-\frac{\alpha(4-\epsilon_0)}{3}-\beta+\frac{r^2(\alpha+\beta)}{4(r-1)}\big)}, \quad \text{if}\quad \frac{r^2(\alpha+\beta)}{4(r-1)} -\frac{\alpha(4-\epsilon_0)}{3}-\beta>0, \\[12pt]
\displaystyle
0,\quad \text{otherwise}. \end{array}\right.
$$

$\textbf{Step}\ 1$: We assume that 
\begin{equation}\label{ghu}
\begin{split}
&\int_{\mathbb{T}^3 \cap \{|u|>0\}}  |u|^r \Big| \nabla \Big(\frac{u}{|u|}\Big)\Big|^2\text{d}x> \Phi(\epsilon_0,\epsilon_1,r)\int_{\mathbb{T}^3 \cap \{|u|>0\}}  |u|^{r-2} \big| \nabla  |u|\big|^2\text{d}x.
\end{split}
\end{equation}
 A direct calculation gives for $|u|>0$ that  the following formula holds.
 \begin{equation}\label{gpkk}
\displaystyle
|\nabla u|^2=|u|^2\Big| \nabla \Big(\frac{u}{|u|}\Big)\Big|^2+\big| \nabla  |u|\big|^2.
\end{equation}

According to  (\ref{lz1}) and  the Cauchy's inequality, one has
\begin{equation}\label{lz3}
\begin{split}
& \frac{d}{dt}\int \varphi |u|^r+a\int_{\mathbb{T}^3 \cap \{|u|>0\}} H_r\text{d}x\\
=&-ar(r-2)(\alpha+\beta)\int_{\mathbb{T}^3 \cap \{|u|>0\}}  \text{div}u |u|^{\frac{r-2}{2}} |u|^{\frac{r-4}{2}}u\cdot \nabla |u|\text{d}x\\
&+\int  \Big(\delta \varphi \text{div}u |u|^r+r g \text{div }(|u|^{r-2}u)+rf\cdot Q(u)\cdot |u|^{r-2}u\Big)\\
\leq & ar(\alpha+\beta)\int_{\mathbb{T}^3 \cap \{|u|>0\}}   |u|^{r-2} \Big( |\text{div}u|^2
+\frac{(r-2)^2}{4} |\nabla |u||^2\Big)\text{d}x\\
&+\int  \Big(\delta \varphi \text{div}u |u|^r+r g \text{div }(|u|^{r-2}u)+rf\cdot Q(u)\cdot |u|^{r-2}u\Big).
\end{split}
\end{equation}
It follows from Lemmas \ref{bs1}- \ref{ts2},   Gagliardo-Nirenberg inequality,    H\"older's inequality and Young's inequality that
\begin{equation}\label{zhu2s}
\begin{split}
M_1=&\int  \delta \varphi \text{div}u |u|^r \leq C |\text{div}u|_\infty |\sqrt{\varphi}|u|^{\frac{r}{2}}|^2_2,\\
M_2=& r\int g|u|^{r-2}|\nabla u|  
\leq C \Big( \int |u|^{r-2}|\nabla u|^2 \Big)^{\frac{1}{2}}\Big(\int |u|^{r-2}g^2\Big)^{\frac{1}{2}}\\
\leq&C\Big( \int |u|^{r-2}|\nabla u|^2 \Big)^{\frac{1}{2}}||u|^{\frac{r}{2}}|^{1-\frac{2}{r}}_6|g|_{\frac{6r}{2r+2}}\\
\leq &
\frac{1}{4}a\alpha r \epsilon_0 \int |u|^{r-2}|\nabla u|^2\text{d}x+C(a, \alpha, r,\epsilon_0),\\
M_3=&\int rf\cdot Q(u)\cdot |u|^{r-2}u\leq C|\nabla \rho|_2|D(u)|_\infty||u|^{r-1}|_2,
%\end{split}
%\end{equation}
%and similarly, for $A_r$, we have
%\begin{equation}\label{zhu2ss}
%\begin{split}
\end{split}
\end{equation}
where $\epsilon_0\in (0,\frac{1}{4})$  is independent of $r$.
Then combining (\ref{ghu})-(\ref{zhu2s}), one  easily has 
\begin{equation}\label{lz4}
\begin{split}
& \frac{d}{dt}\int  \varphi |u|^r +ar\Psi(\epsilon_0,\epsilon_1,\epsilon_2,r)\int_{\mathbb{T}^3 \cap \{|u|>0\}} |u|^{r-2}|\nabla |u||^2\text{d}x\\
&+\int_{\mathbb{T}^3 \cap \{|u|>0\}}  a\alpha r(1-\epsilon_0)\epsilon_2|u|^{r}\Big| \nabla \Big(\frac{u}{|u|}\Big)\Big|^2\text{d}x\\
\leq &C(a,\alpha, r,\epsilon_0)+C |\text{div}u|_\infty |\sqrt{\varphi}|u|^{\frac{r}{2}}|^2_2+C|D(u)|_\infty||u|^{r-1}|_2,
\end{split}
\end{equation}
where
\begin{equation*}
\begin{split}
\Psi(\epsilon_0,\epsilon_1,\epsilon_2, r)=\alpha (1-\epsilon_0)(1-\epsilon_2)\Phi(\epsilon_0,\epsilon_1,r)+\alpha(r-1-\epsilon_0)-\frac{(r-2)^2(\alpha+\beta)}{4}.
\end{split}
\end{equation*}

Via choosing $\epsilon_0<2(1-\delta)$ small enough, then  $\beta <-\epsilon_0 \alpha$, i.e., 

 $$4\notin \left\{ r\big| \frac{r^2(\alpha+\beta)}{4(r-1)}-\frac{(4-\epsilon_0)\alpha}{3}-\beta>0\right\}.$$ 
  In this case, for $r=4$ and $0<\epsilon_0 <\min\{2(1-\delta),\frac{1}{4}\}$,  it is easy to get
 \begin{equation}\label{peng}
\begin{split}
&r\Big[\alpha (1-\epsilon_0)(1-\epsilon_2)\Phi(\epsilon_0,\epsilon_1,r)+\alpha(r-1-\epsilon_0)-\frac{(r-2)^2(\alpha+\beta)}{4}\Big]\\
>&4\Big(\frac{11}{4}\alpha-(\alpha+\beta)\Big)=4\Big(\frac{7\alpha}{4}-\beta\Big)
\geq 4\Big(\frac{7\alpha}{4}+\epsilon_0\alpha\Big)>7\alpha,
\end{split}
\end{equation}
which, together with (\ref{lz4}),  implies that
\begin{equation}\label{lz422}
\begin{split}& \frac{d}{dt}\int \varphi  |u|^4+C\int_{\mathbb{T}^3 \cap \{|u|>0\}}|u|^{2}|\nabla u|^2\text{d}x\\
\leq &C(a, \alpha, r,\epsilon_0)+C |\text{div}u|_\infty |\sqrt{\varphi}|u|^{2}|^2_2+C|D(u)|_\infty||u|^{3}|_2,
\end{split}
\end{equation}

$\textbf{Step}$ 2 : we assume that
\begin{equation}\label{ghu11}
\begin{split}
&\int_{\mathbb{T}^3 \cap \{|u|>0\}}  |u|^r \Big| \nabla \Big(\frac{u}{|u|}\Big)\Big|^2\text{d}x\leq  \Phi(\epsilon_0,\epsilon_1,r)\int_{\mathbb{T}^3 \cap \{|u|>0\}}  |u|^{r-2} \big| \nabla  |u|\big|^2\text{d}x.
\end{split}
\end{equation}
A direct calculation gives for $|u|>0$,
\begin{equation}\label{ghu22}
\begin{split}
\text{div}u=|u|\text{div}\Big(\frac{u}{|u|}\Big)+\frac{u\cdot \nabla |u|}{|u|}.
\end{split}
\end{equation}
Then combining (\ref{ghu22}) and (\ref{lz3})-(\ref{zhu2s}), one  quickly has
\begin{equation}\label{lz77}
\begin{split}
& \frac{d}{dt}\int \varphi |u|^r +a\int_{\mathbb{T}^3 \cap \{|u|>0\}}\alpha r(1-\epsilon_0)|u|^{r-2}|\nabla u|^2\text{d}x\\
&+a\int_{\mathbb{T}^3 \cap \{|u|>0\}}\Big(r(\alpha+\beta) |u|^{r-2}|\text{div}u|^2+\alpha r(r-2)|u|^{r-2}\big| \nabla  |u| \big|^2\Big)\text{d}x\\
=&-ar(r-2)(\alpha+\beta)\int_{\mathbb{T}^3 \cap \{|u|>0\}}  |u|^{r-2} u\cdot \nabla |u| \text{div}\Big(\frac{u}{|u|}\Big)\text{d}x\\
&-ar(r-2)(\alpha+\beta)\int_{\mathbb{T}^3 \cap \{|u|>0\}} |u|^{r-4} |u\cdot \nabla |u| |^2\text{d}x\\
&+\int  \Big(\delta \varphi \text{div}u |u|^r+rf\cdot Q(u)\cdot |u|^{r-2}u\Big).
\end{split}
\end{equation}
This gives
\begin{equation}\label{lz88}
\begin{split}
& \frac{d}{dt}\int\varphi |u|^r +\int_{\mathbb{T}^3 \cap \{|u|>0\}}a r|u|^{r-4}\Gamma \text{d}x\\
\leq &C(a, \alpha, r,\epsilon_0)+C |\text{div}u|_\infty |\sqrt{\varphi}|u|^{\frac{r}{2}}|^2_2+C|D(u)|_\infty||u|^{r-1}|_2,
\end{split}
\end{equation}
where
\begin{equation}\label{wang1}
\begin{split}
\Gamma=&\alpha(1-\epsilon_0)|u|^{2} |\nabla u|^2+(\alpha+\beta)|u|^{2}|\text{div}u|^2+\alpha(r-2)|u|^{2}\big| \nabla  |u| \big|^2\\
&+(r-2)(\alpha+\beta)|u|^{2} u\cdot \nabla |u|\text{div}\Big(\frac{u}{|u|}\Big)+(r-2)(\alpha+\beta)|u \cdot \nabla |u||^2.
\end{split}
\end{equation}
Now we consider how to make sure that $\Gamma \geq 0$.
\begin{equation*}
\begin{split}
\Gamma=&\alpha (1-\epsilon_0)|u|^{2} \Big( |u|^2\Big| \nabla \Big(\frac{u}{|u|}\Big)\Big|^2+\big| \nabla  |u|\big|^2\Big)\\
&+(\alpha+\beta)|u|^{2}|\Big(|u|\text{div}\Big(\frac{u}{|u|}\Big)+\frac{u\cdot \nabla |u|}{|u|}\Big)^2+\alpha(r-2)|u|^{2}\big| \nabla  |u| \big|^2\\
&+(r-2)(\alpha+\beta)|u|^{2} u\cdot \nabla |u|\text{div}\Big(\frac{u}{|u|}\Big)+(r-2)(\alpha+\beta)|u \cdot \nabla |u||^2\\
=&\alpha(1-\epsilon_0) |u|^{4} \Big| \nabla \Big(\frac{u}{|u|}\Big)\Big|^2+\alpha(r-1-\epsilon_0)|u|^{2}\big| \nabla  |u| \big|^2\\
\end{split}
\end{equation*}
\begin{equation*}
\begin{split}
&+(r-1)(\alpha+\beta)\Big(u \cdot \nabla |u|+\frac{r}{2(r-1)}|u|^{2}\Big(\text{div}\frac{u}{|u|}\Big)\Big)^2\\
&+(\alpha+\beta)|u|^{4}\Big(\text{div}\frac{u}{|u|}\Big)^2-\frac{r^2(\alpha+\beta)}{4(r-1)}|u|^{4}\Big(\text{div}\Big(\frac{u}{|u|}\Big)\Big)^2,
\end{split}
\end{equation*}
which, combining with the fact
$
\Big|\text{div}\Big(\frac{u}{|u|}\Big)\Big|^2\leq 3\Big|\nabla \Big(\frac{u}{|u|}\Big)\Big|^2
$,
implies that
\begin{equation*}
\begin{split}
\Gamma
\geq &\alpha(r-1-\epsilon_0)|u|^{2}\big| \nabla  |u| \big|^2+\Big(\frac{(4-\epsilon_0)\alpha}{3}+\beta-\frac{r^2(\alpha+\beta)}{4(r-1)}\Big)|u|^{4}\Big(\text{div}\Big(\frac{u}{|u|}\Big)\Big)^2.
\end{split}
\end{equation*}
Thus
\begin{equation}\label{wang4}
\begin{split}
&\int_{\mathbb{T}^3 \cap \{|u|>0\}} r|u|^{r-4} \Gamma\text{d}x\geq \alpha r(r-1-\epsilon_0)\int_{\mathbb{T}^3 \cap \{|u|>0\}} |u|^{r-2} \big| \nabla  |u| \big|^2\text{d}x\\
&\qquad + r \Big(\frac{(4-\epsilon_0)\alpha}{3}+\beta-\frac{r^2(\alpha+\beta)}{4(r-1)}\Big) \int_{\mathbb{T}^3 \cap \{|u|>0\}} |u|^{r} \Big(\text{div}\Big(\frac{u}{|u|}\Big)\Big)^2\text{d}x\\
&\qquad \geq  \Lambda(\epsilon_0,\epsilon_1,r)\int_{\mathbb{T}^3 \cap \{|u|>0\}}  |u|^{r-2} \big| \nabla  |u| \big|^2\text{d}x,
\end{split}
\end{equation}
where
\begin{equation}\label{xuchendd}
\begin{split}
\Lambda(\epsilon_0,\epsilon_1,r)=3r \Big(\frac{(4-\epsilon_0)\alpha}{3}+\beta-\frac{r^2(\alpha+\beta)}{4(r-1)}\Big)\Phi(\epsilon_0,\epsilon_1,r)+\alpha r(r-1-\epsilon_0).
\end{split}
\end{equation}
Here we need that $\epsilon_0$ is sufficiently small such that 
$$0<\epsilon_0 <\min\{2(1-\delta),1/4, (r-1)(1-\epsilon_1)\}.$$
Then combining (\ref{gpkk}),  (\ref{lz88}) and  (\ref{wang4})-(\ref{xuchendd}), when $r=4$, one  quickly has 
\begin{equation}\label{lz4mm}
\begin{split}
&\frac{d}{dt}\int \varphi |u|^4(t,\cdot)+ \int  |u|^2 |\nabla u|^2(t,\cdot)\\
\leq & C\big(1+|\text{div}u|_\infty|\sqrt{\varphi}|u|^2|^2_2+|D(u)|_\infty||u|^3|_2\big)\quad  \text{for} \quad 0\leq t< \overline{T}.
\end{split}
\end{equation}

Finally,  \eqref{keyq} follows from \eqref{lz422} and \eqref{lz4mm}.

\end{proof}

The next lemma provides a  key estimate on $\nabla u $.

  \begin{lemma}\label{s4ty}  
\begin{equation*}
\begin{split}
\sup_{0\leq t\leq T}\big(|\nabla u|^2_{ 2}+|u|^4_{ 4}\big)(t)+\int_0^T (|\nabla^2 u|^2_2+| u_t|^2_2+||u||\nabla u||^2)\text{\rm d}t\leq C,
\end{split}
\end{equation*}
for $ 0\leq T<  \overline{T}$,   where the constant  $C>0$ only depends on $(\rho_0,u_0)$,  $C_0$,  $\alpha$,  $\beta$, $A$, $\gamma$, $\delta$     and $\overline{T}$.
 \end{lemma}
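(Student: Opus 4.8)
The plan is to run, in the periodic setting, the same two-estimate scheme used for Lemmas \ref{s4}--\ref{s6h}: feed the weighted $L^4$-inequality \eqref{keyq} of Lemma \ref{abs3} into a standard $\int\rho|u_t|^2$ energy estimate, and close the coupling through Lam\'e elliptic regularity. The decisive structural feature is that everything is now driven by the square-integrability $\int_0^{\overline T}|D(u)|^2_\infty\,\text{d}t=C_0$ from \eqref{we11ty}; this is exactly why powers $|D(u)|^p_\infty$ with $p\le2$ (and not only $p=1$) are admissible as Gronwall coefficients. I will use freely Lemma \ref{bs1} ($C^{-1}\le\rho\le C$, hence $C^{-1}\le\varphi\le C$), Lemma \ref{ts2} ($\sup_t|u|_2$, $\sup_t|\nabla\rho|_2\le C$, $\int_0^T|\nabla u|_2^2\le C$), and the elementary facts on $\mathbb{T}^3$ that $|u|_6\le C(1+|\nabla u|_2)$ and $|\nabla u|_3\le C|\nabla u|_2^{1/2}|\nabla^2u|_2^{1/2}+C|\nabla u|_2$.

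\textbf{Step 1 (the $|u|_4$ bound).} First I would close \eqref{keyq}. Writing $|\text{div}u|_\infty\le C|D(u)|_\infty$, using the Gagliardo--Nirenberg inequality $|u|_6^4=\||u|^2\|_{L^3}^2\le C|u|_4^4+C|u|_4^2\,\big\||u|\,|\nabla u|\big\|_2$ (from $\|\cdot\|_{L^3}\le C\|\cdot\|_{L^2}^{1/2}\|\cdot\|_{H^1}^{1/2}$ and $\big|\nabla|u|^2\big|\le2|u|\,|\nabla u|$), and Young's inequality,
\[
|D(u)|_\infty\,\big\||u|^3\big\|_2=|D(u)|_\infty|u|_6^3\le\tfrac14\!\int|u|^2|\nabla u|^2+C\big(|D(u)|_\infty+|D(u)|^2_\infty\big)\!\int\varphi|u|^4+C|D(u)|_\infty .
\]
Hence $\frac{d}{dt}\!\int\varphi|u|^4+\frac34\!\int|u|^2|\nabla u|^2\le C\big(1+|D(u)|_\infty+|D(u)|^2_\infty\big)\!\int\varphi|u|^4+C\big(1+|D(u)|_\infty\big)$, and since $\int_0^{\overline T}\big(|D(u)|_\infty+|D(u)|^2_\infty\big)\,\text{d}t\le C(C_0,\overline T)$ (Cauchy--Schwarz for the first summand), Gronwall's inequality yields $\sup_{[0,T]}|u|_4^4\le C$ and $\int_0^T\!\!\int|u|^2|\nabla u|^2\,\text{d}t\le C$. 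This already gives the $|u|_4^4$ and $\||u|\,|\nabla u|\|^2$ parts of the lemma.

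\textbf{Step 2 (energy estimate and elliptic closure).} Next, multiplying $(\ref{eq:1.1})_2$ by $u_t$, integrating over $\mathbb{T}^3$, and using $\text{div}\,\mathbb{T}=-\rho^\delta Lu+2\alpha D(u)\nabla\rho^\delta+\beta\,\text{div}u\,\nabla\rho^\delta$, one gets
\[
\tfrac{d}{dt}E(t)+c|u_t|_2^2\le\mathrm{(I)}+\mathrm{(II)}+\mathrm{(III)},\qquad E(t):=\!\int\!\Big(\mu(\rho)|D(u)|^2+\tfrac12\lambda(\rho)|\text{div}u|^2-P\,\text{div}u\Big),
\]
where $\mathrm{(I)}=-\int\rho(u\cdot\nabla u)\cdot u_t\le\frac14\!\int\rho|u_t|^2+C\!\int|u|^2|\nabla u|^2$ (integrable in $t$ by Step 1); $\mathrm{(II)}$ gathers the pressure terms from $\int\nabla P\cdot u_t=-\frac{d}{dt}\!\int P\,\text{div}u+\int P_t\,\text{div}u$ with $P_t=-u\cdot\nabla P-\gamma P\,\text{div}u$, bounded via $|\nabla P|_2\le C|\nabla\rho|_2\le C$ and the $|\nabla u|_3$-interpolation by $\varepsilon|\nabla^2u|_2^2+C(1+|D(u)|_\infty)(1+|\nabla u|_2^2)$; and $\mathrm{(III)}$ gathers $\int\mu'(\rho)\rho_t|D(u)|^2+\frac12\int\lambda'(\rho)\rho_t|\text{div}u|^2$ with $\rho_t=-u\cdot\nabla\rho-\rho\,\text{div}u$, where the $\rho\,\text{div}u$ parts are $\le C|D(u)|_\infty\!\int\mu(\rho)|D(u)|^2\le C|D(u)|_\infty(1+E(t))$ and the $u\cdot\nabla\rho$ parts are $\le C|D(u)|_\infty|u|_6|\nabla\rho|_2|\nabla u|_3\le\varepsilon|\nabla^2u|_2^2+C\big(|D(u)|_\infty+|D(u)|^{4/3}_\infty\big)(1+|\nabla u|_2^2)$, using $\sup_t|u|_4\le C$ from Step 1 (so $\sup_t|u|_6\le C(1+|\nabla u|_2)$) and a Young split chosen so that $|\nabla u|_2^2$ enters only to the first power. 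The identity $|D(u)|_2^2=\frac12(|\nabla u|_2^2+|\text{div}u|_2^2)$ on $\mathbb{T}^3$ gives $\int\mu(\rho)|D(u)|^2\ge c|\nabla u|_2^2$, whence $E(t)+C\simeq1+|\nabla u|_2^2$. For the $\varepsilon|\nabla^2u|_2^2$ terms I use $\rho^\delta Lu=-\rho u_t-\rho u\cdot\nabla u-\nabla P+2\alpha D(u)\nabla\rho^\delta+\beta\,\text{div}u\,\nabla\rho^\delta$ together with the $L^2$ Lam\'e estimate on $\mathbb{T}^3$ — elementary via Fourier, since the symbol $\alpha|k|^2\mathbb{I}+(\alpha+\beta)k\otimes k$ is uniformly invertible on $\{k\ne0\}$ when $\alpha>0$ and $2\alpha+\beta>0$, both holding by \eqref{canshu} — to get $|\nabla^2u|_2\le C|Lu|_2\le C\big(1+|u_t|_2+\||u|\,|\nabla u|\|_2+|D(u)|_\infty\big)$; squaring and taking $\varepsilon$ small absorbs $\varepsilon|\nabla^2u|_2^2$ into $c|u_t|_2^2$ and leaves only $\varepsilon C(\int|u|^2|\nabla u|^2+1+|D(u)|^2_\infty)$, integrable in $t$. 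This produces $\frac{d}{dt}E(t)+c'|u_t|_2^2\le C(1+g(t))(1+E(t))+C\!\int|u|^2|\nabla u|^2$ with $g:=|D(u)|_\infty+|D(u)|^{4/3}_\infty+|D(u)|^2_\infty\in L^1(0,\overline T)$; Gronwall's inequality (with $|E(0)|\le C(1+\|u_0\|_{H^1}^2)<\infty$) gives $\sup_{[0,T]}|\nabla u|_2^2+\int_0^T|u_t|_2^2\,\text{d}t\le C$, and then $\int_0^T|\nabla^2u|_2^2\,\text{d}t\le C$ from the Lam\'e bound. Combined with Step 1 this is exactly the assertion.

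\textbf{Where the difficulty lies.} The substance of the proof is the term-by-term bookkeeping of Step 2: every contribution of $\int(\ref{eq:1.1})_2\cdot u_t$ must be absorbed into $c|u_t|_2^2+\varepsilon|\nabla^2u|_2^2+c\!\int|u|^2|\nabla u|^2$ or else carry a time-multiplier lying in $L^1(0,\overline T)$ — and the only such admissible multipliers are powers $|D(u)|^p_\infty$ with $p\le2$. The two terms containing $u\cdot\nabla\rho$ (the pressure term $\int(u\cdot\nabla P)\,\text{div}u$ and the weight-derivative term $\int\mu'(\rho)(u\cdot\nabla\rho)|D(u)|^2$) are the bottleneck: they genuinely need the $L^4$-bound on $u$ from Step 1, together with $|\nabla u|_3\le C|\nabla u|_2^{1/2}|\nabla^2u|_2^{1/2}+C|\nabla u|_2$ and a Young split arranged so that the multiplier of $|\nabla u|_2^2$ is exactly of first order; a cruder bound produces a $|\nabla u|_2^3$ term that destroys the Gronwall step. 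Supplying the $L^2$ Lam\'e regularity on $\mathbb{T}^3$ (Lemma \ref{zhenok} being stated on $\mathbb{R}^3$) is a minor but necessary supplement.
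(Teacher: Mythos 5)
Your Step 1 is fine (it is a decoupled version of what the paper does: the paper instead adds a small multiple of the $H^1$-level inequality to \eqref{keyq} and runs one coupled Gronwall), and most of the bookkeeping in Step 2 is plausible. But Step 2 has a genuine gap at its linchpin: the coercivity claim $E(t)+C\simeq 1+|\nabla u|_2^2$ for $E(t)=\int\big(\mu(\rho)|D(u)|^2+\tfrac12\lambda(\rho)(\mathrm{div}\,u)^2-P\,\mathrm{div}\,u\big)$. You justify it only through $\int\mu(\rho)|D(u)|^2\ge c|\nabla u|_2^2$ and silently treat the $\lambda$-term as harmless, but under the B--D relation \eqref{bdrelation} one has $\lambda(\rho)=2\alpha(\delta-1)\rho^\delta<0$. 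Pointwise, $\mu(\rho)|D(u)|^2+\tfrac12\lambda(\rho)(\mathrm{div}\,u)^2=\alpha\rho^\delta\big(|D_0(u)|^2+(\delta-\tfrac23)(\mathrm{div}\,u)^2\big)$ (with $D_0$ the traceless part), which is indefinite whenever $\delta<\tfrac23$; and the hypotheses of Theorem \ref{th3t} (namely \eqref{canshu} with $\alpha+\beta\ge0$ together with $\beta=2\alpha(\delta-1)$) only force $\delta\ge\tfrac12$, so the range $\delta\in[\tfrac12,\tfrac23)$ is admissible. The unweighted torus identity $\int|D(u)|^2=\tfrac12\int(|\nabla u|^2+(\mathrm{div}\,u)^2)$ does not transfer to the $\rho^\delta$-weighted integrals: if you integrate by parts to reproduce it with the weight, you pick up commutators of the type $\int\nabla\rho^\delta\cdot u\,\mathrm{div}\,u$ and $\int u^j\partial_i\rho^\delta\,\partial_j u^i$, which at a \emph{fixed} time are controlled only by quantities like $\||u|\,|\nabla u|(t)\|_2$ or $|\nabla u(t)|_3$ that are not yet bounded pointwise in time, so they cannot be absorbed into the energy you are trying to bound from below. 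Crude absorption ($(\mathrm{div}\,u)^2\le3|D(u)|^2$ with $\sup\rho/\inf\rho$ only bounded by a large constant) likewise fails for $\delta$ near $\tfrac12$. Hence your Gronwall step does not close on the stated parameter range.

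This is exactly the structural point the paper's proof is built to avoid: instead of multiplying $(\ref{eq:1.1})_2$ by $u_t$, it multiplies the reformulated equation $(\ref{we22})_2$ by $-a\phi^{2e}Lu-\nabla\phi$ (as in Lemma \ref{s4}), so the propagated energy is $\tfrac a2\big(\alpha|\phi^e\nabla u|_2^2+(\alpha+\beta)|\phi^e\mathrm{div}\,u|_2^2\big)-\int\phi\,\mathrm{div}\,u$, which is coercive for trivial reasons: the full gradient carries the coefficient $\alpha>0$ and the divergence carries the nonnegative coefficient $\alpha+\beta=\alpha(2\delta-1)\ge0$. The dissipation $\int(a\phi^{2e}Lu+\nabla\phi)^2$ then yields $\int_0^T|\nabla^2u|_2^2$ via \eqref{gaibian}, and $\int_0^T|u_t|_2^2$ follows afterwards from the equation as in \eqref{ut}; the $L^4$/$\||u||\nabla u|\|_2$ input from Lemma \ref{abs3} enters through the term $L_6$ and the coupled Gronwall. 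To repair your argument you would either have to restrict to $\delta\ge\tfrac23$, or switch to a multiplier/energy of the paper's type (or otherwise prove a weighted coercivity statement for $\int 2\mu(\rho)|D(u)|^2+\lambda(\rho)(\mathrm{div}\,u)^2$ with merely bounded, time-dependent weights), which is precisely the missing ingredient.
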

\begin{proof}
First, it follows from the proof of Lemma \ref{s4} that 
\begin{equation}\label{zhu6ty}
\begin{split}
&\frac{a}{2} \frac{d}{dt}\Big(\alpha|\phi^e\nabla u|^2_2+(\alpha+\beta)|\phi^e\text{div}u|^2_2\Big)+\int (a\phi^{2e}Lu+ \nabla \phi)^2\equiv: \sum_{i=1}^{8} L_i.
\end{split}
\end{equation}
The definitions of $L_i$ $(i=1,..,8)$ are same as those of $L_i$ $(i=1,..,8)$ in Lemma \ref{s4} with $\mathbb{R}^3$ replaced by $\mathbb{T}^3$.

  Second, for the terms $L_1$, $L_3$ and $L_4$,  similarly one still has 
\begin{equation}\label{zhu10tty}
\begin{split}
|L_1|
 \leq & C|D(u)|_\infty|\phi^{e}\nabla u|^2_2+\epsilon |\nabla^2 u|^2_2+C(\epsilon)|\phi^e \nabla u |^4_2,\\
  |L_3|
\leq &C|\phi|^{1-2e}_\infty|\phi^e\nabla u|^2_2+C|\text{div}u|_{\infty}|u|_2|\nabla\phi|_2,\\ 
L_{4}
\leq &\frac{d}{dt} \int  \phi \text{div}u +C|\text{div}u|_{\infty}|u|_2|\nabla\phi|_2+C|\phi|^{1-2e}_\infty|\phi^e\nabla u|^2_2.
 \end{split}
\end{equation} 

For other terms, according to      H\"older's inequality,  Gagliardo-Nirenberg inequality  and Young's inequality, one gets
\begin{equation}\label{zhu10t3355ty}
\begin{split}
|L_2|
=& a(2\alpha+\beta)\Big|\int \phi^{2e} \big(-\nabla u: \nabla u^\top  \text{div}u+\frac{1}{2}(\text{div}u)^3\big)\Big|\\
&+ a(2\alpha+\beta)\Big|\int (u\cdot \nabla) u\cdot \nabla \phi^{2e}\text{div}u\Big|\\
\leq & C|\text{div} u|_\infty |\phi^{e}\nabla u|^2_2+C|\psi|_2|u|_3|\nabla u |_6|\text{div} u|_\infty\\
 \leq & C(1+|D(u)|^2_\infty)(1+|\phi^{e}\nabla u|^2_2)+\epsilon |\nabla^2 u|^2_2, \\ 
L_{5}=&\frac{1-\delta}{\delta}\int \psi\cdot Q(u)\cdot u_t
\leq  C|D(u)|_\infty|\psi|_2|u_t|_2,\\
L_{6}
=& \frac{a}{2}\int (-\text{div}(u \phi^{2e})-(\delta-2)\phi^{2e} \text{div}u )(\alpha |\nabla u|^2+(\alpha+\beta)|\text{div} u|^2)\\
\leq & C||u||\nabla u||_2|\nabla^2 u |_2|\phi^{2e}|_\infty+ C|D(u)|_\infty|\phi^{e}\nabla u|^2_2 \\
 \leq & C|D(u)|_\infty|\phi^{e}\nabla u|^2_2+\epsilon |\nabla^2 u|^2_2+C(\epsilon)||u||\nabla u||^2_2, \\
%\leq & C(\epsilon) |\phi^{e} \nabla u|^2_2+\epsilon (|u_t|^{2}_2+|\nabla^2 u|^{2}_2),\\
L_{7}=&\int \psi\cdot Q(u) \cdot  \nabla \phi 
\leq  C|D(u)|_\infty|\psi|_2|\nabla \phi|_2,\\
L_{8}=&\int \psi\cdot Q(u) \cdot \phi^{2e}Lu
\leq  C|D(u)|_\infty|\psi|_2|\phi^{2e}Lu|_2\\
\leq & C(\epsilon)|D(u)|^2_\infty+\epsilon |\phi^{2e}Lu|^2_2,\end{split}
\end{equation}
where $\epsilon> 0$ is a sufficiently small constant.

It follows from  (\ref{zhu6ty})-(\ref{zhu10t3355ty})  that 
\begin{equation}\label{zhu6qssty}
\begin{split}
& \frac{d}{dt}\int  \Big(\frac{a}{2}\alpha|\phi^e\nabla u|^2+\frac{a}{2}(\alpha+\beta)|\phi^e\text{div}u|^2-\phi \text{div}u\Big)\text{d}x\\
&+C|\nabla^2 u|^2_2+\frac{a}{2}|\phi^{2e} Lu|^2_2\\
\leq & C(1+|D(u)|^2_\infty+|\phi^{e}\nabla u|^2_2)(1+|\phi^{e}\nabla u|^2_2)+C(\epsilon) ||u||\nabla u||^2_2.\end{split}
\end{equation}

Let $\eta>0$ be a sufficiently small constant. We add $\eta$\eqref{zhu6qssty} to \eqref{keyq}, and it follows from Gronwall's inequality that 
\begin{equation*}
\begin{split}
|\phi^e \nabla u|^2_{ 2}+|\sqrt{\varphi}|u|^2|^2_2+\int_0^t (|\nabla^2 u|^2_2+|\phi^{2e} Lu|^2_2+||u||\nabla u||^2_2)   \text{d}s\leq C,
\end{split}
\end{equation*}
for $ 0\leq t\leq T$, which, together with \eqref{ut}, implies that 
\begin{equation*}
\begin{split}
\int_0^t | u_t|^2_2\text{d}s\leq C\int_0^t (|\phi^{2e} L u|^2_2+|\nabla u|^2_3|u|^2_6+|\nabla \phi|^2_2+|D( u)|^2_\infty|\psi|^2_2)\text{d}s\leq C.
\end{split}
\end{equation*}
 \end{proof}

\begin{lemma}\label{s6hty}
 \begin{equation}\label{zhu14sshty}
\begin{split}
&\sup_{0\leq t\leq T}\Big(|u_t|^2_2+|u|^2_{D^2}+|\nabla \rho|^2_6\Big)(t)+\int_0^T(| \nabla u_t|^2_2+ |u|^2_{D^{2,6}})   \text{\rm d}t\leq C
\end{split}
\end{equation}
for $0\leq T<  \overline{T}$,
where the constant  $C>0$ only depends on $(\rho_0,u_0)$,  $C_0$,  $\alpha$,  $\beta$, $A$, $\gamma$, $\delta$     and $\overline{T}$.
\end{lemma}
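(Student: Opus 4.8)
The statement is the periodic, non-vacuum counterpart of Lemma~\ref{s6h}, and the plan is to run the same scheme with one additional ingredient. Because $\rho$ is bounded above and below on $[0,T]\times\mathbb{T}^3$ by Lemma~\ref{bs1}, the weights $\varphi=a\rho^{1-\delta}$ and $\phi^{2e}$ in the reformulations \eqref{dege}--\eqref{we22} are comparable to positive constants, so that $|\phi^{e}\nabla u_t|_2\simeq|\nabla u_t|_2$, $|\phi^{2e}Lu|_2\simeq|Lu|_2$, and so on; the proof therefore amounts to a (classical-looking) energy estimate for the $t$-differentiated momentum equation together with an elliptic bootstrap. The one new feature is that in the Cauchy problem the hypothesis \eqref{we11} provides $\sup_t\|\nabla\rho^{\delta-1}\|_{6}\le C_0$ for free, whereas here $|\psi|_6\simeq|\nabla\rho|_6$ must be propagated along the flow --- which is precisely why the criterion \eqref{crit} requires the $L^2$-in-time norm of $D(u)$ rather than the $L^1$-in-time norm.

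First I would record the elliptic estimates: writing $\eqref{we22}_2$ as $aLu=-\varphi H$ with $H=u_t+u\cdot\nabla u+\nabla\phi-\psi\cdot Q(u)$, Lemma~\ref{zhenok} on $\mathbb{T}^3$ (applicable since the B-D relation \eqref{bdrelation} gives $2\alpha+\beta=2\alpha\delta>0$) yields $|u|_{D^{2,q}}\le C|H|_q$ for $q=2,6$; bounding the right-hand sides by Gagliardo--Nirenberg, Young and Lemmas~\ref{ts2}--\ref{s4ty} gives $|u|_{D^2}\le C(1+|u_t|_2+|\nabla\rho|_6^2)$ and a corresponding control of $|u|_{D^{2,6}}$ by $|\nabla u_t|_2$, $|u_t|_2$ and $|\nabla\rho|_6$ (the dangerous piece $|\psi|_6|\nabla u|_\infty$ being split via $|\nabla u|_\infty\le C|\nabla u|_6^{1/2}|u|_{D^{2,6}}^{1/2}$ and partly absorbed). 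Next I would estimate $|\nabla\rho|_6$: differentiating $\eqref{eq:1.1}_1$ gives
\begin{equation*}
\partial_t\nabla\rho+(u\cdot\nabla)\nabla\rho+(\nabla u)^{\top}\nabla\rho+\nabla\rho\,\text{div}\,u+\rho\nabla\text{div}\,u=0,
\end{equation*}
and testing against $|\nabla\rho|^4\nabla\rho$, integrating by parts in the transport term and using the scalar identity $(\nabla\rho)^{\top}(\nabla u)^{\top}(\nabla\rho)=(\nabla\rho)^{\top}D(u)(\nabla\rho)$ (the antisymmetric part of $\nabla u$ drops out), every contribution carrying only first derivatives of $u$ is bounded by $|D(u)|_\infty|\nabla\rho|_6^6$, so that
\begin{equation*}
\frac{d}{dt}|\nabla\rho|_6\le C|D(u)|_\infty|\nabla\rho|_6+C|u|_{D^{2,6}}.
\end{equation*}
This is the step that produces the dependence of \eqref{crit} on $D(u)$ alone.

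Then I would differentiate the momentum equation in $t$ (as in \eqref{zhu37ssh}), multiply by $u_t$ and integrate over $\mathbb{T}^3$; proceeding exactly as in Lemma~\ref{s6h} but with the weights removed, and estimating the nonlinear terms so that --- via the antisymmetry trick used for $L_9$, integration by parts, and the elliptic estimates above --- only $|D(u)|_\infty$ and $|D(u)|_\infty^2$ (both integrable on $[0,\overline{T})$ by \eqref{we11ty}) occur as Gr\"onwall multipliers while the highest-order pieces are absorbed into $|\nabla u_t|_2^2$ and into a small multiple of $|u|_{D^{2,6}}^2$, one arrives at a differential inequality of the form
\begin{equation*}
\frac{d}{dt}\big(|u_t|_2^2+|\sqrt{\phi}\,\text{div}\,u|_2^2\big)+c|\nabla u_t|_2^2\le C\big(1+|D(u)|_\infty+|D(u)|_\infty^2+|u_t|_2^2+|\nabla\rho|_6^2\big)\big(1+|u_t|_2^2+|\nabla\rho|_6^2\big)+\epsilon|u|_{D^{2,6}}^2.
\end{equation*}
The initial value $|u_t(0)|_2$ is finite directly from the momentum equation and $(\rho_0,u_0)\in H^3$, so no compatibility conditions are needed (in contrast with the Cauchy case). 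Combining this with the two previous displays, absorbing $\epsilon|u|_{D^{2,6}}^2$ through the elliptic estimates, and running Gr\"onwall on $|u_t|_2^2+|\nabla\rho|_6^2$ over $[0,T]$ gives the $\sup$-in-$t$ bounds on $|u_t|_2^2+|\nabla\rho|_6^2$ and on $\int_0^T|\nabla u_t|_2^2\,\text{d}t$; feeding these back into the elliptic estimates yields $\sup_{[0,T]}|u|_{D^2}^2$ and $\int_0^T|u|_{D^{2,6}}^2\,\text{d}t$.

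The main obstacle is the self-referential character of this coupling: the elliptic bound for $|u|_{D^{2,6}}$ needs $|\nabla u_t|_2$ and $|\nabla\rho|_6$, the $u_t$-energy inequality carries $|u|_{D^{2,6}}$ on its right-hand side, and the $\nabla\rho$ transport needs $|u|_{D^{2,6}}$ in $L^2_t$, which naively generates superlinear terms (such as $|\nabla\rho|_6^2|u_t|_2^2$) for which a one-shot Gr\"onwall does not close. I would resolve this by a continuity/bootstrap argument --- closing the estimate first on a short interval $[0,T_1]$ on which $\int_0^{T_1}|D(u)|_\infty^2\,\text{d}t$ and the accumulated quantities are small, then iterating up to $\overline{T}$ --- noting that all time-dependent coefficients not yet known to be bounded are integrable by \eqref{we11ty} and by Lemma~\ref{s4ty}, so that finitely many iterations suffice and the final constant depends only on $(\rho_0,u_0)$, $C_0$, the structural parameters and $\overline{T}$.
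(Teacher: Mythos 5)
Your overall architecture (elliptic control of $|u|_{D^2}$ and $|u|_{D^{2,6}}$, a transport estimate for $|\nabla\rho|_6$ with the $\epsilon|\nabla u_t|_2^2$ piece absorbed, and a $u_t$-energy estimate, all tied together by Gr\"onwall) matches the paper's Lemma \ref{s6hty}, and your $|\nabla\rho|_6$ step is essentially the paper's \eqref{zhu20cccc}--\eqref{zhu200}. But there is a genuine gap in the central step. You perform the $u_t$-energy estimate on the reformulated equation \eqref{zhu37ssh} ``with the weights removed''. In the periodic setting $|\psi|_6\simeq|\nabla\rho|_6$ is \emph{not} a priori bounded (unlike in \eqref{we11} for the Cauchy problem), so every commutator produced by this route --- $a\nabla\phi^{2e}\cdot Q(u_t)\cdot u_t$ from integrating $\phi^{2e}Lu_t$ by parts, and the $(\psi\cdot Q(u))_t$ source --- carries a factor of $|\nabla\rho|_6$ against top-order quantities, and after Young's inequality this inevitably yields terms like $|\nabla\rho|_6^4$ or $|\nabla\rho|_6^4|u_t|_2^2$; your own displayed inequality already contains the product $\big(\cdots+|\nabla\rho|_6^2\big)\big(\cdots+|\nabla\rho|_6^2\big)$. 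This is a Riccati-type structure, and the continuity/bootstrap you invoke does not repair it: for $\dot X\le m(t)(1+X)+CX^2$ (or $m(t)X^2$ with $m\in L^1$ but no smallness of its integral), the inequality alone permits blow-up strictly before $\overline T$, and the intervals on which a local bootstrap closes shrink as the bound grows, so ``finitely many iterations'' up to $\overline T$ is not justified. Nothing in \eqref{we11ty} or Lemma \ref{s4ty} gives the smallness such an argument would need.

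The paper avoids this obstruction by a structural choice you discarded: it differentiates the \emph{original} divergence-form momentum equation $\eqref{eq:1.1}_2$ in time (see \eqref{wgh78}) and tests with $u_t$, so the viscous term $\mathrm{div}\big(2\mu(\rho)D(u_t)+\lambda(\rho)\mathrm{div}\,u_t\,\mathbb{I}_3\big)$ integrates by parts \emph{without} ever producing $\nabla\rho$ at top order (coercivity then being recovered via Korn's inequality, Lemma \ref{Korn}, together with Lemma \ref{bs1}). The density gradient enters only through $\mu(\rho)_t,\lambda(\rho)_t$ in $\tilde L_{13}$ of \eqref{wzhou6}, where it is multiplied by $|D(u)|_\infty$ and by $D(u)$ rather than $D(u_t)$; after Young's inequality this gives $\epsilon|\nabla u_t|_2^2+C|D(u)|_\infty^2(1+|\nabla\rho|_6^2)$, i.e.\ a contribution \emph{linear} in the unknown $|u_t|_2^2+|\nabla\rho|_6^2$ with a time-integrable coefficient ($|D(u)|_\infty^2\in L^1$ by \eqref{we11ty}, $\|\nabla u\|_1^2,|u_t|_2^2\in L^1$ by Lemma \ref{s4ty}). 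The resulting inequality \eqref{wzhen5g} closes with a single linear Gr\"onwall, no bootstrap needed. To fix your proof you would need either to reproduce this divergence-form trick (so that no $\nabla\rho$-weighted commutator against $\nabla u_t$ appears), or to supply an independent a priori bound on $|\nabla\rho|_6$ before the $u_t$-estimate --- neither of which is in your proposal. (Your remark that no compatibility conditions are needed for $|u_t(0)|_2$ is correct and consistent with the paper.)
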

\begin{proof}
First, it  follows from Lemma \ref{zhenok}, equations $\eqref{dege}_3$ and Young's inequality    that 
$$
|u|_{D^2}\leq  C\big(|u_t|_{2}+| u|^2_6|\nabla u|_2+|\nabla \phi|_2+|\psi \cdot Q(u)|_2\big),
$$
which, along with  Lemmas \ref{bs1}-\ref{s4ty}, implies that 
 \begin{equation}\label{zhu15nnhty}
\begin{split}
|u|_{D^2}\leq C(1+|u_t|_2+|\nabla \rho|^2_6)\quad \text{or} \quad |u|_{D^2}\leq C(1+|u_t|_2+|D(u)|_\infty).
\end{split}
\end{equation}

Second,  differentiating $(\ref{eq:1.1})_2$ with respect to t, it reads
\begin{equation}\label{wgh78}
\begin{split}
&\rho u_{tt}-\text{div}(2\mu(\rho)D(u_t)+\lambda(\rho)\text{div}u_t\,\mathbb{I}_3)\\
=&-\rho_tu_t -\rho u\cdot\nabla u_t-\rho_t u\cdot\nabla u-\rho u_t\cdot\nabla u-\nabla P_t\\
&+\text{div}(2\mu(\rho)_tD(u)+\lambda(\rho)_t\text{div}u\,\mathbb{I}_3).
\end{split}
\end{equation}
Multiplying (\ref{wgh78}) by $u_t$ and   integrating  over $\mathbb{T}^3$, one has 
\begin{equation}\label{wzhen4}
\begin{split}
&\frac{1}{2}\frac{d}{dt}\int \rho |u_t|^2 +\int \Big(\frac{1}{2}\mu(\rho)|\nabla u_t+\nabla u^\top_t|^2+\lambda(\rho)(\text{div}u_t)^2\Big) \\
=&  -\int \rho u \cdot \nabla  |u_t|^2-\int  \rho u \nabla ( u \cdot \nabla u \cdot u_t)-\int  \rho u_t \cdot \nabla u \cdot u_t\\
&+\int  P_t \text{div} u_t-\int \Big(2\mu(\rho)D(u):D(u_t)+\lambda(\rho)_t\text{div}u\text{div}u_t \Big)
\equiv:\sum_{i=9}^{13}\tilde{L}_i.
\end{split}
\end{equation}

According to Lemmas \ref{bs1}-\ref{s4ty}, H\"older's inequality, Gagliardo-Nirenberg inequality and Young's inequality,  we deduce that
\begin{equation}\label{wzhou6opp}
\begin{split}
\tilde{L}_{9}=&-\int \rho u \cdot \nabla  |u_t|^2\\
\leq &  C|\rho|_{\infty}|u|_{\infty}|u_t|_{2}|\nabla u_t|_{2}\leq C\|\nabla u\|^2_1|u_t|^2_{2}+\epsilon |\nabla  u_t|^2_2, \\
\tilde{L}_{10}=& -\int \rho u\cdot  \nabla ( u \cdot \nabla u \cdot u_t)\\
\leq& C\big( |u_t|_6 ||\nabla u|^2|_{\frac{3}{2}} |u|_{6}+ ||u|^2|_{3} |\nabla^2 u|_{2} |u_t|_{6}+||u|^2|_{3} |\nabla u|_{6}  | \nabla u_t|_{2}\big)\\
\leq& C \big( |\nabla u|^2_{3} |\nabla u|_{2}+ |\nabla u|^2_{2} \|\nabla u\|_{1}   \big)|\nabla u_t|_{2}\\
\leq & C\|\nabla u\|_{1}| \nabla u_t|_{2} \leq \epsilon|\nabla u_t|^2_{2}+C(\epsilon)\|\nabla u\|^2_{1},\\
\tilde{L}_{11}=&-\int \rho u_t \cdot \nabla u \cdot u_t \\
\leq &  C|\rho|_{\infty}|u_t|_{6}| u_t|_{2}|\nabla u|_{3}
\leq \epsilon|\nabla u_t|^2_{2}+C(\epsilon)| u_t|^2_{2} \|\nabla u\|^2_{1},\\
\end{split}
\end{equation}
\begin{equation}\label{wzhou6}
\begin{split}
\tilde{L}_{12}=&\int P_t \text{div} u_t \leq \int_{\Omega}|u\cdot \nabla P+\gamma P\text{div}v| |\nabla u_t| \\
\leq &C(|u|_{\infty}|\nabla P|_{2}+|P|_{\infty}|\text{div} u|_{2})|\nabla u_t|_{2}\\
\leq & \epsilon|\nabla u_t|^2_{2}+C(\epsilon)\|\nabla u\|^2_{1},\\
\tilde{L}_{13}=&\int \Big(2\mu(\rho)D(u):D(u_t)+\lambda(\rho)_t\text{div}u\text{div}u_t \Big) \\
\leq &  C|D(u)|_{\infty}|\nabla u_t|_{2}(|\rho|_\infty |\nabla u|_{2}+| u|_{3}|\nabla \rho|_6)\\
\leq& \epsilon|\nabla u_t|^2_{2}+C(\epsilon)|D(u)|^2_{\infty}(1+|\nabla \rho|_6).
\end{split}
\end{equation}
Then it follows from   \eqref{wzhen4}-\eqref{wzhou6} and Lemmas \ref{s4ty} and \ref{Korn} that 
\begin{equation}\label{wzhen5g}
\begin{split}
&\frac{1}{2}\frac{d}{dt}\int \rho |u_t|^2 +\int |\nabla u_t|^2\\
\leq & C(|\nabla \rho|^2_6+|u_t|^2_{2}+1)(\|\nabla u\|^2_1+|D(u)|^2_{\infty}+1).
\end{split}
\end{equation}

Next,  applying $\nabla$ to  $(\ref{eq:1.1})_1$ and multiplying  by $6|\nabla \rho|^{4} \nabla \rho$, one has
\begin{equation}\label{zhu20cccc}
\begin{split}
&(|\nabla \rho|^6)_t+\text{div}(|\nabla \rho|^6 u)+5|\nabla \rho|^6\text{div}u\\
=&-6 |\nabla \rho|^{4}(\nabla \rho)^\top D( u) (\nabla \rho)-6 \rho|\nabla \rho|^{4} \nabla \rho \cdot \nabla \text{div}u.
\end{split}
\end{equation}
It follows from  Lemma \ref{zhenok} and equations $\eqref{dege}_3$ that 
 \begin{equation}\label{zhu55}
\begin{split}
|\nabla^2 u|_6
\leq& C(1+|\nabla u_t|_2+|D(u)|_{\infty}(1+|\nabla \rho|_6)).
\end{split}
\end{equation}
Then integrating (\ref{zhu20cccc}) over $\mathbb{T}^3$ and noticing (\ref{zhu55}), one  immediately obtains
\begin{equation}\label{zhu200}
\begin{split}
\frac{d}{dt}|\nabla \rho|_6
\leq& C|D( u)|_\infty(|\nabla \rho|_6+1)+C(\epsilon)+\epsilon |\nabla u_t|^2_2,
\end{split}
\end{equation}
which, together with \eqref{wzhen5g} and Gronwall's inequaltiy, implies  \eqref{zhu14sshty}.

This completes the proof of this lemma.
\end{proof}

Until now, we have obtained that 
\begin{equation}\label{we11uyy}
\begin{split}
\displaystyle\lim_{T\mapsto \overline{T}} \left(\sup_{0\le t\le T}\big\|\nabla \rho^{\delta-1}(t,\cdot)\big\|_{L^6(\mathbb{T}^3)}+\int_0^T \|D( u)(t,\cdot)\|_ {L^\infty(\mathbb{T}^3)}\ \text{d}t\right)=C_0<\infty,
\end{split}
\end{equation}
for some constant $C_0>0$, then the rest of the proof can be obtained by the completely same argument used in the proof for Lemmas 4.6-4.8.

\bigskip

\section*{Appendix. Some basic lemmas}

In this section, we list  some basic lemmas  to be used later.
The first one is the  well-known Gagliardo-Nirenberg inequality.
\begin{lemma}\cite{oar}\label{lem2as}\
For $p\in [2,6]$, $q\in (1,\infty)$, and $r\in (3,\infty)$, there exists some generic constant $C> 0$ that may depend on $q$ and $r$ such that for
$$f\in H^1(\mathbb{R}^3),\quad \text{and} \quad  g\in L^q(\mathbb{R}^3)\cap D^{1,r}(\mathbb{R}^3),$$
it holds that
\begin{equation}\label{33}
\begin{split}
&|f|^p_p \leq C |f|^{(6-p)/2}_2 |\nabla f|^{(3p-6)/2}_2,\quad |g|_\infty\leq C |g|^{q(r-3)/(3r+q(r-3))}_q |\nabla g|^{3r/(3r+q(r-3))}_r.
\end{split}
\end{equation}
\end{lemma}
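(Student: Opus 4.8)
The plan is to prove the two interpolation inequalities separately; in each case the exponents are forced by scaling, and the content is a borderline Sobolev embedding combined with H\"older interpolation. For the first inequality, I would start from the critical Gagliardo-Nirenberg-Sobolev embedding $|f|_6\le C|\nabla f|_2$ valid for $f\in H^1(\mathbb{R}^3)$. Given $p\in[2,6]$, pick $\theta\in[0,1]$ by $\frac1p=\frac\theta2+\frac{1-\theta}6$, which forces $\theta=\frac{6-p}{2p}$ and $1-\theta=\frac{3p-6}{2p}$, the constraint $\theta\in[0,1]$ being precisely $p\in[2,6]$. Log-convexity of the $L^p$ norms (H\"older's inequality) then gives $|f|_p\le|f|_2^{\theta}|f|_6^{1-\theta}$; inserting the Sobolev bound for $|f|_6$ and raising to the $p$-th power yields exactly $|f|_p^p\le C|f|_2^{(6-p)/2}|\nabla f|_2^{(3p-6)/2}$.

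For the second inequality the decisive feature is $r>3$, so I would use Morrey's inequality to control the oscillation of $g$: for every ball $B(x_0,\rho)$,
\[
\Big|\,g(x_0)-\tfrac{1}{|B(x_0,\rho)|}\!\int_{B(x_0,\rho)}g\,\Big|\le C\,\rho^{1-3/r}\,|\nabla g|_r ,
\]
while H\"older bounds the average of $g$ over that ball by $C\rho^{-3/q}|g|_q$. Adding these and optimizing over $\rho>0$ (the two powers of $\rho$ have opposite signs because $r>3$, so a genuine minimum is attained) produces $|g(x_0)|\le C|g|_q^{a}|\nabla g|_r^{b}$ with $a+b=1$ and, after balancing the powers, $a=\frac{q(r-3)}{3r+q(r-3)}$ and $b=\frac{3r}{3r+q(r-3)}$; taking the supremum over $x_0$ finishes it. Equivalently, the pair $(a,b)$ can be read off purely from scaling: under $g\mapsto g(\lambda\cdot)$ the quantities $|g|_\infty,|g|_q,|\nabla g|_r$ scale as $\lambda^0,\lambda^{-3/q},\lambda^{1-3/r}$, and first-degree homogeneity in $g$ pins the exponents down uniquely.

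The step I expect to require the most care is making the $L^\infty$ statement meaningful on the whole space: one must know a priori that $g$ has a continuous representative vanishing at infinity, so that the pointwise bound above controls $|g|_\infty$ rather than merely a local supremum. This is where the hypothesis $g\in L^q\cap D^{1,r}$ with $r>3$ (rather than $r\le 3$) enters: Morrey gives uniform continuity of $g$ on $\mathbb{R}^3$, and a uniformly continuous function in $L^q(\mathbb{R}^3)$ must tend to $0$ at infinity; a routine density argument with $C_c^\infty$ functions then transfers the inequality from smooth $g$ to the stated class. Beyond this, the proof is just exponent bookkeeping, and in practice one would simply cite the statement, as the authors do via \cite{oar}.
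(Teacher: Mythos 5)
Your proof is correct. The paper does not prove this lemma at all---it is quoted from \cite{oar} as a standard fact---so there is no argument to compare against; your derivation (H\"older interpolation between $L^2$ and the critical Sobolev bound $|f|_6\le C|\nabla f|_2$ for the first estimate, and a Morrey-type oscillation bound on balls optimized in the radius for the second, with the exponents pinned down by scaling) is the standard route, the exponent bookkeeping checks out, and your remark about the continuous representative vanishing at infinity correctly identifies the only point needing care for the $L^\infty$ statement on all of $\mathbb{R}^3$.
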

Some special cases of this inequality are
\begin{equation}\label{ine}\begin{split}
|u|_6\leq C|u|_{D^1},\quad |u|_{\infty}\leq C|u|^{\frac{1}{2}}_6|\nabla u|^{\frac{1}{2}}_{6}, \quad |u|_{\infty}\leq C\|u\|_{W^{1,r}}.
\end{split}
\end{equation}

The second lemma gives some compactness results obtained via the Aubin-Lions Lemma.
\begin{lemma}\cite{jm}\label{aubin} Let $X_0\subset X\subset X_1$ be three Banach spaces.  Suppose that $X_0$ is compactly embedded in $X$ and $X$ is continuously embedded in $X_1$. Then the following statements hold.

\begin{enumerate}
\item[i)] If $J$ is bounded in $L^p([0,T];X_0)$ for $1\leq p < +\infty$, and $\frac{\partial J}{\partial t}$ is bounded in $L^1([0,T];X_1)$, then $J$ is relatively compact in $L^p([0,T];X)$;\\

\item[ii)] If $J$ is bounded in $L^\infty([0,T];X_0)$  and $\frac{\partial J}{\partial t}$ is bounded in $L^p([0,T];X_1)$ for $p>1$, then $J$ is relatively compact in $C([0,T];X)$.
\end{enumerate}
\end{lemma}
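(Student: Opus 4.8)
The plan is to give the classical Aubin--Lions--Simon argument, which rests on two ingredients: an Ehrling-type interpolation inequality that is the only place the compactness of $X_0\hookrightarrow X$ is used, and a Fr\'echet--Kolmogorov compactness criterion (for part i)) or an Arzel\`a--Ascoli criterion (for part ii)) in the relevant Bochner space. First I would prove \emph{Ehrling's lemma}: since $X_0\hookrightarrow X$ is compact and $X\hookrightarrow X_1$ is continuous, for every $\eta>0$ there is $C_\eta>0$ with $\|v\|_X\le \eta\|v\|_{X_0}+C_\eta\|v\|_{X_1}$ for all $v\in X_0$. This is a contradiction argument: were it to fail for some $\eta_0>0$, one would obtain $v_n\in X_0$ with $\|v_n\|_X=1$, $\|v_n\|_{X_0}\le\eta_0^{-1}$ and $\|v_n\|_{X_1}\to0$; extracting $v_{n_k}\to v$ in $X$ by the compact embedding and passing to the limit via $X\hookrightarrow X_1$ forces $v=0$, contradicting $\|v\|_X=1$.

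\emph{Case i) ($p<\infty$, $\partial_t J_n$ bounded in $L^1(0,T;X_1)$).} The family $\{J_n\}$ is bounded in $L^p(0,T;X_0)$, hence in $L^p(0,T;X)$; I would verify the Fr\'echet--Kolmogorov criterion there: (a) $\big\{\int_a^b J_n(t)\,dt\big\}_n$ is relatively compact in $X$ for all $0\le a<b\le T$, and (b) $\sup_n\|\tau_h J_n-J_n\|_{L^p(0,T-h;X)}\to0$ as $h\to0^+$, with $\tau_h J_n(t)=J_n(t+h)$. For (a), $\big\|\int_a^b J_n\big\|_{X_0}\le\int_0^T\|J_n\|_{X_0}\le C$, so the averages lie in a bounded subset of $X_0$, which is precompact in $X$. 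For (b), the crucial observation is that $\partial_t J_n\in L^1(0,T;X_1)$ makes $J_n$ coincide a.e.\ with an absolutely continuous $X_1$-valued map; choosing $t_0$ with $\|J_n(t_0)\|_{X_0}\le T^{-1}\|J_n\|_{L^1(0,T;X_0)}$ and integrating $\partial_t J_n$ yields $\sup_t\|J_n(t)\|_{X_1}\le R$ uniformly in $n$. Then $\|J_n(t+h)-J_n(t)\|_{X_1}\le\min\{g_n(t),2R\}$ with $g_n(t)=\int_t^{t+h}\|\partial_t J_n(\tau)\|_{X_1}\,d\tau$ and $\int_0^{T-h}g_n\le h\,\|\partial_t J_n\|_{L^1(0,T;X_1)}\le hM$, so $\|\tau_h J_n-J_n\|_{L^p(0,T-h;X_1)}\le(2R)^{1-1/p}(hM)^{1/p}\to0$; combining this with $\|J_n(t+h)-J_n(t)\|_X\le\eta\|J_n(t+h)-J_n(t)\|_{X_0}+C_\eta\|J_n(t+h)-J_n(t)\|_{X_1}$, the uniform $L^p(0,T;X_0)$-bound, and the arbitrariness of $\eta$, gives (b). Fr\'echet--Kolmogorov then yields relative compactness of $\{J_n\}$ in $L^p(0,T;X)$.

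\emph{Case ii) ($p=\infty$, $\partial_t J_n$ bounded in $L^q(0,T;X_1)$ with $q>1$).} Here I would use Arzel\`a--Ascoli in $C([0,T];X)$. Equicontinuity: since $J_n$ is, after modification, absolutely continuous into $X_1$, H\"older's inequality gives $\|J_n(t)-J_n(s)\|_{X_1}\le\|\partial_t J_n\|_{L^q(0,T;X_1)}\,|t-s|^{1-1/q}$, and Ehrling's lemma then gives $\|J_n(t)-J_n(s)\|_X\le 2\eta\sup_k\|J_k\|_{L^\infty(0,T;X_0)}+C_\eta M\,|t-s|^{1-1/q}$, which is below any prescribed $\varepsilon$ once $\eta$ is taken small and then $|t-s|$ small; the hypothesis $q>1$ is essential precisely so that $|t-s|^{1-1/q}\to0$. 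Pointwise precompactness: for each $t$, $\{J_n(t)\}_n$ is bounded in $X_0$ by the uniform $L^\infty(0,T;X_0)$-bound, hence relatively compact in $X$. Arzel\`a--Ascoli then gives relative compactness of $\{J_n\}$ in $C([0,T];X)$.

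The only genuinely delicate point is in case i) with the merely $L^1$-in-time bound on $\partial_t J_n$: one must first upgrade it to the uniform bound $\sup_t\|J_n(t)\|_{X_1}\le R$ via the Bochner fundamental theorem of calculus together with a judicious choice of base point $t_0$, since this is exactly what turns the translation estimate into the quantitative decay $\mathcal{O}(h^{1/p})$ needed for (b). Everything else is routine interpolation through Ehrling's inequality combined with the standard vector-valued compactness criteria.
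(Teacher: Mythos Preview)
Your outline is essentially correct and follows the classical Aubin--Lions--Simon argument from \cite{jm}. Note, however, that the paper does not give its own proof of this lemma: it is stated in the appendix as a standard tool and simply attributed to Simon's paper. So there is no ``paper's proof'' to compare against; your sketch reproduces the strategy of the cited reference.

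One small technical point worth tightening in case ii): the assertion that ``for each $t$, $\{J_n(t)\}_n$ is bounded in $X_0$'' requires a little care, since $J_n\in L^\infty(0,T;X_0)$ only gives this for almost every $t$, and the continuous $X_1$-valued representative need not a priori take values in $X_0$ at the exceptional times. Simon handles this by first showing (via Ehrling and the H\"older estimate in $X_1$) that each $J_n$ has a representative in $C([0,T];X)$, and then obtaining pointwise precompactness in $X$ at every $t$ by approximating from the full-measure set where the $X_0$-bound holds and using the compact embedding together with continuity in $X_1$. Your sketch is fine as a plan, but this step is where the argument is slightly more delicate than a direct invocation of Arzel\`a--Ascoli suggests.
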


The third  one  can be found in Majda \cite{amj}.
\begin{lemma}\cite{amj}\label{zhen1}
Let  $r$, $a$ and $b$  be constants such that
$$\frac{1}{r}=\frac{1}{a}+\frac{1}{b},\quad \text{and} \quad 1\leq a,\ b, \ r\leq \infty.$$  $ \forall s\geq 1$, if $f, g \in W^{s,a} \cap  W^{s,b}(\mathbb{R}^3)$, then it holds that
\begin{equation}\begin{split}\label{ku11}
&|\nabla^s(fg)-f \nabla^s g|_r\leq C_s\big(|\nabla f|_a |\nabla^{s-1}g|_b+|\nabla^s f|_b|g|_a\big),\\
\end{split}
\end{equation}
\begin{equation}\begin{split}\label{ku22}
&|\nabla^s(fg)-f \nabla^s g|_r\leq C_s\big(|\nabla f|_a |\nabla^{s-1}g|_b+|\nabla^s f|_a|g|_b\big),
\end{split}
\end{equation}
where $C_s> 0$ is a constant depending only on $s$, and $\nabla^s f$ ($s\geq 1$) is the set of  all $\partial^\zeta_x f$  with $|\zeta|=s$. Here $\zeta=(\zeta_1,\zeta_2,\zeta_3)\in \mathbb{R}^3$ is a multi-index.
\end{lemma}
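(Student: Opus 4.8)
The plan is to deduce both \eqref{ku11} and \eqref{ku22} from the Leibniz rule combined with the Gagliardo--Nirenberg interpolation inequality, Hölder's inequality, and Young's inequality; by a routine density argument (using $f,g\in W^{s,a}\cap W^{s,b}$ to guarantee finiteness of all norms involved) it suffices to argue for Schwartz $f,g$. First I would fix a multi-index $\alpha$ with $|\alpha|=s$ and expand $\partial_x^\alpha(fg)=\sum_{\beta\le\alpha}\binom{\alpha}{\beta}\partial_x^\beta f\,\partial_x^{\alpha-\beta}g$; subtracting the single term $f\,\partial_x^\alpha g$ (the $\beta=0$ summand) leaves
\[
\partial_x^\alpha(fg)-f\,\partial_x^\alpha g=\sum_{1\le|\beta|\le s}\binom{\alpha}{\beta}\,\partial_x^\beta f\,\partial_x^{\alpha-\beta}g,
\]
a finite sum whose number of terms depends only on $s$ (and the dimension $3$). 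Hence it is enough to bound each summand $\partial_x^\beta f\,\partial_x^{\alpha-\beta}g$ with $j:=|\beta|$, where $1\le j\le s$ and $\partial_x^\beta f$ has order $j$, $\partial_x^{\alpha-\beta}g$ has order $s-j$.

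The two extreme cases are immediate from Hölder. For $j=1$, splitting $\tfrac1r=\tfrac1a+\tfrac1b$ gives $|\partial_x^\beta f\,\partial_x^{\alpha-\beta}g|_r\le|\nabla f|_a|\nabla^{s-1}g|_b$, the first term on the right of \eqref{ku11} (and of \eqref{ku22}). For $j=s$, Hölder with the exponents split the other way, $\tfrac1r=\tfrac1b+\tfrac1a$, gives $|\partial_x^\beta f\,\partial_x^{\alpha-\beta}g|_r\le|\nabla^s f|_b|g|_a$, the second term on the right of \eqref{ku11}; for \eqref{ku22} one instead uses $\tfrac1r=\tfrac1a+\tfrac1b$ to get $|\nabla^s f|_a|g|_b$.

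For the intermediate range $2\le j\le s-1$ I would interpolate both factors. With $\theta=\tfrac{j-1}{s-1}\in(0,1)$, the Gagliardo--Nirenberg inequality on $\mathbb{R}^3$ gives
\[
|\nabla^j f|_{p}\le C\,|\nabla f|_a^{1-\theta}\,|\nabla^s f|_b^{\theta},\qquad
|\nabla^{s-j}g|_{q}\le C\,|g|_a^{\theta}\,|\nabla^{s-1}g|_b^{1-\theta},
\]
where $p,q$ are the Lebesgue exponents forced by the dimensional balance of Gagliardo--Nirenberg; a short computation with those balance relations shows $\tfrac1p+\tfrac1q=\tfrac1a+\tfrac1b=\tfrac1r$, so Hölder applies, and then
\[
|\partial_x^\beta f\,\partial_x^{\alpha-\beta}g|_r\le C\big(|\nabla f|_a|\nabla^{s-1}g|_b\big)^{1-\theta}\big(|\nabla^s f|_b|g|_a\big)^{\theta}\le C\big(|\nabla f|_a|\nabla^{s-1}g|_b+|\nabla^s f|_b|g|_a\big),
\]
the last step by Young's inequality $x^{1-\theta}y^{\theta}\le x+y$. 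Summing over the finitely many $\beta$ with $1\le|\beta|\le s$ yields \eqref{ku11}. For \eqref{ku22} one repeats the argument but interpolates the $f$-factor between $\nabla f$ and $\nabla^s f$ in the \emph{same} space $L^a$ (which keeps $p=a$) and the $g$-factor between $g$ and $\nabla^{s-1}g$ in $L^b$ (which keeps $q=b$), so Hölder with $\tfrac1r=\tfrac1a+\tfrac1b$ closes trivially and the anchor products become $|\nabla f|_a|\nabla^{s-1}g|_b$ and $|\nabla^s f|_a|g|_b$.

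The routine but genuinely delicate point—where essentially all the content lies—is the exponent bookkeeping for Gagliardo--Nirenberg: one must check that $\theta=\tfrac{j-1}{s-1}$ lies in the admissible interpolation interval for the $f$-factor (it is exactly the left endpoint $\tfrac{j-1}{s-1}$) and that $1-\theta=\tfrac{s-j}{s-1}$ is admissible for the $g$-factor, verify explicitly from the scaling relations that the two intermediate exponents $p$ and $q$ produced are conjugate over $r$ (this rests on the algebraic identity $\theta_f+\theta_g=1$ together with $\tfrac1r=\tfrac1a+\tfrac1b$), and treat the borderline values $a,b\in\{1,\infty\}$ separately, replacing the degenerate Gagliardo--Nirenberg inequalities by their $L^\infty$ forms. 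Recording the identity $\tfrac1p+\tfrac1q=\tfrac1r$ in detail is the crux of the write-up; everything else is bookkeeping over a finite index set.
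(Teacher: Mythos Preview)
The paper does not supply a proof of this lemma; it is quoted verbatim from Majda \cite{amj} and placed in the appendix as a known tool, so there is no in-paper argument to compare against. Your proposal is the classical Moser/Kato--Ponce proof of the commutator estimate (Leibniz expansion, Gagliardo--Nirenberg interpolation at the scale-invariant exponent $\theta=(j-1)/(s-1)$, H\"older to recombine, Young to collapse the geometric mean), which is exactly the argument that appears in Majda's monograph and the references therein. The exponent calculation you sketch is correct: with $\tfrac1p=\tfrac{j-1}{(s-1)b}+\tfrac{s-j}{(s-1)a}$ and $\tfrac1q=\tfrac{s-j}{(s-1)b}+\tfrac{j-1}{(s-1)a}$ one indeed gets $\tfrac1p+\tfrac1q=\tfrac1a+\tfrac1b=\tfrac1r$, and for \eqref{ku22} the ``same-space'' interpolation collapses $p=a$, $q=b$ as you say. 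Your caveats about the borderline cases $a,b\in\{1,\infty\}$ and the admissibility of the endpoint $\theta=j/m$ in Gagliardo--Nirenberg are the only places requiring care, and you have correctly identified them; nothing further is needed.
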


The following lemma is important in the derivation of  the a priori estimates  in Section $3$, which can be found in Remark 1 of \cite{bjr}.
\begin{lemma}\cite{bjr}\label{1}
If $f(t,x)\in L^2([0,T]; L^2)$, then there exists a sequence $s_k$ such that
$$
s_k\rightarrow 0, \quad \text{and}\quad s_k |f(s_k,x)|^2_2\rightarrow 0, \quad \text{as} \quad k\rightarrow+\infty.
$$
\end{lemma}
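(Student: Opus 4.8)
The plan is to reduce the statement to an elementary fact about scalar $L^1$ functions of the time variable. Set $\phi(t):=|f(t,\cdot)|_2^2$; the hypothesis $f\in L^2([0,T];L^2)$ says exactly that $\phi\in L^1([0,T])$, with $\phi\ge 0$. The claim is then equivalent to $\operatorname{ess\,liminf}_{s\to 0^+} s\,\phi(s)=0$, since from this one extracts the desired sequence directly: for each $k\in\mathbb N$ the set $E_k:=\{s\in(0,1/k): s\,\phi(s)<1/k\}$ has positive Lebesgue measure, hence contains a point $s_k$ at which $\phi$ is finite (and at which $|f(s_k,\cdot)|_2$ is meaningful), and then $0<s_k\le 1/k\to 0$ while $s_k\,\phi(s_k)<1/k\to 0$, i.e. $s_k|f(s_k,\cdot)|_2^2\to 0$.

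First I would establish $\operatorname{ess\,liminf}_{s\to 0^+}s\,\phi(s)=0$ by contradiction. If this essential lower limit were some $c>0$, there would be $\delta\in(0,T)$ with $s\,\phi(s)\ge c/2$ for a.e. $s\in(0,\delta)$, that is $\phi(s)\ge \frac{c}{2s}$ a.e. on $(0,\delta)$. Integrating over $(0,\delta)$ would give $\int_0^\delta\phi(s)\,\mathrm ds\ge \frac{c}{2}\int_0^\delta \frac{\mathrm ds}{s}=+\infty$, contradicting $\phi\in L^1([0,T])$. Hence the essential lower limit vanishes and the extraction above applies.

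A slightly more constructive variant, if one prefers to avoid the contradiction step, is the following: by absolute continuity of the Lebesgue integral, $\int_0^{1/k}\phi(s)\,\mathrm ds\to 0$ as $k\to\infty$; since the average of $\phi$ over the interval $[1/(2k),1/k]$ (of length $1/(2k)$) equals $2k\int_{1/(2k)}^{1/k}\phi(s)\,\mathrm ds$, the set of $s$ in that interval with $\phi(s)$ not exceeding this average has positive measure, so one may pick $s_k$ there (where $\phi$ is finite); then $s_k\,\phi(s_k)\le (1/k)\cdot 2k\int_{1/(2k)}^{1/k}\phi(s)\,\mathrm ds=2\int_{1/(2k)}^{1/k}\phi(s)\,\mathrm ds\le 2\int_0^{1/k}\phi(s)\,\mathrm ds\to 0$, while $s_k\in[1/(2k),1/k]\to 0$. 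Either route finishes the proof. The statement is genuinely elementary and I do not expect a real obstacle; the only point deserving a word of care is that $\phi$ is defined merely almost everywhere, so the chosen points $s_k$ must be taken inside the relevant positive-measure set on which the pointwise bound holds and on which $|f(s_k,\cdot)|_2$ is defined — which is always possible.
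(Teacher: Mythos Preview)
Your argument is correct. The paper itself does not supply a proof of this lemma: it is stated in the appendix with a citation to \cite{bjr} (Remark~1 there) and used as a black box in the proof of Lemma~\ref{5}. Your reduction to the scalar fact that $\phi(t):=|f(t,\cdot)|_2^2\in L^1(0,T)$ forces $\operatorname*{ess\,liminf}_{s\to 0^+}s\,\phi(s)=0$ is exactly the standard way to see this, and both the contradiction variant and the averaging variant you wrote are fine; the care you took about choosing $s_k$ inside a positive-measure set so that $\phi(s_k)$ is defined is the only delicate point, and you handled it.
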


Next we give one  Sobolev inequalities on the interpolation estimate in the following   lemma.
\begin{lemma}\cite{amj}\label{gag111}
Let  $u\in H^s$, then for any $s'\in[0,s]$,  there exists  a constant $C_s$ only depending on $s$ such that
$$
\|u\|_{s'} \leq C_s \|u\|^{1-\frac{s'}{s}}_0 \|u\|^{\frac{s'}{s}}_s.
$$
\end{lemma}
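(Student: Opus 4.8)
The plan is to reduce the inequality to a single application of H\"older's inequality on the Fourier side, which is the standard route (and the one used in Majda \cite{amj}). First I would fix a convention: take the $H^s$ norm to be (equivalent to) the Bessel--potential norm
$$\|u\|_s^2=\int_{\RR^3}(1+|\xi|^2)^{s}\,|\widehat u(\xi)|^2\,\text{d}\xi,$$
with the obvious sum over $\mathbb{Z}^3$ replacing the integral in the periodic case. By Plancherel's theorem this norm is equivalent, with constants depending only on $s$ (and the dimension), to the equivalent Sobolev norm used elsewhere in the paper; this equivalence is precisely where the constant $C_s$ in the statement enters, and with the Bessel--potential norm itself one may take $C_s=1$.

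Next I would dispose of the degenerate endpoints: if $s=0$, or $s'=0$, or $s'=s$, the asserted bound is an identity up to the norm-equivalence constant, so from now on assume $0<s'<s$ and set $\theta=s'/s\in(0,1)$. The one key observation is the pointwise factorization
$$(1+|\xi|^2)^{s'}=\big[(1+|\xi|^2)^{0}\big]^{1-\theta}\big[(1+|\xi|^2)^{s}\big]^{\theta},$$
so that
$$\|u\|_{s'}^2=\int_{\RR^3}\big(|\widehat u(\xi)|^2\big)^{1-\theta}\big((1+|\xi|^2)^{s}|\widehat u(\xi)|^2\big)^{\theta}\,\text{d}\xi.$$
Applying H\"older's inequality with conjugate exponents $\frac{1}{1-\theta}$ and $\frac{1}{\theta}$ gives
$$\|u\|_{s'}^2\le\Big(\int_{\RR^3}|\widehat u|^2\,\text{d}\xi\Big)^{1-\theta}\Big(\int_{\RR^3}(1+|\xi|^2)^{s}|\widehat u|^2\,\text{d}\xi\Big)^{\theta}=\|u\|_0^{2(1-\theta)}\|u\|_s^{2\theta},$$
and taking square roots yields $\|u\|_{s'}\le\|u\|_0^{1-\theta}\|u\|_s^{\theta}$ with $\theta=s'/s$. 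Re-expressing the three norms in whatever fixed convention is in force reintroduces the factor $C_s$, finishing the proof; the periodic case is identical with Fourier series in place of the Fourier transform.

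Honestly there is no hard step here: this is a classical interpolation inequality, included only for convenient reference. The only points requiring a little care are (i) pinning down the norm convention so that the stated constant $C_s$ is legitimate, and (ii) treating the endpoints $s'\in\{0,s\}$ and the trivial case $s=0$ separately, since $\theta$ is then $0$, $1$, or undefined and the H\"older step degenerates. As an alternative that avoids Fourier analysis for integer $s$, one could iterate the elementary bound $\|\nabla^{j}u\|_0\le\|u\|_0^{1-j/k}\|\nabla^{k}u\|_0^{j/k}$ obtained by integration by parts and Cauchy--Schwarz, but the Fourier argument is cleaner and handles fractional orders uniformly.
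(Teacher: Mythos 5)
Your proof is correct and is exactly the standard argument: the paper gives no proof of this lemma (it is quoted from Majda's book), and the cited proof is precisely the Plancherel/H\"older interpolation on the Fourier side that you carry out, with the same treatment of the endpoint cases and of the norm-equivalence constant $C_s$. Nothing to add.
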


In order   to improve a weak convergence to the strong convergence, we give the following lemma.
\begin{lemma}\cite{amj}\label{zheng5}
If the function sequence $\{w_n\}^\infty_{n=1}$ converges weakly  to $w$ in a Hilbert space $X$, then it converges strongly to $w$ in $X$ if and only if
$$
\|w\|_X \geq \lim \text{sup}_{n \rightarrow \infty} \|w_n\|_X.
$$
\end{lemma}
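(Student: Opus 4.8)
The plan is to prove the equivalence directly from the Hilbert-space identity
\[
\|w_n-w\|_X^2=\|w_n\|_X^2-2\langle w_n,w\rangle+\|w\|_X^2,
\]
together with the definition of weak convergence; no compactness or special structure of $X$ is needed, so the lemma applies verbatim to the $L^2$-type spaces used in Sections~3--5.

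First I would dispose of the easy implication. If $w_n\to w$ strongly in $X$, then $\big|\,\|w_n\|_X-\|w\|_X\,\big|\le\|w_n-w\|_X\to0$, so $\|w_n\|_X\to\|w\|_X$; in particular $\limsup_{n\to\infty}\|w_n\|_X=\|w\|_X\ge\|w\|_X$, which is the stated inequality.

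For the converse, assume $w_n\rightharpoonup w$ weakly and $\limsup_{n\to\infty}\|w_n\|_X\le\|w\|_X$. The key preliminary step is to upgrade this to $\lim_{n\to\infty}\|w_n\|_X=\|w\|_X$: applying the definition of weak convergence to the test vector $w$ and using Cauchy--Schwarz gives $\|w\|_X^2=\langle w,w\rangle=\lim_{n\to\infty}\langle w_n,w\rangle\le\|w\|_X\,\liminf_{n\to\infty}\|w_n\|_X$, hence $\|w\|_X\le\liminf_{n\to\infty}\|w_n\|_X$, and combined with the hypothesis this forces $\|w_n\|_X\to\|w\|_X$. Then I would pass to the limit $n\to\infty$ in the identity above: the first term tends to $\|w\|_X^2$ by the previous step, the cross term tends to $2\langle w,w\rangle=2\|w\|_X^2$ by weak convergence, and the last term is constant, so $\|w_n-w\|_X^2\to\|w\|_X^2-2\|w\|_X^2+\|w\|_X^2=0$, i.e.\ $w_n\to w$ strongly in $X$.

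The argument is elementary and I do not anticipate a genuine obstacle; the only place demanding a moment's attention is that one must \emph{derive} $\|w_n\|_X\to\|w\|_X$ rather than assume it, which is precisely where the weak lower semicontinuity of the norm (a direct consequence of Cauchy--Schwarz, valid in any inner-product space) enters the proof.
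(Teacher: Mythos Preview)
Your proof is correct; this is precisely the standard Radon--Riesz type argument in a Hilbert space, and every step is justified. The paper, however, does not supply its own proof of this lemma: it is stated in the appendix as a basic tool and simply attributed to Majda~\cite{amj}, so there is no argument in the paper to compare against. Your write-up could serve as a self-contained replacement for that citation; the only cosmetic point is that the trivial case $w=0$ (where the division by $\|w\|_X$ in the lower-semicontinuity step is unavailable) is handled implicitly, since then the hypothesis forces $\|w_n\|_X\to 0$ directly.
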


The next lemma is used to give the estimate on $\nabla u_t$ in the periodic problem away from the vacuum.
\begin{lemma}\cite{Corn}\label{Korn}
Let $\Omega\subset \mathbb{R}^n$ $(n\geq 2)$ be an open,  connected domain. Then there is a constant $C>0$, known as the Korn constant of $\Omega$, such that, for all vector fields $v=(v^1,..., v^n)\in H^1(\Omega)$,
$$
\|v\|^2_{H^1(\Omega)}\leq C\int_{\Omega} \big(|v|^2+|D(v)|^2\big) \text{\rm d}x.
$$
\end{lemma}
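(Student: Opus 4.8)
The plan is to derive the stated estimate (the \emph{second} Korn inequality, i.e.\ the one carrying the lower-order term $\|v\|_{L^2}$ on the right) from the Lions lemma, the standard route for bounds of this type. The crucial observation is that the full gradient $\nabla v$ is controlled, in the sense of distributions, by first-order derivatives of the symmetric part $D(v)$.

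First I would reduce the claim to the gradient bound
$$\|\nabla v\|_{L^2(\Omega)}\le C\big(\|v\|_{L^2(\Omega)}+\|D(v)\|_{L^2(\Omega)}\big)\qquad\text{for all }v\in H^1(\Omega);$$
indeed, squaring this and using $2ab\le a^2+b^2$ gives $\|\nabla v\|_{L^2}^2\le 2C^2\big(\|v\|_{L^2}^2+\|D(v)\|_{L^2}^2\big)$, so that $\|v\|_{H^1(\Omega)}^2=\|v\|_{L^2}^2+\|\nabla v\|_{L^2}^2\le(1+2C^2)\int_\Omega\big(|v|^2+|D(v)|^2\big)\,\mathrm{d}x$. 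Next I would record the elementary distributional identity
$$\partial_j\big(\partial_k v^i\big)=\partial_k D_{ij}(v)+\partial_j D_{ik}(v)-\partial_i D_{jk}(v),$$
valid for every $v\in H^1(\Omega)$ and all $i,j,k$, which follows at once from $D_{ij}(v)=\tfrac12(\partial_i v^j+\partial_j v^i)$ and the commutativity of distributional partial derivatives. Fix $i,k$ and set $w:=\partial_k v^i$. Since $v\in L^2(\Omega)$ we have $w\in H^{-1}(\Omega)$ with $\|w\|_{H^{-1}}\le\|v\|_{L^2}$, and the identity above expresses each $\partial_j w$ as a first derivative of a component of $D(v)\in L^2(\Omega)$, so $\nabla w\in H^{-1}(\Omega)$ with $\|\nabla w\|_{H^{-1}}\le C\|D(v)\|_{L^2}$.

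The heart of the proof is then to invoke the Lions lemma (Ne\v{c}as inequality): on the admissible domains in question --- in particular on bounded Lipschitz domains, on the periodic box $\mathbb{T}^3$, and on $\mathbb{R}^3$, which are the cases relevant here --- every $f\in H^{-1}(\Omega)$ with $\nabla f\in H^{-1}(\Omega)$ in fact belongs to $L^2(\Omega)$, with $\|f\|_{L^2}\le C\big(\|f\|_{H^{-1}}+\|\nabla f\|_{H^{-1}}\big)$. Applying this with $f=w=\partial_k v^i$, then summing over $i$ and $k$, yields $\|\nabla v\|_{L^2}\le C(\|v\|_{L^2}+\|D(v)\|_{L^2})$, which is the reduced bound, and the lemma follows. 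The main obstacle is the Lions lemma itself: its proof is not elementary for a general domain and is precisely where the geometric hypotheses on $\partial\Omega$ enter --- for a Lipschitz domain one flattens the boundary and combines a partition of unity with a mollification/Fourier argument, while on $\mathbb{T}^n$ one can bypass it entirely by the one-line Fourier computation $|\widehat{D(v)}(\xi)|^2\ge\tfrac12|\xi|^2|\hat v(\xi)|^2$ for $\xi\ne0$, which makes Korn's inequality immediate on the torus. Since only $\Omega=\mathbb{T}^3$ and $\Omega=\mathbb{R}^3$ are used in this paper, the estimate in those cases is classical; for the general statement the verification of the Lions lemma is deferred to \cite{Corn}.
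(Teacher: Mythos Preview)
Your argument is correct and follows the standard Lions--Ne\v{c}as route to the second Korn inequality: reduce to a gradient bound, use the identity $\partial_j(\partial_k v^i)=\partial_k D_{ij}(v)+\partial_j D_{ik}(v)-\partial_i D_{jk}(v)$ to place $\nabla(\partial_k v^i)$ in $H^{-1}$, and then invoke the Lions lemma. The Fourier remark for $\mathbb{T}^n$ is also correct and is the cleanest argument in the only case where the lemma is actually applied in this paper.

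As for the comparison: the paper gives no proof of this lemma at all---it is quoted as a known result with a citation to \cite{Corn} (Ciarlet, \emph{On Korn's inequality}). So there is nothing to compare on the level of argument; you have supplied what the paper deliberately omits. It is worth noting that the approach in \cite{Corn} is precisely the Lions-lemma route you outline, so your sketch is in fact the proof behind the citation. One caveat: the statement as written (``open, connected domain'') is too permissive for Korn's inequality to hold without further boundary regularity; your proof correctly flags that the Lions lemma is where the Lipschitz (or periodic, or whole-space) hypothesis enters, and this qualification should be understood as implicit in the lemma.
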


Finally, we give the well-known Fatou's lemma.
\begin{lemma}\label{Fatou}
Given a measure space $(V,\mathcal{F},\nu)$ and a set $X\in \mathcal{F}$, let  $\{f_n\}$ be a sequence of $(\mathcal{F} , \mathcal{B}_{\mathbb{R}_{\geq 0}} )$-measurable non-negative functions $f_n: X\rightarrow [0,\infty]$. Define the function $f: X\rightarrow [0,\infty]$ by setting
$$
f(x)= \liminf_{n\rightarrow \infty} f_n(x),
$$
for every $x\in X$. Then $f$ is $(\mathcal{F},  \mathcal{B}_{\mathbb{R}_{\geq 0}})$-measurable, and   
$$
\int_X f(x) \text{\rm d}\nu \leq \liminf_{n\rightarrow \infty} \int_X f_n(x) \text{\rm d}\nu.
$$
\end{lemma}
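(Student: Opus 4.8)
The plan is to run the classical two-step argument: first verify that $f=\liminf_n f_n$ is measurable, and then derive the integral inequality from the Monotone Convergence Theorem applied to the auxiliary ``running infimum'' sequence. To begin, I would write $f=\liminf_{n\to\infty}f_n=\sup_{k\ge 1}\inf_{n\ge k}f_n$ and set $g_k:=\inf_{n\ge k}f_n$. Each $g_k$ is $(\mathcal F,\mathcal B_{\mathbb R_{\ge 0}})$-measurable, since the pointwise infimum of a countable family of measurable $[0,\infty]$-valued functions is measurable: for any $a\in\mathbb R$ one has $\{x\in X: g_k(x)<a\}=\bigcup_{n\ge k}\{x\in X: f_n(x)<a\}\in\mathcal F$. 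Likewise $f=\sup_{k\ge 1}g_k$ is measurable because $\{x\in X: f(x)>a\}=\bigcup_{k\ge 1}\{x\in X: g_k(x)>a\}\in\mathcal F$. This disposes of the measurability assertion.

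For the inequality, the key structural facts are that the $g_k$ are non-negative, non-decreasing in $k$ (the infimum is taken over a shrinking index set), and satisfy $g_k(x)\uparrow f(x)$ for every $x\in X$, directly from the definition of $\liminf$. Hence the Monotone Convergence Theorem gives $\int_X g_k\,\mathrm{d}\nu\to\int_X f\,\mathrm{d}\nu$. On the other hand, for every $n\ge k$ we have $g_k\le f_n$ pointwise, so monotonicity of the integral yields $\int_X g_k\,\mathrm{d}\nu\le\int_X f_n\,\mathrm{d}\nu$, and taking the infimum over $n\ge k$ gives $\int_X g_k\,\mathrm{d}\nu\le\inf_{n\ge k}\int_X f_n\,\mathrm{d}\nu$. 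Letting $k\to\infty$ and combining the two displays,
$$\int_X f\,\mathrm{d}\nu=\lim_{k\to\infty}\int_X g_k\,\mathrm{d}\nu\le\lim_{k\to\infty}\,\inf_{n\ge k}\int_X f_n\,\mathrm{d}\nu=\liminf_{n\to\infty}\int_X f_n\,\mathrm{d}\nu,$$
which is exactly the claimed bound.

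The only substantive ingredient is the appeal to the Monotone Convergence Theorem; everything else is bookkeeping with countable unions and monotonicity of the integral, and no estimate from the PDE part of the paper is needed. Since the Monotone Convergence Theorem is a foundational fact of measure theory—obtainable directly from the definition of the integral of a non-negative measurable function as the supremum of integrals of dominated simple functions—I would, in the setting of this appendix, simply invoke it as a known prerequisite rather than reprove it, and keep the whole argument to a few lines.
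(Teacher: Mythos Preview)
Your proof is correct and is precisely the standard textbook derivation of Fatou's lemma via the Monotone Convergence Theorem. The paper itself gives no proof at all---it merely states the lemma in the appendix as ``the well-known Fatou's lemma'' and cites it as a prerequisite---so your argument in fact supplies strictly more than what the paper contains.
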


\bigskip
\bigskip
\noindent{\bf Acknowledgement:} 
The research of Shengguo Zhu was supported in part by  the Royal Society-- Newton International Fellowships NF170015, and  Monash University-Robert Bartnik Visiting Fellowships.

\bigskip

\noindent{\bf Conflict of Interest:} The authors declare that they have no conflict of
interest.

\bigskip

\end{document}